\documentclass[11pt]{article}

\usepackage[margin=1cm]{geometry}
\usepackage{fullpage}
\usepackage{amsmath}
\usepackage{amsthm}
\usepackage{amssymb}
\usepackage{mathtools}
\usepackage{enumerate}
\usepackage{textcomp}
\usepackage[hidelinks]{hyperref}
\usepackage{cleveref}

\newcommand{\mbb}[1]{\mathbb{#1}}
\newcommand{\mbf}[1]{\mathbf{#1}}
\newcommand{\mc}[1]{\mathcal{#1}}

\newcommand{\bs}{\boldsymbol}
\newcommand{\tr}{\textup{tr}\,}
\newcommand{\wt}[1]{\widetilde{#1}}
\newcommand{\wh}[1]{\widehat{#1}}
\newcommand{\diag}{\textup{diag}\,}
\newcommand{\pf}{\textup{pf}\,}
\newcommand{\Tr}[1]{\bigl\langle{#1}\bigr\rangle}
\newcommand{\diff}{\,\mathrm{d}}
\renewcommand{\det}{\textup{det}}
\renewcommand{\Re}{\textup{Re}}
\renewcommand{\Im}{\textup{Im}}
\numberwithin{equation}{section}

\newtheorem{theorem}{Theorem}[section]

\newtheorem{lemma}{Lemma}[section]
\newtheorem{corollary}{Corollary}[section]
\newtheorem{proposition}{Proposition}[section]

\theoremstyle{remark}

\theoremstyle{definition}
\newtheorem{definition}{Definition}[section]

\title{Precise Delocalisation and Gumbel Laws for Eigenvectors of Wigner Matrices}
\author{Mohammed Osman\footnote{mohammed.osman@ist.ac.at}}
\date{\small Institute of Science and Technology Austria}

\begin{document}
\maketitle

\abstract{We prove a delocalisation bound for eigenvectors of Wigner matrices with the precise relationship between the size of the largest entry and the decay exponent of the probability. We also prove that the largest entry of an individual eigenvector and the largest entry of all eigenvectors are both Gumbel distributed. The proof is based on the inclusion-exclusion principle, the small probability comparison of Erd\H{o}s--Xu, and partial diagonalisation of Gaussian-divisible matrices.}

\section{Introduction}
Let $V$ be a random $N\times N$ Hermitian matrix distributed according Gaussian Orthogonal/Unitary Ensemble (GOE/GUE). Due to unitary invariance, the eigenvalues and eigenvectors of $V$ are independent, and the matrix of eigenvectors is uniformly distributed on $\textup{O}(N)$ ($\beta=1$) or $\textup{U}(N)$ ($\beta=2$). In particular, individual eigenvectors are uniformly distributed on the sphere. It is well-known that such random vectors behave like a vector of independent standard Gaussians. More precisely, if $\mbf{u}$ is uniform on the sphere and $\mbf{x}$ is a vector of independent standard Gaussians, then a result of Diaconis--Freedman \cite{diaconis_dozen_1987} shows that
\begin{align}
	d_{TV}(\mbf{u}_{I},\mbf{x}_{I})&\simeq\frac{|I|}{N},
\end{align}
where $\mbf{a}_{I}$ denotes the subvector with components in $I\subset[N]$. Thus, any $o(N)$ subvector is close in total variation distance to a vector of independent Gaussians. Later work by Jiang \cite{jiang_how_2006} extended this to a matrix $U$ uniformly distributed on $\textup{O}(N)$ or $\textup{U}(N)$. In this case, one can simultaneously approximate submatrices of size $p\times q$, where $pq=o(N)$.

The approximation by Gaussians also holds on the level of the largest entry. One manifestation of this is delocalisation: for any $D>0$ there is an $\ell^{(\beta)}_{N}(D)$ (see \eqref{eq:l} below) such that
\begin{align}
	\mbb{P}\Bigl(\|\mbf{u}\|_{\infty}>\sqrt{\frac{\ell^{(\beta)}(D)}{N}}\Bigr)&\simeq N^{-D},\label{eq:deloc}
\end{align}
Moreover, the largest entry $\|\mbf{u}\|_{\infty}$ has Gumbel fluctuations, i.e.
\begin{align}
	\lim_{N\to\infty}\mbb{P}\Bigl(N\|\mbf{u}\|_{\infty}^{2}-\ell^{(\beta)}_{N}(0)\leq x\Bigr)&=e^{-e^{\frac{\beta x}{2}}}.
\end{align}
Jiang \cite{jiang_maxima_2005} proves the analogous matrix result: if $U$ is uniform on $\textup{O}(N)$ or $\textup{U}(N)$, then
\begin{align}
	\lim_{N\to\infty}\mbb{P}\Bigl(N\max_{i,j\in[N]}|U_{i,j}|^{2}-\ell^{(\beta)}_{N}(1)\leq x\Bigr)&=e^{-e^{\frac{\beta x}{2}}}.\label{eq:jiang}
\end{align}

Following the general principle of universality in random matrix theory, these results are expected to hold for a broad class of matrices beyond the GOE/GUE. The class we are interested in is the class of Wigner matrices, defined as follows.
\begin{definition}
A random Hermitian matrix $W$ is a \textbf{Wigner matrix} if its upper-triangular entries are independent random variables satisfying:
\begin{align}
	\mbb{E}W_{ij}&=0,\quad\mbb{E}|W_{ij}|^{2}=\frac{1}{N},\quad\mbb{E}|W_{ij}|^{p}\leq \frac{C_{p}}{N^{p/2}}.
\end{align}
\end{definition}
Eigenvectors of Wigner matrices are by now very well understood. Erd\H{o}s--Schlein--Yau \cite{erdos_local_2009} prove a weaker form of \eqref{eq:deloc} as a consequence of a local law for the resolvent: if $\mbf{u}_{n}$ are the eigenvectors of $W$, then for any $\epsilon>0$ and $D>0$ we have
\begin{align}
	\mbb{P}\Bigl(\|\mbf{u}_{n}\|_{\infty}>\sqrt{\frac{N^{\epsilon}}{N}}\Bigr)&\leq N^{-D}.
\end{align}
The $N^{\epsilon}$ term was improved to $C(D)\log N$ for some $C(D)$ by Benigni--Lopatto \cite{benigni_optimal_2022}. The latter work also studies small deviations and proves that, for any $\epsilon>0$, there is a $\delta>0$ such that
\begin{align}
	\mbb{P}\Bigl(\|\mbf{u}_{n}\|_{\infty}>\sqrt{\frac{(2/\beta+\epsilon)\log N}{N}}\Bigr)&\leq N^{-\delta}.
\end{align}

In another direction, finite collections of entries of eigenvectors have been shown to converge to independent Gaussians (in the sense of moments) by Bourgade--Yau \cite{bourgade_eigenvector_2017}. Recently, Benigni \cite{benigni_quantitative_2026} has improved this result to allow for the simultaneous approximation of a growing number of entries, though not quite at the threshold  in Jiang's result. 

Finally, a recent work of Bao--Bucht--Schnelli \cite{bao_order_2026} obtains the convergence in distribution of order statistics of edge eigenvectors, i.e. those with index $n<N^{\epsilon}$ or $N-n>N^{\epsilon}$ for sufficiently small $\epsilon>0$. In particular, the maximum entry is shown to be Gumbel distributed. We remark that the Gumbel distribution at the edge can also be obtained from the work of Benigni--Lopatto \cite{benigni_optimal_2022} by direct comparison with the GOE/GUE, since universality at the edge holds under two-moment matching (see the paper of Knowles--Yin \cite{knowles_eigenvector_2013}). 

In this paper we extend the upper bound in \eqref{eq:deloc} to Wigner matrices, with an extra (suboptimal) factor of $\log^{C} N$ on the right hand side. In addition, we prove that the largest entries of individual eigenvectors are Gumbel distributed in the bulk and edge, in the sense of fixed energy. This means that we fix a point in the support of the limiting spectrum and look at the eigenvector whose eigenvalue is closed to that point. We also prove Gumbel fluctuations for the maximum over all eigenvectors. In other words, we extend \eqref{eq:jiang} to the matrix of eigenvectors of a Wigner matrix in the case $\beta=2$. This last result holds in full generality for complex Wigner matrices, while in the real case we require an additional four-moment matching with the GOE. In the next subsection we give a precise statement of these results.

\subsection{Main Results}

We use the common convention that $\beta=1$ stands for real matrices and $\beta=2$ for complex matrices. Define
\begin{align}
	l_{N}^{(\beta)}(D)&:=\begin{cases}
	2(D+1)\log N-\log(4(D+1)\pi\log N)&\quad \beta=1\\
	(D+1)\log N&\quad\beta=2
	\end{cases}.\label{eq:l}
\end{align}
Let $\rho_{sc}(x)$ be the density of the semicircle law, i.e.
\begin{align}
    \rho_{sc}(x)&=\frac{\sqrt{4-x^{2}}}{2\pi}.
\end{align}
For $E\in[-2,2]$, let 
\begin{align}
    d_{E}&=\min\Bigl((N\rho_{sc}(E))^{-1},N^{-2/3}\Bigr),
\end{align} 
denote the typical eigenvalue spacing at $E$. Our first main result is delocalisation of eigenvectors with the precise relationship between the size of $\|\mbf{u}_{n}\|_{\infty}$ and the decay exponent of the probability. 
\begin{theorem}[Precise delocalisation]\label{thm1}
Let $D>1$, $E\in[-2,2]$ and $I$ be an interval centred at $E$ such that $|I|>N^{-D}$. Let 
\begin{align}
	\rho_{sc}(I)&:=\int_{I}\rho_{sc}(\lambda)\diff\lambda.
\end{align}
There is a constant $C>0$ such that 
\begin{align}
	\mbb{P}\left(\max_{n:\lambda_{n}\in I}\|\mbf{u}_{n}\|_{\infty}>\sqrt{\frac{\ell_{N}^{(\beta)}(D)}{N}}\right)&\leq CN^{1-D}\rho_{sc}(I),\label{eq:thm11}
\end{align}
If either i) $\beta=1$ and $|I|\lesssim d_{E}$; or ii) $\beta=2$, we also have the lower bound
\begin{align}
	\mbb{P}\left(\max_{n:\lambda_{n}\in I}\|\mbf{u}_{n}\|_{\infty}>\sqrt{\frac{\ell_{N}^{(\beta)}(D)}{N}}\right)&\geq\frac{1}{C}N^{1-D}\rho_{sc}(I).\label{eq:thm12}
\end{align}
Moreover, we have
\begin{align}
	\max_{n\in[N]}\mbb{P}\left(\|\mbf{u}_{n}\|_{\infty}>\sqrt{\frac{\ell_{N}^{(\beta)}(D)}{N}}\right)&\leq C(\log^{C} N)N^{-D},\label{eq:thm13}
\end{align}
\end{theorem}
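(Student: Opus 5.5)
We propose to prove the three bounds by reducing them to the Gaussian computation through the tools named in the abstract. Throughout, write $t:=\ell^{(\beta)}_N(D)/N$. We will use $\mbb{P}(\|\mbf{u}\|_\infty^2>t)\le\sum_{i}\mbb{P}(|u(i)|^2>t)$ for the upper bounds, and inclusion--exclusion truncated at second order for the lower bounds.

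\emph{Step 1 (Gaussian-divisible case).} We first take $W_t=\sqrt{1-t_0}\,W_0+\sqrt{t_0}\,V$ with $V$ a GUE/GOE and $t_0=N^{-1+\epsilon}$. Conditionally on $W_0$, we partially diagonalise: we write the Gaussian component in the eigenbasis of $W_0$, so that the entry $u_n(i)$ becomes, up to a normalisation, a linear combination of Gaussians. The weights of this combination are governed by the resolvent of $W_0$ and are controlled by the local law. A direct Gaussian tail computation then gives
\[
\mbb{P}(|u_n(i)|^2>t)\approx \mbb{P}(|g|^2>Nt)\asymp N^{-D-1}.
\]
Here $g$ is a real or complex standard Gaussian, and the choice of $\ell^{(\beta)}_N$ in \eqref{eq:l} makes the $\beta=1$ prefactor $\sqrt{\log N}$ cancel exactly. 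Summing over $i\in[N]$ and over the expected number $N\rho_{sc}(I)$ of eigenvalues in $I$ gives \eqref{eq:thm11} for $W_t$.

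\emph{Step 2 (lower bound).} For the lower bound we use inclusion--exclusion over pairs $(n,i)$. The second-order terms $\mbb{P}(|u_n(i)|^2>t,\,|u_m(j)|^2>t)$ must be $o$ of the first-order sum. This holds whenever the two events behave like independent Gaussian tail events, each of size about $N^{-D-1}$. The delicate pairs are $n=m$ with $i\neq j$, where the Gaussian structure makes the correlation negligible. For $n\ne m$ with the eigenvalues close, complex matrices decorrelate through the independence of the phases. For $\beta=1$ we avoid this issue by requiring $|I|\lesssim d_E$, so that $I$ contains $O(1)$ eigenvalues with high probability.

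\emph{Step 3 (removing the Gaussian component).} Next we remove the Gaussian component using the small-probability comparison of Erd\H{o}s--Xu. This comparison transfers probabilities of events of size $N^{-D}$ between $W$ and a matched $W_t$ with only a multiplicative $1+o(1)$ error. To apply it, we smooth the indicators $\mathbf 1(\max_{n:\lambda_n\in I}\|\mbf{u}_n\|_\infty^2>t)$ as functionals of resolvents, namely $\Im G_{ii}(E+i\eta)$ with $\eta\ll d_E$ integrated over $I$. This expresses $N|u_n(i)|^2$ as $\eta\,\Im G_{ii}$ near $\lambda_n$. The monotonicity of the smoothed quantity then gives two-sided sandwich bounds at slightly shifted thresholds $t(1\pm N^{-\delta})$, and these shifts change $N^{-D}$ only by constant factors.

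\emph{Step 4 (fixed index).} Finally, for \eqref{eq:thm13} we handle a fixed index $n$ rather than a fixed interval. By eigenvalue rigidity, $\lambda_n$ lies with overwhelming probability in an interval $I_n$ around the classical location $\gamma_n$ of length $N^{\epsilon}d_{\gamma_n}$. We cover $I_n$ by $\log^{C}N$-many short windows at the scale where rigidity holds with $\log$ precision, and apply \eqref{eq:thm11} on each window. Each window contributes $N^{1-D}\rho_{sc}(\text{window})\lesssim N^{-D}$, and summing produces the $\log^C N$ loss.

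The main obstacle is Step 3: the Erd\H{o}s--Xu comparison must be applied to a quantity whose probability is polynomially small. This requires a resolvent representation of the event that is stable in $\eta$ and free of $N^{\epsilon}$ losses in the threshold; even a $(1+N^{-\delta})$ change in $t$ must keep the probability within constant factors. We would first try to choose $\eta=N^{-\epsilon}d_E$ and control the error $\eta\Im G_{ii}-\sum_n\eta^2|u_n(i)|^2/((\lambda_n-E)^2+\eta^2)$ by the local law.
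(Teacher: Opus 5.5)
Your overall architecture (Gaussian-divisible computation, Erd\H{o}s--Xu comparison, second-order inclusion--exclusion, $\log^{C}N$-precision rigidity for \eqref{eq:thm13}) matches the paper's. However, Step~3 has a genuine gap, and it is exactly the difficulty the paper is built around.

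By the spectral theorem, $\eta\Im G_{ii}(E+i\eta)=\sum_{k}\eta^{2}|u_{k}(i)|^{2}/((\lambda_{k}-E)^{2}+\eta^{2})$ holds exactly. So the error you propose to control by the local law is identically zero. The real error is the contribution of other eigenvalues within distance $\sim\eta$ of $\lambda_{n}$. The resolvent proxy $\hat{v}^{2}_{n,i}$ must also be localised around a smooth eigenvalue proxy $\hat\lambda_{n}$: integrating over all of $I$ gives $\sum_{n:\lambda_{n}\in I}|u_{n}(i)|^{2}$, not a maximum. Even so, it only satisfies $|u_{n}(i)|^{2}\leq\hat{v}^{2}_{n,i}\leq\sum_{k:\lambda_{k}\in\hat{I}_{n}}|u_{k}(i)|^{2}$ up to $O(N^{-1-\xi})$. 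It is a two-sided sandwich only on $\{\mc{N}(\hat{I}_{n})=1\}$.

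The complement of that event is only $O(N^{-\delta})$-small, vastly larger than $N^{-D}$. So it cannot be discarded, and shifting thresholds by $1\pm N^{-\delta}$ does not help in either direction. For \eqref{eq:thm11} you still need $\mbb{E}\sum_{n}\mbf{1}_{I}(\lambda_{n})\mbf{1}\bigl(\sum_{k:\lambda_{k}\in\hat{I}_{n}}N|u_{k}(i)|^{2}>\ell^{(\beta)}_{N}(D)\bigr)\lesssim N^{-D}\rho_{sc}(I)$. But a sum of $k+1$ Gaussian squares has a tail larger by $(\log N)^{\beta k/2}$, and a priori $k$ is only $O(N^{\xi})$. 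For \eqref{eq:thm12} you must show that the degenerate event carries a negligible share of the tail event.

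The paper resolves this by splitting according to $\mc{N}(\hat{I}_{i_{a}})=k_{a}+1$. It then proves for Gaussian-divisible matrices that repulsion and tail costs multiply, giving $N^{m-(D+1)\sum_{a}|J_{a}|-c\delta\sum_{a}k_{a}^{2}}$ (Proposition \ref{prop:p-gauss-divisible}). This bound is fed into the Erd\H{o}s--Xu iteration for the smoothed statistic (Lemma \ref{lem:momentmatching2}). That proposition rests on homogenisation and on approximate independence of eigenvalues and eigenvector entries (Proposition \ref{prop:gauss-divisible}).

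Your Step~1 silently assumes the same independence when you multiply a fixed-index tail by the expected number $N\rho_{sc}(I)$ of eigenvalues in $I$. For non-invariant ensembles, $\{\lambda_{n}\in I\}$ and $\mbf{u}_{n}$ are correlated. Nor is the eigenvector of $W_{0}+\sqrt{t_{0}}V$ literally a linear combination of Gaussians. Partial diagonalisation gives a density in $(\bs\lambda,\mbf{v})$ with $\mbf{v}$-dependent weights $F_{\beta}$ and $K_{\beta}$: an expected characteristic polynomial of the compressed matrix, and a normalisation. Proving these are essentially $\mbf{v}$-independent is the main technical work, with errors $O(\log N/\sqrt{Nt^{3}})$. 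So $t_{0}=N^{-1+\epsilon}$ is far too small; the paper takes $t=N^{-1/3+\rho}$, which edge homogenisation also needs, and the theorem includes $E=\pm2$.

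Finally, the $\beta=1$ restriction $|I|\lesssim d_{E}$ has nothing to do with phases or with $I$ containing $O(1)$ eigenvalues. It appears because the two-eigenvector factorisation for real matrices (Lemma \ref{lem:Ftilde1}) is only proved when all $\lambda_{i}$ lie within $O(d_{E})$ of each other.
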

We remark that the restriction $|I|\lesssim d_{E}$ in the lower bound for real matrices is purely technical and the result should hold uniformly in $|I|\gtrsim N^{-D}$. The logarithmic factor in \eqref{eq:thm12} is suboptimal and arises because we can only localise a given eigenvalue $\lambda_{n}$ to an interval of size $(N^{2/3}\hat{n}^{1/3})^{-1}\log^{C} N$ with very high probability, where $\hat{n}=\textup{min}(n,N+1-n)$. In the bulk this can be improved to $\log N$ using the optimal rigidity result of Bourgade--Lopatto--Zeitouni \cite{bourgade_optimal_2025}.

The next two results deal with the limiting distributions of $\|\mbf{u}_{n}\|_{\infty}$ and $\max_{n}\|\mbf{u}_{n}\|_{\infty}$. Firstly, we obtain Gumbel fluctuations for the largest entry of individual eigenvectors. We phrase it in terms of the eigenvalue closest to a given point rather than the eigenvalue with a given index. This is because our method for treating Gaussian-divisible matrices can only work with unlabelled eigenvalues.
\begin{theorem}[Gumbel laws for individual eigenvectors]\label{thm2}
Fix $E\in[-2,2]$ and $x\in\mbb{R}$. Let $n\equiv n(E)$ be the (random) index of the eigenvalue closest to $E$. Then we have
\begin{align}
	\lim_{N\to\infty}\mbb{P}\left(N\|\mbf{u}_{n}\|^{2}_{\infty}-\ell_{N}^{(\beta)}(0)\leq x\right)&=e^{-e^{-\frac{\beta x}{2}}}.
\end{align}
\end{theorem}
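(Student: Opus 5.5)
We prove Theorem~\ref{thm2} in two stages. First we establish it for Gaussian-divisible matrices $W_t=\sqrt{1-t}\,W_0+\sqrt{t}\,V$ with $t=N^{-1+\epsilon}$, where $V$ is GOE/GUE. Then we remove the Gaussian component by a small-probability comparison in the style of Erd\H{o}s--Xu.

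The target is a statement about an \emph{unlabelled} eigenvector, the one closest to $E$. So we write the event of interest through the inclusion-exclusion principle. Set $a=\sqrt{(\ell_N^{(\beta)}(0)+x)/N}$ and, for each coordinate $i$, let $A_i=\{|\mbf{u}_n(i)|>a\}$. Then
\begin{align}
\mbb{P}\Bigl(\max_i|\mbf{u}_n(i)|\le a\Bigr)=\sum_{k=0}^{N}(-1)^k\sum_{|S|=k}\mbb{P}\Bigl(\bigcap_{i\in S}A_i\Bigr),
\end{align}
and Bonferroni truncation at fixed depth $K$ gives upper and lower bounds. It therefore suffices to show that for each fixed $k$,
\begin{align}
\sum_{|S|=k}\mbb{P}\Bigl(\bigcap_{i\in S}A_i\Bigr)\to \frac{(e^{-\beta x/2})^k}{k!}.
\end{align}
For $k=1$ this is the Gaussian tail computation: $N\,\mbb{P}(|g|^2>\ell_N^{(\beta)}(0)+x)\to e^{-\beta x/2}$, and this is exactly what the choice of $\ell_N^{(\beta)}$ in \eqref{eq:l} is designed to produce. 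Summing $e^{-\beta x/2}$-type terms this way yields the Poisson limit and hence $e^{-e^{-\beta x/2}}$.

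\textbf{Gaussian-divisible case.} Here we use partial diagonalisation. Conditioning on $W_0$, we write $V$ in a basis adapted to the coordinates in $S$ and isolate the $k\times k$ block of indices in $S$ together with the rest. The eigenvector components $\mbf{u}(i)$, $i\in S$, at an eigenvalue $\lambda$ near $E$ are then expressed through resolvent quantities of the complementary minor and Gaussian vectors. Conditionally on the minor and on $\lambda$, the vector $(\sqrt{N}\mbf{u}(i))_{i\in S}$ is asymptotically a standard real or complex Gaussian vector. Its covariance is controlled by isotropic local laws, which give $\langle \mbf{e}_i,\Im G\,\mbf{e}_j\rangle\approx\delta_{ij}\Im m_{sc}$. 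This is why we need unlabelled eigenvalues: the event ``$\lambda$ is the eigenvalue closest to $E$ and $|\mbf{u}(i)|>a$ for $i\in S$'' can be written as an integral over $\lambda$ of a determinantal or Jacobian density. In that integral the eigenvector weight factorises asymptotically, up to relative errors $o(1)$ that must hold \emph{uniformly in the tail regime} $|\mbf{u}(i)|^2\sim \log N/N$. Summing over $S$ gives $\binom{N}{k}\,\mbb{P}(|g|^2>\ell+x)^k(1+o(1))\to e^{-k\beta x/2}/k!$. The upper bound \eqref{eq:thm11} of Theorem~\ref{thm1} controls the tail of the Bonferroni series.

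\textbf{Comparison.} We then compare $\mbb{P}(\bigcap_{i\in S}A_i,\ n=n(E))$ between $W$ and $W_t$ with a Green function comparison adapted to small probabilities, following Erd\H{o}s--Xu. The relevant probabilities are of order $N^{-k}$, so additive errors are useless. Instead we need multiplicative errors, obtained by smoothing the indicator into a function of $\Im G_{ii}(E+i\eta)$, with $\eta$ slightly below $d_E$, and tracking derivatives along the Ornstein--Uhlenbeck flow relative to the quantity itself. The constraint ``closest eigenvalue to $E$'' is handled by standard smoothed counting functions of eigenvalues, as in fixed-energy universality.

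\textbf{Main obstacle.} The hardest step is this relative-error comparison at the scale $N^{-k}$: cumulant expansion terms have to be bounded by a constant times the probability itself rather than by $N^{-c}$. We would first try to bound each expansion term by the same tail event with a slightly shifted threshold $a\pm N^{-1/2-c}$. This closes by a Gronwall argument because the Gaussian tails are continuous in the threshold on the $\log N/N$ scale.
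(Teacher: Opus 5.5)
\paragraph{Main gap: the Gaussian-divisible step.}
This is where the proof actually lives, and as you describe it, it does not work. With $t=N^{-1+\epsilon}$, once you condition on $W_0$ and on the complementary minor, the only randomness left in the $S$-rows is $\sqrt{t}$ times a Gaussian row. Projected onto the minor's eigenvectors near $\lambda$, this has size $\sqrt{t/N}=N^{-1+\epsilon/2}$. The contribution from $W_0$ is of size $N^{-1/2}$ and is frozen. So under your conditioning, $\lambda$ and $(\sqrt N u(i))_{i\in S}$ are essentially deterministic, not Gaussian. Isotropic laws for $\Im G$ tell you what the variance ought to be, not the law.

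In the paper, Gaussianity comes from the exact change of variables in Lemma \ref{lem:partialDiagonalisation}. This uses Householder reflections onto unlabelled eigenvectors, not a coordinate Schur complement. The direction $\mbf{v}_j$ then has the explicit law $\mu^{(\beta)}_j$: a Gaussian-type weight with covariance $\approx tH(E)/N$, spread over the eigenvectors of $X$ in a window of width $\sim\eta(E)$. This weight is tilted by the characteristic-polynomial factor $F_\beta$ and the normalisation $K_\beta$.

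The relative-error factorisation you say ``must hold'' is Proposition \ref{prop:gauss-divisible}. Its core is proving that $F_\beta$ and $K_\beta$ are independent of $\{\mbf{v}_j\}$ up to a factor $1+O_\prec((Nt^3)^{-1/2})$ (Lemmas \ref{lem:Ftilde}, \ref{lem:Ftilde1}, \ref{lem:K}). That proof uses a supersymmetric representation, steepest descent, Cram\'er's rule and concentration of quadratic forms. It is then combined with the Fourier half-space identity and diagonal dominance of $tH$ (Lemmas \ref{lem:halfSpace}, \ref{lem:detB}, \ref{lem:gaussianApprox}). Your sketch names partial diagonalisation and the diagonal-dominance input, but not the $\mbf{v}$-independence of $F_\beta,K_\beta$, which is the heart of the matter.

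Moreover, these errors are of size $(Nt(|\kappa_\lambda|+t^2))^{-1/2}$. So at $E=\pm2$, which the statement includes, one needs $t\gg N^{-1/3}$; your $t=N^{-1+\epsilon}$ is below the edge relaxation scale. Taking $t=N^{-1/3+\rho}$ in turn rules out a short Ornstein--Uhlenbeck comparison. That is why the paper instead uses Lindeberg replacement against a Gaussian-divisible matrix matching $W$ to four moments up to $O(t/N^2)$.

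\paragraph{The random index $n(E)$.}
The event $\{n=n(E)\}$ is a gap event involving all eigenvalues near $E$. It is not ``an integral over $\lambda$'' of a one-variable density, so you need the factorisation jointly with all local $m$-point eigenvalue statistics. The paper shows that the marked point process $\{(d_E^{-1}(\lambda_i-E),N\|\mbf{u}_i\|_\infty^2-\ell^{(\beta)}_N(0))\}$ has correlation functions that factorise into sine/Airy times an independent Poisson process of intensity $e^{-\beta x/2}$. This uses Proposition \ref{prop:gauss-divisible} for general $f$ on $\mbb{R}^m$ localised at scale $d_E$, plus inclusion--exclusion over coordinates. It then applies the continuous mapping theorem to $\xi\mapsto N\|\mbf{u}_{n(E)}\|_\infty^2-\ell^{(\beta)}_N(0)$.

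\paragraph{The comparison step.}
You have misplaced the main obstacle. After summing over $S$, or using a smoothed maximum, every quantity here is of order one. So the additive-error comparison of Lemma \ref{lem:momentmatching1} suffices, and the Erd\H{o}s--Xu scheme is needed only for Theorems \ref{thm1} and \ref{thm3}.

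If you insist on comparing each $\mbb{P}(\bigcap_{i\in S}A_i)$ at scale $N^{-k}$, your shifted-threshold Gronwall argument misses a problem. The resolvent proxy fails on near-degenerate configurations, which have probability $N^{-\delta}\gg N^{-k}$; controlling these is the role of Proposition \ref{prop:p-gauss-divisible}. Note also that the proxy must be centred at a regularised eigenvalue $\hat\lambda_n$ with $\eta$ well below the spacing. $\Im G_{ii}(E+i\eta)$ at the fixed energy $E$ does not see $|u_n(i)|^2$.

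\paragraph{Minor points.}
Bonferroni already brackets the probability at each fixed depth, so \eqref{eq:thm11} is not needed. With $\ell^{(1)}_N(0)$ exactly as in \eqref{eq:l}, one gets $N\mbb{P}(g^2>\ell^{(1)}_N(0)+x)\to 2e^{-x/2}$. The constant inside the logarithm should be $\pi\log N$; this is a normalisation slip in the paper that your computation inherited.
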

The deterministic index version of this result at the edge (i.e. indices $n\leq N^{\epsilon}$ or $N-n\geq N^{\epsilon}$ for small $\epsilon$) is obtained as a special case of a general result about order statistics by Bao--Bucht--Schnelli \cite{bao_order_2026}. It can also be deduced from the results of Benigni--Lopatto \cite{benigni_optimal_2022}, since only two moments are needed to match at the edge and hence one can directly compare a Wigner matrix with the GOE/GUE. Note that the random index $n(E)$ has fluctuations of size $\log N$, and so we cannot deduce the deterministic index result from the fixed energy result (and vice versa).

Secondly, we obtain the Gumbel law for the maximum entry of all eigenvectors, i.e. the analogue of Jiang's result for $\max_{i,j\in[N]}|U_{ij}|$ when $U$ is uniform in $\textup{U}(N)$ or $\textup{O}(N)$. This result is proven in general only for complex Wigner matrices; in the real case we require four-moment matching.
\begin{theorem}[Gumbel law for matrix of eigenvectors]\label{thm3}
Let $W$ be either:
\begin{enumerate}[i)]
\item a complex Wigner matrix;
\item a real Wigner matrix whose first four moments match the GOE.
\end{enumerate}
Then
\begin{align}
	\lim_{N\to\infty}\mbb{P}\left(\max_{n\in[N]}N\|\mbf{u}_{n}\|_{\infty}^{2}-\ell_{N}^{(\beta)}(1)\leq x\right)&=e^{-e^{-\frac{\beta x}{2}}}.
\end{align}
\end{theorem}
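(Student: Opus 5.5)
The plan is to reduce the Gumbel law to counting exceedances. Set $t_x:=\ell^{(\beta)}_N(1)+x$ and let $\mc{N}_x$ be the number of pairs $(n,i)\in[N]^2$ with $N|\mbf{u}_n(i)|^2>t_x$. The event in Theorem \ref{thm3} is $\{\mc{N}_x=0\}$. By the inclusion--exclusion principle (Bonferroni inequalities), it suffices to show that for each fixed $k$ the factorial moments satisfy
\begin{align}
	\mbb{E}\binom{\mc{N}_x}{k}\longrightarrow\frac{e^{-k\beta x/2}}{k!}.
\end{align}
For the GOE/GUE, Jiang's result \eqref{eq:jiang} already gives this, and the Gaussian heuristic explains the constant. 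If $\mbf{u}$ is uniform on the sphere, then $\mbb{P}(N|u_i|^2>t)\approx\mbb{P}(|g|^2>t)$. With the normalisation in \eqref{eq:l}, this probability equals $N^{-2}e^{-\beta x/2}(1+o(1))$, and there are $N^2$ pairs. So the work is to show that the $k$-point exceedance probabilities
\begin{align}
	\mbb{P}\bigl(N|\mbf{u}_{n_j}(i_j)|^2>t_x,\ j=1,\dots,k\bigr)
\end{align}
summed over distinct pairs are asymptotically the same for $W$ as for the Gaussian ensemble.

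First, I handle the diagonal terms, i.e.\ pairs sharing an eigenvector index or a coordinate index. These should be negligible. Precise delocalisation \eqref{eq:thm13}, applied with a larger exponent to the joint event, together with a two-entry version of the same small-probability bound, should show that two large entries occurring in the same row or column costs an extra factor of roughly $N^{-1}$ up to logarithms. The sum over such configurations then vanishes.

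Second, I compare the $k$-point probabilities for distinct pairs. These probabilities are of order $N^{-2k}$, which is tiny. Ordinary Green's-function comparison gives only additive errors of size $N^{-c}$, which is useless here. I would therefore use the Erd\H{o}s--Xu small probability comparison: compare the probability of the event directly along a continuous interpolation (Ornstein--Uhlenbeck flow) between $W$ and a Gaussian-divisible matrix $W_t=\sqrt{1-t}\,W+\sqrt{t}\,V$. The event is expressed through smooth functions of resolvent entries $\Im G_{ii}(E+i\eta)$ at scale $\eta$ just below the eigenvalue spacing, with $E$ integrated over the spectrum. The point of this comparison is that it controls the derivative relative to the probability itself, not in absolute terms. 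Complex Wigner matrices differ from GUE at third order, and the third-order terms carry a phase that averages out; this is where $\beta=2$ suffices. In the real case the third- and fourth-order Taylor terms do not cancel relative to probabilities of order $N^{-2k}$, which is why I would impose four-moment matching. Taking $t$ of order $N^{-1+\epsilon}$ keeps the accumulated relative error $o(1)$.

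Third, for the Gaussian-divisible matrix $W_t=\sqrt{1-t}\,W+\sqrt{t}\,V$, I would condition on $W$ and partially diagonalise the Gaussian component. Eigenvectors of $W_t$ in a mesoscopic window are then approximately rotated eigenvectors of $W$, rotated by an approximately Haar-distributed block. Summing over unlabelled eigenvalues in windows, their entries behave like independent Gaussians with variance $1/N$ at the level of tail probabilities; this is why the method works only with unlabelled eigenvalues. This yields the factorial moment asymptotics with uniform relative error, and the bound \eqref{eq:thm11} supplies the needed a priori control on windows. Summing over windows of the spectrum reproduces the count $N^2$.

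I expect the main obstacle to be the second step. The comparison must hold with relative precision for events of probability $N^{-2k}$ while summing over $N^{2k}$ configurations, and the resolvent approximation of $|\mbf{u}_n(i)|^2$ must be sharp enough not to shift the threshold $t_x$ by $o(1)$. I would first try a regularised count $\sum_n f(N|\mbf{u}_n(i)|^2)$ written via $\eta\,\Im G_{ii}$ at $\eta=N^{-1-\epsilon}$, using level repulsion to discard near-degenerate eigenvalue pairs.
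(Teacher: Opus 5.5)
Your outline matches the paper's: Bonferroni on exceedance counts, Erd\H{o}s--Xu small-probability comparison, partial diagonalisation of a Gaussian-divisible matrix with unlabelled eigenvalues, and four-moment matching to reach the GOE and Jiang's theorem in the real case. The problem is the step you yourself flag as the obstacle: it has no mechanism that works.

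\textbf{The near-degenerate event.} Replacing $|\mathbf{u}_n(i)|^2$ by a resolvent quantity such as $\eta\,\Im G_{ii}$ or $\hat v^2_{n,i}$ is accurate only when $\lambda_n$ is isolated on scale $N^{-\delta}$ times the local spacing. Level repulsion bounds the complement only additively, by $N^{-\delta}$. Your $k$-th factorial moment is a sum of $N^{2k}$ terms of size $N^{-2k}$, so an additive $N^{-\delta}$ per eigenvector is useless. To discard near-degenerate configurations you must show they cost a relative factor $N^{-c\delta}$ \emph{jointly} with the rare eigenvector event. That is an approximate independence of eigenvalue gaps and extreme eigenvector entries, which is not known for $W$.

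The same input is what closes the Erd\H{o}s--Xu iteration. The self-improving derivative bound only helps if you have an a priori bound on the regularised statistic, including its near-degenerate part, for the reference Gaussian-divisible matrix. The paper supplies exactly this in three steps:
\begin{enumerate}
\item It splits $P_x=A_x+B_x$ according to whether $\mathcal{N}(\hat I_{i_a})=1$.
\item It proves Proposition~\ref{prop:p-gauss-divisible}: Proposition~\ref{prop:gauss-divisible} factorises eigenvalues from Gaussian eigenvector entries, and homogenisation plus the GOE/GUE $k$-point functions give an extra factor $N^{-c\delta\sum_a k_a^2}$.
\item It transfers this bound to $W$ via Lemma~\ref{lem:momentmatching2} with $\mathbf{k}\neq 0$.
\end{enumerate}
Your treatment of diagonal terms has the same hole. \eqref{eq:thm13} is a single-eigenvector bound. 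It does not control two large entries in one eigenvector, or the same coordinate in two eigenvectors. The paper does not discard these; it handles every configuration $(J_1,\dots,J_p)$ with the same machinery.

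\textbf{The real/complex dichotomy.} You have placed it in the wrong step. The comparison is insensitive to $\beta$. The paper uses discrete Lindeberg against a $t$-matching Gaussian-divisible matrix in both symmetry classes. In your short OU flow, the third-order terms are small simply because $t$ is small. No phase cancellation is involved, and none is available, since complex Wigner entries have arbitrary third moments.

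The obstruction is in the Gaussian-divisible step. Decoupling eigenvectors from eigenvalues requires the Jacobian factor $F_\beta$, built from $\mathbb{E}\prod_j|\det(X^{(m)}+\sqrt{t}Y^{(m)}-\lambda_j)|^{\beta}$, to be approximately independent of the $\mathbf{v}_j$. For $\beta=1$ this is only proved when all $\lambda_j$ lie within $O(d_E)$ of each other. Theorem~\ref{thm3}, however, sums over eigenvector pairs anywhere in the spectrum. So your third step is precisely the statement that is unavailable in the real case. Four-moment matching is what lets the paper bypass it by comparing directly with the GOE.

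\textbf{The time scale.} $t\sim N^{-1+\epsilon}$ is too small for the Gaussian-divisible analysis the paper actually has. Its errors are $O(\log N/\sqrt{Nt^3})$, which forces $t=N^{-1/3+\rho}$; this is why the paper uses Lindeberg rather than an OU flow. Near the edges ($\hat n\ll N^{1-3\epsilon}$) the local spacing exceeds $N^{-1+\epsilon}$, so such a Gaussian component does not appreciably mix neighbouring eigenvectors. In any case, ``approximately Haar-rotated'' does not replace proving that $F_\beta$ and $K_\beta$ are independent of $\{\mathbf{v}_j\}$ to relative precision.
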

The general method applies to both real and complex matrices, but the obstacle to removing the moment-matching condition in Theorem \ref{thm3} for real matrices is our lack of good asymptotics for
\begin{align}
	\mbb{E}_{Y}\Bigl[\prod_{j}|\det(X+\sqrt{t}Y-\lambda_{j})|\Bigr]
\end{align}
when $|\lambda_{i}-\lambda_{j}|\gg N^{-2/3}\hat{i}^{-1/3}$. Here $X$ is a deterministic matrix and the expectation is with respect to $Y\sim GOE(N)$.

\subsection{Proof Outline}
The proof of each theorem splits into two parts: i) comparison of Wigner matrices by moment-matching; ii) computations for Gaussian-divisible matrices. For the comparison part, we start from the eigenvector regularisation used by Benigni--Lopatto:
\begin{align}
	\frac{1}{\pi}\int_{I_{\delta}(\hat{\lambda}_{n})}\mbf{q}^{*}\Im G(E+i\eta)\mbf{q}\diff E,
\end{align}
where $I_{\delta}(x)$ is an interval of size $N^{-1-\delta}$ centred at $x$ and $\hat{\lambda}_{n}$ is a regularisation of the eigenvalue $\lambda_{n}$. As shown by Benigni--Lopatto, this is a good approximation of $|\mbf{u}_{n}^{*}\mbf{q}|^{2}$ on the event in which $I_{\delta}(\hat{\lambda}_{n})$ contains only $\lambda_{n}$, which holds with probability $1-O(N^{-\delta})$. This observation is enough for the comparison step of Theorem \ref{thm2}, since this is a local result and so we can work on the intersection of $O(N^{o(1)})$ such events. However, Theorem \ref{thm3} is a global result and we cannot simultaneously exclude other eigenvalues from intervals around each eigenvalue. Moreover, the probability of the bad event is only $O(N^{-\delta})$ and hence not small enough to obtain Theorem \ref{thm1}.

Our solution is to use the inclusion-exclusion principle to reduce the problem to the comparison of
\begin{align}
	\mbb{P}\Bigl(N|u_{i,j}|^{2}>x,\,i\in I,j\in J\Bigr),
\end{align}
for arbitrary fixed subsets $I,J\subset [N]$. Since $|I|$ is fixed, this probability only involves a fixed number of eigenvectors and we can reasonably hope for an approximation of the form
\begin{align}
	\mbb{P}\Bigl(N|u_{i,j}|^{2}>x,\,i\in I,j\in J\Bigr)&\simeq\mbb{P}\Bigl(\{N|u_{i,j}|^{2}>x,\,i\in I,\,j\in J\}\cap\mc{G}(I)\Bigr),
\end{align}
where $\mc{G}(I)$ is the ``good" event that small intervals around $\lambda_{i}$ exclude other eigenvalues, simultaneously for each $i\in I$. If the eigenvalues and eigenvectors were independent (as they are in the GOE/GUE), such an approximation would be trivial. In the absence of this independence, we first compare 
\begin{align}
	\mbb{P}\Bigl(\{N|u_{i,j}|^{2}>x,\,i\in I,\,j\in J\}\cap\mc{G}^{c}(I)\Bigr)
\end{align}
with the Gaussian-divisible case, for which we can show an approximate independence between eigenvalues and eigenvectors. In order to carry out these steps, we must compare probabilities of size $N^{-k}$ for any finite $k$, which we do using the iterative scheme of Erd\H{o}s--Xu \cite{erdos_small_2023}.

For the Gaussian-divisible part, we use the method of partial diagonalisation, which is the Hermitian analogue of the partial Schur decomposition (see \cite{maltsev_bulk_2024}). This allows us to derive explicit integral formulas for expectations of functions of unlabelled eigenvalues and eigenvectors. In our case we need to consider indicator functions
\begin{align*}
	\mbf{1}_{(x,\infty)}(N\|\mbf{u}_{i}\|_{\infty}^{2}-\ell^{(\beta)}_{N}(D)).
\end{align*}
Since we are dealing with low probability events, we can only afford additive errors of $O(N^{-D})$ for some large $D$, and thus require a suitable integrable representation that allows for multiplicative errors. This is provided by the distributional identity
\begin{align*}
	1_{(x,\infty)}(y)&=\frac{1}{2\pi i}\int_{-\infty}^{\infty}\frac{e^{ik(y-x)}}{k-i0}\diff k.
\end{align*}
By approximating $\mbb{E}[e^{iNk|u_{i,j}|^{2}}]$, we show that we can replace entries of $\mbf{u}_{i}$ with independent Gaussian random variables that are also independent of the eigenvalues in such expectation values. 

\section{Preliminaries}
In this section we collect the relevant previous results. We start with the familiar local law. Recall that $\rho_{sc}(x)$ denotes the density of the semicircle law and let 
\begin{align}
    m_{sc}(z)&=\frac{\sqrt{z^{2}-4}-z}{2}
\end{align}
denote its Stieltjes transform.
\begin{lemma}[\cite{erdos_local_2009,erdos_local_2013}]\label{lem:locallaw}
Let $D>0$ and $\epsilon,\xi>0$. With probability at least $1-N^{-D}$ we have
\begin{align}
    |\Tr{G(z)}-m_{sc}(z)|&\leq\frac{N^{\xi}}{N|\Im z|},\\
    \max_{i,j\in[N]}|G_{ij}(z)-m_{sc}(z)\delta_{ij}|&\prec\frac{N^{\xi}}{\sqrt{N|\Im z|}},
\end{align}
uniformly in $|\Im z|>N^{-1}$, and
\begin{align}
    |\Tr{G(z)}-m_{sc}(z)|&\leq\frac{N^{\xi}}{N(|\Re z|-2+|\Im z|)},\\
    \max_{i,j\in[N]}|G_{ij}(z)-m_{sc}(z)\delta_{ij}|&\leq\frac{N^{\xi}}{N^{1/2}(|\Re z|-2+|\Im z|)^{1/4}},
\end{align}
uniformly in $|\Re z|-2>N^{-2/3+\epsilon}$.
\end{lemma}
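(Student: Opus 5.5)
This is the standard local semicircle law, so I would follow the by-now classical route of \cite{erdos_local_2009,erdos_local_2013}: derive an approximate self-consistent equation for the diagonal resolvent entries, use the stability of the quadratic equation $m^{2}+zm+1=0$ satisfied by $m_{sc}$, and bootstrap from large $\Im z$ down to $|\Im z|\approx N^{-1}$. The averaged bound for $\Tr{G}$ then follows from an additional fluctuation averaging step.

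\textbf{Step 1 (entrywise self-consistent equation).} For fixed $z$, I would use the Schur complement formula
\begin{align*}
\frac{1}{G_{ii}}=W_{ii}-z-\sum_{k,l\neq i}W_{ik}G^{(i)}_{kl}W_{li},
\end{align*}
where $G^{(i)}$ is the resolvent of the minor with row and column $i$ removed. The moment assumptions allow large deviation estimates for quadratic forms in the independent vector $(W_{ik})_{k}$. These give
\begin{align*}
\sum_{k,l}W_{ik}G^{(i)}_{kl}W_{li}=\frac{1}{N}\tr G^{(i)}+O\Bigl(N^{\xi}\bigl(N^{-2}\textstyle\sum_{k,l}|G^{(i)}_{kl}|^{2}\bigr)^{1/2}\Bigr),
\end{align*}
and the Ward identity gives $\sum_{l}|G_{kl}|^{2}=\Im G_{kk}/\Im z$. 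Interlacing bounds $|\tr G-\tr G^{(i)}|\leq C/\Im z$. Combining these, one gets $1/G_{ii}=-z-\Tr{G}+O(N^{\xi}\Psi)$ with $\Psi=\sqrt{\Im m/(N\Im z)}+1/(N\Im z)$. Off-diagonal entries are handled by the analogous two-row identity $G_{ij}=-G_{ii}G^{(i)}_{jj}(W_{ij}-\sum W_{ik}G^{(ij)}_{kl}W_{lj})$.

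\textbf{Step 2 (stability and continuity).} Averaging over $i$ shows that $\Tr{G}$ solves $m^{2}+zm+1=O(N^{\xi}\Psi)$. The stability of this equation away from, and near, the edges gives $|\Tr{G}-m_{sc}|\lesssim N^{\xi}\Psi/\sqrt{\kappa+\eta}$, provided one already knows a weak a priori bound that selects the correct root. I would obtain that a priori bound by a continuity argument: the bound holds trivially for $\Im z\geq N^{10}$. Since $G$ is $N^{2}$-Lipschitz in $z$, it suffices to check a grid of spacing $N^{-3}$. Each grid step decreases $\eta$ slightly, and a union bound over the $N^{O(1)}$ grid points costs only a factor $N^{O(1)}$ against probabilities $N^{-D'}$ with $D'$ arbitrary. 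The same argument gives the entrywise bound $N^{\xi}/\sqrt{N\eta}$.

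\textbf{Step 3 (averaged law and outside the spectrum).} The improvement from $(N\eta)^{-1/2}$ to $(N\eta)^{-1}$ for $\Tr{G}$ needs fluctuation averaging. One shows that $\frac1N\sum_i Q_i(1/G_{ii})$, where $Q_i=1-\mathbb{E}_i$, is smaller by a factor $\Psi$ than a single term. I would prove this by the high-moment expansion of \cite{erdos_local_2013}, expanding in minors and tracking that each index appearing only once contributes a vanishing expectation. For $|\Re z|-2>N^{-2/3+\epsilon}$, the stability factor behaves like $\sqrt{\kappa+\eta}$ with $\kappa=|\Re z|-2$, and $\Im m_{sc}\sim\eta/\sqrt{\kappa+\eta}$ is small. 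Feeding these into the same self-consistent equation yields the bounds with $\kappa+\eta$ in place of $N\eta$. Rigidity of the extreme eigenvalues keeps the spectrum inside $[-2-N^{-2/3+\epsilon/2},2+N^{-2/3+\epsilon/2}]$, which justifies uniformity there.

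I expect the main obstacle to be the fluctuation averaging step. It is combinatorially the heaviest part, and it is exactly what is needed to reach the optimal $1/(N\eta)$ rate, both in the bulk and outside the spectrum. Everything else is a routine combination of Schur complements, large deviations and continuity.
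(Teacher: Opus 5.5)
Your outline is the standard Schur-complement, large-deviation, stability/continuity and fluctuation-averaging proof of \cite{erdos_local_2009,erdos_local_2013}, and the paper gives no proof of its own beyond citing these works, so your route is essentially the one it relies on. Two small points need care. Near the edge, the entrywise bound $N^{\xi}/\sqrt{N|\Im z|}$ only follows after the fluctuation-averaging step, because before it stability controls $\Tr{G}-m_{sc}$ too weakly. And for uniformity as $\Im z\to0$ outside the spectrum, the $1/(N\Im z)$ term in your control parameter must be replaced by $1/(N\,\textup{dist}(z,\sigma(W)))$; this is exactly where the extreme-eigenvalue rigidity you mention, itself deduced from the $\Im z>0$ bounds, enters.
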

Here and in the rest of the paper $\Tr{A}:=N^{-1}\tr A$.

Now we come to some definitions, following Benigni--Lopatto \cite{benigni_optimal_2022}. For an index $i\in[N]$ we define 
\begin{align}
	\hat{i}&:=\textup{min}(i,N+1-i).
\end{align}
For $w\in\mbb{C}$, let 
\begin{align}
	\theta^{(a,b)}_{w}(W)&:=W+(w-1)W_{a,b}\mbf{e}_{a}\mbf{e}_{b}^{*},
\end{align}
be the matrix whose $(a,b)$ and $(b,a)$-entries are replaced by $wW_{a,b}$ and $wW_{b,a}$ respectively. The following result from \cite{benigni_optimal_2022} gives an approximation of the eigenvalues in terms of resolvents.
\begin{lemma}[Proposition 4.1 in \cite{benigni_optimal_2022}]\label{lem:eigapprox}
Fix $\delta,\epsilon,D>0$. For each $i\in[N]$ there exists a function $\hat{\lambda}_{i}\equiv\hat{\lambda}_{i,\delta,\epsilon}:\textup{Mat}_{N}\to\mbb{R}$ satisfying the following:
\begin{enumerate}[i)]
\item with probability at least $1-N^{-D}$,
\begin{align}
	|\hat{\lambda}_{i}(W)-\lambda_{i}(W)|&\leq\frac{N^{\epsilon-\delta}}{N^{2/3}\hat{i}^{1/3}};
\end{align}
\item with probability at least $1-N^{-D}$,
\begin{align}
	\sup_{0\leq w\leq1}|\partial^{j}_{a,b}\hat{\lambda}_{i}(\Theta_{w}^{(c,d)}(W))|\leq\frac{N^{j(\epsilon+\delta)}}{N^{2/3}\hat{i}^{1/3}};
\end{align}
\item there is a $C>0$ such that
\begin{align}
	\sup_{0\leq w\leq 1}|\partial^{j}_{a,b}\hat{\lambda}_{i}(\Theta^{(c,d)}_{w}(W))|&\leq CN^{Cj}.
\end{align}
\end{enumerate}
\end{lemma}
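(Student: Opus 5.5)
The plan is to build $\hat{\lambda}_{i}$ as a smoothed version of the identity
\begin{align}
	\lambda_{i}&=E_{-}+\int_{E_{-}}^{E_{+}}\mbf{1}\bigl(\mc{N}(E)<i\bigr)\diff E,\qquad \mc{N}(E):=\#\{j:\lambda_{j}\leq E\},
\end{align}
valid whenever $\lambda_{i}\in[E_{-},E_{+}]$. Let $\Delta_{i}:=N^{-2/3}\hat{i}^{-1/3}$, let $\gamma_{i}$ be the $i$-th classical location, and set $E_{\pm}=\gamma_{i}\pm N^{\epsilon/2}\Delta_{i}$. By rigidity, which follows from Lemma \ref{lem:locallaw}, $\lambda_{i}\in[E_{-},E_{+}]$ with probability $1-N^{-D}$. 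Put $\eta:=N^{-\delta-\epsilon/2}\Delta_{i}$ (comparable to $N^{-\delta}$ times the local spacing) and replace $\mc{N}(E)$ by
\begin{align}
	\mc{N}_{\eta}(E):=\tr\bigl(\mbf{1}_{(-\infty,E]}*\theta_{\eta}\bigr)(W)=\frac{1}{\pi}\int_{-\infty}^{E}\Im\tr G(x+i\eta)\diff x,
\end{align}
where $\theta_{\eta}$ is the Poisson kernel (one could also use a Helffer--Sjöstrand smoothing if a faster-decaying kernel helps). Replace the indicator by a smooth monotone $q$ with $q(x)=1$ for $x\leq i-2/3$ and $q(x)=0$ for $x\geq i-1/3$. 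This gives the definition
\begin{align}
	\hat{\lambda}_{i}(W):=E_{-}+\int_{E_{-}}^{E_{+}}q\bigl(\mc{N}_{\eta}(E)\bigr)\diff E.
\end{align}

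For i), I will show that with probability $1-N^{-D}$ one has $|\mc{N}_{\eta}(E)-\mc{N}(E)|<1/3$ for all $E\in[E_{-},E_{+}]$ at distance more than $N^{\epsilon/2}\eta$ from every eigenvalue. The contribution of eigenvalues at distance $r\gg\eta$ from $E$ is $O(\eta/r)$. Summing this over the spectrum needs the local law on scales between $\eta$ and $1$, plus a dyadic decomposition. The delicate point is that $\eta$ is below the spacing scale. Eigenvalue counts in windows of size $\geq N^{\xi}\Delta_{i}$ are controlled by rigidity, and the sum $\sum_{j}\eta/(|E-\lambda_{j}|)$ over far eigenvalues is then $O(N^{\epsilon/2}\eta/\Delta_{i})\ll 1$. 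Hence $q(\mc{N}_{\eta}(E))=\mbf{1}(\mc{N}(E)<i)$ outside a set of $E$ of measure $O(N^{\epsilon/2}\eta)$ around $\lambda_{i-1},\lambda_{i},\lambda_{i+1}$. This gives $|\hat{\lambda}_{i}-\lambda_{i}|\lesssim N^{\epsilon}\eta\leq N^{\epsilon-\delta}\Delta_{i}$ after adjusting constants. I expect this counting-error estimate at sub-spacing scale to be the main obstacle. I would handle it by splitting the sum over eigenvalues into near ones, of which there are $N^{o(1)}$ by rigidity, and far ones, treated via the local law at scale $N^{\xi}\Delta_{i}$.

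For ii), differentiate under the integral. Each $\partial_{a,b}$ falling on $\mc{N}_{\eta}(E)$ produces $-\frac{1}{\pi}\Im\int^{E}(G^{2})_{ba}(x+i\eta)\diff x=-\frac{1}{\pi}\Im G_{ba}(E+i\eta)$ up to boundary terms. Higher derivatives produce products of resolvent entries $G_{\cdot\cdot}(E+i\eta)$ and their $x$-integrals. By the entrywise local law (applied at scale $\eta$, with the usual monotonicity bound $|G_{ab}(x+i\eta)|\leq (\eta'/\eta)\max|G(x+i\eta')|$ to pass from scale $N^{\xi}/N$ down to $\eta$), each such factor is $O(N^{\delta+\epsilon})$. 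Since the $E$-integration runs over a window of length $O(N^{\epsilon}\Delta_{i})$ and $q^{(k)}$ is bounded, the $j$-th derivative is $\leq N^{j(\epsilon+\delta)}\Delta_{i}$. These bounds must hold for $\Theta^{(c,d)}_{w}(W)$ uniformly in $w\in[0,1]$. For fixed $w$ the modified matrix satisfies the same local law, via a resolvent expansion in the single changed entry, which has size $N^{-1/2+o(1)}$. Uniformity in $w$ then follows from a union bound over an $N^{-C}$-net together with Lipschitz continuity of the resolvent.

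Part iii) is deterministic. Since $\eta\geq N^{-C}$, we have $\|G(x+i\eta)\|\leq\eta^{-1}\leq N^{C}$, every entry of every derivative is polynomially bounded, and the integration window has length $O(1)$. This gives $|\partial^{j}_{a,b}\hat{\lambda}_{i}|\leq CN^{Cj}$ for all $W$, with no probabilistic input.
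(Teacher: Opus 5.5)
The paper does not prove this lemma; it quotes it from Benigni--Lopatto. So the comparison below is with the standard construction behind it. Your overall design is the right one: integrate a smoothed $\mbf{1}(\mc{N}(E)<i)$ over a deterministic window around $\gamma_i$, get (ii) from resolvent entries at $E+i\eta$, and get (iii) from $\|G\|\leq\eta^{-1}$. Parts (ii) and (iii) go through essentially as you describe.

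\textbf{The gap is in (i), in your bound on the far-eigenvalue error.} You smooth the \emph{half-line} indicator $\mbf{1}_{(-\infty,E]}$ with the Poisson kernel. Every eigenvalue at distance $r\gg\eta$ from $E$ then contributes an error $\frac{1}{\pi}\arctan(\eta/r)\asymp\eta/r$. All eigenvalues on one side of $E$ contribute with the same sign, so nothing cancels.

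Your estimate $O(N^{\epsilon/2}\eta/\Delta_i)$ tacitly assumes the spacing is at least $\Delta_i$ throughout the spectrum. That holds in the bulk, where the sum is $O((\eta/\Delta_i)\log N)$. Near an edge, however, the spacing shrinks to $N^{-1}$ as one moves inward. Take $i\leq N/2$ and $E$ in your window. The $\asymp N$ eigenvalues in $[1,2]$ lie above $E$ at distance at most $5$, so each contributes at least $\frac{1}{\pi}\arctan(\eta/5)\gtrsim\eta$. The eigenvalues below $E$ contribute only $o(1)$ in total, by your own near/far argument on the edge side. Hence
\[
\mc{N}_{\eta}(E)-\mc{N}(E)\geq c\,\eta N-o(1),\qquad \eta N=N^{1/3-\delta-\epsilon/2}\,\hat{i}^{-1/3},
\]
which is $\gg1$ whenever $\hat{i}\ll N^{1-3\delta-3\epsilon/2}$.

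For example, take $i=1$ with $\delta+\epsilon/2<1/3$. Every term of $\mc{N}_{\eta}(E)$ is positive, so $\mc{N}_{\eta}(E)\gtrsim\eta N\gg1$ on the whole window. Then $q(\mc{N}_{\eta}(E))=0$ throughout, so $\hat{\lambda}_{1}=E_{-}$. This misses $\lambda_{1}$ by order $N^{\epsilon/2}\Delta_{1}$, which is far more than $N^{\epsilon-\delta}\Delta_{1}$ in the only nontrivial regime $\delta>\epsilon/2$ (the one the paper uses). So (i) fails for a polynomially large range of indices at both edges.

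\textbf{The fix.} The standard remedy is the edge-anchored counting of Erd\H{o}s--Yau--Yin and Knowles--Yin. For $i\leq N/2$, replace $\mc{N}_{\eta}(E)$ by
\[
\tr(\mbf{1}_{[E_{L},E]}*\theta_{\eta})(W)=\frac{1}{\pi}\int_{E_{L}}^{E}\Im\tr G(x+i\eta)\diff x,\qquad E_{L}=-2-2N^{-2/3+\epsilon},
\]
and use $[E,E_{U}]$ symmetrically for $i>N/2$. Edge rigidity leaves no eigenvalue below $E_{L}$. An eigenvalue at distance $r\gg E-E_{L}$ above $E$ now contributes only $\asymp\eta(E-E_{L})/r^{2}$. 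Splitting the sum dyadically gives a total error $O(N^{\epsilon/2}(\eta/\Delta_{i})\log N)$ uniformly in $i$, and after that your argument for (i) goes through unchanged. In (ii) this only adds the analogous resolvent term at $E_{L}+i\eta$. That term is $O(1)$ with overwhelming probability by the local law outside the spectrum, and (iii) is unaffected.

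Alternatively, use the Helffer--Sj\"ostrand option you mention in passing: a smooth cutoff equal to $1$ on $(-\infty,E-\eta]$ and $0$ on $[E+\eta,\infty)$ removes the tails altogether. But then it is a necessary change, not an optional one. The derivative bounds in (ii) would then have to be redone through $f_{E}'(W)$ rather than a single resolvent entry.
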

For the remainder of this paper we fix $\delta>\xi>0$ and set $\hat{\lambda}_{i}\equiv\hat{\lambda}_{i,\delta+3\xi,\xi}$, so that
\begin{align}
	|\hat{\lambda}_{i}-\lambda_{i}|&\leq\frac{N^{-\delta-2\xi}}{N^{2/3}\hat{i}^{1/3}},
\end{align}
with probability at least $1-N^{-D}$. 

Let
\begin{align}
	I_{i}&:=\Biggl[\lambda_{i}-\frac{N^{-\delta}}{N^{2/3}\hat{i}^{1/3}},\lambda_{i}+\frac{N^{-\delta}}{N^{2/3}\hat{i}^{1/3}}\Biggr].\label{eq:I_i}
\end{align}
Let $\hat{I}_{i}$ be defined similarly, with $\hat{\lambda}_{i}$ in place of $\lambda_{i}$. Note that by Lemma \ref{lem:eigapprox}, $\lambda_{i}\in\hat{I}_{i}$ with probability at least $1-N^{-D}$. We define two notions of approximate eigenvector:
\begin{align}
	\hat{w}^{2}_{i}(\mbf{q})&:=\frac{1}{\pi}\int_{I_{i}}\mbf{q}^{*}\Im G(E+i\eta_{i})\mbf{q}\diff E,
\end{align}
and
\begin{align}
	\hat{v}^{2}_{i}(\mbf{q})&:=\frac{1}{\pi}\int_{\hat{I}_{i}}\mbf{q}^{*}\Im G(E+i\eta_{i})\mbf{q}\diff E,
\end{align}
where 
\begin{align}
	\eta_{i}&=\frac{N^{-\delta-\xi}}{N^{2/3}\hat{i}^{1/3}}.\label{eq:eta_i}
\end{align}
In other words, for $\hat{w}^{2}_{i}$ we centre around the true eigenvalue, while for $\hat{v}^{2}_{i}$ we centre around the approximate eigenvalue. We use the shorthand
\begin{align}
	\hat{w}^{2}_{i,j}&:=\hat{w}^{2}_{i}(\mbf{e}_{j}),\quad\hat{v}^{2}_{i,j}:=\hat{v}^{2}_{i}(\mbf{e}_{j}),
\end{align}
where $\mbf{e}_{j},\,j\in[N]$ are the coordinate vectors. We record some properties of these approximations (see \cite[Lemmas 4.10-4.12]{benigni_optimal_2022}).
\begin{lemma}\label{lem:eigapprox2}
For any $D>0$ and unit vector $\mbf{q}$, the following statements hold with probability at least $1-N^{-D}$:
\begin{enumerate}[i)]
\item
\begin{align}
	\hat{w}^{2}_{i}(\mbf{q})&=\hat{v}^{2}_{i}(\mbf{q})+O\Bigl(\frac{1}{N^{1+\xi}}\Bigr);
\end{align}
\item
\begin{align}
	\hat{v}^{2}_{i}(\mbf{q})&\leq\sum_{k:\lambda_{k}\in \hat{I}_{i}}|\mbf{u}_{i}^{*}\mbf{q}|^{2}+O\Bigl(\frac{1}{N^{1+\xi}}\Bigr);
\end{align}
\item
\begin{align}
	|\mbf{u}^{*}_{i}\mbf{q}|^{2}&\leq\hat{v}^{2}_{i}(\mbf{q})+O\Bigl(\frac{1}{N^{1+\xi}}\Bigr).
\end{align}
\end{enumerate}
\end{lemma}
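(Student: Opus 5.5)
The plan is to reduce all three statements to the spectral decomposition of the resolvent and then control the resulting weighted sums with rigidity and the (non-sharp) delocalisation bound $\max_k|\mbf{u}_k^*\mbf{q}|^2\leq N^{-1+\epsilon}$. Both of these hold with probability at least $1-N^{-D}$ for any $\epsilon>0$; they follow from Lemma \ref{lem:locallaw}, and I would take them as given. Since $\xi$ is fixed, I am free to choose $\epsilon\ll\xi$.

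\textbf{Spectral formula.} Write $s_i:=(N^{2/3}\hat{i}^{1/3})^{-1}$, so that $\eta_i=N^{-\delta-\xi}s_i$ and the intervals $I_i,\hat{I}_i$ have half-length $L_i=N^{-\delta}s_i=N^{\xi}\eta_i$. From
\begin{align}
	\mbf{q}^*\Im G(E+i\eta)\mbf{q}&=\sum_{k}\frac{\eta\,|\mbf{u}_k^*\mbf{q}|^2}{(\lambda_k-E)^2+\eta^2},
\end{align}
integration over an interval $J=[c-L,c+L]$ gives
\begin{align}
	\frac{1}{\pi}\int_J\mbf{q}^*\Im G(E+i\eta)\mbf{q}\diff E=\sum_k f_k|\mbf{u}_k^*\mbf{q}|^2,
\end{align}
where $f_k=\frac{1}{\pi}\bigl[\arctan\frac{c+L-\lambda_k}{\eta}-\arctan\frac{c-L-\lambda_k}{\eta}\bigr]\in[0,1]$. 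The weights have two regimes:
\begin{itemize}
\item if $\lambda_k$ is at distance $r$ inside $J$, then $1-f_k\lesssim\eta/r$;
\item if $\lambda_k$ is at distance $r$ outside $J$, then $f_k\lesssim L\eta/r^2$ for $r\geq L$, and $f_k\lesssim\eta/r$ for $\eta\leq r\leq L$.
\end{itemize}

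\textbf{Part iii).} On the event of Lemma \ref{lem:eigapprox} (with our choice of parameters), $|\hat{\lambda}_i-\lambda_i|\leq N^{-2\xi}L_i$. Hence $\lambda_i$ lies at distance at least $L_i/2$ inside $\hat{I}_i$, and so $f_i\geq1-O(\eta_i/L_i)=1-O(N^{-\xi})$. All other terms are nonnegative, so
\begin{align}
	\hat{v}_i^2(\mbf{q})\geq|\mbf{u}_i^*\mbf{q}|^2-O(N^{-\xi})N^{-1+\epsilon}.
\end{align}
Choosing $\epsilon$ small and renaming $\xi$ slightly gives the claim.

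\textbf{Part i).} The integrands defining $\hat{w}_i^2$ and $\hat{v}_i^2$ are identical, so the difference is an integral over the symmetric difference $I_i\triangle\hat{I}_i$. This set has measure at most $4N^{-2\xi}L_i=4N^{-\xi}\eta_i$. I would split the sum over $k$ in two.
\begin{itemize}
\item For the $O(N^{\epsilon})$ eigenvalues with $|\lambda_k-\lambda_i|\leq N^{\epsilon}s_i$ (a count supplied by rigidity), bound the integrand pointwise by $|\mbf{u}_k^*\mbf{q}|^2/\eta_i$. Their total contribution is then $O(N^{\epsilon}\cdot N^{-1+\epsilon}\cdot N^{-\xi})$.
\item The remaining eigenvalues are at distance $r_k\gtrsim N^{\epsilon}s_i\gg L_i$ from the region, with $r_k$ growing according to the semicircle law by rigidity. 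Each contributes at most $N^{-1+\epsilon}\,\eta_i\,N^{-\xi}\eta_i/r_k^2$, and summing over $k$ gives a negligible total.
\end{itemize}
Together these give $O(N^{-1-\xi+3\epsilon})$, which is the claim.

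\textbf{Part ii) and the main obstacle.} Here the sum on the right should be read as $\sum_{k:\lambda_k\in\hat{I}_i}|\mbf{u}_k^*\mbf{q}|^2$. Since $f_k\leq1$, the task is to show that the eigenvalues outside $\hat{I}_i$ carry total weight $O(N^{-1-\xi})$.
\begin{itemize}
\item Eigenvalues at distance $r\geq N^{\xi/2}\eta_i$ outside the interval are handled as in part i). Each has $f_k\lesssim\max(\eta_i/r,L_i\eta_i/r^2)$, which is at most $N^{-\xi/2}$, and summing with rigidity and delocalisation gives $O(N^{-1-\xi/2+2\epsilon})$.
\item The genuine difficulty is an eigenvalue lying within $N^{\xi/2}\eta_i$ of an endpoint of $\hat{I}_i$. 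Its weight is of order $1/2$, and rigidity alone cannot exclude such an eigenvalue with probability $1-N^{-D}$.
\end{itemize}
My first attempt would be to follow \cite{benigni_optimal_2022} and replace $\hat{I}_i$ in the sum on the right by its enlargement by $N^{\xi/2}\eta_i$; this is harmless for the later inclusion--exclusion arguments, since only the localisation scale $L_i$ matters there. Alternatively, I would show that the endpoint event has small enough probability to be absorbed into the bad event $\mc{G}^c(I)$ treated later. In either version the exponent of the error term must be readjusted ($\xi\to\xi/3$), which is consistent with how $\hat{\lambda}_i$ was parametrised with $\delta+3\xi$.
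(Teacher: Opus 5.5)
The paper gives no proof of its own here. It only cites \cite[Lemmas 4.10--4.12]{benigni_optimal_2022}, and your spectral-decomposition argument (arctan weights plus rigidity and delocalisation) is the standard way to prove such statements.

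Your treatment of i) and iii) is correct, with three small caveats.
\begin{enumerate}[a)]
\item For a general unit vector $\mbf{q}$, the bound $\max_k|\mbf{u}_k^*\mbf{q}|^2\leq N^{-1+\epsilon}$ needs the isotropic local law, not the entrywise Lemma~\ref{lem:locallaw}. This is immaterial, since only $\mbf{q}=\mbf{e}_j$ is used later.
\item You obtain $O(N^{-1-\xi+C\epsilon})$ rather than $O(N^{-1-\xi})$. So ``renaming $\xi$'' really means re-choosing $\eta_i$ or the parameters of $\hat{\lambda}_i$, which is harmless downstream.
\item In ii), the weight of an outside eigenvalue at distance $r$ must be bounded by $\min(\eta_i/r,\,L_i\eta_i/r^2)$, not the maximum. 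The maximum is of order one at $r=N^{\xi/2}\eta_i$.
\end{enumerate}

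Your diagnosis of ii) is correct, and the problem lies in the statement rather than in your argument. As written, ii) uses $\hat{I}_i$ both as integration domain and as summation range, with $\eta_i=N^{-\xi}L_i$. In that form it cannot hold with probability $1-N^{-D}$ for all $D$. Consider the event $\lambda_{i+1}-\lambda_i\in[L_i+2N^{-\xi}\eta_i,\,L_i+\eta_i/2]$:
\begin{itemize}
\item Since $|\hat{\lambda}_i-\lambda_i|\leq N^{-\xi}\eta_i$, we have $\lambda_{i+1}\notin\hat{I}_i$.
\item Yet $f_{i+1}\geq\frac{1}{4}-O(N^{-\xi})$, so $\hat{v}_i^2(\mbf{q})$ exceeds $\sum_{k:\lambda_k\in\hat{I}_i}|\mbf{u}_k^*\mbf{q}|^2$ by about $\frac14|\mbf{u}_{i+1}^*\mbf{q}|^2$, which is typically of order $N^{-1}$.
\item For the GOE, whose eigenvectors are independent of its eigenvalues, this event has probability of order $N^{-(\beta+1)\delta-\xi}$ in the bulk, not $O(N^{-D})$.
\end{itemize}

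Your enlarged version is the right replacement: sum over $\lambda_k$ in the $N^{\xi/2}\eta_i$-neighbourhood of $\hat{I}_i$, with error $O(N^{-1-\xi/2+C\epsilon})$. It suffices for the two places where ii) is used:
\begin{itemize}
\item the upper bound on $N\hat{v}^2_{i,j}$ in the proof of Proposition~\ref{prop:p-gauss-divisible};
\item the inequality $\hat{A}_{x+N^{-\epsilon}}\leq A_x$ in the proof of Theorem~\ref{thm3}.
\end{itemize}
This requires taking the eigenvalue-counting events there on the enlarged intervals. Any extra eigenvalues this admits only produce further level-repulsion factors.
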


We define $\mc{E}\equiv\mc{E}(\delta,\xi,D)$ to be the event that the statements in Lemmas \ref{lem:eigapprox} and \ref{lem:eigapprox2} hold.

\section{Moment Matching}
In this section we carry out the moment-matching arguments to compare a general Wigner matrix with a Gaussian-divisible one, where Gaussian-divisible means a matrix of the form $(1+t)^{-1/2}(X+\sqrt{t}Y)$ with $Y\sim GOE(N)/GUE(N)$. Throughout this section, we set $t=N^{-1/3+\rho}$ with $\rho>\delta$ and choose $\wt{W}$ to be a Gaussian-divisible matrix that $t$-matches with $W$, by which we mean that
\begin{align}
	\mbb{E}\Re W_{ij}^{p}\Im W_{ij}^{q}&=\mbb{E}\Re\wt{W}_{ij}^{p}\Im\wt{W}_{ij}^{q},\qquad p+q\leq 3,\\
	\mbb{E}\Re W_{ij}^{4-p}\Im W_{ij}^{p}&=\mbb{E}\Re\wt{W}_{ij}^{4-p}\Im\wt{W}_{ij}^{q}+O\Bigl(\frac{t}{N^{2}}\Bigr),\qquad p\leq 4.
\end{align}
For $E\in[-2,2]$, $m\in\mbb{N}$ and $f\in C^{\infty}_{c}(\mbb{R}^{2m})$, define
\begin{align}
	\mc{L}_{m,f,E}(W)&:=\sum_{i_{1},...,i_{m}}f\Bigl(\frac{\lambda_{i_{1}}-E}{d_{E}},...,\frac{\lambda_{i_{m}}-E}{d_{E}},N\|\mbf{u}_{i_{1}}\|_{\infty}^{2}-\ell^{(\beta)}_{N}(0),...,N\|\mbf{u}_{i_{m}}\|_{\infty}^{2}-\ell^{(\beta)}_{N}(0)\Bigr),
\end{align}
where the sum is over distinct indices in $[N]$ and we recall that $d_{E}$ is the typical spacing at $E$. The following lemma is the comparison step for Theorem \ref{thm1}. Since its proof is standard we only give a sketch (see e.g. Benigni--Lopatto \cite[Proof of Theorem 1.3]{benigni_optimal_2022} for a similar result).
\begin{lemma}\label{lem:momentmatching1}
Let $E\in[-2,2]$ and $m\in\mbb{N}$. Let $f\in C^{\infty}_{c}(\mbb{R}^{2m})$. Then there is a $\tau>0$ such that
\begin{align}
	\Bigl|\mbb{E}\bigl[\mc{L}_{m,f,E}(W)-\mc{L}_{m,f,E}(\wt{W})\bigr]\Bigr|&\leq N^{-\tau}
\end{align}
\end{lemma}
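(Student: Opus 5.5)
We follow the standard Green's function comparison scheme of Benigni--Lopatto, with a Lindeberg replacement of the entries. The first step is to rewrite $\mc{L}_{m,f,E}(W)$ as a smooth function of resolvent entries, up to an error that is small in expectation. On the event $\mc{E}$, and on the further event that each $\hat{I}_{i_k}$ contains only $\lambda_{i_k}$, Lemma \ref{lem:eigapprox2} gives
\begin{align}
|\mbf{u}_{i}^{*}\mbf{e}_{j}|^{2}=\hat{v}^{2}_{i,j}+O(N^{-1-\xi}).
\end{align}
We also replace $\lambda_{i}$ by $\hat{\lambda}_{i}$ from Lemma \ref{lem:eigapprox}. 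Since $f$ is compactly supported and Lipschitz, and $N\cdot N^{-1-\xi}\to0$, these substitutions change $f$ by $O(N^{-\xi})$ per tuple. The maximum $\max_j$ is not smooth, so we replace the condition on $N\max_j\hat{v}^2_{i,j}$ by a smooth function of the product $\prod_j \chi(N\hat{v}^{2}_{i,j}-\ell)$. Here $\chi$ is a smoothed step function, and the smoothing of $f$ in the last $m$ variables is handled by monotone sandwiching. This gives a functional
\begin{align}
F(W)=\sum_{i_1,\dots,i_m}\tilde f\bigl((\hat{\lambda}_{i_k}-E)/d_E,\ (N\hat{v}^2_{i_k,j}-\ell)_{j}\bigr),
\end{align}
which is smooth in the entries of $W$.

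The sum over tuples can be restricted to indices with $|\lambda_{i_k}-E|\lesssim d_E$, since $f$ has compact support. By rigidity these are $O(N^{\epsilon})$ indices near a deterministic index $i_0(E)$. The bad event, that some $\hat I_{i_k}$ contains a second eigenvalue, has probability $O(N^{-\delta+\epsilon})$ by level repulsion/Wegner-type estimates. This is the estimate quoted in the outline: a window of size $N^{-\delta}$ times the spacing captures a neighbour with probability $O(N^{-\delta})$, and we would import it from Benigni--Lopatto. Because the number of tuples is at most $N^{m\epsilon}$, both $\mbb{E}|\mc{L}(W)-F(W)|$ and the corresponding quantity for $\wt W$ are bounded by $N^{-c}$ once $\epsilon\ll\delta,\xi$.

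The main step is to compare $\mbb{E}F(W)$ with $\mbb{E}F(\wt{W})$ by swapping one entry $(a,b)$ at a time, using $\theta^{(a,b)}_w$, and Taylor expanding to fifth order in $W_{ab}$. Derivatives of $\hat{v}^{2}_{i,j}$ with respect to $W_{ab}$ are integrals over $\hat I_i$ of products of resolvent entries at $\eta_i$. By the local law (Lemma \ref{lem:locallaw}), extended to $\eta_i$ below the $1/N$ scale by the usual monotonicity argument, each derivative costs a factor $N^{C(\delta+\xi)}$ times the natural size. Derivatives of $\hat\lambda_i$ are controlled by Lemma \ref{lem:eigapprox} ii). The exceptional set is handled by the crude polynomial bound iii) together with the $N^{-D}$ probability. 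The first three moments match exactly, so their terms cancel. The fourth-order terms contribute $N^2\cdot tN^{-2}\cdot N^{C(\delta+\xi)}=N^{-1/3+\rho+C(\delta+\xi)}$, and the fifth-order terms contribute $N^2\cdot N^{-5/2}\cdot N^{C(\delta+\xi)}$. Both are $N^{-\tau}$ provided $\rho$ and $\delta,\xi$ are small enough.

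The main obstacle is bounding the derivatives of the smoothed $\max_j$ over $N$ coordinates without losing a factor of $N$. The derivative of $\prod_j\chi(N\hat v^2_{i,j}-\ell)$ produces a sum over $j$. We control it by noting that $\chi'$ is supported only where $N\hat v^2_{i,j}\approx\ell$. On $\mc{E}$ this is a small set of coordinates, and its size in expectation is $O(N^{\epsilon})$ by a first-moment (Gaussian-tail) bound. We also use the fact that for $j\notin\{a,b\}$ the derivative of $\hat v^2_{i,j}$ is only $N^{-1/2}$ times its natural size, by the off-diagonal local law. If this accounting turns out to be too lossy, we would first try restricting to a coarser $\ell^p$-norm approximation of the maximum with $p\sim\log N$ before differentiating.
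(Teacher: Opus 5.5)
Your skeleton matches the paper's sketch. You restrict to $O(N^{\epsilon})$ tuples near $E$ and use the Benigni--Lopatto level repulsion bound to isolate $\lambda_{i}$ in $\hat I_{i}$. You then replace $N|u_{i,j}|^{2}$ and $\lambda_{i}$ by $N\hat v^{2}_{i,j}$ and $\hat\lambda_{i}$, and run a Lindeberg comparison. The paper inserts a smoothed indicator of $\{\mc{N}(\hat I_{i_a})=1\}$ into the functional, whereas you bound the bad event directly in expectation for both $W$ and $\wt W$; that difference is harmless for an $N^{-\tau}$ statement.

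\textbf{The gap.} The gap is at the step you yourself flag: bounding derivatives of the smoothed maximum over $j\in[N]$. With a generic smoothed step $\chi$, the $p$-th derivative of $\prod_{j\in[N]}\chi(N\hat v^{2}_{i,j}-\ell^{(\beta)}_{N}(0)-x)$ is a sum over up to $p$ distinct coordinates lying in the transition window of $\chi$. The fourth- and fifth-order Lindeberg terms therefore need bounds on moments, up to the fifth, of the number of $j$ with $N\hat v^{2}_{i,j}\approx\ell^{(\beta)}_{N}(0)+x$. These bounds must hold uniformly along the interpolation $W^{(\gamma)}_{0}$, $\theta_{w}(W^{(\gamma)})$.

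Nothing on $\mc{E}$ provides this. The only a priori information is $\sum_{j}N\hat v^{2}_{i,j}\approx N$, which allows a count of order $N/\log N$. A bound of order $N^{\epsilon}$ is a sharp tail estimate $\mbb{P}(N|u_{i,j}|^{2}>\ell^{(\beta)}_{N}(0)-C)\lesssim N^{-1+\epsilon}$, plus its multi-coordinate analogues, for general Wigner-type matrices. That is the kind of small-probability statement the paper obtains only by combining Proposition \ref{prop:gauss-divisible} with the Erd\H{o}s--Xu iteration. Calling it a ``Gaussian-tail first-moment bound'' is circular.

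Even granting the $N^{-1/2}$ gain you invoke for $j\notin\{a,b\}$, with $O(N)$ window coordinates the first derivative already loses $N^{1/2}$, and the fourth loses more. Your fallback also fails. For $p\sim\log N$ one only has $\max_{j}y_{j}\leq\|\mbf{y}\|_{p}\leq N^{1/p}\max_{j}y_{j}$ with $N^{1/p}=e^{O(1)}$. That is an error of order $\log N$ on the Gumbel scale at which $f$ is evaluated.

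\textbf{The fix.} Use the device from the paper: replace $N\max_{j}\hat v^{2}_{i,j}$ by the soft maximum $S=\beta^{-1}\log\sum_{j}e^{\beta N\hat v^{2}_{i,j}}$ with $\beta=N^{\epsilon}$.
\begin{enumerate}[i)]
\item $S$ lies within $\beta^{-1}\log N$ of the maximum.
\item Its $k$-th derivative is a cumulant-type combination of derivatives of $N\hat v^{2}_{i,j}$, averaged against the probability vector $p_{j}\propto e^{\beta N\hat v^{2}_{i,j}}$.
\item It is therefore bounded by $C_{k}\beta^{k-1}$ times the single-coordinate derivative bounds, which hold uniformly in $j$ with probability $1-N^{-D}$. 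No sum over $j$ and no count of large coordinates appears.
\end{enumerate}
Two equivalent options also work. You may take $p=N^{\epsilon}$ in your $\ell^{p}$ approach, since by H\"older the weights $(y_{j}/\|\mbf{y}\|_{p})^{p-1}$ sum to at most $N^{1/p}$. Or you may choose $\chi(y)=\exp(-e^{\beta(y-x)})$, for which $\prod_{j}\chi(y_{j})=\exp(-e^{\beta(S-x)})$. With this replacement the rest of your power counting goes through.
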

\begin{proof}[Proof sketch]
Since $f$ has compact support, for any $D>0$ and $\xi>0$, with probability $1-N^{-D}$ there are at most $N^{\xi}$ terms in the sum. By the level repulsion estimate \cite[Proposition 5.7]{benigni_optimal_2022}, we have
\begin{align}
    \mbb{P}\bigl(\mc{N}(\hat{I}_{i})\geq2\bigr)\leq N^{-\delta},    
\end{align}
where $\mc{N}(I)$ is the number of eigenvalues in $I$. We can therefore insert a smoothed indicator function of the event
\begin{align}
    \cap_{a=1}^{m}\bigl\{\mc{N}(\hat{I}_{i_{a}})=1\bigr\},
\end{align}
on which we can replace $\|\mbf{u}_{i}\|_{\infty}^{2}$ with $\max_{j}\hat{v}_{i,j}^{2}$. Now we use the smoothed maximum $\beta^{-1}\log\sum_{i}e^{\beta x_{i}}$ and argue by Lindeberg replacement.
\end{proof}

For Theorems \ref{thm2} and \ref{thm3}, we need a stronger comparison result that allows us to compare probabilities of order $N^{-k}$ for any $k$. This is done using an idea of Erd\H{o}s--Xu \cite{erdos_small_2023}. The difference here is that we use the discrete Lindeberg interpolation rather than the continuous Ornstein--Uhlenbeck interpolation since we work in the bulk and edge (at the edge the continuous interpolation is sufficient).

Let $D\geq0$, $m\in\mbb{N}$, $k_{a}\in\mbb{N},\,a=1,...,m$ and $J_{a}\subset[N],\,a=1,...,m$. Define
\begin{align}
	f_{m,\mbf{k}}(\lambda_{1},...,\lambda_{m})&=\prod_{a=1}^{m}\mbf{1}_{k_{a}+1}(\mc{N}(\hat{I}_{a})),
\end{align}
where $\mbf{1}_{k}\equiv\mbf{1}_{\{k\}}$, and
\begin{align}
	g_{m,D,J}(\lambda_{i_{1}},...,\lambda_{i_{m}})&:=\prod_{a=1}^{m}\prod_{j\in J_{a}}\mbf{1}_{(x,\infty)}(N\hat{v}_{i_{a},j}^{2}-\ell^{(\beta)}_{N}(D)).
\end{align}
The function $f_{m,\mbf{k}}$ restricts to the event
\begin{align}
	\bigcap_{a=1}^{m}\bigl\{\mc{N}(\hat{I}_{a})=k_{a}+1\bigr\},
\end{align}
in which there are $k_{a}+1$ eigenvalues in the interval $\hat{I}_{a}$ for $a=1,...,m$ (recall that $\hat{I}_{a}$ is centred at the approximate eigenvalue $\hat{\lambda}_{a}$ and $\lambda_{a}\in\hat{I}_{a}$ with probability at least $1-N^{-D}$).

We will bound the following sum of joint probabilities of eigenvalues and eigenvectors:
\begin{align}
	\mc{L}_{m,x,\mbf{k},D,J}(W)&:=\mbb{E}\sum_{i_{1},...,i_{m}}f_{m}(\lambda_{i_{1}},...,\lambda_{i_{m}})g_{m,D,J}(\lambda_{i_{1}},...,\lambda_{i_{m}}).
\end{align}
A key input, to be proven in the next section, is a bound on $\mc{L}_{m,x,\mbf{k},D,J}$ for Gaussian-divisible matrices $W=X+\sqrt{t}Y$ when at least one $k_{a}$ is positive.
\begin{proposition}\label{prop:p-gauss-divisible}
Let $t\gg N^{-1/3+\delta}$ and $W=X+\sqrt{t}Y$ be a real or complex Gaussian-divisible matrix if $m=1$ and a complex Gaussian-divisible matrix if $m\geq2$. Let $C>0$, $x\in[-C,C]$ and $\max_{a\in[m]}|J_{a}|\leq C$. Then there is a $c>0$ such that
\begin{align}
	\mc{L}_{m,x,\mbf{k},DJ}(W)&\lesssim N^{m-(D+1)\sum_{a=1}^{m}|J_{a}|-c\delta\sum_{a}k_{a}^{2}}.
\end{align}
\end{proposition}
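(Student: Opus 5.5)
We will prove the bound by conditioning on $X$ and working only with the Gaussian component $Y$, using the partial diagonalisation formula for Gaussian-divisible matrices described in the proof outline. The idea is to write $W=X+\sqrt{t}Y$ as $W=U\begin{pmatrix}\Lambda&0\\0&W'\end{pmatrix}U^{*}$, where $\Lambda=\diag(\lambda_{i_{1}},\dots,\lambda_{i_{m}})$, the first $m$ columns of $U$ are $\mbf{u}_{i_{1}},\dots,\mbf{u}_{i_{m}}$, and $W'$ is an $(N-m)\times(N-m)$ Hermitian matrix. Since the sum in $\mc{L}_{m,x,\mbf{k},D,J}$ runs over unlabelled $m$-tuples of eigenvalues, the change of variables turns it into an integral over $(\lambda_{a},\mbf{u}_{a})_{a\le m}$ and the complement matrix. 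The Jacobian contains the Vandermonde factor $\prod_{a<b}|\lambda_{a}-\lambda_{b}|^{\beta}$ and $\prod_{a}|\det(W'-\lambda_{a})|^{\beta}$. The Gaussian density of $Y$ then gives factors $e^{-N\beta|\mbf{u}_{a}^{*}(X-\lambda_{a})\mbf{u}_{a}|^{2}/(4t)}$ and similar cross terms.

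After integrating out $W'$ we obtain $\mbb{E}_{Y'}\prod_{a}|\det(X'+\sqrt{t}Y'-\lambda_{a})|^{\beta}$ together with the indicator $f_{m,\mbf{k}}$. This indicator requires at least $k_{a}$ further eigenvalues of $W'$ in $\hat{I}_{a}$, a window of size $N^{-\delta}$ times the local spacing.

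The bound then factorises into two pieces.

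\emph{Eigenvector factor.} Following the outline, we write each $\mbf{1}_{(x,\infty)}$ through its Fourier representation and approximate $\mbb{E}[e^{iNk|u_{a,j}|^{2}}]$ in the integral formula. The goal is to show that, uniformly in the eigenvalue configuration and with only multiplicative errors, the entries $(\mbf{u}_{a})_{j}$ for $j\in J_{a}$ behave like independent Gaussians of variance $1/N$. Then $\mbb{P}(N|u_{a,j}|^{2}>\ell^{(\beta)}_{N}(D)+x)\lesssim N^{-(D+1)}$ for each pair $(a,j)$, and these factors multiply over $j\in J_{a}$ and $a\le m$. This yields the factor $N^{-(D+1)\sum_{a}|J_{a}|}$. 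The point requiring care is that $\hat{v}^{2}$ rather than $|\mbf{u}|^{2}$ appears in $g$. By Lemma \ref{lem:eigapprox2}, on $\mc{E}$ we have $\hat{v}^{2}_{a,j}\le\sum_{k:\lambda_{k}\in\hat{I}_{a}}|u_{k,j}|^{2}+O(N^{-1-\xi})$. The sum has $k_{a}+1$ terms, so we should enlarge the partial diagonalisation to include all $\sum_{a}(k_{a}+1)$ eigenvalues in the windows. A union bound over which eigenvector carries the large mass then costs only constants. The error $O(N^{-1-\xi})$ shifts the threshold $\ell_{N}$ by $o(1)$, which is absorbed since $x$ ranges over a compact set. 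The complement of $\mc{E}$ contributes $N^{-D'}$ with $D'$ arbitrary.

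\emph{Eigenvalue factor.} The counting over $i_{1},\dots,i_{m}$ contributes $N^{m}$, because the one-point density integrated against the windows gives an $O(N)$ mass per index. The clustering gives the gain $N^{-c\delta\sum_{a}k_{a}^{2}}$: having $k_{a}+1$ eigenvalues in a window of relative size $N^{-\delta}$ forces $\binom{k_{a}+1}{2}$ Vandermonde factors of size $(N^{-\delta})^{\beta}$, while the window volume gives $(N^{-\delta})^{k_{a}}$. Together these give a gain of order $N^{-\delta(\beta k_{a}(k_{a}+1)/2+k_{a})}$, i.e.\ at least $N^{-c\delta k_{a}^{2}}$. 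Rigorously, this is a level repulsion estimate for $X+\sqrt{t}Y$ with $t\gg N^{-1/3+\delta}$. We would prove it from the same determinantal-type integral, bounding $\mbb{E}_{Y}\prod_{j}|\det(X+\sqrt{t}Y-\lambda_{j})|^{\beta}$ by the local law (Lemma \ref{lem:locallaw}) applied to the free convolution of $X$.

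\textbf{Main obstacle.} The hardest step is uniform, multiplicative-error control of the $\det$ expectations jointly over several windows, i.e.\ over $m$ eigenvalues that may be far apart. For $\beta=2$ this expectation has an exact contour-integral (duality) representation giving asymptotic factorisation. For $\beta=1$ no such formula is available when $|\lambda_{a}-\lambda_{b}|\gg N^{-2/3}\hat{a}^{-1/3}$, which is why the statement restricts to complex matrices for $m\ge2$. For $m=1$ only a single window appears and the real case can be handled by the local analysis. We would therefore treat $m=1$ (both symmetry classes) first, and then $m\ge2$ via the $\beta=2$ duality formula with a saddle point analysis around each $\lambda_{a}$.
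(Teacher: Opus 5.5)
Your overall architecture matches the paper's. You use Lemma \ref{lem:eigapprox2} to dominate $N\hat{v}^{2}_{i_a,j}$ by $\sum_{b}N|u_{b,j}|^{2}$ over the $k_a+1$ eigenvalues in the window. You dominate the counting indicator by a sum over $k_a$ extra distinct indices, and factorise eigenvalues from eigenvectors via partial diagonalisation (Proposition \ref{prop:gauss-divisible}). Finally you bound a Gaussian factor and an eigenvalue factor separately. Two of your steps, however, do not work as described.

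\textbf{The union bound.} The union bound ``over which eigenvector carries the large mass'' does not merely cost more than a constant; it fails. The event $\sum_{b=0}^{k_a}N|u_{b,j}|^{2}>\ell^{(\beta)}_{N}(D)+x$ does not force any single summand above $\ell^{(\beta)}_{N}(D)-O(1)$. A union bound only yields a summand above $\ell^{(\beta)}_{N}(D)/(k_a+1)$, which loses a positive power of $N$ that $N^{-c\delta k_a^{2}}$ cannot repay.

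What is needed, and what the paper does, is to apply the joint Gaussian replacement to the sum itself. This is why $g_{m,x}$ in Proposition \ref{prop:gauss-divisible} is a function of the sums $\sum_{b\in I_a}x_{i_b,j}$. One then uses the Gamma-type tail $\mbb{P}\bigl(\sum_{b=1}^{n}|Z^{(\beta)}_{b}|^{2}>y\bigr)\lesssim y^{\beta n/2-1}e^{-\beta y/2}$. This costs a factor $(\log N)^{\beta k_a/2}$ per coordinate $j$, not a constant. It is harmless only because extra eigenvectors appear exactly when $k_a\geq1$, so the polylog is absorbed into $N^{-c\delta k_a^{2}}$ after shrinking $c$.

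\textbf{The repulsion gain.} The gain does not follow from bounding the determinants with the local law for the free convolution. After partial diagonalisation $|\Delta(\bs\lambda)|^{\beta}$ is explicit. Turning it into $N^{-c\delta k_a^{2}}$, however, requires a sharp upper bound on $\mbb{E}\prod_{j}|\det(X'+\sqrt{t}Y'-\lambda_{j})|^{\beta}$ (times $K_\beta$) for $k_a+1$ points coalescing at scale $N^{-\delta}d_n$, where $d_n=N^{-2/3}\hat{n}^{-1/3}$. The bound must capture exactly the growth $d_n^{-\beta\binom{k_a+1}{2}}$ relative to the one-point normalisation.

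In the formula of Lemma \ref{lem:F_2}, this means showing quantitatively that the $\mbf{s}$-integral vanishes fast enough to compensate the prefactor $\Delta^{-2}(\bs\lambda)$. That is a degenerate saddle-point analysis, equivalent to controlling local statistics with multiplicative error far below the local-law scale. For $\beta=1$ it would also have to pass through the spin-variable decomposition of Lemma \ref{lem:spin}. The paper never computes the size of $\wt{F}_{\beta}$, only its independence of $\{\mbf{v}_{j}\}$.

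The ingredient you are missing is homogenisation. The paper uses Landon--Xian/Bourgade to replace the gaps of $X+\sqrt{t}Y$ by those of an independent GOE/GUE, up to an error $(Nt^{3})^{-1}d_n$. This error is $\ll N^{-\delta}d_n$ because $t\gg N^{-1/3+\delta}$. After restricting to nearby indices by rigidity, the factor $N^{-c\delta k^{2}}$ is read off from the explicit GOE/GUE $k$-point functions. This handles $\beta=1$ and $\beta=2$, bulk and edge, uniformly.
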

The main point here is that the event $\{\mc{N}(\hat{I}_{a})=k_{a}+1\}$ for $k_{a}\geq1$ contributes a factor of $N^{-c\delta k_{a}^{2}}$ due to eigenvalue repulsion. In the case $m\geq2$, we are not able to prove this bound for real Gaussian-divisible matrices for technical reasons, although the result should still be true.

For the comparison, we first replace the indicators by smooth functions. Let $\alpha>0$ and $\chi_{x,\alpha}$ be a smooth approximation to $\mbf{1}_{(x,\infty)}$ on the scale $N^{-\alpha}$, so that
\begin{align}
	\chi_{x+N^{-\alpha},\alpha}&\leq\mbf{1}_{(x,\infty)}\leq\chi_{x-N^{-\alpha},\alpha}.
\end{align}
Let $\psi_{l}$ be a smooth approximation to $\mbf{1}_{(l-1/4,l+1/4)}$ so that (since $\mc{N}(I)$ is integer-valued)
\begin{align}
	\mbf{1}_{l}(\mc{N}(\hat{I}_{a}))&=\psi_{l}(\mc{N}(\hat{I}_{a})).
\end{align}
Finally, we approximate the eigenvalue counting function $\mc{N}$ by
\begin{align}
	\widehat{\mc{N}}(\hat{I}_{a})&:=\frac{1}{\pi}\int_{\hat{I}_{a}}\Im\tr G(E+i\eta_{a})\diff E,
\end{align}
where we recall that $\eta_{a}=N^{-2/3-\delta-\xi}\hat{a}^{-1/3}$. Let $\hat{I}_{a,\epsilon}$ denote the $N^{-2/3-\epsilon}\hat{a}^{-1/3}$-neighbourhood of $\hat{I}_{a}$, and define $\hat{I}_{a,\pm}$ by $\hat{I}_{a,+}:=\hat{I}_{a,\delta+\xi}$ and $\hat{I}_{a}:=(I_{a,-})_{\delta+\xi}$. Then we have
\begin{align}
	\widehat{\mc{N}}(\hat{I}_{a,-})&\leq\mc{N}(\hat{I}_{a})\leq\widehat{\mc{N}}(\hat{I}_{a,+}),
\end{align}
with probability at least $1-N^{-D}$. 

We are now in a position to compare the following smoothed version of $\mc{L}_{m,x,\mbf{k},D,J}$:
\begin{align}
	\mc{S}_{m,x,\mbf{k},D,J}(W)&:=\sum_{i_{1},...,i_{m}}\prod_{a=1}^{m}\psi_{k_{a}+1}(\widehat{\mc{N}}(\hat{I}_{i_{a}}))\prod_{j\in J_{a}}\chi_{x,\alpha}(N\hat{v}_{i_{a},j}^{2}-\ell^{(\beta)}_{N}(D)).
\end{align}  
\begin{lemma}\label{lem:momentmatching2}
Let $m\in\mbb{N}$, $C>0$, $x\in[-C,C]$ and $\max_{a\in[m]}|J_{a}|\leq C$. Then there is a $c>0$ and $\tau>0$ such that
\begin{align}
	\Bigl|\mbb{E}\mc{S}_{m,x,\mbf{k},D,J}(W)-\mbb{E}\mc{S}_{m,x,\mbf{k},D,J}(\wt{W})\Bigr|&\lesssim N^{m-(D+1)\sum_{a}|J_{a}|-c\delta\sum_{a}k_{a}^{2}-\tau}.
\end{align}
\end{lemma}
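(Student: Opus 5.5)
We will prove Lemma \ref{lem:momentmatching2} by the small-probability comparison scheme of Erd\H{o}s--Xu \cite{erdos_small_2023}, run with discrete Lindeberg replacement. Order the index pairs $(a,b)$ with $a\leq b$ as $\gamma=1,\dots,N(N+1)/2$. Let $W^{\gamma}$ be the matrix whose first $\gamma$ entries come from $\wt{W}$ and whose remaining entries come from $W$. We telescope
\begin{align*}
	\mbb{E}\mc{S}(W)-\mbb{E}\mc{S}(\wt{W})&=\sum_{\gamma}\bigl(\mbb{E}\mc{S}(W^{\gamma-1})-\mbb{E}\mc{S}(W^{\gamma})\bigr).
\end{align*}
For each $\gamma=(a,b)$ we Taylor expand in the $(a,b)$ entry, writing $\Theta^{(a,b)}_{w}$ for the matrix with that entry scaled by $w$, up to order $5$.

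The first three orders cancel exactly. The fourth order differs by $O(t/N^{2})$. The fifth-order remainder is of size $N^{-5/2}$ times $\sup_{w}|\partial^{5}_{ab}\mc{S}|$. Summing over the $N^{2}$ pairs, the total error is
\begin{align*}
	N^{2}\Bigl(\frac{t}{N^{2}}\,\mbb{E}|\partial^{4}\mc{S}|+N^{-5/2}\,\mbb{E}\sup_{w}|\partial^{5}\mc{S}|\Bigr).
\end{align*}

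\textbf{Derivatives of $\mc{S}$.} We control these using Lemma \ref{lem:eigapprox} ii)--iii) for $\hat{\lambda}_{i}$ and the local law (Lemma \ref{lem:locallaw}) for the resolvent entries in $\hat{v}^{2}_{i,j}$ and $\widehat{\mc{N}}$, with $\eta_{i}\geq N^{-1-\delta-\xi}$. Each $\partial_{ab}$ acting on $\chi_{x,\alpha}(N\hat{v}^{2}_{i,j}-\ell)$ or $\psi(\widehat{\mc{N}})$ costs at most $N^{C(\delta+\xi+\alpha)}$ on $\mc{E}$. On the complement of $\mc{E}$, we use the crude polynomial bound iii) together with probability $N^{-D'}$ for arbitrarily large $D'$.

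\textbf{Localisation of the derivative.} A naive bound on the derivative only gives the smallness $N^{-1/2+C\delta}$, or $tN^{C\delta}$ at fourth order, times $\mbb{E}$ of an \emph{enlarged} functional, not times the tiny probability itself. The key observation is that each derivative $\partial^{k}_{ab}\mc{S}$ is again a sum over $i_{1},\dots,i_{m}$ of products of derivatives of the cutoffs. Since $\chi'_{x,\alpha}$ is supported where $\chi_{x-N^{-\alpha},\alpha}$ is still $\approx1$, and similarly for $\psi'$, we can bound
\begin{align*}
	|\partial^{k}\mc{S}_{m,x,\mbf{k},D,J}|&\leq N^{C_{k}(\delta+\xi+\alpha)}\,\mc{S}^{+}_{m,x',\mbf{k}',D,J},
\end{align*}
where $x'=x-O(N^{-\alpha})$ and $\mc{S}^{+}$ is a functional of the same type. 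In $\mc{S}^{+}$ the $\psi$'s are replaced by smooth indicators of $\{\widehat{\mc{N}}\in[k_{a}+1-1/2,k_{a}+1+1/2]\}$ on slightly enlarged intervals $\hat{I}_{a,+}$, and the cutoffs on $\hat{v}^{2}$ are widened.

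This produces a self-consistent Gronwall-type inequality: the comparison error for $\mc{S}$ is at most $N^{-1/2+C\delta}+tN^{C\delta}\leq N^{-\tau_{0}}$ times $\max_{\gamma}\mbb{E}\mc{S}^{+}(W^{\gamma})$. We then iterate in the Erd\H{o}s--Xu manner. We apply the same comparison to $\mc{S}^{+}$, with a further-enlarged functional $\mc{S}^{++}$, for a finite number $K$ of steps, with $K$ chosen so that $K\tau_{0}$ exceeds the target exponent $m-(D+1)\sum|J_{a}|$ plus a margin. At the last step the trivial bound $\mc{S}^{(K)}\lesssim N^{m+o(1)}$ suffices. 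Going back up the chain, each level is bounded by its Gaussian-divisible value plus $N^{-\tau_{0}}$ times the next level.

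\textbf{Gaussian-divisible input and the main obstacle.} The Gaussian-divisible values at every level are bounded by Proposition \ref{prop:p-gauss-divisible}, after converting back from smoothed to sharp indicators via $\widehat{\mc{N}}(\hat{I}_{a,-})\leq\mc{N}\leq\widehat{\mc{N}}(\hat{I}_{a,+})$ and Lemma \ref{lem:eigapprox2}. This gives $N^{m-(D+1)\sum|J_{a}|-c\delta\sum k_{a}^{2}}$ for each enlarged functional, up to harmless $N^{O(\alpha+\xi)}$ factors from shifting $x$ and interval widths. These factors are absorbed by choosing $\alpha,\xi\ll\tau_{0}$, which yields the claimed bound with some $\tau>0$.

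The main obstacle is the domination step $|\partial^{k}\mc{S}|\lesssim N^{C\epsilon}\mc{S}^{+}$ uniformly over the interpolating matrices $\Theta^{(a,b)}_{w}(W^{\gamma})$. Two points need care. First, the enlarged count constraint must still carry the repulsion gain $k_{a}^{2}$; I would keep $\mbf{k}'=\mbf{k}$ and only enlarge intervals by factors $N^{\xi}$ that Proposition \ref{prop:p-gauss-divisible} tolerates. Second, the number of summands must be controlled, which is done by the local law on $\mc{E}$. If the count constraint cannot be kept exactly, I would instead dominate by a sum over $k'_{a}\geq k_{a}$, which is still summable.
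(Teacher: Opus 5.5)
Your proposal is correct and follows essentially the same route as the paper's proof: discrete Lindeberg replacement with a fifth-order Taylor expansion, domination of $\partial^{p}\mc{S}$ on the good event by a slightly relaxed functional of the same type, and the Erd\H{o}s--Xu bootstrap, seeded by the trivial bound $N^{m}$ and closed with Proposition \ref{prop:p-gauss-divisible} at the Gaussian-divisible end. Two of your choices are minor refinements of that same argument rather than a different approach: widening the count windows for $\psi$ (where the paper asserts $|\psi^{(p)}_{l}|\lesssim\psi_{l}$, which cannot hold literally for a compactly supported $\psi_{l}$), and explicitly flagging uniformity over the interpolating matrices $\Theta^{(a,b)}_{w}$.
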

\begin{proof}
The overall idea of Erd\H{o}s--Xu \cite{erdos_small_2023} is to keep track of the small-probability event in the error estimates that arise from the Lindeberg method. Fix an ordering of upper-triangular indices and let $W^{(\gamma)}$ denote the matrix whose first $\gamma$ upper-triangular entries are those of $W$ and remaining $N(N+1)/2-\gamma$ entries those of $\wt{W}$. Let $W^{(\gamma)}_{0}$ denote the matrix obtained from $W^{(\gamma)}$ by setting the $\gamma$-entry to zero. 

First observe that, for any $p\in\mbb{N}$,
\begin{align}
	\textup{supp}\chi^{(p)}_{x,\alpha}&\subset (x-N^{-\alpha},x+N^{-\alpha}),
\end{align}
and so
\begin{align}
	|\chi^{(p)}_{x,\alpha}|&\lesssim N^{p\alpha}\chi_{x-N^{-\alpha},\alpha}.\label{eq:chi^p}
\end{align}
Moreover, we can choose $\psi_{l}$ such that, for any $p\in\mbb{N}$,
\begin{align}
	|\psi^{(p)}_{l}|&\lesssim \psi_{l}.\label{eq:psi^p}
\end{align} 

Let 
\begin{align}
	F_{x}(W)&:=\mc{S}_{m,x,\mbf{k},D,J}(W)
\end{align}
and observe that, by Lemmas \ref{lem:eigapprox}-\ref{lem:eigapprox2}, \eqref{eq:chi^p} and \eqref{eq:psi^p},
\begin{align}
	|F_{x}^{(p)}(W)|&\leq N^{5p(\alpha+\delta+\xi)}F_{x-\alpha}(W),\label{eq:F^p}
\end{align}
with probability at least $1-N^{-D}$. Indeed, by Lemma \ref{lem:eigapprox} we have
\begin{align}
    \partial_{ab}\hat{N}(\hat{I}_{j})&=\frac{1}{\pi}\Im\tr\bigl(G(\hat{\lambda}_{j}+i\eta_{j}+N^{-2/3-\delta}\hat{j}^{-1/3})-G(\hat{\lambda}_{j}+i\eta_{j}-N^{-2/3-\delta}\hat{j}^{-1/3})\bigr)\partial_{ab}\hat{\lambda}_{j}\\
    &=O\Biggl(\frac{N^{\delta+3\xi}}{N^{2/3}\hat{j}^{1/3}\eta_{j}}\Biggr)\\
    &=O(N^{2\delta+4\xi},)
\end{align}
with probability at least $1-N^{-D}$. Higher order derivatives and derivatives of the other quantities are bounded by similar computations.

By Taylor expansion we have
\begin{align}
	\mbb{E}F_{x}(W^{(\gamma+1)})-\mbb{E}F_{x}(W^{(\gamma)})&=\sum_{p=0}^{4}\mbb{E}F_{x}^{(p)}(W^{(\gamma)}_{0})(W_{\gamma}^{p}-\wt{W}_{\gamma}^{p})\\&+\mbb{E}F_{x}^{(5)}(\theta^{(\gamma)}_{w}(W^{(\gamma)}))(W^{5}_{\gamma}-\wt{W}^{5}_{\gamma}),
\end{align}
for some random variable $w\in[0,1]$. Note that by resolvent expansion, the local law holds for $\theta^{(\gamma)}_{w}(W^{(\gamma)}_{0})$ uniformly in $w\in[0,1]$. By \eqref{eq:F^p} and the moment matching condition, we have
\begin{align}
	|\mbb{E}F_{x}(W^{(\gamma+1)})-\mbb{E}F_{x}(W^{(\gamma)})|&\prec \frac{N^{20(\alpha+\delta+\xi)}t}{N^{2}}\mbb{E}F_{x-N^{-\alpha}}(W^{(\gamma)}_{0})\\
    &+\frac{N^{25(\alpha+\delta+\xi)}}{N^{5/2}}\mbb{E}F_{x-N^{-\alpha}}(\theta^{(\gamma)}_{w}(W^{(\gamma)}_{0})).\label{eq:iteration}
\end{align}
We now proceed iteratively as in Erd\H{o}s--Xu \cite{erdos_small_2023}. In the first iteration, we use the trivial uniform bound $F_{x}(W)\lesssim N^{m}$ to obtain
\begin{align}
	|\mbb{E}F_{x-N^{-\alpha}}(\theta^{(\gamma)}_{w}(W^{(\gamma)}_{0}))-\mbb{E}F_{x-N^{-\alpha}}(\wt{W})|&\lesssim \phi N^{m},
\end{align}
for any $\gamma\in[N(N-1)/2]$ and $w\in[0,1]$, where $\phi=N^{20(\alpha+\delta+\gamma)}t+N^{25(\alpha+\delta+\gamma)-5/2}$. By Proposition \ref{prop:p-gauss-divisible}, we obtain
\begin{align}
	\mbb{E}F_{x-\alpha}(\theta^{(\gamma)}_{w}(W^{(\gamma)}_{0}))&\lesssim \phi N^{m}+N^{m-(D+1)\sum_{a=1}^{m}|J_{a}|-c\delta\sum_{a}k_{a}^{2}}.
\end{align}
In the second iteration, we insert this bound into \eqref{eq:iteration} to obtain
\begin{align}
	|\mbb{E}F_{x}(W)-\mbb{E}F_{x}(\wt{W})|&\lesssim\phi(\phi N^{m}+N^{m-(D+1)\sum_{a=1}^{m}|J_{a}|-c\delta\sum_{a}k_{a}^{2}})
\end{align}
and hence
\begin{align}
	\mbb{E}F_{x}(W)&\lesssim \phi^{2}N^{m}+N^{m-(D+1)\sum_{a=1}^{m}|J_{a}|-c\delta\sum_{a}k_{a}^{2}}.
\end{align}
Continuing in this fashion, after $k$ steps we obtain
\begin{align}
	|\mbb{E}F_{x}(W)-\mbb{E}F_{x}(\wt{W})|&\lesssim \phi^{k}N^{m}+\phi N^{m-(D+1)\sum_{a=1}^{m}|J_{a}|-c\delta\sum_{a}k_{a}^{2}}\\
    &\lesssim \phi N^{m-(D+1)\sum_{a=1}^{m}|J_{a}|-c\delta\sum_{a}k_{a}^{2}},
\end{align}
for sufficiently large $k$ depending on $m,\alpha,\delta,\gamma$.
\end{proof}

\section{Gaussian-Divisible Matrices}
In this section we study Gaussian-divisible matrices, i.e. matrices of the form $W=X+\sqrt{t}Y$ where $X$ is deterministic and $Y$ is from the GOE/GUE. Our main goal is to prove that eigenvector entries can be replaced by i.i.d. Gaussians that are also independent of the eigenvalues. Once this is established, Theorems \ref{thm1}-\ref{thm3} will follow for Gaussian-divisible matrices. 

Let $m\in\mbb{N}$, $I_{a}\subset[m],\,J_{a}\subset[N],\,a=1,...,m$ and set $|J|=\sum_{a=1}^{m}|J_{a}|$. Let $x>0$ and define $g_{m,x}:\mbb{R}^{|J|}\to\mbb{R}$ by 
\begin{align}
    g_{m,x}(\mbf{x})&=\prod_{a=1}^{m}\prod_{j\in J_{a}}\mbf{1}_{(x,\infty)}\Bigl(\sum_{b\in I_{a}}x_{i_{b},j}\Bigr)
\end{align}
Let $f:\mbb{R}^{m}\to\mbb{R}$ and consider the statistic
\begin{align}
	\mc{L}(f,g_{m,x})&:=\sum_{i_{1},...,i_{m}}f(\lambda_{i_{1}},...,\lambda_{i_{m}})g_{m,x}\Bigl(\bigl\{\{N|u_{i_{a},j}|^{2}\}_{j\in J_{a}}\bigr\}_{a\in[m]}\Bigr),\label{eq:L(f,g)}
\end{align}
We define $\mc{L}(f)\equiv\mc{L}(f,1)$. Let $\{Z^{(\beta)}_{i,j}\}_{i\in[N],j\in[N]}$ be a collection of independent standard real ($\beta=1$) or complex ($\beta=2$) Gaussians that are also independent of $W$. The following proposition is the main result of this section.
\begin{proposition}\label{prop:gauss-divisible}
Let $t=N^{-1/3+\rho}$, $m\in\mbb{N}$ and $J_{a}\subset[N],\,a=1,...,m$. Let $f:\mbb{R}^{m}\to\mbb{R}$ be positive and measurable. Assume that
\begin{align}
    \frac{x\log N}{\sqrt{Nt^{3}}}&<N^{-\xi},
\end{align}
for some $\xi>0$. If $\beta=1$ and $m>1$, we additionally assume that there is an $E\in[-2,2]$ and $C>0$ such that
\begin{align}
    \textup{supp}(f)&\subset\Bigl\{\bs\lambda:\max_{i\in[m]}|\lambda_{i}-E|\leq Cd_{E}\Bigr\},
\end{align}
where $d_{E}$ is the mean eigenvalue spacing at $E$. Then we have
\begin{align}
	\mbb{E}\Bigl[\mc{L}(f,g_{m,x})\Bigr]&=\Bigl[1+O\Bigl(\frac{x\log N}{\sqrt{Nt^{3}}}\Bigr)\Bigr]\mbb{E}\Bigl[\mc{L}(f)\Bigr]\mbb{E}\Bigl[g_{m,x}\bigl(\bigl\{\{|Z^{(\beta)}_{i,j}|^{2}\}_{j\in J_{i}}\bigr\}_{i\in[m]}\bigr)\Bigr].
\end{align}
\end{proposition}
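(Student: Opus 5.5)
We will prove the proposition by partial diagonalisation of $W=X+\sqrt{t}Y$, which makes the joint law of the unlabelled eigenpairs $(\lambda_{i_a},\mbf{u}_{i_a})_{a\in[m]}$ explicit. Concretely, for $m$ distinct indices we write $W=U\begin{pmatrix}\Lambda_m&0\\0&W'\end{pmatrix}U^{*}$, where $\Lambda_m=\diag(\lambda_{i_1},\dots,\lambda_{i_m})$, $U$ is a product of $m$ Householder-type reflections determined by the vectors $\mbf{u}_{i_1},\dots,\mbf{u}_{i_m}$, and $W'$ is an $(N-m)\times(N-m)$ matrix. Changing variables from $Y$ to $(\Lambda_m,\mbf{u}_{i_1},\dots,\mbf{u}_{i_m},W')$ and integrating out $W'$ gives the representation
\begin{align}
	\mbb{E}\bigl[\mc{L}(f,g)\bigr]&=\int f(\bs\lambda)\,|\Delta(\bs\lambda)|^{\beta}\,\mbb{E}_{Y'}\Bigl[\prod_{a}|\det(X'+\sqrt{t}Y'-\lambda_a)|^{\beta}\Bigr]\,\mc{K}_t(\bs\lambda,\mbf{u})\,g\bigl(\{N|u_{a,j}|^{2}\}\bigr)\diff\bs\lambda\diff\mbf{u}.
\end{align}
Here the Gaussian weight $\mc{K}_t$ has the form $\exp\bigl(-\frac{\beta}{4t}\sum_a\mbf{u}_a^{*}(X-\lambda_a)^{2}\mbf{u}_a+\dots\bigr)$ coming from the off-diagonal blocks, and $X'$ depends on $\mbf{u}$ through the reflection. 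Setting $g\equiv1$ gives the same formula for $\mbb{E}[\mc{L}(f)]$. Note that the eigenvector integral is not uniform on the sphere: for fixed $\bs\lambda$, $\mbf{u}_a$ is approximately a complex (or real) Gaussian vector with covariance $\propto\bigl((X-\lambda_a)^{2}+\eta^{2}\bigr)^{-1}$ on scale $\eta\sim t$. Conditioned on $\bs\lambda$, its entries therefore have variances $\sigma_{a,j}^{2}=\frac{1}{N}\frac{\Im G_X(\lambda_a+i t m)_{jj}}{\Im\Tr{G_X}}$.

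The first key step is to show that, on the regime relevant to $f$, the variance ratio satisfies $N\sigma_{a,j}^{2}=1+O\bigl((Nt^{3})^{-1/2}\bigr)$ uniformly in $j$. We will take this as an input about $X$: the entries of $G_X$ in the deformed-semicircle sense are close to $m_{sc}$ on scale $t$, which is the standard assumption for the initial data in Gaussian-divisible arguments and holds with high probability after Lemma \ref{lem:locallaw}. The error $(Nt^{3})^{-1/2}$ should come from the next-order (non-Gaussian) corrections in the saddle-point/Laplace evaluation of the $\mbf{u}$-integral. Those corrections arise because the quadratic form is only approximately Gaussian once the normalisation $\|\mbf{u}_a\|=1$ and the coupling through $\det(X'-\lambda_a)$ are taken into account.

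The second step evaluates the $g$-functional. We represent each indicator by
\begin{align}
	\mbf{1}_{(x,\infty)}(y)=\frac{1}{2\pi i}\int\frac{e^{ik(y-x)}}{k-i0}\diff k,
\end{align}
which turns $g$ into an inner integral of $\exp\bigl(i\sum k_{a,j}N|u_{a,j}|^{2}\bigr)$. This modifies the quadratic form in $\mc{K}_t$ only by a rank-$|J|$ diagonal perturbation on coordinates $j\in J_a$. The Gaussian integral can then be computed exactly up to the determinant ratio
\begin{align}
	\prod_{a,j}\bigl(1-ik_{a,j}N\sigma^{2}_{a,j}\cdot\tfrac{2}{\beta}\bigr)^{-\beta/2}\bigl(1+O(k\,\cdot\,\text{error})\bigr).
\end{align}
This exhibits $\mbb{E}[g(|Z|^{2})]$ with the correct exponential/chi-square characteristic function. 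The perturbation also changes the normalisation constraint and the determinant factor $\mbb{E}_{Y'}\prod|\det|^{\beta}$. We will show both effects are multiplicative factors $1+O(|k|/\sqrt{Nt^{3}})$ by expanding in the rank-$|J|$ perturbation via resolvent identities, since each $N|u_{a,j}|^{2}$ is $O(1)$. After contour deformation in $k$ (shifting $\Im k$ to order $-\beta/2$ up to $1/x$), the effective $|k|$ is at most $x$. Integrating the $1+O(|k|\log N/\sqrt{Nt^{3}})$ error against the Fourier kernel yields the multiplicative error $x\log N/\sqrt{Nt^{3}}$, because the tail $e^{-\beta x/2}$ is reproduced exactly. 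Factorising over $\bs\lambda$ then gives exactly $\mbb{E}[\mc{L}(f)]$ times the Gaussian expectation.

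The main obstacle is controlling the dependence of $\mbb{E}_{Y'}\bigl[\prod_a|\det(X'+\sqrt{t}Y'-\lambda_a)|^{\beta}\bigr]$ on the eigenvector variables uniformly in $\bs\lambda\in\textup{supp}f$. Multiplicative (not additive) precision is required because the target probabilities are as small as $N^{-D}$. For $\beta=2$ this expectation is a finite-rank determinantal/Gaussian integral that can be written via Grassmann variables as an $m\times m$ supersymmetric integral, and its asymptotics are uniform in the separation $|\lambda_a-\lambda_b|$. For $\beta=1$ the absolute value prevents such a formula. We will therefore rely on the localisation hypothesis $|\lambda_a-E|\le Cd_E$, which keeps all spectral parameters within one mesoscopic window where a single saddle-point analysis applies. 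For $m=1$ no such issue arises.
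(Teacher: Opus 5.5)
For $\beta=2$ your outline follows the paper's route: partial diagonalisation, decoupling the characteristic-polynomial factor and the normalisations from the eigenvector variables, and a Fourier representation of the indicators. What you leave implicit there is the mechanism. The eigenvector dependence of the supersymmetric integrand enters only through $\det(X^{(m)}-z)=\det(X-z)\prod_{k}\mbf{v}_{k}^{*}G^{(k-1)}(z)\mbf{v}_{k}$ at the saddle points, and these quadratic forms concentrate by Lemma \ref{lem:conc}.

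\textbf{The genuine gap is $\beta=1$.} You observe that the absolute values in $\mbb{E}_{Y'}\prod_{a}|\det(X'+\sqrt{t}Y'-\lambda_{a})|$ rule out a Grassmann formula, and then appeal to localisation so that ``a single saddle-point analysis applies''. But there is no integral on which to run a saddle point until the absolute values are removed, and localisation does nothing about them. Your remark that for $m=1$ ``no such issue arises'' is also wrong. Already $\mbb{E}|\det(X^{(1)}+\sqrt{t}Y^{(1)}-\lambda)|$ with $Y^{(1)}$ from the GOE has no such representation. You need its $\mbf{v}_{1}$-dependence to relative precision $(Nt^{3})^{-1/2}$, because the target probabilities are of order $N^{-D}$.

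The missing idea is the spin-variable identity of Lemma \ref{lem:spin}. For $M=X^{(m)}+\sqrt{t}Y^{(m)}$ and $\mu_{1}<\cdots<\mu_{m}$, one writes $|\det(M-\mu)|=s(\mu)\det(M-\mu)$. The sign at each $\mu_{k}$ is then expressed through the signs at the eigenvalues of $M$ lying between consecutive $\mu$'s. This turns $\prod_{j}|\det(M-\mu_{j})|$ into signed determinants times sums over eigenvalues in $[\mu_{1},\mu_{m}]$. These sums are partially diagonalised again, and the spins cancel the absolute values of the new Jacobians. At most one absolute value survives, and it is written as $\lim_{\epsilon\to0}\det^{2}(M-\mu)\det^{-1/2}((M-\mu)^{2}+\epsilon)$. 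This yields the symplectic Br\'{e}zin--Hikami formula of Lemma \ref{lem:F_1,1} and the $r$-integral against the auxiliary measure $\nu_{r}$ of Lemma \ref{lem:F_1,2}. Their eigenvector dependence is then removed using Lemmas \ref{lem:K(r)} and \ref{lem:nuConc}. Localisation enters only at the end. It guarantees that the factor $e^{\frac{iN}{t}\tr UJ(\mbf{s})U^{*}J(\bs\lambda-\lambda_{1})}$ does not shift the saddle point, so the leading term is non-oscillatory in the extra eigenvalue variables and the error can be taken outside those integrals.

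Two further steps are asserted rather than supplied.

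\textbf{Orthogonality for $m\geq2$.} The $\mbf{u}_{a}$ are not approximately independent Gaussians built from $G_{X}$. Conditionally on $\mbf{v}_{<a}$, the law $\mu^{(\beta)}_{a}$ is built from the compressed matrix $X^{(a-1)}$. So you need that its normalisation $K^{(\beta)}_{a}$ is independent of $\mbf{v}_{<a}$ to relative precision $(Nt^{3})^{-1/2}$ (Lemma \ref{lem:K}). You also need that the entrywise local law propagates from $G$ to the compressed resolvents (Lemma \ref{lemma:entrywisePropagation} and Corollary \ref{cor:entrywisePropagation}). A local law for $G_{X}$ alone does not give the variances of $u_{a,j}$ for $a\geq2$.

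\textbf{The Fourier error.} A relative error bounded by $|k|\varepsilon$ on the real axis is not integrable against $(1-2ik/\beta)^{-\beta/2}(k-i0)^{-1}$. Nor does a real-axis bound survive the contour shift you invoke. The paper instead writes the non-Gaussian part as $\det^{-\beta/2}(1-B)$ with $\sup_{\mbf{k}}\|B\|\lesssim\log N/\sqrt{Nt^{3}}$ (Lemma \ref{lem:detB}). The bound is uniform because $B$ involves $ik/(1-ik)$. It then expands this into a polynomial in $k_{j}/(1-ik_{j})$ with small coefficients and evaluates every term exactly by residues. The result is a polynomial in $x$ times the exact Gaussian tail, which is where the multiplicative error $x\log N/\sqrt{Nt^{3}}$ comes from.
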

The condition on the support of $f$ when $\beta=1$ is purely technical, and the general result should also be true in this case.

Let us give an overview of the proof of Proposition \ref{prop:gauss-divisible} in the simplest case $m=1$. By the method of partial diagonalisation, which we recall in the next subsection, we obtain an integral formula for the expectation
\begin{align}
	\mbb{E}\sum_{i}f(\lambda_{i})g(\{u_{i,j}\}_{j\in J})&=c_{N}\int_{\mbb{R}}f(\lambda)K^{(\beta)}_{1}(\lambda)\mbb{E}^{(\beta)}_{1}\Bigl[g(\{v_{1,j}\}_{j\in J})F_{\beta}(\lambda,\mbf{v}_{1})\Bigr]\diff\lambda,\label{eq:ex}
\end{align}
where the expectation on the right-hand side is with respect to $\mbf{v}_{1}$ distributed according to a certain probability measure $\mu^{(\beta)}_{1}$ on the unit sphere $\textup{S}^{N-1}_{\beta}$, $K^{(\beta)}_{1}(\lambda)$ is the normalisation constant for $\mu^{(\beta)}_{1}$, and the function $F$ is an expectation of a characteristic polynomial:
\begin{align}
	F_{\beta}(\lambda,\mbf{v}_{1})&=\mbb{E}_{Y^{(1)}}\Bigl[|\det(X^{(1)}+\sqrt{t}Y^{(1)}-\lambda)|^{\beta}\Bigr].
\end{align}
Here $X^{(1)}$ is the projection of $X$ onto the space orthogonal to $\mbf{v}_{1}$ and $Y^{(1)}\sim\sqrt{\frac{N}{N-1}}G\beta E$. By the supersymmetry method, we obtain a formula for $F_{\beta}$ in terms of an integral over a finite number of variables. From this representation we deduce that the dependence of $F$ on $\mbf{v}_{1}$ is given by a polynomial in quadratic forms. We can show that such quadratic forms concentrate with respect to $\mu^{(\beta)}_{1}$, and so obtain an approximation of the form
\begin{align}
	F_{\beta}(\lambda,\mbf{v}_{1})&=[1+o(1)]\wt{F}_{\beta}(\lambda).
\end{align}
The dependence on $\mbf{v}_{1}$ has now been removed, so that only $g(\{v_{1,j}\}_{j\in J})$ remains inside the expectation over $\mbf{v}_{1}$. A similar argument to the one giving concentration of quadratic forms also gives Gaussian approximation for the components $v_{1,j}$, and so
\begin{align}
	\mbb{E}^{(\beta)}_{1}[g(\{v_{1,j}\}_{j\in J})]&=[1+o(1)]\mbb{E}\bigl[g(\{Z_{1,j}\}_{j\in J})\bigr].
\end{align} 
Inserting this into \eqref{eq:ex}, we find
\begin{align}
	\mbb{E}\sum_{i}f(\lambda_{i})g(\{u_{i,j}\}_{j\in J})&=[1+o(1)]\int_{\mbb{R}}f(\lambda)K_{\beta}(\lambda)\wt{F}_{\beta}(\lambda)\diff\lambda\cdot\mbb{E}\bigl[g(\{Z_{1,j}\}_{j\in J})\bigr].
\end{align}
We can identify the integral over $\lambda$ with $\mbb{E}\sum_{i}f(\lambda_{i})$, concluding the proof. Note that we do not need to approximate $\wt{F}_{\beta}(\lambda)$ itself, rather we only need to obtain the independence with respect to $\mbf{v}_{1}$.

With Proposition \ref{prop:gauss-divisible} in hand, we can prove Proposition \ref{prop:p-gauss-divisible}.
\begin{proof}[Proof of Proposition \ref{prop:p-gauss-divisible}]
Recall the definition of $\mc{S}_{m,x,\mbf{k},D,J}$:
\begin{align}
	\mc{S}_{m,x,\mbf{k},D,J}(x)&=\sum_{i_{1},...,i_{m}}\prod_{a=1}^{m}\psi_{k_{a}}(\wh{\mc{N}}(\hat{I}_{i_{a}}))\prod_{j\in J_{a}}\chi_{x,\alpha}(N\hat{v}^{2}_{i,j}-\ell^{(\beta)}_{N}(D)),
\end{align}
where $\psi_{k}$ is a smoothed indicator on $[k-1/4,k+1/4]$ and $\chi_{x,\alpha}$ a smoothed indicator on $(x,\infty)$. First we observe that we can replace $\psi_{k}$ with $\mbf{1}_{k}$ and $\chi_{x,\alpha}$ with $\mbf{1}_{(x_{\alpha},\infty)}$, where $x_{\alpha}=x-N^{-\alpha}$. By Lemmas \ref{lem:eigapprox} and \ref{lem:eigapprox2}, for an upper bound we can also replace $\hat{I}_{i}$ with $I_{i}$ (with an additive error of $O(N^{-D})$) and $N\hat{v}^{2}_{i,j}$ with $\sum_{k:\lambda_{k}\in I_{i}}N|u_{k,j}|^{2}$. Finally, we insert the decomposition of unity
\begin{align}
	1&=\prod_{a=1}^{m}\Biggl(\sum_{n_{a}}\zeta\Bigl(\frac{\lambda_{i_{a}}-\gamma_{n_{a}}}{d_{n_{a}}}\Bigr)+r(\lambda_{i_{a}})\Bigr),
\end{align}
where $\zeta$ is a smooth function with compact support, $\textup{supp}(r)\subset(-\infty,-2]\cup[2,\infty)$, $\gamma_{n}$ are the quantiles of the semicircle law and $d_{n}:=N^{-2/3}\hat{n}^{-1/3}$ is the eigenvalue spacing at $\gamma_{n}$.

Let
\begin{align}
	f_{n,k}(\lambda_{1},...,\lambda_{k};\lambda)&=\zeta\Bigl(\frac{\lambda-\gamma_{n}}{d_{n}}\Bigr)\prod_{b=1}^{k}\mbf{1}_{(0,N^{-\delta}d_{n})}(|\lambda_{b}-\lambda|);
\end{align}
then (after possibly adjusting the support of $\zeta$) we have the inequality
\begin{align}
	\zeta\Bigl(\frac{\lambda_{i_{a}}-\gamma_{n_{a}}}{d_{n_{a}}}\Bigr)\mbf{1}_{k_{a}}(\mc{N}(I_{i_{a}}))&\leq\sum_{i_{m+1},...,i_{m+k_{a}}}f_{n_{a},k_{a}}(\lambda_{i_{m+1}},...,\lambda_{i_{m+k}};\lambda_{i_{a}}).
\end{align}
Indeed, this is trivially satisfied if we replace $d_{n_{a}}$ with $d_{i_{a}}$ in the definition of $f_{n,k}$, and we can make such a replacement in the support of $\zeta$.

It therefore suffices to bound
\begin{align}
	\psi_{m,k}&:=\mbb{E}\sum_{i_{1},...,i_{m+k}}\prod_{a=1}^{m}\sum_{n_{a}}f_{n_{a},k_{a}}(\lambda_{i_{m_{a-1}+b}},...,\lambda_{i_{m_{a}}};\lambda_{i_{a}})\nonumber\\
	&\times\prod_{a=1}^{m}\prod_{j\in J_{a}}\mbf{1}_{(x,\infty)}\Bigl(N\sum_{b=m_{a-1}+1}^{m_{a}}|u_{i_{b},j}|^{2}+N|u_{i_{a},j}|^{2}-\ell^{(\beta)}_{N}(D)\Bigr),
\end{align}
where $k=\sum_{a}k_{a}$, $m_{0}=m$, $m_{a}=m+k_{1}+\cdots+k_{a}$, and the sum is over distinct tuples of indices. The sum over the additional indices $i_{m+1},...,i_{m+k}$ represents the extra eigenvalues in the intervals $I_{i_{a}},\,a=1,...,m$, and for each extra eigenvalue in $I_{i_{a}}$ we gain an extra summand in the argument of $\mbf{1}_{(x,\infty)}$. We can now apply Proposition \ref{prop:gauss-divisible} to factorise the eigenvalues and eigenvectors:
\begin{align}
	\psi_{m,k}&\lesssim AB,
\end{align}
where
\begin{align}
	A&=\mbb{E}\sum_{i_{1},...,i_{m+k}}\prod_{a=1}^{m}\sum_{n_{a}}f_{n_{a},k_{a}}(\lambda_{i_{m_{a-1}}+b},...,\lambda_{i_{m_{a}}};\lambda_{i_{a}}),
\end{align} 
and
\begin{align}
	B&=\mbb{E}\prod_{a=1}^{m}\prod_{j\in J_{a}}\mbf{1}_{(x,\infty)}\Bigl(\sum_{b=m_{a-1}+1}^{m_{a}}|Z^{(\beta)}_{b,j}|^{2}+|Z^{(\beta)}_{a,j}|^{2}-\ell^{(\beta)}_{N}(D)\Bigr).
\end{align}
The $Z^{(\beta)}_{i,j}$ are i.i.d. standard Gaussians, for which we have
\begin{align}
	\mbb{E}\mbf{1}_{(y,\infty)}\Bigl(\sum_{i=1}^{n-1}|Z^{(\beta)}_{i}|^{2}+|Z^{(\beta)}_{n}|^{2}\Bigr)&\lesssim y^{\beta n/2-1}e^{-y}.
\end{align}
Substituting $y=x+\ell^{(\beta)}_{N}(D)$ we find
\begin{align}
	B&\lesssim\prod_{a=1}^{m}\prod_{j\in J_{a}}\bigl(\ell^{(\beta)}_{N}(D)+x\bigr)^{k_{a}+1}e^{-\ell^{(\beta)}_{D}}\\
	&\lesssim\Bigl(\frac{x+\log N}{N}\Bigr)^{(D+1)\sum_{a}|J_{a}|}.
\end{align}
For $A$ we use the homogenisation result of Landon--Xian \cite[Theorem 1.2]{landon_edge_2025} (see also Bourgade \cite[Theorem 3.1]{bourgade_extreme_2021}): if $\lambda_{i}(t)$ and $\mu_{i}(t)$ are the eigenvalues of $(1+t)^{-1/2}(X+\sqrt{t}Y)$ and $(1+t)^{-1/2}(\wt{X}+\sqrt{t}Y)$ respectively, where $\wt{X}$ is a GOE/GUE matrix independent of $X$ and $Y$, then for any $D'$ we have
\begin{align}
	\max_{i\in[N]}\mbb{P}\Bigl(\bigl|(\lambda_{i+1}(t)-\lambda_{i}(t))-(\mu_{i+1}(t)-\mu_{i}(t))\bigr|>\frac{1}{Nt^{3}\cdot N^{2/3}\hat{i}^{1/3}}\Bigr)&\leq N^{-D'}.
\end{align}
Applying this again to the matrix $\wt{X}+\sqrt{t}Y=t^{1/2}(Y+t^{-1/2}\wt{X})$, this time considering $Y$ as the initial matrix, we have
\begin{align}
	\max_{i\in[N]}\mbb{P}\Bigl(\bigl|(\lambda_{i+1}(t)-\lambda_{i}(t))-(\nu_{i+1}(t)-\nu_{i}(t))\bigr|>\frac{1}{Nt^{3}\cdot N^{2/3}\hat{i}^{1/3}}\Bigr)&\leq N^{-D'},\label{eq:homogenisation}
\end{align}
where the $\nu_{i}(t)$ are eigenvalues of a GOE/GUE matrix that is independent of $X$ and $Y$. Using this bound we can replace $\lambda_{j}-\lambda_{i}$ with $\mu_{j}-\mu_{i}$ in the arguments of $f_{n,k}$, as long as $Nt^{3}\gg N^{\delta}$ and $|i-j|\ll N^{1-\delta}t^{3}$. By rigidity, we can drop pairs of indices that violate the latter constraint. For example, when $m=1$ we have
\begin{align}
	&\mbb{E}\sum_{i_{1},...,i_{k+1}}\zeta\Bigl(\frac{\lambda_{i_{1}}-\gamma_{n}}{d_{n}}\Bigr)f_{n,k}(\lambda_{i_{2}},...,\lambda_{i_{k+1}};\lambda_{i_{1}})\\
	&=\mbb{E}\sum_{i_{1},...,i_{k+1}}\zeta\Bigl(\frac{\lambda_{i_{1}}-\gamma_{n}}{d_{n}}\Bigr)f_{n,k}(\nu_{i_{2}},...,\nu_{i_{k+1}};\nu_{i_{1}})+O(N^{-D'})\\
	&\leq\mbb{E}\sum_{i_{1}}\zeta\Bigl(\frac{\lambda_{i_{1}}-\gamma_{n}}{d_{n}}\Bigr)\cdot\max_{|\nu-\gamma_{n}|<C\Delta_{n}}\mbb{E}\sum_{i_{2},...,i_{k+1}}f_{n,k}(\nu_{i_{2}},...,\nu_{i_{k+1}};\nu)+O(N^{-D'})\\
	&\leq N^{-c\delta k^{2}}\mbb{E}\sum_{i}\zeta\Bigl(\frac{\lambda_{i}-\gamma_{n}}{d_{n}}\Bigr)+O(N^{-D'}).
\end{align}
In the first equality we use rigidity to restrict $i_{2},...,i_{k+1}$ to values satisfying $|i_{a}-i_{1}|<N^{1-\delta}t^{3}$ and then apply \eqref{eq:homogenisation} to replace $\lambda_{i}$ with $\mu_{i}$ in the arguments of $f_{n,k}$. The factor of $N^{-c\delta k^{2}}$ in last inequality follows from the explicit form of the $k$-point functions of the GOE/GUE, which behave as $\prod_{i<j}|\lambda_{i}-\lambda_{j}|^{\beta}$. Summing the last line over $n$ we find
\begin{align}
	\mbb{E}\sum_{i_{1},...,i_{k+1}}\zeta\Bigl(\frac{\lambda_{i}-\gamma_{n}}{d_{n}}\Bigr)f_{n,k}(\lambda_{i_{2}},...,\lambda_{i_{k+1}};\lambda_{i_{1}})&\leq N^{1-ck^{2}\delta}.
\end{align}
Applying the same arguments for general $m\in\mbb{N}$ we obtain
\begin{align}
	A&\lesssim N^{m-c\delta \sum_{a}k_{a}^{2}}.
\end{align}

Combining the bounds for $A$ and $B$ we have
\begin{align}
	AB&\lesssim N^{m-(D+1)\sum_{a}|J_{a}|-c\delta \sum_{a}k_{a}^{2}}(x+\log N)^{(D+1)\sum_{a}|J_{a}|},
\end{align}
from which Proposition \ref{prop:p-gauss-divisible} follows.
\end{proof}

The rest of this section is devoted to the proof of Proposition \ref{prop:gauss-divisible} and divided into several subsections. In the first subsection we recall the method of partial diagonalisation and prove Proposition \ref{prop:gauss-divisible}, relying on some lemmas that are proved in the second and third subsections.

\subsection{Partial Diagonalisation}\label{sec:partialdiag}
We recall the convention that $\beta=1$ stands for real numbers and $\beta=2$ for complex numbers. Hence $\textup{S}^{n}_{\beta}$ denotes the real or complex $n$-sphere and $\mbb{F}_{\beta}$ denotes $\mbb{R}$ or $\mbb{C}$. Let $H$ be a Hermitian matrix with eigenvalue $\lambda_{1}$. Let $\mbf{v}_{1}$ be the corresponding unit eigenvector and $R_{1}=(\mbf{v}_{1},V_{1})$ the Householder matrix swapping $\mbf{e}_{1}$ and $\mbf{v}_{1}$, where $V_{1}\in\mbb{F}_{\beta}^{N\times(N-1)}$ is the partial isometry projecting onto the space orthogonal to $\mbf{v}_{1}$. Then we have
\begin{align}
    H&=R_{1}\begin{pmatrix}\lambda_{1}&0\\0&H^{(1)}\end{pmatrix}R_{1},
\end{align}
for some Hermitian $H^{(1)}$ of size $N-1$. Let $\lambda_{j}$ be an eigenvalue of $H^{(j)}$ with eigenvector $\mbf{v}_{j}\in \textup{S}_{\beta}^{N-j}$, and $R_{j}=(\mbf{v}_{j},V_{j})$ be the corresponding Householder matrix. Repeating the above transformation $m$ times we obtain
\begin{align}
    H&=U\begin{pmatrix}\Lambda&0\\0&H^{(m)}\end{pmatrix}U^{*},\label{eq:partialDiagonalisation}
\end{align}
where $\Lambda=\textup{diag}(\lambda_{1},...,\lambda_{m})$ and $U$ is a unitary matrix (a product of suitably dilated $R_{j}$). The eigenvectors $\{\mbf{u}_{j}\}$ of $H$ corresponding to $\{\lambda_{j}\}$ are given in terms of $\{\mbf{v}_{j}\}$ by
\begin{align}
    \mbf{u}_{j}&=V_{1}V_{2}\cdots V_{j-1}\mbf{v}_{j}.
\end{align}
Note that, since $V_{j}^{*}V_{j}=1_{N-j}$, for $j<k$ we have
\begin{align*}
    \mbf{u}_{j}^{*}\mbf{u}_{k}&=\mbf{v}_{j}^{*}V_{j-1}^{*}\cdots V_{1}^{*}V_{1}\cdots V_{k-1}\mbf{v}_{k}\\
    &=\mbf{v}_{j}^{*}V_{j}\cdots V_{k-1}\mbf{v}_{k}\\
    &=0,
\end{align*}
so $\mbf{u}_{i}$ are orthonormal as required.

To write down a compact formula for expectation values with respect to a Gaussian-divisible matrices, we first need some definitions. For $j=1,...,m$ and $\lambda_{j}\in\mbb{R}$, define the probability measures $\mu^{(\beta)}_{j}$ on $\textup{S}^{N-j}_{\beta}$ in the following recursive way. Let $X^{(0)}=X$ and $X^{(j)}=V_{j}^{*}X^{(j-1)}V_{j}$, where $\mbf{v}_{j}$ is distributed according to
\begin{align}
    \diff\mu^{(\beta)}_{j}(\mbf{v})&=\frac{1}{K^{(\beta)}_{j}(\lambda_{j},\{\mbf{v}_{i<j}\})}\Bigl(\frac{\beta N}{2\pi t}\Bigr)^{N-j}e^{-\frac{\beta N}{4t}(\mbf{v}^{*}X^{(j-1)}\mbf{v}-\lambda_{j})^{2}-\frac{\beta N}{2t}\|(1-\mbf{v}\mbf{v}^{*})X^{(j-1)}\mbf{v}\|^{2}}\diff S^{(\beta)}_{N-j}(\mbf{v}).
\end{align}
Here $K^{(\beta)}_{j}(\lambda_{j},\{\mbf{v}_{i<j}\})$ is the normalisation and $\diff S^{(\beta)}_{N-j}$ is the unnormalised Haar measure on $\textup{S}^{N-j}_{\beta}$. Note that $\mu^{(\beta)}_{j}$ is defined in terms of $X^{(j-1)}$, and hence depends on $\mbf{v}_{1},...,\mbf{v}_{j-1}$. Define
\begin{align}
	K_{\beta}(\bs\lambda,\{\mbf{v}_{j\leq m}\})&=\prod_{j=1}^{m}K^{(\beta)}_{j}(\lambda_{j},\{\mbf{v}_{i<j}\}),
\end{align}
and
\begin{align}
    F_{\beta}(\bs\lambda,\{\mbf{v}_{j\leq m}\})&=|\Delta(\bs\lambda)|^{\beta}\mbb{E}\Bigl[\prod_{j=1}^{m}\bigl|\det\bigl(X^{(m)}+\sqrt{t}Y^{(m)}-\lambda_{j}\bigr)\bigr|^{\beta}\Bigr],\label{eq:F}
\end{align}
where 
\begin{align}
	\Delta(\mbf{x})&:=\prod_{i<j}(x_{i}-x_{j})
\end{align}
is the Vandermonde determinant and the expectation is with respect to $Y^{(m)}\sim \sqrt{\frac{N}{N-m}}G\beta E(N-m)$. Here and in the following we use the shorthand $\{\mbf{v}_{j\leq m}\}=\{\mbf{v}_{1},...,\mbf{v}_{m}\}$. Note that the dependence of $F_{\beta}$ on $\{\mbf{v}_{j\leq m}\}$ is entirely contained in $X^{(m)}$.

Considering the map in \eqref{eq:partialDiagonalisation} as a change of variables, with $H=X+\sqrt{t}Y$ we have the following formula for expectation values (see \cite[Proposition 4.1]{maltsev_bulk_2024}).
\begin{lemma}\label{lem:partialDiagonalisation}
Let $m\in\mbb{N}$ and $f:\mbb{R}^{m}\times (\textup{S}_{\beta}^{N})^{m}$. Let $X$ be deterministic and $Y\sim G\beta E$ for $\beta\in\{1,2\}$. Then
\begin{align}
    &\mbb{E}\sum_{i_{1},...,i_{m}}f(\lambda_{i_{1}},...,\lambda_{i_{m}},\mbf{u}_{i_{1}},...,\mbf{u}_{i_{m}})\nonumber\\
    &=\Bigl(\frac{\beta N}{4\pi t}\Bigr)^{m/2}\int_{\mbb{R}^{m}}\mbb{E}_{1}\bigl[\cdots\mbb{E}_{m}\bigl[f(\bs\lambda,\{\mbf{u}_{j\leq m}\})F_{\beta}(\bs\lambda,\{\mbf{v}_{j\leq m}\})K_{\beta}(\bs\lambda,\{\mbf{v}_{j\leq m}\})\bigr]\cdots\bigr]\diff\bs\lambda,\label{eq:gaussdivisible}
\end{align}
where $\mbb{E}_{j}$ is the expectation with respect to $\diff\mu^{(\beta)}_{j}$.
\end{lemma}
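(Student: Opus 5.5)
We would prove the formula by induction on $m$, taking a single step of the partial diagonalisation \eqref{eq:partialDiagonalisation} as a change of variables and then iterating it on the minor $H^{(1)}$. For $m=1$ the density of $H=X+\sqrt{t}Y$ with respect to Lebesgue measure on Hermitian matrices is proportional to $\exp\bigl(-\frac{\beta N}{4t}\tr(H-X)^{2}\bigr)$. We parametrise $H$ by $(\lambda_{1},\mbf{v}_{1},H^{(1)})$ through $H=R_{1}(\lambda_{1}\oplus H^{(1)})R_{1}$.

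\textbf{Jacobian.} The first key step is the Jacobian of this map. We would compute the differential $dH$ in the frame $R_{1}$. The off-diagonal block of $R_{1}^{*}dH R_{1}$ equals $(H^{(1)}-\lambda_{1})\,V_{1}^{*}d\mbf{v}_{1}$, up to sign and conjugation. The diagonal blocks are $d\lambda_{1}$ and $dH^{(1)}$, modulo terms that do not affect the determinant. This gives
\begin{align*}
\diff H\propto|\det(H^{(1)}-\lambda_{1})|^{\beta}\diff\lambda_{1}\diff S^{(\beta)}_{N-1}(\mbf{v}_{1})\diff H^{(1)}.
\end{align*}
When $\beta=2$ the phase of $\mbf{v}_{1}$ is redundant and is divided out by the normalisation of the sphere measure; when $\beta=1$ it is the sign. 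Summing over $i_{1}$ on the left-hand side corresponds exactly to integrating over all $(\lambda_{1},\mbf{v}_{1})$, since the map is a.s.\ $N$-to-one onto eigenpairs counted once each. We expect this step, together with pinning down the constants $(\beta N/4\pi t)^{m/2}$ and the phase bookkeeping, to be the main obstacle. We would follow \cite[Proposition 4.1]{maltsev_bulk_2024} closely here.

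\textbf{Factorising the weight.} Next we expand the Gaussian weight in the new coordinates:
\begin{align*}
\tr(H-X)^{2}=(\lambda_{1}-\mbf{v}_{1}^{*}X\mbf{v}_{1})^{2}+2\|(1-\mbf{v}_{1}\mbf{v}_{1}^{*})X\mbf{v}_{1}\|^{2}+\tr\bigl(H^{(1)}-V_{1}^{*}XV_{1}\bigr)^{2}.
\end{align*}
This follows by writing $R_{1}^{*}XR_{1}$ in block form. The first two terms give the unnormalised density of $\mu^{(\beta)}_{1}$, so integrating them against $\diff S^{(\beta)}_{N-1}$ produces $K^{(\beta)}_{1}$. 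The third term says that $H^{(1)}$ is distributed as $X^{(1)}+\sqrt{t}Y^{(1)}$, where $Y^{(1)}\sim\sqrt{\frac{N}{N-1}}G\beta E(N-1)$, independent of $\lambda_{1}$ given $\mbf{v}_{1}$. Taking the expectation over $H^{(1)}$ of $|\det(H^{(1)}-\lambda_{1})|^{\beta}$ then yields $F_{\beta}$ for $m=1$. The remaining Gaussian normalisations combine into the prefactor.

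\textbf{Induction step.} For the step $m-1\to m$, we apply the $m=1$ identity to the Gaussian-divisible matrix $H^{(1)}=X^{(1)}+\sqrt{t}Y^{(1)}$. This is done conditionally on $(\lambda_{1},\mbf{v}_{1})$, with the weight $|\det(H^{(1)}-\lambda_{1})|^{\beta}$ kept inside the test function. The eigenpairs of $H$ other than $(\lambda_{1},\mbf{u}_{1})$ are $(\lambda_{j},V_{1}\mbf{w})$ for eigenpairs $(\lambda_{j},\mbf{w})$ of $H^{(1)}$, so distinct index tuples are preserved. Iterating, the accumulated determinant factors simplify by the block identity
\begin{align*}
\det(H^{(j-1)}-\lambda)=(\lambda_{j}-\lambda)\det(H^{(j)}-\lambda).
\end{align*}
These factors collapse to $|\Delta(\bs\lambda)|^{\beta}\prod_{j}|\det(H^{(m)}-\lambda_{j})|^{\beta}$. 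The conditional densities of the $\mbf{v}_{j}$ produce the nested expectations $\mbb{E}_{1}\cdots\mbb{E}_{m}$ and the product $K_{\beta}$, which gives \eqref{eq:gaussdivisible}.
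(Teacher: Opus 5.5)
Your proposal is correct and follows the paper's approach. The paper gives no separate proof: it treats the partial diagonalisation map as a change of variables and cites \cite[Proposition 4.1]{maltsev_bulk_2024}, which is your one-step Jacobian $|\det(H^{(1)}-\lambda_{1})|^{\beta}$, the splitting of $\tr(H-X)^{2}$ into the $\mu^{(\beta)}_{1}$ weight plus a Gaussian weight for $H^{(1)}$, and iteration on the minor using the one-step identity with test functions allowed to depend on $H^{(1)}$. One cosmetic point, not affecting the argument: your splitting gives $H^{(1)}-X^{(1)}$ a density proportional to $e^{-\frac{\beta N}{4t}\tr(\cdot)^{2}}$, i.e.\ $Y^{(1)}\sim\sqrt{(N-1)/N}\,G\beta E(N-1)$, so the factor $\sqrt{N/(N-1)}$ you copied from the paper is inverted.
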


In the remainder of this subsection we present some preliminary results useful for the analysis of \eqref{eq:gaussdivisible}. We assume throughout that $X$ satisfies the local law, i.e. the estimates in Lemma \ref{lem:locallaw}. We begin with a simple observation: by eigenvalue interlacing, we have
\begin{align}
    |\Tr{G^{(j)}(z)}-\Tr{G(z)}|&\lesssim\frac{j}{N\textup{dist}(z,\sigma(X))},
\end{align}
uniformly in $\{\mbf{v}_{i\leq j}\}$. Thus if $X$ satisfies the averaged local law, then so does $X^{(j)}$. This fact will be used repeatedly in the following.

We define $E^{(j)}_{*}$ by 
\begin{align}
    t\Tr{(G^{(j)}(E^{(j)}_{*}))^{2}}&=1\label{eq:E_*}
\end{align}
and set
\begin{align}
    \lambda^{(j)}_{*}&=E^{(j)}_{*}-\Tr{G^{(j)}(E^{(j)}_{*})}.\label{eq:lambda_*}
\end{align}
This is the spectral edge of the Gaussian-divisible matrix $X^{(j)}+\sqrt{t}Y^{(j)}$, and so we measure the distance from the spectral edge by
\begin{align}
	\kappa^{(j)}_{t}(\lambda)&:=\lambda^{(j)}_{*}-\lambda.\label{eq:kappa}
\end{align}
The asymptotic spectral edge of $X+\sqrt{t}Y$ is $2\sqrt{1+t}$ and so we also define a deterministic equivalent of $\kappa^{(j)}_{t}$ by
\begin{align}
    \kappa_{\lambda}&:=2-\frac{\lambda}{\sqrt{1+t}}.\label{eq:kappa_lambda}
\end{align}

Now we introduce the function
\begin{align}
	\phi^{(j)}_{\lambda}(z)&:=\frac{(z-\lambda)^{2}}{2t}+\Tr{\log(X^{(j)}-z)}.\label{eq:phi}
\end{align}
The functions $\phi^{(j)},\,j=1,...,m$ control the asymptotics of $F_{\beta}$, so we record their relevant properties below.
\begin{lemma}\label{lem:phi}
If $\kappa^{(j)}_{t}(\lambda)>0$, the equation
\begin{align}
	\partial_{z}\phi^{(j)}_{\lambda}(z)&=0\label{eq:phi'}
\end{align}
has two complex conjugate solutions $E^{(j)}_{\lambda}\pm i\eta^{(j)}_{\lambda}$ satisfying
\begin{align}
	E^{(j)}_{\lambda}&=\frac{2+t}{2(1+t)}\lambda+O\Bigl(\frac{1}{N\sqrt{|\kappa_{\lambda}|+t^{2}}}\Bigr),\label{eq:z1}\\
	(\eta^{(j)}_{\lambda})^{2}&=\frac{t^{2}}{1+t}\Bigl(1-\frac{\lambda^{2}}{4(1+t)}\Bigr)+O\Bigl(\frac{\sqrt{|\kappa_{\lambda}|+t^{2}}}{N}\Bigr).\label{eq:z2}
\end{align}
If $\kappa^{(j)}_{t}(\lambda)\leq0$, then \eqref{eq:phi'} has a real solution $E^{(j)}_{\lambda}$ satisfying
\begin{align}
    E^{(j)}_{\lambda}&=\frac{2+t}{2(1+t)}\lambda+\frac{t}{\sqrt{1+t}}\sqrt{\frac{\lambda^{2}}{4(1+t)}-1}+O\Bigl(\frac{1}{N\sqrt{|\kappa_{\lambda}|+t^{2}}}\Bigr).\label{eq:z3}
\end{align}
Moreover, we have
\begin{align}
	\Re\bigl(\phi^{(j)}_{\lambda}(E^{(j)}_{\lambda}\pm i(\eta^{(j)}_{\lambda}+s))-\phi^{(j)}_{\lambda}(E^{(j)}_{\lambda}\pm i\eta^{(j)}_{\lambda})\bigr)&\lesssim-t(|\kappa_{\lambda}|+t^{2})\cdot\frac{\Bigl(\frac{s}{t\sqrt{|\kappa_{\lambda}|+t^{2}}}\Bigr)^{4}}{1+\Bigl(\frac{s}{t\sqrt{|\kappa_{\lambda}|+t^{2}}}\Bigr)^{2}},
\end{align}
uniformly in $s\in[-\eta^{(j)}_{\lambda},\infty)$.
\end{lemma}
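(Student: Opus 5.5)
The plan is to reduce everything to the scalar equation $\partial_z\phi^{(j)}_\lambda(z)=0$, i.e.
\begin{align}
	z-\lambda&=t\,\Tr{G^{(j)}(z)},\qquad G^{(j)}(z)=(X^{(j)}-z)^{-1},
\end{align}
and to compare it with the deterministic equation obtained by replacing $\Tr{G^{(j)}}$ with $m_{sc}$. First I will solve the model equation $z-\lambda=t\,m_{sc}(z)$ explicitly. Using $m_{sc}^{2}+zm_{sc}+1=0$ and eliminating $z=\lambda+tm$ gives the quadratic
\begin{align}
	(1+t)m^{2}+\lambda m+1=0,\qquad m=\frac{-\lambda\pm\sqrt{\lambda^{2}-4(1+t)}}{2(1+t)}.
\end{align}
Then $z=\lambda+tm$ has real part $\lambda-\frac{t\lambda}{2(1+t)}=\frac{2+t}{2(1+t)}\lambda$. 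For $\lambda^{2}<4(1+t)$ the imaginary part squared is $\frac{t^{2}}{4(1+t)^{2}}(4(1+t)-\lambda^{2})=\frac{t^{2}}{1+t}\bigl(1-\frac{\lambda^{2}}{4(1+t)}\bigr)$. For $\lambda^{2}>4(1+t)$ the root is real and equals \eqref{eq:z3}. This gives the leading terms, and $\kappa_\lambda\asymp 1-\lambda^{2}/(4(1+t))$ near the edge, so $\eta_\lambda\asymp t\sqrt{|\kappa_\lambda|}$. One must keep $\sqrt{|\kappa_\lambda|+t^{2}}$ rather than $\sqrt{|\kappa_\lambda|}$ because the true edge $\lambda^{(j)}_*$ differs from $2\sqrt{1+t}$ by lower-order terms.

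Second, I will perturb to the actual equation. At the model solution $z_0$ we have $\Im z_0\gtrsim t\sqrt{|\kappa_\lambda|+t^{2}}\gg N^{-1+\epsilon}$ when $t\gg N^{-1/3}$. So the averaged local law of Lemma \ref{lem:locallaw}, transferred to $X^{(j)}$ by the interlacing remark, gives $|\Tr{G^{(j)}(z)}-m_{sc}(z)|\lesssim N^{\xi}/(N\Im z)$ in a neighbourhood of $z_0$; outside the spectrum I will use the second version of the local law. The derivative of $h(z)=z-\lambda-t m_{sc}(z)$ at $z_0$ is $1-tm_{sc}'(z_0)$. Since $m'=m^{2}/(1-m^{2})$, this is of order $\sqrt{|\kappa_\lambda|+t^{2}}$, because $1-t\,m'$ vanishes exactly at the edge. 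A Rouché or contraction argument on a disc of radius $r\ll t\sqrt{|\kappa_\lambda|+t^{2}}$ then yields a unique root, displaced by
\begin{align}
	\frac{tN^{-1}(\Im z_0)^{-1}}{\sqrt{|\kappa_\lambda|+t^{2}}}\sim\frac{1}{N(|\kappa_\lambda|+t^{2})}.
\end{align}
This must be sharpened to the stated $O(1/(N\sqrt{|\kappa_\lambda|+t^{2}}))$ for $E$. To do so I will use that $t\Tr{G^{(j)}}$ is evaluated at a point whose imaginary part is itself of order $t\sqrt{\cdot}$, so the fluctuation error is $t\cdot N^{-1}/(t\sqrt{\cdot})$. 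For $\eta^{2}$ the error is multiplied by $\eta$, giving $O(\sqrt{\cdot}/N)$. The main obstacle is exactly here: the regime near the edge where $|\kappa_\lambda|\lesssim t^{2}$ and the derivative degenerates. There I will instead use the exact edge $\lambda^{(j)}_*,E^{(j)}_*$ from \eqref{eq:E_*}–\eqref{eq:lambda_*}, at which $1=t\Tr{G^{(j)2}}$, and Taylor expand to second order to obtain the square-root behaviour in $\kappa^{(j)}_t$. The sign of $\kappa^{(j)}_t$ decides whether the pair of roots is complex conjugate or real, and since $\phi$ is real on the real axis the roots come in conjugate pairs.

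Finally, for the decay estimate I will write
\begin{align}
	\Re\phi(E+i(\eta+s))-\Re\phi(E+i\eta)&=-\frac{(\eta+s)^{2}-\eta^{2}}{2t}+\frac{1}{2N}\sum_k\log\frac{(x_k-E)^{2}+(\eta+s)^{2}}{(x_k-E)^{2}+\eta^{2}}.
\end{align}
Its $s$-derivative is $-(\eta+s)/t+(\eta+s)\Tr{|G(E+i(\eta+s))|^{2}}$. At $s=0$ the critical point equation gives $\Tr{|G|^{2}}=\Im\Tr{G}/\eta=1/t$, so the derivative vanishes. Moreover $\Tr{|G(E+iy)|^{2}}=\Im\Tr{G}/y$ is decreasing in $y$, so the derivative is negative for $s>0$ and positive for $s<0$, which makes the difference nonpositive. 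I will quantify this with the semicircle approximation: $\Im m_{sc}(E+iy)/y$ decreases like $1/t-c(s/\text{scale})^{2}/t$ for small $s$ and like $1/y^{2}$ for large $s$. Integrating gives a drop of $t(|\kappa_\lambda|+t^{2})\,u^{4}/(1+u^{2})$, where $u=s/(t\sqrt{|\kappa_\lambda|+t^{2}})$: it behaves as $u^{4}$ for small $u$ and as $s^{2}/t$ for large $u$. The same computation applies to the $-i$ root by conjugation.
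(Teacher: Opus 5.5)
Your route to the critical points differs from the paper's, and as written it rests on a wrong computation. You assert $|1-tm_{sc}'(z_0)|\asymp\sqrt{|\kappa_\lambda|+t^{2}}$, which contradicts your own remark that this quantity vanishes at the edge. From $m'=m^{2}/(1-m^{2})$ and $(1+t)m^{2}+\lambda m+1=0$,
\begin{align*}
	1-tm_{sc}'(z_0)&=\frac{1-(1+t)m^{2}}{1-m^{2}}=\frac{2+\lambda m}{1-m^{2}}.
\end{align*}
Near the edge $|2+\lambda m|\asymp\sqrt{|\kappa_\lambda|}$ and $|1-m^{2}|\asymp t+\sqrt{|\kappa_\lambda|}$, so $|h'(z_0)|\asymp\bigl(|\kappa_\lambda|/(|\kappa_\lambda|+t^{2})\bigr)^{1/2}$. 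This is of order one for $|\kappa_\lambda|\gtrsim t^{2}$ and $\asymp\sqrt{|\kappa_\lambda|}/t$ below. The displacement $1/(N(|\kappa_\lambda|+t^{2}))$ is therefore an artefact of the wrong derivative. Your proposed sharpening is not an argument: the numerator $t\cdot N^{-1}/(t\sqrt{\cdot})$ is exactly what you had already used, and re-estimating it cannot remove a spurious factor in the denominator. With the correct derivative, Rouch\'e on a disc of radius a small multiple of $t\sqrt{|\kappa_\lambda|+t^{2}}$ gives the stated error for $E^{(j)}_\lambda$ directly, and an $O(t/N)$ error for $(\eta^{(j)}_\lambda)^{2}$, whenever $|\kappa_\lambda|\geq ct^{2}$. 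The radius constraints are compatible since $Nt^{3}\gg N^{\xi}$. Also, near the edge $\Im z_0\asymp t\sqrt{\kappa_\lambda}$, not $\gtrsim t\sqrt{|\kappa_\lambda|+t^{2}}$. The correct statement is $\textup{dist}(z_0,[-2,2])\gtrsim t\sqrt{|\kappa_\lambda|+t^{2}}$, because $E_*-2\asymp t^{2}$; this is where the $t^{2}$ comes from, not the shift of the true edge.

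The paper avoids the degeneracy instead of perturbing in $z$. It splits $\partial_z\phi^{(j)}_\lambda=0$ into real and imaginary parts. At fixed $E$ the imaginary part is $t\Tr{((X^{(j)}-E)^{2}+\eta^{2})^{-1}}=1$, which is monotone in $\eta^{2}$, so $\eta^{2}(E)$ is unique and compared with the explicit $\eta_\infty^{2}(E)$. The real part then becomes the non-degenerate equation $(E-\lambda)/t=-E/(2+t)+O\bigl(1/(Nt\sqrt{|\kappa_\lambda|+t^{2}})\bigr)$. Using $\eta^{2}$ rather than $z$ as the unknown removes the square-root branching, so all of $\{\kappa^{(j)}_t(\lambda)>0\}$ is handled uniformly. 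Only beyond the edge is an expansion about $E^{(j)}_*$ needed. For your edge window, use the paper's exact form of that expansion: by the resolvent identity and $t\Tr{G^{(j)}(E^{(j)}_*)^{2}}=1$,
\begin{align*}
	z-\lambda-t\Tr{G^{(j)}(z)}&=\kappa^{(j)}_{t}(\lambda)-t\bigl(z-E^{(j)}_{*}\bigr)^{2}\Tr{G^{(j)}(E^{(j)}_{*})^{2}G^{(j)}(z)},
\end{align*}
valid for complex $z$, with no Taylor remainder. Since $\Tr{G^{(j)}(E^{(j)}_*)^{3}}<0$, this makes the real/complex dichotomy transparent. You still need $E^{(j)}_*$ and $\lambda^{(j)}_*$ to precision $O(1/(Nt))$, and the coefficient to relative precision $O(1/(Nt^{3}))$, to match the stated formulas for $|\kappa_\lambda|\lesssim t^{2}$. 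What your route buys is the closed-form equation $(1+t)m^{2}+\lambda m+1=0$, which gives the leading terms more transparently than the paper's computation via $\eta_\infty(E)$.

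Your decay argument uses the paper's mechanism: the $s$-derivative vanishes at the critical point, and monotonicity of $y\mapsto\Tr{|G(E+iy)|^{2}}$ fixes its sign. Two points are needed for uniformity in $s\in[-\eta_\lambda,\infty)$, and the paper handles both (below, drop $(j)$ and set $z_\lambda=E+i\eta_\lambda$). First, approximating $\Tr{|G(E+iy)|^{2}}$ itself by $\Im m_{sc}(E+iy)/y$ fails for small $|s|$, because the local-law error $\sim N^{-1}y^{-2}$ exceeds the decrease you need to capture. Instead write
\begin{align*}
	\frac{1}{t}-\Tr{|G(E+i(\eta_{\lambda}+s))|^{2}}&=\Bigl(\frac{1}{t}-\Tr{|G(z_{\lambda})|^{2}}\Bigr)+2\int_{\eta_{\lambda}}^{\eta_{\lambda}+s}y\Tr{|G(E+iy)|^{4}}\diff y.
\end{align*}
The first term vanishes when $\eta_\lambda>0$ and is nonnegative in the real case. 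Use the local law only to bound the positive quantity $\Tr{|G|^{4}}$ from below; this is the paper's double-integral formula. Second, when $\eta_\lambda+s$ is near $0$ the local law is unavailable in the bulk, and one compares with $y=\epsilon\eta_\lambda$ by monotonicity. Note also that away from the edge the decrease of $\Tr{|G|^{2}}$ is linear in $s$, with slope $2\eta_\lambda\Tr{|G|^{4}}$, so the drop there is $\asymp s^{2}/t$. Your quadratic picture is only a sufficient lower bound, and the quartic behaviour is genuine only as $\eta_\lambda\to0$.
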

These properties follow directly from the local law and are proved in the appendix. In the remainder, we will use the following shorthand:
\begin{align}
    z^{(j)}_{\lambda}&:=E^{(j)}_{\lambda}+i\eta^{(j)}_{\lambda},\\
    G^{(j)}_{\lambda}&:=G^{(j)}(z^{(j)}_{\lambda}),\\
    \phi^{(j)}_{\lambda}&:=\phi^{(j)}_{\lambda}(z^{(j)}_{\lambda}),
\end{align}
and drop the superscript $(j)$ when $j=0$.

We will also need the following variant of $\phi^{(j)}$, which controls the asymptotics of $K_{\beta}$:
\begin{align}
    \psi^{(j)}_{\lambda}(E)&=\frac{(E-\lambda)^{2}-(\eta^{(j)}(E))^{2}}{2t}-\frac{1}{2}\Tr{\log\bigl((X^{(j)}-E)^{2}+(\eta^{(j)}(E))^{2}\bigr)},
\end{align}
where $\eta^{(j)}(E)$ is defined by
\begin{align}
    t\Tr{\bigl((X^{(j)}-E)^{2}+(\eta^{(j)}(E))^{2}\bigr)^{-1}}&=1\label{eq:eta(s)}.
\end{align}
Since the left-hand side is monotonic in $\eta^{2}$, there is a real-valued solution for $(\eta^{(j)}(E))^{2}$. If $E>E^{(j)}_{*}$, then $(\eta^{(j)}(E))^{2}<0$ and hence $\eta^{(j)}(E)$ is imaginary. Note that if $E\leq E^{(j)}_{*}$, then $\psi^{(j)}_{\lambda}(E)=\Re\phi^{(j)}_{\lambda}(E+i\eta^{(j)}(E))$. Also note that $|\eta^{(j)}(E^{(j)}_{\lambda})|=|\eta^{(j)}_{\lambda}|$.
\begin{lemma}\label{lem:phi2}
Let $E\in\mbb{R}$. Then
\begin{align}
	\psi^{(j)}_{\lambda}(E)-\psi^{(j)}_{\lambda}(E^{(j)}_{\lambda})&\gtrsim\frac{(E-E^{(j)}_{\lambda})^{2}}{t}.
\end{align}
\end{lemma}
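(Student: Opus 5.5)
The plan is to study $\psi^{(j)}_{\lambda}$ as a function of one real variable and show it is uniformly convex with curvature of order $1/t$, with $E^{(j)}_{\lambda}$ as its critical point. The result then follows by integrating the second derivative twice from $E^{(j)}_{\lambda}$. Throughout I drop the superscript $(j)$ and write $\eta(E)$, $G=G(E+i\eta(E))$, with $\eta^{2}$ as the variable where convenient.

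\emph{Step 1: first derivative.} I would use the defining equation \eqref{eq:eta(s)} for $\eta(E)$ as an envelope condition. Differentiating $\psi_{\lambda}$ with respect to $\eta^{2}$ at fixed $E$ gives
\begin{align}
-\frac{1}{2t}+\frac12\Tr{\bigl((X-E)^{2}+\eta^{2}\bigr)^{-1}},
\end{align}
which vanishes exactly by \eqref{eq:eta(s)}. Hence
\begin{align}
\psi_{\lambda}'(E)=\frac{E-\lambda}{t}+\Tr{(X-E)\bigl((X-E)^{2}+\eta(E)^{2}\bigr)^{-1}}=\frac{E-\lambda}{t}-\Re\Tr{G(E+i\eta(E))}.
\end{align}
This is the real part of $\partial_{z}\phi_{\lambda}$ evaluated at $E+i\eta(E)$. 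Since the imaginary part of $\partial_z\phi_\lambda$ vanishes automatically on the curve $\eta=\eta(E)$ by \eqref{eq:eta(s)}, the critical point of $\psi_\lambda$ is $E_{\lambda}$ from Lemma \ref{lem:phi}, in both the case $\kappa_{t}>0$ and the case $\kappa_{t}\le 0$. For $E>E_{*}$, where $\eta^{2}<0$, the same formula holds as long as $\eta^{2}>-\mathrm{dist}(E,\sigma(X))^{2}$, which is guaranteed by monotonicity.

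\emph{Step 2: second derivative.} Differentiating again and using $\frac{d\eta^{2}}{dE}$, obtained by implicit differentiation of \eqref{eq:eta(s)}, gives
\begin{align}
\psi''_{\lambda}(E)=\frac1t-\Tr{\frac{(X-E)^{2}-\eta^{2}}{((X-E)^{2}+\eta^{2})^{2}}}+\frac{\bigl(\Tr{(X-E)((X-E)^{2}+\eta^{2})^{-2}}\bigr)^{2}}{\Tr{((X-E)^{2}+\eta^{2})^{-2}}}.
\end{align}
The last term is nonnegative. Writing $A=(X-E)^{2}+\eta^{2}$ and using $(X-E)^2-\eta^2 = A-2\eta^{2}$, the middle term equals $-\Tr{A^{-1}}+2\eta^{2}\Tr{A^{-2}}=-\frac1t+2\eta^{2}\Tr{A^{-2}}$. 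So
\begin{align}
\psi''_{\lambda}=2\eta^{2}\Tr{A^{-2}}+\frac{(\Tr{(X-E)A^{-2}})^{2}}{\Tr{A^{-2}}}.
\end{align}
With the exact cancellation the remaining task is to show $\psi''_\lambda\gtrsim 1/t$. In the bulk regime, where $\eta\sim t$ by Lemma \ref{lem:phi} and the local law, this is immediate: $\Tr{A^{-2}}\ge \Tr{A^{-1}}^{2}=t^{-2}$ by Cauchy–Schwarz, so $2\eta^{2}\Tr{A^{-2}}\gtrsim 1$. This is not yet of order $1/t$, so I need to reconsider.

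A sharper bound comes from the Cauchy–Schwarz combination
\begin{align}
\Tr{A^{-1}}^{2}=\Tr{\bigl((X-E)^{2}+\eta^{2}\bigr)A^{-2}}^{2}\le \Tr{A^{-2}}\cdot\bigl(\Tr{(X-E)^2A^{-2}}+\eta^{2}\Tr{A^{-2}}\bigr)\cdot\ldots
\end{align}
This is too lossy. Instead I would evaluate directly with the local law: $\Tr{A^{-2}}\approx \Im m/(2\eta^{3})$-type quantities give $\eta^{2}\Tr{A^{-2}}\sim \eta^{-1}\Im m\cdot\frac12\sim 1/t$, since $\Im m\sim\eta/t$ when $\eta\sim t$ (from $\Tr{A^{-1}}=\Im m/\eta=1/t$). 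More precisely, $\Tr{A^{-2}}=\frac{1}{2\eta^{2}}\bigl(\Im m/\eta-\partial_\eta \Im m\bigr)$, and by the local law at scale $\eta\gg N^{-1}$ we have $\partial_\eta\Im m=O(1)\ll 1/t$. Hence $2\eta^2\Tr{A^{-2}}=1/t-O(1)\gtrsim 1/t$.

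\emph{Step 3: edge and outside regimes.} Near and beyond $E_{*}$, $\eta^{2}$ becomes small or negative, and the first term degenerates. There the second term must compensate: $\Tr{(X-E)A^{-2}}\approx \Re\,\partial_z m$ and $\Tr{A^{-2}}$ are both of size $\sim 1/(t\sqrt{\kappa+t^2})$, so their ratio squared divided by... gives order $1/t$ using the square-root behaviour of $m$ near the free-convolution edge. I expect this edge and outside regime, where the lower bound must come from the interplay of both terms with error control uniform in $\kappa$, to be the \textbf{main obstacle}. For it I would first try exact free-convolution identities for $m_{fc}$ rather than crude Cauchy–Schwarz estimates.

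Once $\psi''_{\lambda}\gtrsim 1/t$ holds uniformly and $\psi'_\lambda(E_{\lambda})=0$, Taylor's theorem with integral remainder yields $\psi_\lambda(E)-\psi_\lambda(E_\lambda)\gtrsim (E-E_\lambda)^2/t$. For $|E|$ far from the spectrum the quadratic term $(E-\lambda)^{2}/2t$ dominates trivially.
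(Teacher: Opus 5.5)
Your computation is the same as the paper's: the envelope formula for $\psi'_\lambda$, implicit differentiation of \eqref{eq:eta(s)}, and $\psi''_\lambda=2\eta^2\Tr{A^{-2}}+2\Tr{(X-E)A^{-2}}^2/\Tr{A^{-2}}$. Two small slips: you dropped the factor $2$ on the second term, and your first expression for $\psi'_\lambda$ has the wrong sign in front of $\Tr{(X-E)A^{-1}}$; the form $\frac{E-\lambda}{t}-\Re\Tr{G}$ is the correct one. The paper then simply asserts $\psi''_\lambda\gtrsim 1/t$ ``by the local law''.

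Your write-up does not establish that bound, and the bound is the entire content of the lemma. Write $z=E+i\eta(E)$. Your Step~2 uses $2\eta^2\Tr{A^{-2}}=t^{-1}-\Re\Tr{G^2(z)}$ together with $\Re\Tr{G^2(z)}\ll t^{-1}$. That smallness holds only when $E$ is at distance $\gg t^2$ inside the spectral edge. As $E\uparrow E_*$ one has $\Tr{G^2(z)}\to t^{-1}$ by \eqref{eq:E_*}, so the first term tends to $0$. For $E>E_*$ it is negative, not merely small. Step~3 is supposed to cover exactly this window, but it only states a hope, and its scaling is wrong. $E_*$ lies at distance $\asymp t^2$ from the edge, where $|\Tr{(X-E_*)^{-3}}|\asymp t^{-3}$ and $\Tr{(X-E_*)^{-4}}\asymp t^{-5}$. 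These two quantities are not of the same size, and that mismatch is precisely what makes the second term $\asymp t^{-1}$.

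One way to close the gap uses, with your $A$, the identities $\Re\Tr{G^2(z)}=\Tr{((X-E)^2-\eta^2)A^{-2}}$ and $\Im\Tr{G^2(z)}=2\eta\Tr{(X-E)A^{-2}}$. With them the correct formula becomes
\[
\psi''_\lambda(E)=\frac{\bigl|t^{-1}-\Tr{G^{2}(z)}\bigr|^{2}}{t^{-1}-\Re\Tr{G^{2}(z)}}=\frac{1}{t\,\Re\bigl[(1-t\Tr{G^{2}(z)})^{-1}\bigr]}.
\]
Equivalently, $\psi'_\lambda(E)=(w(E)-\lambda)/t$ with $w(E):=z-t\Tr{G(z)}$, which is real by \eqref{eq:eta(s)}. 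Hence $\psi''_\lambda=t^{-1}\,dw/dE$, and the required bound is exactly $0<dE/dw\lesssim 1$.

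\begin{itemize}
\item For the semicircle, the identity $\int\frac{(x-E)\rho_{sc}(x)\diff x}{(x-E)^2+\eta_\infty^2(E)}=-\frac{E}{2+t}$ from the proof of Lemma \ref{lem:phi} gives $E=\frac{2+t}{2(1+t)}w$ exactly for $E<E_*$.
\item For $X$, the local law transfers this away from $E_*$.
\item Near $E_*$, where $1-t\Tr{G^2}$ vanishes, you need a second-order expansion in $z-E_*$, controlled by $\Tr{(X-E_*)^{-k}}$ for $k=3,4$.
\item For $E>E_*$, write $(X-E)^2+\eta^2=(X-u_+)(X-u_-)$ with $u_\pm=E\pm|\eta(E)|$. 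Then \eqref{eq:eta(s)} reads $w(u_+)=w(u_-)$ for $w(u)=u-t\Tr{G(u)}$. This $w$ is convex with concave derivative on $(\lambda_{\max}(X),\infty)$, which forces $|w'(u_-)|>w'(u_+)$. Hence $dE/dw=\frac12\bigl(1/w'(u_+)+1/w'(u_-)\bigr)\in\bigl(0,1/(2w'(u_+))\bigr]$, which is bounded except near $E_*$.
\end{itemize}

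The same formula shows that your Step~1 claim fails for $\kappa^{(j)}_t(\lambda)<0$. There $\phi_\lambda$ has two real critical points $u_-<E_*<u_+$, and the $E_\lambda$ of Lemma \ref{lem:phi} is $u_+$. But $\psi'_\lambda(u_+)=(w(u_++|\eta(u_+)|)-\lambda)/t>0$, and the zero of $\psi'_\lambda$ is $(u_++u_-)/2$. So in that regime the Taylor step, and the lemma itself, only hold with $E_\lambda$ replaced by the minimiser of $\psi_\lambda$.
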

For ease of notation we define
\begin{align}
    \psi^{(j)}_{\lambda}&:=\psi^{(j)}_{\lambda}(E^{(j)}_{\lambda}),\\
    H^{(j)}(E)&:=\bigl((X^{(j)}-E)^{2}+(\eta^{(j)}(E))^{2}\bigr)^{-1},\label{eq:H}\\
    H^{(j)}_{\lambda}&:=H^{(j)}(E^{(j)}_{\lambda}).
\end{align}

To handle integrals over $\textup{S}^{n}_{\beta}$ we use the following ``duality" formula. The proof proceeds by approximating the Haar measure by a Gaussian (see \cite[Lemma 3.4]{maltsev_bulk_2024}).
\begin{lemma}\label{lem:Sduality}
Let $f:\textup{S}^{n}_{\beta}\to\mbb{C}$ have an extension to $L^{1}(\mbb{F}_{\beta}^{n})$. Then there is an explicit constant $c_{n}>0$ such that
\begin{align}
	\int_{\textup{S}^{n}_{\beta}}f(\mbf{v})\diff S^{(\beta)}_{n}(\mbf{v})&=c_{n}\int_{-\infty}^{\infty}e^{\frac{i\beta p}{2}}\hat{f}_{\beta}(p)\diff p,
\end{align}
where
\begin{align}
	\hat{f}_{\beta}(p)&:=\int_{\mbb{F}^{n}_{\beta}}e^{\frac{i\beta p}{2}\|\mbf{x}\|^{2}}f(\mbf{x})\diff\mbf{x}.
\end{align}
\end{lemma}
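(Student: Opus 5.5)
The plan is to replace Haar measure on the sphere by a Gaussian measure on $\mbb{F}^{n}_{\beta}$ via a one-dimensional Fourier representation of the radial delta function. We write the sphere integral as a delta function in the radius and then represent that delta function as a Fourier integral in $p$. Concretely, for $f\in L^{1}(\mbb{F}^{n}_{\beta})$ we use polar coordinates $\mbf{x}=r\mbf{v}$ with $r>0$, $\mbf{v}\in\textup{S}^{n}_{\beta}$. The Lebesgue measure becomes $\diff\mbf{x}=r^{\beta n-1}\diff r\,\diff S^{(\beta)}_{n}(\mbf{v})$, up to the usual normalising constant, where the real dimension of $\mbb{F}_{\beta}^{n}$ is $\beta n$. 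Then
\begin{align*}
	\int_{\mbb{F}^{n}_{\beta}}\delta(\|\mbf{x}\|^{2}-1)f(\mbf{x})\diff\mbf{x}&=\int_{0}^{\infty}\delta(r^{2}-1)r^{\beta n-1}\int_{\textup{S}^{n}_{\beta}}f(r\mbf{v})\diff S^{(\beta)}_{n}(\mbf{v})\diff r=\frac{1}{2}\int_{\textup{S}^{n}_{\beta}}f(\mbf{v})\diff S^{(\beta)}_{n}(\mbf{v}).
\end{align*}
Next we insert the Fourier representation $\delta(s)=\frac{\beta}{4\pi}\int_{\mbb{R}}e^{-\frac{i\beta p}{2}s}\diff p$, with the scaling chosen to match the exponent $\frac{i\beta p}{2}$. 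Setting $s=\|\mbf{x}\|^{2}-1$ gives $e^{\frac{i\beta p}{2}}e^{-\frac{i\beta p}{2}\|\mbf{x}\|^{2}}$. The sign of $p$ can then be flipped by the change of variables $p\to-p$, after which the formula matches the statement's convention $e^{\frac{i\beta p}{2}\|\mbf{x}\|^{2}}$, with the prefactor $e^{\frac{i\beta p}{2}}$ acquiring the corresponding sign. A formal exchange of the $p$ and $\mbf{x}$ integrals then yields the identity with an explicit constant $c_{n}$, coming from the factor $2$, the factor $\beta/4\pi$, and the polar-coordinate normalisation.

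The main obstacle is justifying this exchange, since $\int e^{i p s}\diff p$ is not absolutely convergent. I would regularise in two steps.
\begin{enumerate}[i)]
\item Replace the delta function by a Gaussian mollifier in $r^{2}$: insert a damping factor $e^{-\epsilon p^{2}}$ in the $p$-integral. For each fixed $\epsilon>0$ the double integral converges absolutely because $f\in L^{1}$, so Fubini applies. The inner $p$-integral becomes the heat kernel $\frac{1}{\sqrt{4\pi\epsilon'}}e^{-(\|\mbf{x}\|^{2}-1)^{2}/4\epsilon'}$, with $\epsilon'$ a rescaling of $\epsilon$ absorbing the factor $\beta/2$, which is an approximate identity in the variable $\|\mbf{x}\|^{2}$.
\item Let $\epsilon\to0$. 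In polar coordinates the left side becomes $\int_{0}^{\infty}k_{\epsilon}(r^{2}-1)r^{\beta n-1}g(r)\diff r$, where $g(r)=\int_{\textup{S}^{n}_{\beta}}f(r\mbf{v})\diff S^{(\beta)}_{n}(\mbf{v})$ and $k_{\epsilon}$ denotes the heat kernel from step i). This converges to $\frac12 g(1)$ provided $g$ is continuous at $r=1$.
\end{enumerate}

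So the precise hypothesis we need is that the spherical average $g$ is continuous near $1$. This is the sense in which $f$ "has an extension" in the statement, and it holds for the integrands used later, which are polynomials times Gaussians. We also need that $\hat f_{\beta}(p)e^{\frac{i\beta p}{2}}$ is integrable, or is interpreted as the limit of the $e^{-\epsilon p^{2}}$-regularised integral. In the applications $\hat f_{\beta}(p)$ decays like $|p|^{-\beta n/2}$ because of the Gaussian factor, so for $n\geq 3$ dominated convergence lets us remove the regulariser on the right-hand side as well.

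Finally, we would compute $c_{n}$ explicitly by testing the identity on $f\equiv e^{-\|\mbf{x}\|^{2}}$, equivalently by tracking the constants above. This fixes $c_{n}$ in terms of the surface area of $\textup{S}^{n}_{\beta}$ and the factors $\beta/4\pi$ and $\frac12$. For the later use of this lemma, only the independence of $c_{n}$ from $f$ is relevant.
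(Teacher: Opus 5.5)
Your argument is correct and follows essentially the paper's route. The paper only says the proof ``proceeds by approximating the Haar measure by a Gaussian'' (deferring to \cite[Lemma 3.4]{maltsev_bulk_2024}), which is your Gaussian regularisation of $\delta(\|\mbf{x}\|^{2}-1)$ via the damping $e^{-\epsilon p^{2}}$, followed by Fubini and $\epsilon\to0$; your remark that one really needs continuity of the spherical average at $r=1$ plus some integrability in $p$ usefully sharpens the loose ``$L^{1}$ extension'' hypothesis. Your sign bookkeeping is also right, and you should state it outright rather than as an aside: if $\hat{f}_{\beta}$ carries $e^{+\frac{i\beta p}{2}\|\mbf{x}\|^{2}}$, the prefactor must be $e^{-\frac{i\beta p}{2}}$. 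With equal signs, as printed, the right-hand side vanishes for $f=e^{-\|\mbf{x}\|^{2}}$ (close the $p$-contour in the upper half-plane), and the opposite-sign pairing is the one the paper actually uses later, e.g.\ $e^{\frac{i\beta Np}{2t}}$ against $\det^{-\beta/2}(1+ipH^{(j-1)}(E))$ in Lemma \ref{lem:E_j}.
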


The following lemma shows that quadratic forms in $\mbf{v}_{i}$ concentrate, and can be proved in the same way as \cite[Lemma 7.2]{maltsev_bulk_2024} (we give a sketch in the appendix).
\begin{lemma}\label{lem:conc}
Let $A\in\textup{Herm}(N-i)$ and $\mbf{v}_{i}\sim\mu^{(\beta)}_{i}$. Then
\begin{align}
	\Bigl|\mbf{v}_{i}^{*}A\mbf{v}_{i}-t\Tr{AH^{(i-1)}_{\lambda_{i}}}\Bigr|&\prec\frac{t}{\sqrt{N}}\Tr{(AH^{(i-1)}_{\lambda_{i}})^{2}}^{1/2}.\label{eq:conc}
\end{align}
\end{lemma}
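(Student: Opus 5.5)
The strategy is to write the density of $\mu^{(\beta)}_{i}$ as a Gaussian density on $\mbb{F}_\beta^{N-i+1}$ restricted to the sphere, and then lift it off the sphere with the duality formula of Lemma \ref{lem:Sduality}. First we expand the exponent. For $|\mbf{v}|=1$ we have $\|(1-\mbf{v}\mbf{v}^{*})X\mbf{v}\|^{2}=\mbf{v}^{*}X^{2}\mbf{v}-(\mbf{v}^{*}X\mbf{v})^{2}$, where $X=X^{(i-1)}$ and $\lambda=\lambda_i$. The exponent then equals, up to a constant, $-\frac{\beta N}{2t}\mbf{v}^{*}(X-\lambda)^{2}\mbf{v}+\frac{\beta N}{4t}(\mbf{v}^{*}X\mbf{v}-\lambda)^{2}$. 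The quartic term is removed by a Hubbard--Stratonovich transform in an auxiliary real variable $s$, which turns it into $\frac{\beta N}{2t}\bigl(s(\mbf{v}^{*}X\mbf{v}-\lambda)-s^{2}/2\bigr)$ integrated against $ds$.

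After this, the weight $e^{-\frac{\beta N}{2t}\mbf{v}^{*}[(X-\lambda)^{2}-s(X-\lambda)]\mbf{v}}$ is Gaussian in $\mbf{v}$. Completing the square, the matrix $(X-\lambda)^2-s(X-\lambda)$ becomes $(X-E)^{2}+(\text{const})$ with $E=\lambda+s/2$. Adding the duality variable $p$ from Lemma \ref{lem:Sduality}, which shifts the constant by $\|\mbf{x}\|^{2}$, yields the covariance $\frac{t}{N}\bigl((X-E)^{2}+\eta^{2}\bigr)^{-1}$ with $\eta^2$ depending on $(s,p)$. The resulting two-dimensional integral over $(E,\eta^2)$ has an exponent of the form $-N\psi_\lambda$, up to the normalisation. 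By Lemma \ref{lem:phi2}, together with the saddle equation \eqref{eq:eta(s)}, it concentrates at $E=E^{(i-1)}_{\lambda}$, $\eta=\eta^{(i-1)}_{\lambda}$, on windows of width $\sqrt{t/N}$ in $E$ and of the corresponding size in $\eta^{2}$.

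Next we evaluate the statistic. For the quadratic form $\mbf{v}^{*}A\mbf{v}$ we compute the moment generating function $\mbb{E}e^{\theta\mbf{v}^{*}A\mbf{v}}$ in the Gaussian picture, which is a ratio of determinants $\det(1-\frac{2\theta t}{\beta N}AH(E))^{-\beta/2}$ (taking $\eta$ as defining $H$). Writing the log-determinant as $\frac{\theta t}{N}\tr AH+O(\theta^{2}t^{2}N^{-2}\tr (AH)^{2})$ gives the mean $t\Tr{AH}$ and fluctuations of order $\frac{t}{\sqrt N}\Tr{(AH)^2}^{1/2}$ at fixed $(E,\eta)$. We take $\theta$ of size $N^{1/2+\epsilon}/(t\Tr{(AH)^2}^{1/2})$ in a Chernoff bound, and this gives the $\prec$ tail bound.

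It remains to replace $H(E,\eta)$ by $H^{(i-1)}_{\lambda}$ over the saddle window. Since $H$ is smooth in $(E,\eta^2)$ with derivatives bounded by resolvent norms, which are of order $\eta^{-2}\sim t^{-2}$ where the local law holds, the shift in $t\Tr{AH}$ across a window of width $\sqrt{t/N}$ should be absorbed into the error. We control this shift by Cauchy--Schwarz, $|\Tr{A\,\delta H}|\le\Tr{(AH)^2}^{1/2}\Tr{(H^{-1}\delta H)^2}^{1/2}$.

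The main obstacle is the saddle-point step. The tilt $e^{\theta\mbf{v}^*A\mbf{v}}$ moves the saddle, and we must show the perturbed $(E,\eta)$ integral is still dominated by a window of the same size. In the $p$ variable the contour has to be deformed to pass through the imaginary saddle, and near the edge, when $\kappa\le0$, $\eta$ becomes imaginary and Lemma \ref{lem:phi} must be used instead of Lemma \ref{lem:phi2}. To handle this I would first bound the tails of the $(E,p)$ integral uniformly in $|\theta|$ of that size, using the quadratic lower bound of Lemma \ref{lem:phi2} plus the quartic decay of Lemma \ref{lem:phi}, and then do a Gaussian expansion inside the window. This mirrors \cite[Lemma 7.2]{maltsev_bulk_2024}.
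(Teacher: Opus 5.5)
Your route is the paper's. You write the Laplace transform of $\mbf{v}_{i}^{*}A\mbf{v}_{i}$ under $\mu^{(\beta)}_{i}$ through the Hubbard--Stratonovich variable $E$ and the duality variable $p$ of Lemma \ref{lem:Sduality}. You then localise with Lemma \ref{lem:phi2} and the decay of $|\det^{-1}(1+ipB)|$, treat $A$ as a perturbation of $H^{(i-1)}(E)$, and conclude with Markov's inequality, as in \cite[Lemma 7.2]{maltsev_bulk_2024}.

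The step that fails as written is your choice $\theta\sim N^{1/2+\epsilon}/(t\Tr{(AH)^{2}}^{1/2})=N^{\epsilon}/\sigma$, where $\sigma:=\frac{t}{\sqrt{N}}\Tr{(AH)^{2}}^{1/2}$. Your second-order expansion of $\log\det(1-\frac{2\theta t}{\beta N}AH)$ needs $\frac{|\theta|t}{N}\|\sqrt{H}A\sqrt{H}\|\leq1-\delta$. This is exactly the hypothesis under which the paper treats $A$ perturbatively. Put $b:=\frac{t}{N}\|\sqrt{H}A\sqrt{H}\|$; then $b\leq\sigma$, with equality for rank-one $A$. Hence your $\theta$ breaks the condition, by up to a factor $N^{\epsilon}$, whenever $A$ has effective rank $(\sigma/b)^{2}\lesssim N^{2\epsilon}$. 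For $A=\mbf{q}\mbf{q}^{*}$, the form in which quantities like $|u_{j,i}|^{2}$ arise, the Gaussian surrogate MGF is infinite at your $\theta$. Indeed, there $\mbf{v}^{*}A\mbf{v}$ is $\chi^{2}$-like, with fluctuations of the size of its mean. The repair is to take $\theta=\min(N^{\epsilon}/\sigma,\,c/b)$. This yields a Hanson--Wright type tail $\exp(-c\min(u^{2}/\sigma^{2},u/b))$, and since $b\leq\sigma$, the deviation $u=N^{\epsilon}\sigma$ still has probability at most $e^{-cN^{\epsilon}}$.

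There are two smaller points. First, when you replace $H(E,\eta)$ by $H^{(i-1)}_{\lambda_{i}}$ across the window, the factor $\Tr{(H^{-1}\delta H)^{2}}^{1/2}$ in your Cauchy--Schwarz bound must be estimated in averaged form with the local law. In the bulk $\Tr{(H^{-1}\partial_{E}H)^{2}}\asymp t^{-1}$, so the window $|E-E_{\lambda_{i}}|\lesssim\sqrt{t/N}\log N$ costs only $O(\sigma\log N)$. The operator-norm bound $\|H^{-1}\partial_{E}H\|\asymp t^{-1}$, suggested by your remark on resolvent norms, loses a factor $t^{-1/2}$ and does not close. Second, the paper's $\prec$ is logarithmic: $\leq y\log N$ with probability $1-e^{-c\log^{2}N}$. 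With $\log N$ in place of $N^{\epsilon}$, the capped argument delivers this only when $\sigma/b\gtrsim\log N$, i.e. for $A$ of effective rank $\gtrsim\log^{2}N$.
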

Here and in the rest of this section we use the notation $x\prec y$ or $x=O_{\prec}(y)$ to mean
\begin{align}
    x\leq y\log N,
\end{align}
with probability at least $1-e^{-c\log^{2}N}$ with respect to $\{\mbf{v}_{j}\}$.

We also record the following identity relating $G^{(j)}$ and $G^{(j-1)}$.
\begin{lemma}\label{lem:resolventProjection}
Let $V_{j}\in\mbb{F}_{\beta}^{(N-j+1)\times(N-j)}$ be the partial isometry such that $R_{j}=(\mbf{v}_{j},V_{j})$ is a Householder matrix. Then we have
\begin{align}
	V_{j}G^{(j)}(z)V_{j}^{*}&=G^{(j-1)}(z)-\frac{G^{(j-1)}(z)\mbf{v}_{j}\mbf{v}_{j}^{*}G^{(j-1)}(z)}{\mbf{v}_{j}^{*}G^{(j-1)}(z)\mbf{v}_{j}}.
\end{align}
\end{lemma}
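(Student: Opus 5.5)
The identity is a rank-one Schur-complement statement about the resolvent, so I plan to prove it by block inversion in the basis given by the Householder matrix $R_{j}=(\mbf{v}_{j},V_{j})$. Since $R_{j}$ is unitary, we have $\mbf{v}_{j}\mbf{v}_{j}^{*}+V_{j}V_{j}^{*}=1$, $V_{j}^{*}V_{j}=1_{N-j}$ and $V_{j}^{*}\mbf{v}_{j}=0$. By the recursive definition $X^{(j)}=V_{j}^{*}X^{(j-1)}V_{j}$, the matrix $R_{j}^{*}(X^{(j-1)}-z)R_{j}$ has the block form
\begin{align*}
	\begin{pmatrix}a&\mbf{b}^{*}\\ \mbf{b}&X^{(j)}-z\end{pmatrix},\qquad a=\mbf{v}_{j}^{*}(X^{(j-1)}-z)\mbf{v}_{j},\quad \mbf{b}=V_{j}^{*}X^{(j-1)}\mbf{v}_{j}.
\end{align*}
(The lemma is purely algebraic, so it does not matter that in the partial diagonalisation $\mbf{v}_{j}$ is an eigenvector of the random matrix $H^{(j-1)}$ rather than of $X^{(j-1)}$; the block $\mbf{b}$ is generally nonzero.) Consequently $R_{j}^{*}G^{(j-1)}(z)R_{j}$ is the inverse of this block matrix, and the quantities in the claimed identity become blocks of that inverse.

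The key step is the Schur complement formula applied to the $(1,1)$ entry. Write $R_{j}^{*}G^{(j-1)}R_{j}=\begin{pmatrix}g&\mbf{h}^{*}\\ \mbf{k}&M\end{pmatrix}$, where $g=\mbf{v}_{j}^{*}G^{(j-1)}\mbf{v}_{j}$, $\mbf{k}=V_{j}^{*}G^{(j-1)}\mbf{v}_{j}$, $\mbf{h}^{*}=\mbf{v}_{j}^{*}G^{(j-1)}V_{j}$, and $M=V_{j}^{*}G^{(j-1)}V_{j}$. The standard block-inverse identity, taking the inverse of the lower-right block of a matrix to be its Schur complement with respect to the $(1,1)$ entry, gives
\begin{align*}
	(X^{(j)}-z)^{-1}=M-\mbf{k}g^{-1}\mbf{h}^{*}.
\end{align*}
Both sides can be checked directly from $\begin{pmatrix}g&\mbf{h}^{*}\\ \mbf{k}&M\end{pmatrix}\begin{pmatrix}a&\mbf{b}^{*}\\ \mbf{b}&X^{(j)}-z\end{pmatrix}=1$: the second column gives $g\mbf{b}^{*}+\mbf{h}^{*}(X^{(j)}-z)=0$ and $\mbf{k}\mbf{b}^{*}+M(X^{(j)}-z)=1$. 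Eliminating $\mbf{b}^{*}=-g^{-1}\mbf{h}^{*}(X^{(j)}-z)$ yields $(M-\mbf{k}g^{-1}\mbf{h}^{*})(X^{(j)}-z)=1$. So
\begin{align*}
	G^{(j)}(z)=V_{j}^{*}G^{(j-1)}V_{j}-\frac{V_{j}^{*}G^{(j-1)}\mbf{v}_{j}\mbf{v}_{j}^{*}G^{(j-1)}V_{j}}{\mbf{v}_{j}^{*}G^{(j-1)}\mbf{v}_{j}}.
\end{align*}

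Finally, conjugate by $V_{j}$, multiplying by $V_{j}$ on the left and $V_{j}^{*}$ on the right, and use $V_{j}V_{j}^{*}=1-\mbf{v}_{j}\mbf{v}_{j}^{*}=:P$. The right-hand side becomes $PG P-PG\mbf{v}\mbf{v}^{*}GP/g$, where I write $G=G^{(j-1)}$ and $\mbf{v}=\mbf{v}_{j}$. Expanding $P$ everywhere and collecting terms, the mixed terms cancel exactly. For example, $PG\mbf{v}=G\mbf{v}-g\mbf{v}$, so
\begin{align*}
	PG\mbf{v}\mbf{v}^{*}GP/g=(G\mbf{v}-g\mbf{v})(\mbf{v}^{*}G-g\mbf{v}^{*})/g=G\mbf{v}\mbf{v}^{*}G/g-\mbf{v}\mbf{v}^{*}G-G\mbf{v}\mbf{v}^{*}+g\mbf{v}\mbf{v}^{*}.
\end{align*}
Likewise, $PGP=G-\mbf{v}\mbf{v}^{*}G-G\mbf{v}\mbf{v}^{*}+g\mbf{v}\mbf{v}^{*}$. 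Subtracting the two leaves $G-G\mbf{v}\mbf{v}^{*}G/g$, which is the claim.

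The only point needing care is the invertibility of $g=\mbf{v}_{j}^{*}G^{(j-1)}(z)\mbf{v}_{j}$. For $\Im z\neq0$ this holds because $\Im g=\Im z\,\|G^{(j-1)}\mbf{v}_{j}\|^{2}\neq0$, and for real $z$ off the spectra it follows by continuity or analytic continuation. I do not expect any genuine obstacle: the proof is a two-line Schur complement computation followed by the cancellation check above.
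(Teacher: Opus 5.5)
Your proof is correct. The paper states this identity without proof, and your block inversion in the basis $R_j=(\mbf{v}_j,V_j)$ followed by conjugation with $V_j$ is exactly the standard Schur-complement verification it relies on; the same computation gives the Cramer's-rule identity $\det(X^{(j)}-z)=\det(X^{(j-1)}-z)\,\mbf{v}_j^{*}G^{(j-1)}(z)\mbf{v}_j$ that the paper uses later.

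One small correction to your closing remark: for real $z$, continuity does not show that $g$ is nonzero. The right justification is $g^{-1}=a-\mbf{b}^{*}(X^{(j)}-z)^{-1}\mbf{b}$, which shows $g\neq 0$ precisely for $z\notin\sigma(X^{(j-1)})\cup\sigma(X^{(j)})$. In any case, the paper only applies the lemma at non-real $z$.
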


We now state three lemmas concerning $F_{\beta}$ and $K_{\beta}$. The first Lemma gives the independence of $F_{2}$ with respect to $\{\mbf{v}_{j}\}$.
\begin{lemma}\label{lem:Ftilde}
There is a function $\wt{F}_{2}(\bs\lambda)$ such that
\begin{align}
	F_{2}(\bs\lambda,\{\mbf{v}_{j\leq m}\})&=\Biggl[1+O_{\prec}\Biggl(\frac{1}{\sqrt{Nt^{3}}}\Biggr)\Biggr]\wt{F}_{2}(\bs\lambda)+O(e^{-c\log^{2}N}),\label{eq:Ftilde2}
\end{align}
uniformly in $\bs{\lambda}$.
\end{lemma}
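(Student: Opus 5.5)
We will derive a finite-dimensional integral representation for $F_{2}$ by the supersymmetry (or, for $\beta=2$, the simpler bosonic/fermionic duality) method, and then show that the only dependence on $\{\mbf{v}_{j\leq m}\}$ sits in traces of resolvents of $X^{(m)}$ which concentrate. First, for $\beta=2$, $|\det(X^{(m)}+\sqrt{t}Y^{(m)}-\lambda_{j})|^{2}$ is a product of $2m$ determinants. We represent each as a Grassmann Gaussian integral and average over the GUE matrix $Y^{(m)}$. A Hubbard--Stratonovich transformation then gives
\begin{align*}
F_{2}(\bs\lambda,\{\mbf{v}_{j\leq m}\})&=c_{N,m}|\Delta(\bs\lambda)|^{2}\int_{\mathrm{U}(2m)\text{ or }\textup{Herm}(2m)}e^{-\frac{N}{2t}\tr(Q-\Lambda')^{2}}\det\bigl(X^{(m)}\otimes 1_{2m}-Q\bigr)\diff Q,
\end{align*}
with $\Lambda'=\diag(\bs\lambda,\bs\lambda)$. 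Here the integrand depends on $X^{(m)}$ only through $\exp\bigl(\tr\log(X^{(m)}-q_{k})\bigr)$ evaluated at the eigenvalues $q_{k}$ of $Q$ (Harish-Chandra/Itzykson--Zuber reduces to eigenvalues).

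Second, we compare $\tr\log(X^{(m)}-z)$ with $\tr\log(X-z)$. Iterating Lemma \ref{lem:resolventProjection}, we get $\tr\log(X^{(j)}-z)=\tr\log(X^{(j-1)}-z)+\log\bigl(-\mbf{v}_{j}^{*}G^{(j-1)}(z)\mbf{v}_{j}\bigr)$, using $\det$ of a bordered matrix (Schur complement). Hence
\[
\tr\log(X^{(m)}-z)=\tr\log(X-z)+\sum_{j\le m}\log\bigl(-\mbf{v}_{j}^{*}G^{(j-1)}(z)\mbf{v}_{j}\bigr).
\]
By Lemma \ref{lem:conc}, each $\mbf{v}_{j}^{*}G^{(j-1)}(z)\mbf{v}_{j}$ equals the deterministic-given-$X$ quantity $t\Tr{G^{(j-1)}(z)H^{(j-2)}_{\lambda_{j}}}$ up to a relative error $O_{\prec}((Nt^{3})^{-1/2})$. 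This holds for $z$ near the saddle points $z^{(m)}_{\lambda_{k}}$ of Lemma \ref{lem:phi}, where $\Im z\sim t$ so that $\|G\|\lesssim t^{-1}$ and $\Tr{(AH)^{2}}^{1/2}\lesssim t^{-3/2}$ relative to the main term. Replacing $G^{(j-1)}$ by $G$ and $H^{(j-2)}$ by $H$ via interlacing costs $O(1/(Nt))$, which is smaller. Thus the $\mbf{v}$-dependence factors out as $\prod_{j,k}[1+O_{\prec}((Nt^{3})^{-1/2})]\cdot$(deterministic in $X,\bs\lambda$).

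Third, we perform the saddle point analysis in the $Q$-integral. Lemma \ref{lem:phi} gives the decay of $\Re\phi$ away from $z_{\lambda}$ on scale $t\sqrt{|\kappa_{\lambda}|+t^{2}}$. The region where $q_{k}$ is far from the saddles, including near the real axis where the concentration estimate degrades, contributes $O(e^{-c\log^{2}N})$ relative, or absolutely after normalisation. On the dominant region the $\mbf{v}$-dependent factor is $1+O_{\prec}((Nt^{3})^{-1/2})$ uniformly, so it pulls out of the integral. We define $\wt{F}_{2}(\bs\lambda)$ as the same integral with the concentrated quantities substituted, which yields \eqref{eq:Ftilde2}.

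The main obstacle is uniformity in $\bs\lambda$, particularly near and outside the edge ($\kappa\le0$, real saddle) and when several $\lambda_{j}$ coalesce. The Vandermonde must then cancel singular Jacobian factors, so we cannot apply relative errors naively to an integrand that changes sign or is oscillatory. For this we would deform contours so that the integrand is nonnegative up to phase along steepest descent paths, use the lower bound of Lemma \ref{lem:phi} on $\Re\phi$ to control the modulus, and bound the error term by the absolute integral. The absolute integral is comparable to the main term at the saddle.
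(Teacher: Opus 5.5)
Your route coincides with the paper's up to the last step: the integral representation of Lemma \ref{lem:F_2}, Cramer's rule $\det(X^{(m)}-is)/\det(X-is)=\prod_{j\le m}\mbf{v}_j^*G^{(j-1)}(is)\mbf{v}_j$, Lemma \ref{lem:conc}, interlacing, and localisation of $\mbf{s}$ to the saddle windows. The gap is in taking the relative error $O_\prec((Nt^3)^{-1/2})$ outside the $\mbf{s}$-integral.

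You rightly flag this step as the danger, but your remedy does not work. That remedy is to use steepest-descent contours and bound the error by the absolute integral, which you claim is comparable to the main term. Steepest descent only fixes the phase of $e^{N\phi}$. It cannot make $\Delta(\mbf{s})$ sign-definite, and the relevant cancellations come from $\Delta(\mbf{s})$.

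Suppose two variables $s_a,s_b$ lie in the same saddle window, whose width is $\sqrt{t/N}$ since $\Re\phi''\sim t^{-1}$ there. Then the factor $s_a-s_b$ integrates against the weights to something of the size of the distance between their saddles, $|z_{\lambda_a}-z_{\lambda_b}|\lesssim|\lambda_a-\lambda_b|$. By contrast, $|s_a-s_b|$ integrates to $\sim\sqrt{t/N}$. Your bound therefore gives a relative error of order $(Nt^{3})^{-1/2}\sqrt{t/N}/|\lambda_a-\lambda_b|=(Nt|\lambda_a-\lambda_b|)^{-1}$. This is $\gtrsim t^{-1}\gg1$ in the local bulk regime $|\lambda_a-\lambda_b|\lesssim d_E\sim N^{-1}$. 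That is precisely the regime needed in Propositions \ref{prop:p-gauss-divisible} and \ref{prop:gauss-divisible}, and uniformity in $\bs\lambda$ requires arbitrarily close $\lambda$'s anyway.

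The same failure occurs for $\kappa\le0$, where all $s_j$ share one real saddle. Already for $m=1$, the factor $(s_1-s_2)s_2$ has signed integral of order $w^2$ but absolute integral of order $w$, with $w$ the window width. For a non-degenerate real saddle $w\sim\sqrt{t/N}$, so the relative error is $\sim t^{-2}$.

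The missing idea is that the $\mbf{v}$-dependent correction is a \emph{symmetric} function of $\mbf{s}$. It equals $\prod_{j=1}^{2m}h(s_j)$ with the \emph{same} $h(s)=\det(X^{(m)}-is)/\det(X-is)$ for every $j$. Hence the antisymmetry of $\Delta(\mbf{s})$ acts identically on the main and error terms. This is what the paper means by ``the error is symmetric in $s_j$, so cancellations due to $\Delta(\mbf{s})$ occur for both terms''.

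One way to make this precise is by multilinearity:
$\int\Delta(\mbf{s})\prod_jW_j(s_j)h(s_j)\diff\mbf{s}=\pm\det\bigl[\int s^{k-1}W_j(s)h(s)\diff s\bigr]_{k,j}$.
Here $W_j$ collects $e^{N\phi_{\lambda_j}(is)}$ and the extra factor $s$ for $j>m$, so $\mbf{v}$ enters only through one-variable integrals.
\begin{itemize}
\item Write $h=(1+\epsilon)f$ with $f(s)=\prod_k t\Tr{H(E_{\lambda_k})G(is)}$, which is common to all columns.
\item Then $\epsilon$ is analytic on a neighbourhood of each saddle of size $\gtrsim t^2$, with $|\epsilon|\prec(Nt^3)^{-1/2}$.
\item The Vandermonde cancellations are exposed by column differences and by re-centring $s^{k-1}$ at the saddle. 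By Cauchy estimates these operations only generate derivatives of $\epsilon$ weighted by powers of $\sqrt{t/N}/t^{2}=(Nt^{3})^{-1/2}$.
\end{itemize}
So the determinant changes by a relative $O_\prec((Nt^3)^{-1/2})$, uniformly in $\bs\lambda$.

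Two minor slips: on $\textup{Herm}(2m)$ your Hubbard--Stratonovich Gaussian has the wrong sign, since the contour is $Q\in\Lambda'+i\,\textup{Herm}(2m)$. Also, Lemma \ref{lem:conc} for $\mbf{v}_j$ involves $H^{(j-1)}_{\lambda_j}$, not $H^{(j-2)}_{\lambda_j}$.
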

The second lemma concerns $F_{1}$, where we need to impose additional restrictions for technical reasons.
\begin{lemma}\label{lem:Ftilde1}
Fix $E\in[-2,2]$ and let $\max_{i\in[m]}|\lambda_{i}-E\sqrt{1+t}|<Cd_{E}$, for some $C>0$. Then there is a function $\wt{F}_{1}(\bs\lambda)$ such that
\begin{align}
	F_{1}(\bs\lambda,\{\mbf{v}_{j\leq m}\})&=\Biggl[1+O_{\prec}\Biggl(\frac{1}{\sqrt{Nt^{3}}}\Biggr)\Biggr]\wt{F}_{1}(\bs\lambda)+O(e^{-c\log^{2}N}).\label{eq:Ftilde1}
\end{align}
\end{lemma}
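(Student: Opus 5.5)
Fix $E\in[-2,2]$ and $\bs\lambda$ with all $\lambda_{i}$ within $Cd_{E}$ of $E\sqrt{1+t}$. The plan is to represent $F_{1}$ as a finite-dimensional integral, show that it depends on $\{\mbf{v}_{j\leq m}\}$ only through a few quadratic forms, and then replace those forms by deterministic equivalents using Lemma \ref{lem:conc}.

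\textbf{Step 1: integral representation.} Write $|\det(X^{(m)}+\sqrt{t}Y^{(m)}-\lambda_{j})|=\det(\cdot)\,\textup{sgn}\det(\cdot)$. Alternatively, and more conveniently, use the real supersymmetric representation of $\prod_{j}|\det(\cdot)|$ through Gaussian integrals over real vectors together with Grassmann vectors. Averaging over $Y^{(m)}\sim\sqrt{N/(N-m)}\,GOE(N-m)$ and performing a Hubbard--Stratonovich transformation reduces $F_{1}$ to an integral over $O(m)\times O(m)$ superspace variables. The integrand has the form
\begin{align*}
e^{-N\Phi(Q)}\,\det\nolimits^{\pm1/2}\bigl(X^{(m)}\otimes 1-Q\bigr).
\end{align*}
Here $\Phi$ collects the Gaussian terms from the average over $Y^{(m)}$, and $Q$ is built from $\bs\lambda$ and the auxiliary matrices.

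The absolute value causes the real case to differ from $\beta=2$. In the complex case $|\det|^{2}=\det\cdot\overline{\det}$ is polynomial, but for $\beta=1$ we must use $|\det|=\det^{2}/|\det|$ or a $\det^{2}$-type identity with an extra inverse-square-root factor. This leaves a bosonic sector with a noncompact saddle manifold.

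\textbf{Step 2: reduce the $\mbf{v}$-dependence to quadratic forms.} Apply Lemma \ref{lem:resolventProjection} iteratively to express the traces $\Tr{\log(X^{(m)}-z)}$ and $\Tr{(G^{(m)}(z))^{k}}$ in terms of $X$ and the quadratic forms $\mbf{v}_{j}^{*}G^{(j-1)}(z)\mbf{v}_{j}$ and their derivatives in $z$. This is done for $z$ near the saddle points $z^{(m)}_{\lambda_{i}}$ of Lemma \ref{lem:phi}.

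The hypothesis $|\lambda_{i}-\lambda_{k}|\lesssim d_{E}$ enters here. All $\lambda_{i}$ share essentially the same saddle point, up to a shift of order $d_{E}/t$ relative to the scale $t\sqrt{|\kappa|+t^{2}}$. The saddle manifold is therefore a single $O(m)$- or $Sp$-type orbit, and the mixed terms can be Taylor-expanded around one point $z_{E}$. Without this hypothesis the real case would require separate asymptotics of the form $\mbb{E}\prod|\det|$ with well-separated $\lambda_{j}$, which we want to avoid.

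\textbf{Step 3: saddle point analysis.} Deform the contours through $z_{E}$ and $\bar{z}_{E}$. Lemma \ref{lem:phi} gives quartic decay of $\Re\phi$ away from the saddle, and Lemma \ref{lem:phi2} handles the bosonic direction. Together they localise the integral to a neighbourhood of the saddle of size $O(\log N/\sqrt N)$ in the natural rescaled variables, at a cost of an additive $O(e^{-c\log^{2}N})$.

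Inside this neighbourhood, the integrand depends on $\{\mbf{v}_{j}\}$ only through finitely many quadratic forms $\mbf{v}_{j}^{*}A\mbf{v}_{j}$, with $A$ a polynomial in $G^{(j-1)}(z_{E})$ and $\overline{G^{(j-1)}(z_{E})}$. Lemma \ref{lem:conc} replaces each such form by $t\Tr{AH^{(j-1)}_{\lambda_{j}}}$. The fluctuation is $\frac{t}{\sqrt N}\Tr{(AH)^{2}}^{1/2}$. We have $\|G\|\lesssim 1/\eta_{\lambda}\sim 1/t$, and after normalisation by the saddle scale each fluctuation contributes a relative error $O_{\prec}((Nt^{3})^{-1/2})$ in the exponent. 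Since there are only finitely many forms, exponentiating gives the multiplicative factor $[1+O_{\prec}((Nt^{3})^{-1/2})]$.

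We define $\wt{F}_{1}(\bs\lambda)$ as the same integral with the quadratic forms replaced by their deterministic equivalents. These equivalents are evaluated by the local law and depend only on $X$ and $\bs\lambda$; the dependence on $\mbf{v}$ goes through $H^{(j-1)}$, which is itself replaced using interlacing. This yields \eqref{eq:Ftilde1}.

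\textbf{Main obstacle.} I expect the main obstacle to be Step 3 in the real case. The bosonic integrals coming from $|\det|$ are not compact and the sign structure is delicate. One must show that the Gaussian fluctuation of the quadratic forms changes the localised integral only multiplicatively, uniformly in $\bs\lambda$ within the $Cd_{E}$ window. I would first attempt this by bounding the ratio of the integrand at perturbed and unperturbed quadratic-form values pointwise on the localised region, using the strict positivity of the leading Gaussian weight there.
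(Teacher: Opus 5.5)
Your route differs from the paper's: you put all $m$ absolute values into one supersymmetric integral and then apply concentration. The step you postpone to the end is where it fails.

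\textbf{Multiplicativity.} Passing from a pointwise factor $1+O_{\prec}((Nt^{3})^{-1/2})$ on the localised integrand to a multiplicative error for $F_{1}$ requires that the integral not be much smaller than the integral of its absolute value. The ``strict positivity of the leading Gaussian weight'' you intend to use for this is false.
\begin{itemize}
\item On the localised region the integrand is a sum over saddle configurations, each variable near $z_{\lambda_j}$ or $\bar z_{\lambda_j}$. These carry relative phases $e^{\pm iN\Im\phi_{\lambda_j}}$, which for $|\lambda_i-\lambda_j|\sim d_E$ are of order one and vary on the $d_E$-scale.
\item They are multiplied by signed Vandermonde and Grassmann factors. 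Already for $\beta=2$ the $\mbf{s}$-integral in Lemma~\ref{lem:F_2} carries the prefactor $\Delta^{-2}(\bs\lambda)$. So it vanishes to high order as the $\lambda_j$ merge, which the $Cd_E$-window does not exclude.
\end{itemize}
A pointwise relative error $\varepsilon(\mbf{s})$ therefore only gives an additive error of size $\varepsilon\int|\cdot|$, which can dwarf $\wt{F}_1$.

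What is needed is structure rather than positivity. First, the Cramer's-rule factor is a symmetric product $\prod_j g(s_j)$, so the cancellations are shared by the leading term and the error. Second, the leading term must not oscillate. This second point is where the paper really uses $\max_i|\lambda_i-E\sqrt{1+t}|<Cd_E$. The dominant contribution is a sum of terms $\Re(e^{N\sum_j\phi^{\pm}_{\lambda_j}})$ whose phases cancel only when the $\lambda_j$ are close. Your use of the hypothesis, merging the saddle points, is only the minor part; it corresponds to the paper's remark that the $\textup{Sp}$-integral $I_n(\mbf{s})$ does not move the saddle.

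\textbf{Absolute values.} The second missing ingredient is how the absolute values are handled. The paper never puts all of them into one superintegral. It uses the spin-variable identity (Lemma~\ref{lem:spin}) to remove all but one of them. The price is extra eigenvalues lying between the smallest and largest $\lambda_j$, i.e.\ inside the $d_E$-window. These are integrated out by partial diagonalisation, which produces $\wt{K}$ from Lemma~\ref{lem:K}. Two objects remain:
\begin{itemize}
\item $F_{1,1}$, a product of plain determinants, handled by the symplectic formula of Lemma~\ref{lem:F_1,1};
\item $F_{1,2}$, with a single $|\det|=\det^{2}/|\det|$. Since $\det^{-1/2}((H-\lambda)^{2}+\epsilon)$ is quadratic in $Y$, this factor needs the radial variable $r=\|\mbf{x}\|^{2}$ and the measure $\nu_r$ of Lemma~\ref{lem:F_1,2}. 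The limit $\epsilon\to0$ is justified by a degree count in $r$.
\end{itemize}
The hypothesis then ensures that $\wt{F}_{1,2}(\lambda_2,\lambda_1,\lambda)$ does not oscillate in the extra eigenvalue $\lambda\in(\lambda_1,\lambda_2)$, so the error can be pulled out of $\int_{\lambda_1}^{\lambda_2}\diff\lambda$.

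Your Step 1 with $m$ such factors is not actually set up. If you keep $(H-\lambda_j)^2$, the $Y$-average has its covariance perturbed by $\sum_j\mbf{x}_j\mbf{x}_j^{T}$, so it is not a standard Hubbard--Stratonovich step. If you split into $\lambda_j\pm i0$, the bosonic sector alone diverges as $\epsilon\to0$ (indeed $\mbb{E}|\det(H-\lambda)|^{-1}=\infty$). The limit must then be controlled after recombining with the fermions. You flag this as the main obstacle but give no mechanism for it.
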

For real matrices, we are only able to prove the desired independence on the local scale, but the result should hold uniformly in $\bs\lambda$.

The third lemma gives the analogous independence for $K^{(\beta)}_{j}(\lambda,\{\mbf{v}_{i<j}\})$.
\begin{lemma}\label{lem:K}
There is a function $\wt{K}^{(\beta)}_{j}$ such that
\begin{align}
    K^{(\beta)}_{j}(\lambda,\{\mbf{v}_{i<j}\})&=\Bigl[1+O_{\prec}\Bigl(\frac{1}{\sqrt{Nt^{3}}}\Bigr)\Bigr]\wt{K}^{(\beta)}_{j}(\lambda).\label{eq:K}
\end{align}
\end{lemma}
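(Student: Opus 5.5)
We need to prove Lemma \ref{lem:K}: the normalisation $K^{(\beta)}_{j}(\lambda,\{\mbf{v}_{i<j}\})$ is, up to a multiplicative error $1+O_{\prec}((Nt^{3})^{-1/2})$, a function $\wt{K}^{(\beta)}_{j}(\lambda)$ of $\lambda$ alone.

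The plan is to compute $K^{(\beta)}_{j}$ by saddle point asymptotics and isolate where $\{\mbf{v}_{i<j}\}$ enters. Write
\begin{align*}
K^{(\beta)}_{j}=\Bigl(\frac{\beta N}{2\pi t}\Bigr)^{N-j}\int_{\textup{S}^{N-j}_{\beta}}e^{-\frac{\beta N}{4t}(\mbf{v}^{*}X^{(j-1)}\mbf{v}-\lambda)^{2}-\frac{\beta N}{2t}\|(1-\mbf{v}\mbf{v}^{*})X^{(j-1)}\mbf{v}\|^{2}}\diff S^{(\beta)}_{N-j}(\mbf{v}).
\end{align*}
On the sphere, $\|(1-\mbf{v}\mbf{v}^{*})X\mbf{v}\|^{2}=\mbf{v}^{*}X^{2}\mbf{v}-(\mbf{v}^{*}X\mbf{v})^{2}$. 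I would linearise the square $(\mbf{v}^{*}X\mbf{v}-\lambda)^{2}$ and the term $(\mbf{v}^{*}X\mbf{v})^{2}$ with a Hubbard--Stratonovich variable $E$. This leaves an integrand that is Gaussian in $\mbf{v}$ apart from the sphere constraint. The sphere constraint is then removed with Lemma \ref{lem:Sduality}, which introduces the dual variable $p$. The $\mbf{v}$-integral becomes a determinant, and we get
\begin{align*}
K^{(\beta)}_{j}\propto\int\!\!\int e^{-\frac{N}{t}\Phi(E,p)}\diff E\diff p,\qquad \Phi(E,p)\sim\frac{(E-\lambda)^{2}}{2}+\frac{t\beta}{2}\Tr{\log((X^{(j-1)}-E)^{2}+\eta^{2})}+\cdots,
\end{align*}
where $-ip$ is shifted to play the role of $\eta^{2}$. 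After this reduction, the only dependence on $\{\mbf{v}_{i<j}\}$ is through normalised traces of functions of $X^{(j-1)}$.

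The next step is the saddle point analysis. The $p$-saddle reproduces the defining equation \eqref{eq:eta(s)} for $\eta^{(j-1)}(E)$, and the exponent becomes $N\psi^{(j-1)}_{\lambda}(E)$. Lemma \ref{lem:phi2} then gives Gaussian decay of width $\sqrt{t/N}$ in $E$ around $E^{(j-1)}_{\lambda}$. The steepest-descent contour in $p$ is controlled by a bound of the type in Lemma \ref{lem:phi}. The result is
\begin{align*}
K^{(\beta)}_{j}=[1+O((Nt^{3})^{-1/2})]\,A^{(j-1)}(\lambda)\,e^{-N\beta\psi^{(j-1)}_{\lambda}/2},
\end{align*}
where the prefactor $A^{(j-1)}(\lambda)$ is built from Hessians involving $t\Tr{(H^{(j-1)}_{\lambda})^{2}}$ and similar traces.

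The last step is to remove the $\mbf{v}$-dependence from $X^{(j-1)}$. By interlacing, or by Lemma \ref{lem:resolventProjection} iterated $j-1$ times, each such trace differs from its value for $X$ by $O(j/(N\cdot\text{scale}))$. This controls the prefactor immediately. The main obstacle is the exponent: $N\psi^{(j-1)}_{\lambda}$ carries a factor $N$, so an $O(1/N)$ change in $\Tr{\log(\cdot)}$ produces an $O(1)$ change in $K$ that need not be small.

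To handle the exponent, I would compute the change exactly using Lemma \ref{lem:resolventProjection}. By the Schur complement,
\begin{align*}
\tr\log((X^{(j)}-z)(X^{(j)}-\bar z))=\tr\log((X^{(j-1)}-z)(X^{(j-1)}-\bar z))+\log|\mbf{v}_{j}^{*}G^{(j-1)}(z)\mbf{v}_{j}|^{2}.
\end{align*}
Lemma \ref{lem:conc} gives concentration of $\mbf{v}_{j}^{*}G^{(j-1)}\mbf{v}_{j}$ around $t\Tr{G^{(j-1)}H^{(j-1)}_{\lambda_{j}}}$, which is deterministic to leading order. Its fluctuation is $\frac{t}{\sqrt N}\Tr{|GH|^{2}}^{1/2}$ relative to a quantity of size $\gtrsim t\cdot t^{-1}$. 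After normalising by $\eta\sim t$, this should give a relative error of $O_{\prec}((Nt^{3})^{-1/2})$. The shift of the saddle point $E^{(j-1)}_{\lambda}$ only enters at second order because $\psi$ is stationary there.

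I would define $\wt{K}^{(\beta)}_{j}(\lambda)$ by replacing each such logarithmic correction by its deterministic equivalent. Collecting the $j-1$ corrections, each with error $O_{\prec}((Nt^{3})^{-1/2})$, gives \eqref{eq:K}. I expect the delicate point to be checking that near the edge, where $\kappa_{\lambda}\lesssim t^{2}$, the relative errors are still bounded by $(Nt^{3})^{-1/2}$.
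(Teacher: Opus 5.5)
Your proposal is correct and follows essentially the paper's route. The paper likewise uses the duality formula with a Hubbard--Stratonovich variable $E$ (Lemma \ref{lem:E_j} with $f=1$) and Gaussian localisation of the $p$-integral via the defining equation of $\eta^{(j-1)}(E)$. It then uses interlacing for the prefactor, and Cramer's rule (your Schur complement) together with Lemma \ref{lem:conc} to convert the $O(1)$ change in $N\psi^{(j-1)}_{\lambda}$ into the explicit factor $\chi(E)$.

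The differences are cosmetic. The paper keeps $\wt{K}^{(\beta)}_{j}$ as an integral over $E$ rather than evaluating the $E$-saddle. It also replaces $\eta^{(j-1)}(E)$ by $\eta(E)$ as a separate step, which is the same second-order stationarity argument you invoke for $E^{(j-1)}_{\lambda}$, applied in the variable $\eta^{2}$.
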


The next lemma shows that the entries of $\mbf{u}_{j}$ are well-approximated by iid Gaussians. Let $I\subset[N]$ and define
\begin{align}
    f_{0}(\mbf{v}_{j})&=\prod_{i\in I}\mbf{1}_{(x,\infty)}(N|u_{j,i}|^{2}).
\end{align}
\begin{lemma}\label{lem:gaussianApprox}
Let $\xi^{(\beta)}_{i},\,i\in I$ be iid standard real ($\beta=1$) or  complex ($\beta=2$) Gaussian variables independent of all other random variables. For any $D>0$ we have
\begin{align}
	\mbb{E}^{(\beta)}_{j}[f_{0}(\mbf{v}_{j})]&=\Biggl[1+O_{\prec}\Biggl(\frac{x\log N}{\sqrt{Nt^{3}}}\Biggr)\Biggr]\mbb{E}\Bigl[\prod_{i\in I}\mbf{1}_{(x,\infty)}(|\xi^{(\beta)}_{i}|^{2})\Bigr]+O(N^{-D}).
\end{align}
\end{lemma}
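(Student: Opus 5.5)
We work directly with the density of $\mu^{(\beta)}_{j}$ and use the duality formula of Lemma \ref{lem:Sduality} to turn the spherical integral into a Gaussian one. Fix $j$; after conditioning on $\mbf{v}_{1},\dots,\mbf{v}_{j-1}$ we write $u_{j,i}=(V_{1}\cdots V_{j-1}\mbf{v}_{j})_{i}=\mbf{a}_{i}^{*}\mbf{v}_{j}$. Here $\mbf{a}_{i}=(V_{1}\cdots V_{j-1})^{*}\mbf{e}_{i}$ has norm at most one, and $\|\mbf{a}_{i}\|=1-O_{\prec}(N^{-1})$ by delocalisation of the previous $\mbf{v}$'s, which follows from Lemma \ref{lem:conc} with $A=\mbf{e}_{i}\mbf{e}_{i}^{*}$. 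Thus $f_{0}$ is a function of the $|I|$ linear forms $N|\mbf{a}_{i}^{*}\mbf{v}_{j}|^{2}$.

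\textbf{Step 1.} For each indicator we use the Fourier representation $\mbf{1}_{(x,\infty)}(y)=\frac{1}{2\pi i}\int\frac{e^{ik(y-x)}}{k-i0}\diff k$ from the proof outline. This reduces the problem to computing
\begin{align*}
\mbb{E}_{j}\Bigl[\exp\Bigl(iN\sum_{i\in I}k_{i}|\mbf{a}_{i}^{*}\mbf{v}_{j}|^{2}\Bigr)\Bigr].
\end{align*}
\textbf{Step 2.} The density of $\mu^{(\beta)}_{j}$ is, up to normalisation, $\exp(-\frac{\beta N}{2t}Q(\mbf{v}))$, where
\begin{align*}
Q(\mbf{v})=\tfrac{1}{2}(\mbf{v}^{*}X\mbf{v}-\lambda)^{2}+\mbf{v}^{*}X^{2}\mbf{v}-(\mbf{v}^{*}X\mbf{v})^{2}.
\end{align*}
We linearise the quartic term $(\mbf{v}^{*}X\mbf{v})^{2}$ with one auxiliary Gaussian (Hubbard--Stratonovich) variable. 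We then apply Lemma \ref{lem:Sduality} with the extra variable $p$. The integral over $\mbf{x}\in\mbb{F}^{N-j+1}_{\beta}$ becomes Gaussian, with covariance of the form $\frac{t}{N}(\,(X-E)^{2}+\eta^{2}-\frac{2t}{N}\sum_{i}k_{i}\mbf{a}_{i}\mbf{a}_{i}^{*})^{-1}$ after a saddle point in the auxiliary variables. By Lemma \ref{lem:phi2} and the definition \eqref{eq:eta(s)}, the saddle point sits at $E=E^{(j-1)}_{\lambda}$ and $\eta=\eta^{(j-1)}(E)$, and the Gaussian fluctuations there have relative size $(Nt^{3})^{-1/2}$.

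\textbf{Step 3.} The determinant ratio from the rank-$|I|$ perturbation is
\begin{align*}
\det\Bigl(1-\tfrac{2t}{N}\bigl(k_{i}\mbf{a}_{i}^{*}H^{(j-1)}_{\lambda}\mbf{a}_{l}\bigr)_{i,l}\Bigr)^{-\beta/2}.
\end{align*}
By the local law, the normalisation $t\Tr{H}=1$, and the concentration of Lemma \ref{lem:conc}, we have $\frac{t}{N}\mbf{a}_{i}^{*}H\mbf{a}_{l}=\frac{1}{N}(\delta_{il}+O_{\prec}((Nt^{3})^{-1/2}))$. This is exactly the characteristic function of $\{|\xi^{(\beta)}_{i}|^{2}\}$, but with $k_{i}$ rescaled by $1+O_{\prec}((Nt^{3})^{-1/2})$ and with small off-diagonal couplings. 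Inverting the Fourier transform, a rescaling $k\mapsto k(1+\epsilon)$ corresponds to evaluating the Gaussian tail at $x(1+\epsilon)$. Since $\mbb{P}(|\xi|^{2}>x)\propto x^{\beta/2-1}e^{-\beta x/2}$, this costs a multiplicative factor $1+O(x\epsilon)$, which produces the error $x\log N/\sqrt{Nt^{3}}$.

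\textbf{Main obstacle.} The difficulty is keeping the errors multiplicative rather than additive in the large-deviation regime $x\sim\log N$, where the target probability is $N^{-c}$. The $\frac{1}{k-i0}$ contour cannot be integrated naively. We would first shift each $k_{i}$ contour to $\Im k_{i}=-\beta/2+\epsilon$, just below the pole of the Gaussian characteristic function, so that $e^{ik(y-x)}$ already carries the factor $e^{-\beta x/2}$. We then need the Step 3 expansion to hold uniformly on the shifted contour. This requires $\frac{t}{N}|k|\,\|H\|\ll1$, which holds because $\|H\|\lesssim\eta^{-2}\sim t^{-2}$ and $Nt\gg1$. Contributions from the tails of the auxiliary saddle-point integrals, and the probability-$e^{-c\log^{2}N}$ failure of the $\prec$ bounds, are absorbed into the additive $O(N^{-D})$ term.
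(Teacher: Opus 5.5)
Up to your final step this is the paper's route. You use the Hubbard--Stratonovich variable $E$ and duality variable $p$ of Lemma \ref{lem:E_j}, localise $E$ by Lemma \ref{lem:phi2} and $p$ near $0$, and represent the indicators by Fourier integrals. Lemma \ref{lem:halfSpace}, applied to regularised indicators, is what justifies exchanging the $\mbf{x}$- and $\mbf{k}$-integrals that you flag. You then use near-diagonality $tH^{(j-1)}_{I^{(j-1)}}=1+O_{\prec}(\delta)$ with $\delta:=\log N/\sqrt{Nt^{3}}$. For this last point, the local law concerns $X$ in the basis $\mbf{e}_{i}$, while you need $H^{(j-1)}$ in the projected basis $\mbf{e}^{(j-1)}_{i}$; the paper transfers it with Lemma \ref{lem:resolventProjection} and Lemma \ref{lem:conc} (Corollary \ref{cor:entrywisePropagation}).

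The gap is your ``main obstacle'' step. This is exactly where the multiplicative error must be produced, and as written it does not work, for three reasons.

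(i) Since $k_{i}$ is conjugate to $N|u_{j,i}|^{2}$ and the Gaussian in $\mbf{x}$ has covariance of order $\frac{t}{N}H^{(j-1)}$, the rank-$|I|$ factor is $\det^{-\beta/2}(1-itD(\mbf{k})H^{(j-1)}_{I^{(j-1)}})$, with no factor $1/N$. Its singularities lie at $|k|\asymp1$, as your contour shift presupposes. Your $\frac{2t}{N}$ would instead put them at $|k|\asymp N$.

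(ii) On a line at distance $\epsilon$ from the singularity of $(1-ik_{a})^{-\beta/2}$, the quantity that must be small is $\|B\|$, where $B_{ab}=\frac{ik_{a}}{1-ik_{a}}\bigl(t(H^{(j-1)}_{I^{(j-1)}})_{ab}-\delta_{ab}\bigr)$. On that line $\sup|k_{a}/(1-ik_{a})|\asymp\epsilon^{-1}$, so you need $\delta\ll\epsilon$. Your condition $\frac{t}{N}|k|\|H\|\ll1$ is not uniform along the line, since $|\Re k|$ is unbounded. It is also not the relevant condition: the full norm $\|H\|\lesssim t^{-2}$ never enters, only the $|I|\times|I|$ block does.

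(iii) With $\epsilon$ fixed, your line $\Im k=-\beta/2+\epsilon$ only extracts $e^{-(\beta/2-\epsilon)x}$. For $x\asymp\log N$ this is off by $N^{c\epsilon}$, so the bound is not multiplicative. You must take $\epsilon\asymp1/x$, and then only $\|B\|\lesssim x\delta$ holds on the contour. This, rather than the rescaling heuristic, is where the factor $x$ in the error comes from, and it requires exactly $x\delta\ll1$, the hypothesis of Proposition \ref{prop:gauss-divisible}. Even then, bounding the unperturbed factors in absolute value on shifted lines loses extra powers of $\log x$ or $\sqrt{x}$ when $|I|\geq2$.

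The paper needs no contour shift. On the real axis $|k_{a}/(1-ik_{a})|\leq1$, so $\|B\|\lesssim\delta$ uniformly in $\mbf{k}\in\mbb{R}^{|I|}$ (Lemma \ref{lem:detB}). Expanding $\det^{-\beta/2}(1-B)$ gives a polynomial in $\{k_{a}/(1-ik_{a})\}$ whose degree-$q$ part has $O_{\prec}(\delta^{q})$ coefficients, up to an $O(N^{-D})$ remainder. Each resulting integral $\int e^{-ixk}(1-ik)^{-\beta/2-q}k^{q}\frac{\diff k}{k-i0}$ is then computed exactly by residues. For $\beta=1$ this is done after writing $(1-ik)^{-1/2}$ as a Gaussian integral in an auxiliary variable. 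The result is a degree-$q$ polynomial in $x$ times the Gaussian tail. Multiplicativity is therefore automatic, with the order-$q$ term of relative size $O((x\delta)^{q})$. Your rescaling heuristic is the diagonal part of the $q=1$ term.
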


We can now prove Proposition \ref{prop:gauss-divisible} based on the above lemmas.
\begin{proof}[Proof of Proposition \ref{prop:gauss-divisible}]
By Lemma \ref{lem:partialDiagonalisation} and Lemmas \ref{lem:Ftilde} and \ref{lem:Ftilde1}, we have
\begin{align}
    \mbb{E}\mc{L}(f,g_{m,x})&=\Bigl[1+O\Bigl(\frac{\log N}{\sqrt{Nt^{3}}}\Bigr)\Bigr]\Bigl(\frac{\beta N}{4\pi t}\Bigr)^{m/2}\int_{\mbb{R}^{m}}\wt{K}_{\beta}(\bs\lambda)\wt{F}_{\beta}(\bs\lambda)\mbb{E}^{(\beta)}_{1}\Bigl[\cdots\mbb{E}^{(\beta)}_{m}\Bigl[g_{m,x}\Bigr]\cdots\Bigr]\diff\bs\lambda.
\end{align}
The dependence of $g_{m,x}$ on $\mbf{v}_{m}$ is of the form
\begin{align}
    f_{0}(\mbf{v}_{m})&:=\prod_{j\in J}\mbf{1}_{(\Xi,\infty)}\bigl(N|u_{m,j}|^{2}\bigr),
\end{align}
where $\Xi$ is a random variable depending on $\{\mbf{v}_{j\leq m-1}\}$. By Lemma \ref{lem:gaussianApprox}, we have
\begin{align}
    \mbb{E}_{m}[f_{0}(\mbf{v}_{m})]&=\Bigl[1+O\Bigl(\frac{x\log N}{\sqrt{Nt^{3}}}\Bigr)\Bigr]\mbb{E}_{Z}\Bigl[\prod_{j\in J}\mbf{1}_{(\Xi,\infty)}\bigl(|Z^{(\beta)}_{m,j}|^{2}\bigr)].
\end{align}
We now put $\Xi$ back in the argument of a suitable indicator function and repeatedly apply Lemma \ref{lem:gaussianApprox} to replace $|u_{i,j}|^{2}$ with $|Z^{(\beta)}_{i,j}|^{2}$. The outcome of this procedure is
\begin{align}
    \mbb{E}^{(\beta)}_{1}\Bigl[\cdots\mbb{E}^{(\beta)}_{m}\Bigl[g_{m,x}\Bigr]\Bigr]&=\Bigl[1+O\Bigl(\frac{x\log N}{\sqrt{Nt^{3}}}\Bigr)\Bigr]\mbb{E}\Bigl[g_{m,x}\Bigl(\bigl\{\{|Z^{(\beta)}_{i,j}|^{2}\bigr\}_{j\in J_{i}}\}_{i\in [m]}\Bigr].
\end{align}
This can be taken outside the integral over $\bs\lambda$, which reduces to $\mbb{E}\bigl[\mc{L}(f)\bigr]$.
\end{proof}

\subsection{Proof of Lemmas \ref{lem:Ftilde} and \ref{lem:Ftilde1}}\label{sec:charpoly}
The goal of this subsection is to prove Lemmas \ref{lem:Ftilde} and \ref{lem:Ftilde1}, i.e. to show that $F_{\beta}$ is approximately independent of $\{\mbf{v}_{j\leq m}\}$. The approaches for $\beta=1$ and $\beta=2$ are similar, except that in the former case we need an extra step. For this reason we begin with the latter, simpler case.

We start with an identity whose proof is deferred to the appendix (see also Br\'{e}zin--Hikami \cite{brezin_characteristic_2000}).
\begin{lemma}\label{lem:F_2}
We have
\begin{align}
	F_{2}(\bs\lambda,\{\mbf{v}_{j}\})&=\frac{c_{m}N^{2m}}{t^{2m}\Delta^{2}(\bs\lambda)}\int_{\mbb{R}^{2m}}\Delta(\mbf{s})\Biggl(\prod_{j>m}s_{j}\Biggr)e^{N\sum_{j\geq1}\bigl(\phi^{(m)}_{\lambda_{j}}(is_{j})+\phi^{(m)}_{\lambda_{j}}(is_{m+j})\bigr)}\diff\mbf{s},
\end{align}
for some (explicit) constant $c_{m}$.
\end{lemma}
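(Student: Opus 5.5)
We would prove Lemma \ref{lem:F_2} by the standard supersymmetric (or equivalently, Grassmann plus bosonic Hubbard--Stratonovich) representation of products of characteristic polynomials, in the spirit of Br\'{e}zin--Hikami \cite{brezin_characteristic_2000}. Write $A:=X^{(m)}$, $n:=N-m$, and note that for $\beta=2$ we have $|\det(A+\sqrt{t}Y-\lambda_{j})|^{2}=\det(A+\sqrt{t}Y-\lambda_{j})^{2}$. Thus $F_{2}$ is $|\Delta(\bs\lambda)|^{2}$ times the expectation of a product of $2m$ characteristic polynomials, evaluated at the points $\lambda_{1},\dots,\lambda_{m}$ each repeated twice.

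\emph{Step 1: distinct points.} First we would treat $\mbb{E}\prod_{k=1}^{2m}\det(A+\sqrt{t}Y-\mu_{k})$ for distinct $\mu_{k}$, and then let $\mu_{m+j}\to\mu_{j}=\lambda_{j}$. We represent each determinant as a Grassmann Gaussian integral $\int e^{-\bar\psi_{k}(A+\sqrt{t}Y-\mu_{k})\psi_{k}}\,d\bar\psi_{k}d\psi_{k}$. Since $Y^{(m)}$ is GUE with variance $1/(N-m)$ per entry up to the stated rescaling, averaging over $Y$ produces the quartic term $\exp\bigl(\tfrac{t}{2N}\sum_{k,l}(\bar\psi_{k}\psi_{l})(\bar\psi_{l}\psi_{k})\bigr)$ (up to sign conventions). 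A Hubbard--Stratonovich transformation with a $2m\times 2m$ Hermitian matrix $Q$ linearises this term. Integrating out the fermions then gives
\begin{align*}
	\int\det\bigl(A\otimes 1-1\otimes(\mu+iQ)\bigr)\,e^{-\frac{N}{2t}\tr Q^{2}}\diff Q,\qquad \mu=\diag(\mu_{k}).
\end{align*}
By construction $\det(A-z)=e^{n\Tr{\log(A-z)}}$, which matches the term $N\,\Tr{\log(X^{(m)}-z)}$ in $\phi^{(m)}$ up to the $N$ versus $n$ normalisation of $\Tr{\cdot}$. That mismatch is absorbed into the convention for $\phi^{(m)}$ or into $c_{m}$.

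\emph{Step 2: reduction to eigenvalues.} Next we diagonalise $Q=U\diag(s)U^{*}$ and apply the Harish-Chandra--Itzykson--Zuber integral to the cross term $e^{\frac{N}{t}\tr(Q\,\cdot\,\mu)}$. This cross term comes from completing the square after shifting $Q$ by $i\mu$, which is exactly what produces $(z-\lambda)^{2}/2t$ with $z=is$. HCIZ yields $\det[e^{N s_{k}\mu_{l}/t}]/(\Delta(s)\Delta(\mu))$, and the Jacobian contributes $\Delta(s)^{2}$. Since the remaining integrand is symmetric in the $s_{k}$, we can antisymmetrise and replace the determinant by its diagonal term. This gives
\begin{align*}
	\frac{C}{\Delta(\mu)}\int_{\mbb{R}^{2m}}\Delta(\mbf{s})\prod_{k}e^{N\phi^{(m)}_{\mu_{k}}(is_{k})}\diff\mbf{s}.
\end{align*}

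\emph{Step 3: coalescing limit.} Finally we set $\mu_{j}=\lambda_{j}$ and $\mu_{m+j}=\lambda_{j}+\epsilon_{j}$, and send $\epsilon_{j}\to0$. Here $\Delta(\mu)\sim\Delta(\bs\lambda)^{4}\prod_{j}\epsilon_{j}$ up to sign, while the numerator vanishes to first order in each $\epsilon_{j}$ after antisymmetrisation. Differentiating $e^{N\phi_{\lambda_{j}+\epsilon_{j}}(is_{m+j})}$ in $\epsilon_{j}$ brings down $-N(is_{m+j}-\lambda_{j})/t$. The $\lambda_{j}$ piece integrates to zero by the antisymmetry of $\Delta(\mbf{s})$ under swapping $s_{j}$ and $s_{m+j}$. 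What survives is the factor $(N/t)^{m}\prod_{j}s_{m+j}$, together with a further $(N/t)^{m}$ coming from the HCIZ normalisation. Multiplying by the prefactor $|\Delta(\bs\lambda)|^{2}$ from \eqref{eq:F} then leaves $\Delta(\bs\lambda)^{-2}$, which is the claimed formula.

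\emph{Main obstacle.} The main difficulty is the justification of the Hubbard--Stratonovich step and the contour choice. The $s$-integrals must be taken along $\mbb{R}$ so that $z=is$ is imaginary and the Gaussian factor converges, and this has to be done carefully with respect to the branch of $\Tr{\log}$. A secondary difficulty is tracking the signs and powers in the limit $\epsilon\to0$ so that exactly the stated powers $N^{2m}/t^{2m}$ appear. Our first step there would be to check the $m=1$ case by direct computation.
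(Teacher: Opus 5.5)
Your proposal is correct and is essentially the paper's argument: Grassmann representation, Hubbard--Stratonovich with a $2m\times 2m$ Hermitian $Q$, the shift by $-i\Lambda$ followed by diagonalisation and HCIZ, then the coalescing limit via l'H\^{o}pital together with symmetrisation down to the diagonal term. The only difference is that the paper takes the limit $\lambda_{m+j}\to\lambda_j$ inside the HCIZ determinant, producing the columns $\mu_j e^{\frac{iN}{t}\mu_j\lambda_k}$, rather than after the $s$-integration, and this is equivalent. Two small corrections: there is no $N$ versus $N-m$ mismatch to absorb, because $\Tr{\cdot}=N^{-1}\tr$ for every matrix size, so $e^{N\Tr{\log(X^{(m)}-z)}}=\det(X^{(m)}-z)$ exactly (a genuine mismatch would be $z$-dependent and could not go into $c_m$ anyway); and the HCIZ kernel should be $e^{-iNs_k\mu_l/t}$ rather than $e^{Ns_k\mu_l/t}$, which is what makes the diagonal term combine into $e^{N(is_k-\mu_k)^2/(2t)}$.
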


Let
\begin{align}
    \Psi_{j}&=\frac{t\sqrt{|\kappa_{\lambda_{j}}|+t^{2}}}{(Nt(|\kappa_{\lambda_{j}}|+t^{2}))^{1/4}}\log N.
\end{align}
The following lemma shows that we can restrict $\mbf{s}$ to a small neighbourhood of the solutions to $\partial_{z}\phi_{\lambda_{j}}(z)=0$. The proof is a direct application of Lemma \ref{lem:phi} and thus omitted.
\begin{lemma}
Let $q\in[2m]$, and
\begin{align}
	\Omega_{q}&:=\Bigl\{\mu:|is_{j}-E_{\lambda_{j}}-i\eta_{\lambda_{j}}|<\Psi_{j},\,j=1,...,q,\,\,|is_{j}-E_{\lambda_{j}}+i\eta_{\lambda_{j}}|<\Psi_{j},\,j=q+1,...,2m\Bigr\},
\end{align}
where we set $\lambda_{m+j}=\lambda_{j}$. Let $F_{2,\Omega_{q}}$ be the function obtained from $F_{2}$ by restricting the integral over $\mbf{s}$ to $\Omega_{q}$. Then
\begin{align}
	\bigl|F_{2}(\bs\lambda,\{\mbf{v}_{j\leq m}\})-\sum_{q=0}^{2m}F_{2,\Omega_{q}}(\bs\lambda,\{\mbf{v}_{j\leq m}\})\bigr|&\leq e^{-c\log^{2}N},
\end{align}
uniformly in $\bs\lambda$ and $\{\mbf{v}_{j\leq m}\}$.
\end{lemma}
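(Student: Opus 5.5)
The plan is a standard steepest-descent localisation applied to the $2m$-fold integral of Lemma \ref{lem:F_2}. Each variable $s_{j}$ is deformed to a contour through one of the two saddle points $E_{\lambda_{j}}\pm i\eta_{\lambda_{j}}$ of $\phi^{(m)}_{\lambda_{j}}$ (Lemma \ref{lem:phi}). The contribution away from the $\Psi_{j}$-neighbourhoods is then shown to be negligible relative to the saddle contributions.

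First, I would rewrite $is_{j}=z$ and view each integral as a contour integral over $i\mbb{R}$ in $z$. Since the integrand is entire in each $z_{j}$ apart from the branch of $\Tr{\log(X^{(m)}-z)}$ on the real axis, I deform each contour to the vertical line through $E_{\lambda_{j}}$. I split this line at the real axis into an upper half passing through $z_{\lambda_{j}}=E_{\lambda_{j}}+i\eta_{\lambda_{j}}$ and a lower half passing through $\bar z_{\lambda_{j}}$. The Gaussian factor $e^{N(z-\lambda)^{2}/2t}$ gives decay at $\pm i\infty$ along vertical lines, so the deformation is justified. Expanding the product over $j$ of (upper $+$ lower) gives $2^{2m}$ terms. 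After using the symmetry of the Vandermonde factor and of the pairing $\lambda_{m+j}=\lambda_{j}$, these group into the configurations indexed by $q$ in $\Omega_{q}$. More precisely, I would define $\Omega_{q}$ up to permutation, and the sum over $q$ should be read as a sum over the sign choices.

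Second, on each half-line I parametrise $z=E_{\lambda_{j}}\pm i(\eta_{\lambda_{j}}+s)$ with $s\in[-\eta_{\lambda_{j}},\infty)$. The final estimate of Lemma \ref{lem:phi} then gives
\begin{align}
\Re N\bigl(\phi(z)-\phi(z_{\lambda_{j}})\bigr)\lesssim -Nt(|\kappa|+t^{2})\frac{u^{4}}{1+u^{2}},\qquad u=\frac{s}{t\sqrt{|\kappa|+t^{2}}}.
\end{align}
For $|s|\geq\Psi_{j}$ we have $u\geq(Nt(|\kappa|+t^{2}))^{-1/4}\log N$. Hence the exponent is at most $-c\log^{4}N$ when $u\lesssim1$, and at most $-cNt(|\kappa|+t^{2})u^{2}$ when $u\gtrsim1$. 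Since $Nt^{3}\gg N^{\delta}$, the latter is super-polylogarithmic, and the tail is integrable.

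Third, the prefactors must not spoil this. $\Delta(\mbf{s})\prod s_{j}$ is a polynomial of fixed degree, so it is absorbed by any $e^{-c\log^{2}N}$ loss. The normalisation $N^{2m}t^{-2m}\Delta^{-2}(\bs\lambda)$ is the delicate part. One has to compare with the size of the saddle contribution, and the claimed bound is absolute, namely $e^{-c\log^{2}N}$. To get it, I would bound the tail region by $e^{-c\log^{2}N}$ times the modulus $e^{N\sum\Re\phi(z_{\lambda_{j}})}$. I would then note that $\Re\phi$ at the saddle is normalised consistently, since $F_{2}$ is itself an expectation of determinants that the paper treats up to a common factor. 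Here I expect the main obstacle. Near coinciding $\lambda_{i}$ the factor $\Delta^{-2}(\bs\lambda)$ is singular, and it must be cancelled by the vanishing of the integral, since the integrand is antisymmetric under swapping equal-$\lambda$ variables. I would handle this by first antisymmetrising the restricted and unrestricted integrals identically. The difference is then an integral of an antisymmetric function over a symmetric tail region, and the Vandermonde division can be carried out through divided differences with the same tail bounds applied to derivatives. Uniformity in $\{\mbf{v}_{j}\}$ follows because Lemma \ref{lem:phi} holds for every $X^{(m)}$ satisfying the local law, which interlacing guarantees.
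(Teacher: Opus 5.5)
Your proposal is correct and is the argument the paper has in mind: the paper omits the proof as ``a direct application of Lemma~\ref{lem:phi}'', and your deformation of each $s_j$-contour to the vertical line through $E_{\lambda_j}$, split at the real axis, with $\Psi_j$ calibrated so that $N\Re\phi$ drops by at least $c\log^{4}N$ off the saddle neighbourhoods, is exactly that application. Your extra points (summing over all $2^{2m}$ sign choices rather than the $2m+1$ sets $\Omega_q$, reading the bound relative to the saddle modulus $\frac{N^{2m}}{t^{2m}\Delta^{2}(\bs\lambda)}e^{2N\sum_{j\le m}\Re\phi^{(m)}_{\lambda_j}(z^{(m)}_{\lambda_j})}$, and the $\Delta^{-2}(\bs\lambda)$ cancellation) handle imprecisions the paper leaves implicit; two small corrections: $e^{N\phi^{(m)}_{\lambda}(z)}=e^{N(z-\lambda)^{2}/2t}\det(X^{(m)}-z)$ is entire, so there is no branch cut on the real axis, and your antisymmetrisation only gives a genuinely symmetric tail region if you use common contours and centres for clustered $\lambda_j$.
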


We can now prove Lemma \ref{lem:Ftilde}.
\begin{proof}[Proof of Lemma \ref{lem:Ftilde}]
After localising the integral to $\Omega_{q}$, we can remove the $\mbf{v}_{j}$ dependence using Cramer's rule, the concentration result in Lemma \ref{lem:conc} and eigenvalue interlacing. Indeed, by Cramer's rule we have
\begin{align}
	\frac{\det(X^{(m)}-is)}{\det(X-is)}&=\prod_{j=1}^{m}\mbf{v}_{j}^{*}G^{(j-1)}(is)\mbf{v}_{j}.
\end{align}
By concentration,
\begin{align}
	\mbf{v}_{j}^{*}G^{(j-1)}(is)\mbf{v}_{j}&=t\Tr{H^{(j-1)}_{\lambda_{j}}G^{(j-1)}(is)}+O_{\prec}\Bigl(\frac{t}{\sqrt{N}}\Tr{\bigl(H^{(j-1)}_{\lambda_{j}}G^{(j-1)}(is)\bigr)^{2}}^{1/2}\Bigr).
\end{align}
We can replace $G^{(j-1)}$ with $G$ using interlacing to obtain
\begin{align}
	\mbf{v}_{j}^{*}G^{(j-1)}(is)\mbf{v}_{j}&=\Biggl[1+O_{\prec}\Bigl(\frac{1}{\sqrt{Nt(|\kappa_{\lambda_{j}}|+t^{2})}}\Bigr)\Biggr]t\Tr{H(E_{\lambda_{j}})G(is)}.
\end{align}
Altogether we have
\begin{align}
	\frac{\det(X^{(m)}-is)}{\det(X-is)}&=\prod_{k=1}^{m}\Biggl[1+O_{\prec}\Bigl(\frac{1}{\sqrt{Nt(|\kappa_{\lambda_{j}}|+t^{2})}}\Bigr)\Biggr]t\Tr{H(E_{\lambda_{k}})G(is)}.
\end{align}
Inserting this into $F_{2}$ we find
\begin{align}
	F_{2}(\bs\lambda,\{\mbf{v}_{j\leq m}\})&=c_{m}\Biggl(\frac{N}{t}\Biggr)^{2m}\frac{1}{\Delta^{2}(\lambda)}\int_{\mbb{R}^{2m}}\Delta(\mbf{s})\Biggl(\prod_{j>m}s_{j}\Biggr)e^{N\sum_{j\geq1}\Bigl(\phi_{\lambda_{j}}(is_{j})+\phi_{\lambda_{j}}(is_{m+j})\Bigr)}\nonumber\\
	&\times\Biggl[1+O_{\prec}\Biggl(\frac{1}{\sqrt{Nt^{3}}}\Biggr)\Biggr]\prod_{j=1}^{2m}\prod_{k=1}^{m}t\Tr{H(E_{\lambda_{k}})G(is_{j})}\diff\mbf{s}.
\end{align}
Since the $o(1)$ term inside the integral is symmetric in $s_{j}$, any cancellations in the integral due to $\Delta(\mbf{s})$ occur for both the leading term and the error term, and hence we can take the error outside the integral. Defining
\begin{align}
	\wt{F}_{2}(\bs\lambda)&=c_{m}\Biggl(\frac{N}{t}\Biggr)^{2m}\frac{1}{\Delta^{2}(\bs\lambda)}\int_{\mbb{R}^{2m}}\Delta(\mbf{s})\Biggl(\prod_{j>m}s_{j}\Biggr)e^{N\sum_{j\geq1}\Bigl(\phi_{\lambda_{j}}(s_{j})+\phi_{\lambda_{j}}(s_{m+j})\Bigr)}\prod_{j=1}^{2m}f(\bs\lambda,s_{j})\diff\mbf{s},
\end{align}
with
\begin{align}
	f(\bs\lambda,s)&=\prod_{k=1}^{m}t\Tr{H(E_{\lambda_{k}})G(is)},
\end{align}
we obtain \eqref{eq:Ftilde2}.
\end{proof}

We now come to real matrices, i.e. $\beta=1$. First we use the method of spin variables (see \cite[Lemma 5.2]{osman_bulk_2025}) to remove absolute values from all but one characteristic polynomial, after which we argue in a similar way to the proof of Lemma \ref{lem:Ftilde}. Define
\begin{align}
	s(\lambda)&:=\lim_{\epsilon\to 0}\frac{\det(X-\lambda-\epsilon)}{|\det(X-\lambda-\epsilon)|},
\end{align}
which is the left-continuous version of  $\textup{sign}(\det(X-\lambda))$. We have the following representation.
\begin{lemma}\label{lem:spin}
Let $X$ be a random matrix with distinct eigenvalues almost surely, and let $\mu_{1}<\cdots<\mu_{m}$. Then there are constants $c_{k}$ such that
\begin{align}
	\mbb{E}\prod_{j=1}^{m}|\det(X-\mu_{j})|&=\sum_{k=0}^{m-1}c_{k}\mbb{E}\sum_{i_{1},...,i_{k}}\mbf{1}_{\mc{I}_{k}}(\lambda_{i_{1}},...,\lambda_{i_{k}})s(\lambda_{i_{1}})\cdots s(\lambda_{i_{k}})\prod_{j}\det(X-\mu_{j})\nonumber\\
	&+c_{m}\mbb{E}\sum_{i_{1},...,i_{m-1}}\mbf{1}_{\mc{I}_{m-1}}(\lambda_{i_{1}},...,\lambda_{i_{m-1}})s(\lambda_{i_{1}})\cdots s(\lambda_{i_{m-1}})\nonumber\\
	&\times\Biggl(\prod_{j=1}^{m-1}\det(X-\mu_{j})\Biggr)|\det(X-\mu_{m})|,\label{eq:spin}
\end{align}
where
\begin{align}
	\mc{I}_{k}&:=\{(\lambda_{1},...,\lambda_{k}):\mu_{1}<\lambda_{1}<\cdots<\mu_{k}<\lambda_{k}<\mu_{k+1}\}.
\end{align}
\end{lemma}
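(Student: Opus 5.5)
The plan is to prove the identity pointwise, for every realisation of $X$ with distinct eigenvalues none of which equals some $\mu_{j}$ (a null event is harmless since the expectation is unaffected). After that, \eqref{eq:spin} follows by taking expectations. The starting point is the trivial identity $|\det(X-\mu_{j})|=s(\mu_{j})\det(X-\mu_{j})$. With it,
\begin{align}
	\prod_{j=1}^{m}|\det(X-\mu_{j})|&=\Bigl(\prod_{j=1}^{m-1}s(\mu_{j})\Bigr)\Bigl(\prod_{j=1}^{m-1}\det(X-\mu_{j})\Bigr)|\det(X-\mu_{m})|,
\end{align}
so the whole task reduces to rewriting the product of signs $\prod_{j<m}s(\mu_{j})$ as a linear combination of products $s(\lambda_{i_{1}})\cdots s(\lambda_{i_{k}})$ over ordered eigenvalues interlacing with the $\mu$'s. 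A leftover factor $s(\mu_{m})$ is to be reabsorbed to produce the terms with a genuine $\det(X-\mu_{m})$.

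The key tool is a jump identity for the left-continuous sign function. The function $s$ is piecewise constant and flips sign exactly at each eigenvalue. Its left-continuous value at $\lambda_{i}$ is the sign just to the left of $\lambda_{i}$, so the jump there is $-2s(\lambda_{i})$. Hence for $\mu<\nu$,
\begin{align}
	s(\nu)&=s(\mu)-2\sum_{i:\lambda_{i}\in(\mu,\nu)}s(\lambda_{i}).
\end{align}
Consecutive eigenvalues carry opposite signs, so this sum is telescoping in nature. The same parity fact gives the alternative form $s(\mu)s(\nu)=(-1)^{\mc{N}((\mu,\nu))}$, which I would use as a check on the constants.

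Next I would pair the signs from the right. I would start with $s(\mu_{m-1})$, which sits next to the factor $|\det(X-\mu_{m})|$, i.e. next to $s(\mu_{m})$. Then I would work inductively leftwards, using the jump identity on each interval $(\mu_{j},\mu_{j+1})$ to express each $s(\mu_{j})$ through $s(\mu_{j+1})$ and signs of eigenvalues in $(\mu_{j},\mu_{j+1})$. Expanding the product, every term is a product of signs of eigenvalues, at most one chosen from each interval $(\mu_{j},\mu_{j+1})$. This is exactly the ordered configuration $\mc{I}_{k}$, where $\lambda_{j}$ sits in the $j$-th gap. Products involving $s(\mu_{m})^{2}=1$ are simplified. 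A surviving single $s(\mu_{m})$ is recombined with $|\det(X-\mu_{m})|$ into $\det(X-\mu_{m})$, which gives the first sum in \eqref{eq:spin}. Terms where no such recombination occurs constitute the last term, with $m-1$ eigenvalue signs and one absolute value kept. The constants $c_{k}$ are powers of $\pm2$ together with combinatorial counts, and are independent of $X$.

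I expect the main obstacle to be the bookkeeping in this expansion. Cross terms with several eigenvalues from the same gap must be shown to collapse by the alternating-sign structure (e.g. $s(\lambda_{i})s(\lambda_{i+1})=-1$), so that only one eigenvalue per gap, in the interlaced order $\mc{I}_{k}$, survives. I would first check this via the parity form for $m=2,3$, and then set up an induction on $m$ that peels off $\mu_{1}$ and the first gap. I would finish by verifying the left-continuity conventions at the boundaries of the intervals.
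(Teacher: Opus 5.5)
Your route is the paper's: write $|\det(X-\mu_j)|=s(\mu_j)\det(X-\mu_j)$ and convert signs at the $\mu_j$ into signs at eigenvalues. The paper only works out $m=2$. Your jump identity $s(\nu)=s(\mu)-2\sum_{\lambda_i\in(\mu,\nu)}s(\lambda_i)$ is the correct form of the one the paper displays; its $2\sum_{\lambda_n<\lambda}s(\lambda_n)-1$ and $3s(\mu)$ are slips. Write $d_j:=\det(X-\mu_j)$, $s_j:=s(\mu_j)$ and $\sigma_j:=\sum_{\lambda_i\in(\mu_j,\mu_{j+1})}s(\lambda_i)$. Your scheme then correctly gives $|d_1||d_2|=d_1d_2+2\sigma_1d_1|d_2|$, so the case $m=2$ is fine.

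The gap is the claimed outcome of the bookkeeping, and it cannot be closed, because the identity as stated is false for $m\ge3$. Expand $\prod_{j<m}\bigl(s_m+2\sum_{i\ge j}\sigma_i\bigr)$ and reduce with $s_m^2=1$ and $\sigma_j^2=-s_{j+1}\sigma_j$. What remains are monomials $\sigma_S s_m^{e}$ with $e\in\{0,1\}$ and $|S|+e\equiv m-1\pmod 2$. Here $\sigma_S=\prod_{j\in S}\sigma_j$ and $S\subseteq\{1,\dots,m-1\}$ is an \emph{arbitrary} set of gaps. This has two consequences:
\begin{enumerate}[(a)]
\item the configurations put one eigenvalue in each gap of $S$, not only in the first $k$ gaps as $\mc{I}_k$ demands;
\item the terms keeping $|d_m|$ are those with $|S|\equiv m-1\pmod 2$, which for $m\ge3$ includes $|S|<m-1$.
\end{enumerate}
Both effects occur. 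For $m=3$ your procedure yields exactly
\[
|d_1||d_2||d_3|=d_1d_2|d_3|+2\sigma_1d_1d_2d_3+4\sigma_1\sigma_2\,d_1d_2|d_3|.
\]
For $m=4$ it produces the term $2\sigma_3\,d_1d_2d_3|d_4|$, with a lone eigenvalue in the third gap.

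No choice of constants rescues the statement. Take $m=3$ and a deterministic diagonal $X$ with no eigenvalue in $[\mu_1,\mu_3]$. Every sum over $\mc{I}_1$ or $\mc{I}_2$ then vanishes, so the right-hand side is $c_0d_1d_2d_3$. The left-hand side is $s_1^3d_1d_2d_3=s_1d_1d_2d_3$, and $s_1=\pm1$ according to the parity of the number of eigenvalues below $\mu_1$.

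What your argument actually proves is the expansion over all $S\subseteq\{1,\dots,m-1\}$. The factor is $\prod_{j\le m}d_j$ when $|S|\equiv m$ and $\bigl(\prod_{j<m}d_j\bigr)|d_m|$ when $|S|\equiv m-1\pmod 2$. This keeps what the paper later uses: all new eigenvalues lie in $(\mu_1,\mu_m)$ and there is at most one absolute value. It is not the form asserted in the lemma, however. Your final identification steps (``this is exactly $\mc{I}_k$'' and ``the non-recombined terms have $m-1$ eigenvalue signs'') are where the proof fails. Checking $m=3$, as you planned, would have exposed this.
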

\begin{proof}
Since $s(\lambda)$ alternates between $+1$ and $-1$ as $\lambda$ passes through eigenvalues, we have
\begin{align}
	s(\lambda)&=2\sum_{\lambda_{n}<\lambda}s(\lambda_{n})-1\\
	&=3s(\mu)+2\sum_{\mu<\lambda_{n}<\lambda}s(\lambda_{n}).
\end{align}
We write $|\det(X-\lambda)|=s(\lambda)\det(X-\lambda)$ and use the above identity to conclude. For example,
\begin{align}
	|\det(X-\mu_{1})||\det(X-\mu_{2})|&=s(\mu_{2})|\det(X-\mu_{1})|\det(X-\mu_{2})\\
	&=\Biggl(3s(\mu_{1})+2\sum_{\mu_{1}<\lambda_{i}<\mu_{2}}s(\lambda_{i})\Biggr)|\det(X-\mu_{1})|\det(X-\mu_{2})\\
	&=3\det(X-\mu_{1})\det(X-\mu_{2})\nonumber\\
	&+2\sum_{i}\mbf{1}(\{\mu_{1}<\lambda_{i}<\mu_{2}\})s(\lambda_{i})|\det(X-\mu_{1})|\det(X-\mu_{2}).
\end{align}
\end{proof}
The main difference between this representation and the one in \cite[Lemma 5.2]{osman_bulk_2025} is the fact that all new eigenvalues in the sums are contained in $[\mu_{1},\mu_{m}]$, whereas in the representation in \cite[Lemma 5.2]{osman_bulk_2025} one eigenvalue is contained in $(\mu_{m},\infty)$. The cost of having all eigenvalues in a compact set is that we have to keep one absolute value in the last term in \eqref{eq:spin}. For this term we use the identity
\begin{align}
	|\det(X-\mu)|&=\lim_{\epsilon\to 0}\frac{\det^{2}(X-\mu)}{\det^{1/2}((X-\mu)^{2}+\epsilon)}.
\end{align}

Each term in \eqref{eq:spin} is the expectation value of a sum over tuples of distinct eigenvalues and hence can be evaluated by partial diagonalisation. The ``spin" variables $s(\lambda_{i})$ will remove the absolute value from the Jacobian $|\det(X-\lambda_{i})|$. We are left with the functions
\begin{align}
	F_{1,1}(\bs\lambda,\{\mbf{v}_{j\leq n}\})&:=\wt{\Delta}(\bs\lambda)\mbb{E}\Bigl[\prod_{j=1}^{n}\det(X^{(n)}+\sqrt{t}Y^{(n)}-\lambda_{j})\Bigr],
\end{align}
and
\begin{align}
	F_{1,2}(\bs\lambda,\{\mbf{v}_{j\leq n}\})&:=\wt{\Delta}(\bs\lambda)\mbb{E}\Bigl[|\det(X^{(n)}+\sqrt{t}Y^{(n)}-\lambda_{1})|\prod_{j=2}^{n}\det(X^{(n)}+\sqrt{t}Y^{(n)}-\lambda_{j})\Bigr].
\end{align}
where $n\geq m$ and
\begin{align}
	\wt{\Delta}(\bs\lambda)&=\prod_{j<k}^{m}|\lambda_{j}-\lambda_{k}|\prod_{m<j<k}(\lambda_{j}-\lambda_{k})\prod_{j=1}^{m}\prod_{k=m+1}^{n}(\lambda_{k}-\lambda_{j}).
\end{align}
By integrating these functions over the excess variables $\lambda_{m+1},...,\lambda_{n}$ and $\mbf{v}_{m+1},...,\mbf{v}_{m+n}$, we can recover $F_{1}(\bs\lambda,\{\mbf{v}_{j\leq m}\})$.

Similarly to Lemma \ref{lem:F_2}, we have an integral formula for $F_{1,1}$ (see also Br\'{e}zin--Hikami \cite{brezin_characteristic_2001}).
\begin{lemma}\label{lem:F_1,1}
We have
\begin{align}
	F_{1,1}(\bs\lambda)&=\wt{\Delta}(\bs\lambda)\int_{\mbb{R}^{n}}\Delta^{4}(\mbf{s})e^{N\sum_{j}\phi^{(n)}_{\lambda_{1}}(is_{j})}I_{n}(\mbf{s})\diff\mbf{s},
\end{align}
where
\begin{align}
	I_{n}(\mbf{s})&=\int_{\textup{Sp}(n)}e^{\frac{iN}{t}\tr UJ(\mbf{s})U^{*}J(\bs\lambda-\lambda_{1})}\diff U.
\end{align}
Here $\textup{Sp}(n)$ denotes the compact symplectic group of order $n$ and
\begin{align}
	J(\mbf{x})&=\begin{pmatrix}0&\textup{diag}(\mbf{x})\\-\textup{diag}(\mbf{x})&0\end{pmatrix}.
\end{align}
\end{lemma}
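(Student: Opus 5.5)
We prove the identity for $F_{1,1}$ (Lemma \ref{lem:F_1,1}) by the standard Grassmann/supersymmetric route, as in Br\'ezin--Hikami. We write each determinant as a Berezin integral, average over the Gaussian $Y^{(n)}$, and then apply a Hubbard--Stratonovich transformation. The result is a compact-group integral that is evaluated by the Harish-Chandra--Itzykson--Zuber-type formula for $\textup{Sp}(n)$.

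\emph{Grassmann representation and Gaussian average.} For each $j\in[n]$ introduce Grassmann vectors $\bs\psi_{j},\bar{\bs\psi}_{j}\in\Lambda^{N-n}$, so that
\begin{align}
\prod_{j=1}^{n}\det(X^{(n)}+\sqrt{t}Y^{(n)}-\lambda_{j})&=\int\exp\Bigl(-\sum_{j}\bar{\bs\psi}_{j}^{T}(X^{(n)}+\sqrt{t}Y^{(n)}-\lambda_{j})\bs\psi_{j}\Bigr)\diff\bar{\bs\psi}\diff\bs\psi.
\end{align}
Since $Y^{(n)}$ is real symmetric (GOE, variance $\sim 1/N$), only the symmetrised bilinear $\sum_{j}(\bar{\bs\psi}_{j}\bs\psi_{j}^{T}+\bs\psi_{j}\bar{\bs\psi}_{j}^{T})$ couples to $Y$. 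Averaging therefore produces a quartic term $\frac{t}{N}\tr Q^{2}$, where $Q$ is the $2n\times 2n$ antisymmetric-structure matrix of Grassmann bilinears built from $(\bs\psi_{j},\bar{\bs\psi}_{j})$. This is exactly why the symplectic structure $J$ appears for $\beta=1$.

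\emph{Hubbard--Stratonovich and integration of the Grassmann fields.} Decouple the quartic term with an auxiliary $2n\times2n$ matrix $S$ in the appropriate real form (self-dual quaternion, i.e.\ the tangent space of $\textup{Sp}(n)$ orbits of $J(\mbf{s})$), with Gaussian weight $e^{-\frac{N}{2t}\tr S^{2}}$. The Grassmann integral is then Gaussian and gives $\det(X^{(n)}\otimes 1_{2n}-iS\otimes 1-\Lambda)^{1/2}$ (Pfaffian form), where $\Lambda$ encodes the $\lambda_{j}$ through $J(\bs\lambda)$. Shifting $S$ by $\lambda_{1}$ and writing $J(\bs\lambda)=\lambda_{1}J(\mbf 1)+J(\bs\lambda-\lambda_{1})$ moves the common part into the phase $\frac{(is-\lambda_{1})^{2}}{2t}$. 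The cross term $\frac{iN}{t}\tr SJ(\bs\lambda-\lambda_{1})$ remains, and the determinant becomes $e^{N\sum_{j}\Tr{\log(X^{(n)}-is_{j})}}$ once $S$ is diagonalised. This is where $e^{N\sum_{j}\phi^{(n)}_{\lambda_{1}}(is_{j})}$ comes from.

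\emph{Eigenvalue coordinates.} Write $S=UJ(\mbf{s})U^{*}$ with $U\in\textup{Sp}(n)$. The Jacobian of this change of variables for the symplectic (quaternion, $\beta=4$) ensemble is $\Delta^{4}(\mbf{s})$, and the residual angular integral is exactly $I_{n}(\mbf{s})$. Prefactors (powers of $N/t$, $c_{m}$) are absorbed into constants, and $\wt{\Delta}(\bs\lambda)$ is carried along unchanged.

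\emph{Main obstacle.} The delicate step is the symmetry bookkeeping. We must verify that, for real symmetric $Y$ with $n$ distinct $\lambda_{j}$, the Hubbard--Stratonovich field lives in the Lie algebra whose compact group is $\textup{Sp}(n)$, and that the source term takes precisely the form $\tr UJ(\mbf{s})U^{*}J(\bs\lambda-\lambda_{1})$, rather than the form obtained from a naive diagonal embedding. We would settle this first in the case $n=1$, where $I_{1}$ is trivial and the formula reduces to the known one-point formula. We would then check that the degree count $\Delta^{4}$ matches the $(n=2)$ case by direct expansion before trusting the general argument. Contour justification (rotating $s$ to the imaginary axis so the Gaussian converges) follows as in Lemma \ref{lem:F_2}.
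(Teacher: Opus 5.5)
Your outline is correct and is essentially the paper's route. The paper obtains this lemma ``similarly to Lemma \ref{lem:F_2}'' (Grassmann representation, Gaussian average over $Y^{(n)}$, Hubbard--Stratonovich, eigenvalue/angular coordinates), and it writes out the real-case auxiliary field $S=\begin{pmatrix}D&iB\\-iB^{T}&D^{*}\end{pmatrix}$ and its $\textup{Sp}(n)$ change of variables in the proof of Lemma \ref{lem:F_1,2}, just as you describe.

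To settle the bookkeeping you flag: shift by the full $J(\bs\lambda)$, not only by $\lambda_{1}$, so that the Pfaffian becomes $\prod_{j}\det(X^{(n)}-is_{j})$. Also use the congruence parametrisation $S=UJ(\mbf{s})U^{T}$ from that proof rather than $UJ(\mbf{s})U^{*}$. Under it $\tr\bigl(UJ(\mbf{s})U^{T}J\bigr)=-2\sum_{j}s_{j}$ is independent of $U$, so the $\lambda_{1}$-part of the cross term goes into $\phi^{(n)}_{\lambda_{1}}$ and only $J(\bs\lambda-\lambda_{1})$ enters $I_{n}$.
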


To write down a similar identity for $F_{1,2}$, we first introduce the probability measure
\begin{align}
	\diff\nu_{r}(\mbf{u};r)&=\frac{1}{K_{n}(r,\lambda_{1},\{\mbf{v}_{j\leq n}\})}\Biggl(\frac{Nr}{2\pi t(1+r)}\Biggr)^{(N-n)/2-1}\nonumber\\
    &\times e^{\frac{Nr}{2t(1+r)}\Bigl(\mbf{u}^{T}(X^{(n)}-\lambda_{1})^{2}\mbf{u}-\frac{r}{1+2r}\bigl(\mbf{u}^{T}(X^{(n)}-\lambda_{1})\mbf{u}\bigr)^{2}\Bigr)}\diff S^{(1)}_{N-n}(\mbf{u}),
\end{align}
where $K_{n}(r,\lambda_{1},\{\mbf{v}_{j\leq m}\})$ is the normalisation. This is a one-parameter extension of $\mu^{(1)}_{n}$, which is recovered in the limit $r\to\infty$. Let $X^{(n+1)}$ denote the projection of $X^{(n)}$ onto the space orthogonal to $\mbf{u}$ and $\mbb{E}_{r}$ denote the expectation with respect to $\mbf{u}\sim\nu_{r}$. We also define
\begin{align}
	p(x,U,\mbf{s},r)&:=\mbb{E}_{r}\Biggl[\pf\Bigl(\frac{1}{1+r}\bigl((1_{2n}\otimes\mbf{u}^{T})(J\otimes X^{(n)}-iJ(\mbf{s})\otimes 1_{N-n})^{-1}(1_{2n}\otimes\mbf{u})\bigr)^{-1}+B(\mbf{s},r)\Bigr)\Biggr],
\end{align}
where
\begin{align}
	B(\mbf{s},r)&=\frac{ir}{1+r}J(\mbf{s})+\frac{r^{2}}{(1+r)(1+2r)}\bigl(\mbf{u}^{T}(X^{(n)}-\lambda_{1})\mbf{u}+x\bigr)J-rU^{*}J(\wt{\bs\lambda}-\lambda_{1})U,
\end{align}
and $\wt{\bs\lambda}=(\lambda_{1},\lambda_{1},\lambda_{2},\lambda_{3},...,\lambda_{n})$. Here we have used the shorthand $J\equiv J(0)$ for the symplectic form.
\begin{lemma}\label{lem:F_1,2}
We have
\begin{align}
	F_{1,2}(\bs\lambda,\{\mbf{v}\})&=\wt{\Delta}(\bs\lambda)\int_{0}^{\infty}\frac{K_{n}(r,\lambda_{1},\{\mbf{v}_{j\leq n}\})}{(1+2r)^{1/2}(1+r)^{n+1/2}}\int_{\mbb{R}^{n+1}}\Delta^{4}(\mbf{s})e^{N\sum_{j}\phi^{(n+1)}_{\lambda_{1}}(is_{j})}f(\mbf{s},r)\diff\mbf{s}\diff r,\label{eq:F_1,2}
\end{align}
where
\begin{align}
	f(\mbf{s},r)&=\int_{-\infty}^{\infty}e^{-\frac{Nr^{2}x^{2}}{4t(1+r)^{2}(1+2r)}}\int_{\textup{Sp}(n+1)}e^{\frac{iN}{t}\tr UJ(\mbf{s})U^{*}J(\wt{\bs\lambda}-\lambda_{1})}p(x,U,\mbf{s},r)\diff U\diff x,
\end{align}
\end{lemma}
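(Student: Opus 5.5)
We derive the formula for $F_{1,2}$ the same way as Lemma \ref{lem:F_1,1}, after first turning the single absolute value into an integral of non-negative Gaussian-type expressions. We start from
\begin{align*}
|\det(X^{(n)}+\sqrt{t}Y^{(n)}-\lambda_{1})|=\lim_{\epsilon\to0}\frac{\det^{2}(X^{(n)}+\sqrt{t}Y^{(n)}-\lambda_{1})}{\det^{1/2}\bigl((X^{(n)}+\sqrt{t}Y^{(n)}-\lambda_{1})^{2}+\epsilon\bigr)}.
\end{align*}
The factor $\det^{2}$ accounts for the doubled entry $\lambda_{1}$ in $\wt{\bs\lambda}$. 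We write the numerator, together with the remaining $\prod_{j\ge2}\det(\cdot-\lambda_{j})$, as a Grassmann integral over $n+1$ pairs of anticommuting vectors. We write the inverse square root of the denominator as a real Gaussian integral over a bosonic vector $\mbf{y}\in\mbb{R}^{N-n}$, with weight $e^{-\frac12\mbf{y}^{T}((M-\lambda_{1})^{2}+\epsilon)\mbf{y}}$, where $M=X^{(n)}+\sqrt{t}Y^{(n)}$.

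Next we take the expectation over $Y^{(n)}$. The fermionic part gives a quartic term, which we decouple by the usual Hubbard--Stratonovich transformation with a $2(n+1)\times2(n+1)$ antisymmetric self-dual matrix. We diagonalise that matrix as $UJ(\mbf{s})U^{*}$ with $U\in\textup{Sp}(n+1)$; the Jacobian produces $\Delta^{4}(\mbf{s})$ and the Harish-Chandra-type integral over $\textup{Sp}(n+1)$. The bosonic part is quadratic in $Y^{(n)}$ through $\mbf{y}^{T}(M-\lambda_{1})^{2}\mbf{y}$, so the Gaussian average over $Y$ is explicit but couples $\|\mbf{y}\|^{2}$ and the quadratic form $\mbf{y}^{T}(X^{(n)}-\lambda_{1})\mbf{y}$.

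We then pass to polar coordinates $\mbf{y}=\sqrt{r'}\,\mbf{u}$ with $\mbf{u}\in\textup{S}^{N-n-1}$ and rescale the radial variable to $r$. After completing squares, the angular density in $\mbf{u}$ is exactly the density of $\nu_{r}$, and its normalisation is $K_{n}(r,\cdot)$. The cross term linear in $\mbf{u}^{T}(X^{(n)}-\lambda_{1})\mbf{u}$ is linearised by one more scalar Gaussian variable $x$; this produces $e^{-\frac{Nr^{2}x^{2}}{4t(1+r)^{2}(1+2r)}}$ and the shift $\mbf{u}^{T}(X^{(n)}-\lambda_{1})\mbf{u}+x$ in $B$. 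Completing squares in $r$ yields the prefactors $(1+2r)^{-1/2}(1+r)^{-n-1/2}$.

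Integrating out the fermions then gives a Pfaffian of $(J\otimes X^{(n)}-iJ(\mbf{s})\otimes1)$ augmented by rank-$2(n+1)$ terms coupled to $\mbf{u}$. We apply the Schur complement (block determinant) with respect to the direction $\mbf{u}$. This splits the Pfaffian into two pieces:
\begin{itemize}
\item[(a)] $\pf$ on the complement, which equals $\det^{n+1}$ of $X^{(n+1)}-is_{j}$ and so gives $e^{N\sum_{j}\phi^{(n+1)}_{\lambda_{1}}(is_{j})}$ once combined with the Gaussian factors in $\mbf{s}$;
\item[(b)] the $2(n+1)$-dimensional Pfaffian inside $p$, namely $\frac{1}{1+r}\bigl((1\otimes\mbf{u}^{T})(\cdots)^{-1}(1\otimes\mbf{u})\bigr)^{-1}+B$.
\end{itemize}
Taking $\mbb{E}_{r}$ over $\mbf{u}$ gives $p$, and we finally let $\epsilon\to0$ by dominated convergence.

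The main obstacle is the bookkeeping in the bosonic--fermionic coupling: we must verify that the $Y$-average together with the Hubbard--Stratonovich step produces exactly $B(\mbf{s},r)$, with the term $-rU^{*}J(\wt{\bs\lambda}-\lambda_{1})U$ coming from the interaction between $r$ and the symplectic variables. We would check the constants against the case $n=0$, the single absolute determinant, first.
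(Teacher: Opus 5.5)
Your overall architecture matches the paper's: Grassmann variables for $\det^{2}(\cdot-\lambda_{1})\prod_{j\geq2}\det(\cdot-\lambda_{j})$, a real Gaussian vector for $\det^{-1/2}((\cdot-\lambda_{1})^{2}+\epsilon)$, the average over $Y$, polar coordinates $\mbf{y}=\sqrt{r}\,\mbf{u}$ producing $\nu_{r}$, a Hubbard--Stratonovich matrix diagonalised over $\textup{Sp}(n+1)$, fermion integration to a Pfaffian, and a Schur complement along $\mbf{u}$. The gap is that the step you defer as ``bookkeeping'' is the actual content of the lemma, and the way you set it up does not work.

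Since $\mbf{y}^{T}Y^{2}\mbf{y}$ is quadratic in $Y$, the $Y$-average does not factor into a fermionic part (treated by the usual Hubbard--Stratonovich transformation) and a separate bosonic part. It is a single Gaussian integral over symmetric matrices with deformed weight $e^{-\frac{N}{4}\tr AY^{2}-N\tr (B_{1}+B_{2})Y}$. Here $A=1+2\mbf{y}\mbf{y}^{T}$, $B_{1}\propto\mbf{y}\mbf{y}^{T}(X-\lambda_{1})+(X-\lambda_{1})\mbf{y}\mbf{y}^{T}$ is the bosonic source and $B_{2}\propto\Psi\Psi^{*}-\bar{\Psi}\Psi^{T}$ is the fermionic one. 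The missing ingredient is the explicit evaluation of this integral: it equals $\prod_{i}a_{i}^{-1/2}\prod_{i<j}(\frac{a_{i}+a_{j}}{2})^{-1/2}\exp\{2N\textup{vec}(B)^{T}(1\otimes A+A\otimes1)^{-1}\textup{vec}(B)\}$, with
\begin{align*}
2(1\otimes A+A\otimes1)^{-1}&=1-\frac{r}{1+r}(\mbf{u}\mbf{u}^{T}\otimes1+1\otimes\mbf{u}\mbf{u}^{T})+\frac{2r^{2}}{(1+r)(1+2r)}\mbf{u}\mbf{u}^{T}\otimes\mbf{u}\mbf{u}^{T}.
\end{align*}
Expanding this gives three kinds of terms:
\begin{enumerate}[i)]
\item the $B_{1}B_{1}$ terms, together with the original bosonic weight, give the $\nu_{r}$ density;
\item the $B_{2}B_{2}$ terms are not the usual fermionic quartic: they are projected by $1-\frac{r}{1+r}\mbf{u}\mbf{u}^{T}$ and contain an extra rank-one quartic $(\mbf{u}^{T}\Psi\Psi^{*}\mbf{u})^{2}$;
\item the $B_{1}B_{2}$ cross terms change the fermionic quadratic form from $X-\lambda_{1}$ to an $r,\mbf{u}$-dependent $\mathcal{X}$.
\end{enumerate}
So the Hubbard--Stratonovich matrix must enter as $S\otimes(1-\frac{r}{1+r}\mbf{u}\mbf{u}^{T})$. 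This, together with $\mathcal{X}$, is what produces the $r$-dependent coefficients in $B(\mbf{s},r)$. A transformation done in the usual way, independently of $\mbf{y}$, gives the wrong Pfaffian.

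Relatedly, your account of the scalar $x$ is wrong and internally inconsistent. A term linear in $\mbf{u}^{T}(X^{(n)}-\lambda_{1})\mbf{u}$ needs no linearisation. The square $(\mbf{u}^{T}(X^{(n)}-\lambda_{1})\mbf{u})^{2}$ from $B_{1}B_{1}$ stays inside the $\nu_{r}$ density, as you yourself say the angular density is exactly $\nu_{r}$. In the paper, $x$ is the Hubbard--Stratonovich variable for the extra fermionic term $\frac{tr^{2}}{N(1+r)^{2}(1+2r)}(\mbf{u}^{T}\Psi\Psi^{*}\mbf{u})^{2}$, which is precisely what gives the weight $e^{-\frac{Nr^{2}x^{2}}{4t(1+r)^{2}(1+2r)}}$. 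The combination $\mbf{u}^{T}(X^{(n)}-\lambda_{1})\mbf{u}+x$ in $B$ only appears after the Schur complement, because the $\mbf{u}\mbf{u}^{T}$-component of $\mathcal{X}$ and the term $x\,J\otimes\mbf{u}\mbf{u}^{T}$ sit in the same block.

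Likewise, the prefactor does not come from completing squares in $r$. The factor $(1+2r)^{-1/2}$ is the $a_{i}=1+2r$ factor of the deformed $Y$-integral, and the powers of $1+r$ come from the remaining eigenvalue factors (partly absorbed into $K_{n}$) and from the Schur complements.

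Finally, ``dominated convergence'' for $\epsilon\to0$ needs an integrable majorant in $r$. The paper obtains it from the Pfaffian addition formula and the degeneracy of $J(\wt{\bs\lambda}-\lambda_{1})$ (from the doubled $\lambda_{1}$), which bound the degree of $p$ in $r$ by $n-1$ and give $r^{-2}$ decay of the integrand.
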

The measure $\nu_{r}$ can be treated in a similar way to $\mu^{(n)}_{n}$. In order to do so we define $\eta^{(n)}(E,r)$ by 
\begin{align}
    t\Tr{\bigl((X^{(n)}-E)^{2}+(\eta^{(n)}(E,r))^{2}\bigr)^{-1}}&=\frac{r}{1+r},\label{eq:eta(E,r)}
\end{align}
and $E^{(n)}_{\lambda,r}$ by
\begin{align}
    E^{(n)}_{\lambda,r}&=\lambda+t\Tr{(X^{(n)}-E^{(n)}_{\lambda,r})\bigl((X^{(n)}-E^{(n)}_{\lambda,r})^{2}+(\eta^{(n)}(E^{(n)}_{\lambda,r}))^{2}\bigr)^{-1}}.
\end{align}
We also define
\begin{align}
    H^{(n)}_{\lambda}(r)&:=\bigl((X^{(n)}-E^{(n)}_{\lambda,r})^{2}+(\eta^{(n)}(E^{(n)}_{\lambda,r},r))^{2}\bigr)^{-1},
\end{align}
Firstly, we have the analogue of Lemma \ref{lem:K}.
\begin{lemma}\label{lem:K(r)}
There is a function $\wt{K}_{n}(r,\lambda)$ such that
\begin{align}
    K_{n}(r,\lambda,\{\mbf{v}_{j\leq n}\})&=\Bigl[1+O_{\prec}\Bigl(\frac{\log N}{\sqrt{Nt^{3}}}\Bigr)\Bigr]\wt{K}_{n}(r,\lambda).
\end{align}
\end{lemma}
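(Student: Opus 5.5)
The plan is to follow the scheme used for Lemma \ref{lem:K}: write $K_{n}(r,\lambda_{1},\{\mbf{v}_{j\leq n}\})$ as an integral over the sphere, pass to a Euclidean integral with Lemma \ref{lem:Sduality}, and evaluate that integral by a saddle point. The measure $\nu_{r}$ is a Gaussian-type measure on the sphere. Its exponent contains a quadratic form in $X^{(n)}-\lambda_{1}$ and a square of a quadratic form.

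\textbf{Step 1: linearise and integrate.} First linearise the square $\bigl(\mbf{u}^{T}(X^{(n)}-\lambda_{1})\mbf{u}\bigr)^{2}$ with a Hubbard--Stratonovich variable $y$. This is the same device that produced the auxiliary variable $x$ in Lemma \ref{lem:F_1,2}. The exponent then becomes a single quadratic form $\mbf{u}^{T}Q(y)\mbf{u}$, with $Q(y)$ quadratic in $X^{(n)}-\lambda_{1}$. Next apply Lemma \ref{lem:Sduality} to replace the sphere integral by an integral over $p$ of a Gaussian integral over $\mbb{R}^{N-n}$. The Gaussian integral is computed explicitly and gives
\begin{align}
K_{n}\propto\int\int e^{-\frac{N}{t}(\cdots)y^{2}+\frac{ip}{2}}\det\bigl(Q(y)-ip\bigr)^{-1/2}\diff p\diff y.
\end{align}
After factoring $(X^{(n)}-E)^{2}+\eta^{2}$, the exponent is of the form $N\cdot\bigl(\text{explicit terms}-\tfrac12\Tr{\log((X^{(n)}-E)^{2}+\eta^{2})}\bigr)$, that is, a $\psi$-type function. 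This is the $r$-deformed analogue of $\psi^{(n)}_{\lambda}$.

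\textbf{Step 2: saddle point.} Locate the critical point. Its defining equations are exactly \eqref{eq:eta(E,r)} together with the equation for $E^{(n)}_{\lambda,r}$. The factor $r/(1+r)$ replaces the $1$ in \eqref{eq:eta(s)}, so the saddle point shifts continuously with $r$ and returns to the $\mu^{(1)}_{n}$ case as $r\to\infty$. Then prove a quadratic lower bound analogous to Lemma \ref{lem:phi2} so that the integral can be localised to a neighbourhood of size about $\log N\sqrt{t/N}$. On this neighbourhood a Taylor expansion gives $K_{n}$ as an explicit leading term, $e^{N\psi}$ times a Hessian factor, up to a relative error.

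\textbf{Step 3: remove the dependence on $\{\mbf{v}_{j\leq n}\}$.} This dependence enters only through normalised traces of functions of $X^{(n)}$ at spectral parameters with imaginary part of order $t$. By Lemma \ref{lem:resolventProjection} iterated $n$ times, or by the interlacing bound, each such trace differs from its $X$ counterpart by $O(n/(Nt))$. The quantity to control is $N$ times the difference of $\Tr{\log(\cdot)}$. I will handle this with Cramer's rule, exactly as in the proof of Lemma \ref{lem:Ftilde}: the ratio $\det(\cdot^{(n)})/\det(\cdot)$ is a product of quadratic forms $\mbf{v}_{j}^{*}G^{(j-1)}\mbf{v}_{j}$. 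Each such form concentrates by Lemma \ref{lem:conc} to a deterministic quantity with relative error $O_{\prec}((Nt^{3})^{-1/2})$. Define $\wt{K}_{n}(r,\lambda)$ to be the expression obtained by substituting these deterministic values, and the stated bound follows. The additional $\log N$ comes from the localisation radius and the $\prec$ convention.

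\textbf{Main obstacle.} I expect the hardest part to be uniformity in $r\in(0,\infty)$. As $r\to0$ the measure $\nu_{r}$ becomes nearly flat. The saddle-point equation \eqref{eq:eta(E,r)} then forces $\eta^{(n)}$ to be large, and the concentration scale changes. I would first check that the error terms $\Tr{(AH^{(n)}_{\lambda}(r))^{2}}^{1/2}$ stay comparable to the $r=\infty$ case when $r\gtrsim1$. For small $r$ I would argue directly: the Gaussian is then so spread that quadratic forms concentrate even better, because $H(r)$ has norm $\lesssim 1/\eta^{2}$ with larger $\eta$. This should give a relative error that is uniform in $r$.
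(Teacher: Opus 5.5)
Your proposal matches the paper's proof. The paper also linearises the quartic term and applies the duality formula as in Lemma \ref{lem:E_j}, using the $r$-dependent $\eta^{(n)}(E,r)$ of \eqref{eq:eta(E,r)}; it then proves a quadratic lower bound for $\psi^{(n)}_{\lambda}(E,r)$ as in Lemma \ref{lem:phi2} to localise $E$ and $p$, and removes the $\{\mbf{v}_{j\leq n}\}$-dependence exactly as in Lemma \ref{lem:K}, via interlacing, Cramer's rule and Lemma \ref{lem:conc}. Your extra treatment of uniformity as $r\to0$, where the effective time $t(1+r)/r$ and $\eta^{(n)}(E,r)$ grow, covers a point the paper leaves implicit.
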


Secondly, we have the analogue of Lemma \ref{lem:conc}. The proof is exactly the same and hence omitted.
\begin{lemma}\label{lem:nuConc}
Let $r>0$ and $\mbf{u}\sim\nu_{r}$. Then for any $A\in\textup{Sym}(N-n)$ we have
\begin{align}
	\Bigl|\mbf{u}^{T}A\mbf{u}-t\Tr{AH^{(n)}_{\lambda_{1}}(r)}\Bigr|&\prec\frac{t}{\sqrt{N}}\Tr{(AH^{(n)}_{\lambda_{1}}(r))^{2}}^{1/2}.
\end{align}
\end{lemma}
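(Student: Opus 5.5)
The statement to prove is Lemma \ref{lem:nuConc}: concentration of quadratic forms $\mbf{u}^{T}A\mbf{u}$ under $\nu_{r}$. I would follow the proof of Lemma \ref{lem:conc} (itself modelled on \cite[Lemma 7.2]{maltsev_bulk_2024}). The strategy is to show that $\nu_{r}$ is, up to a harmless one-dimensional tilt, a Gaussian measure conditioned on the sphere. Then the statement reduces to Hanson--Wright concentration for a Gaussian vector with covariance proportional to $H^{(n)}_{\lambda_{1}}(r)$.

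\textbf{Step 1: linearise the quartic term.} The density of $\nu_{r}$ contains $\bigl(\mbf{u}^{T}(X^{(n)}-\lambda_{1})\mbf{u}\bigr)^{2}$ with coefficient $-\frac{r}{1+2r}\cdot\frac{Nr}{2t(1+r)}$. I would remove it with a Hubbard--Stratonovich transform, introducing an auxiliary real variable $y$; this is the same Gaussian integral in $x$ that appears in Lemma \ref{lem:F_1,2}. The exponent then becomes quadratic in $\mbf{u}$, with matrix proportional to $(X^{(n)}-\lambda_{1})^{2}-2iy'(X^{(n)}-\lambda_{1})$ for a rescaled $y'$. Completing the square turns this into $(X^{(n)}-E)^{2}+\eta^{2}$ with $E=\lambda_{1}+(\text{shift in }y)$. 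This matches the sign convention that makes the $\mbf{u}$-integral Gaussian after the spherical constraint is imposed.

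\textbf{Step 2: impose the sphere constraint.} I would apply Lemma \ref{lem:Sduality} to replace the Haar measure on $\textup{S}^{N-n-1}$ by an integral over $\mbb{R}^{N-n}$ against $e^{\frac{ip}{2}(\|\mbf{u}\|^{2}-1)}$. The $\mbf{u}$-integral is then an explicit Gaussian with precision matrix proportional to $(X^{(n)}-E)^{2}+\eta^{2}$, where $\eta^{2}$ depends on $p$. This leaves a two-dimensional integral over $(p,y)$, or equivalently over $(\eta^{2},E)$. Its exponent is $N\times$ a function analogous to $\psi^{(n)}_{\lambda}$, with the factor $\frac{r}{1+r}$ inserted.

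\textbf{Step 3: saddle-point analysis in $(\eta^{2},E)$.} This is the main obstacle. Stationarity in $\eta^{2}$ gives exactly the defining equation \eqref{eq:eta(E,r)}, and stationarity in $E$ gives the self-consistent equation defining $E^{(n)}_{\lambda,r}$. I need the analogue of Lemma \ref{lem:phi2}: the real part of the exponent grows at least quadratically, on scale $t/N$, away from the saddle, uniformly in $r>0$. I would try the same local-law computation as in the appendix proof of Lemma \ref{lem:phi2}, tracking the factor $r/(1+r)\in(0,1)$. For small $r$ this factor only decreases $\eta$, which should not spoil the convexity estimates. I would also check that the critical point is nondegenerate, using that $\partial_{\eta^{2}}$ of the left-hand side of \eqref{eq:eta(E,r)} is of order $t^{-1}$ times a positive quantity. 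Away from a window of size $\log N\sqrt{t/N}$ around the saddle, the contribution is $O(e^{-c\log^{2}N})$.

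\textbf{Step 4: Gaussian concentration.} Inside the window, conditionally on $(E,\eta)$, $\mbf{u}$ is a centred Gaussian with covariance $\frac{t}{N}\bigl((X^{(n)}-E)^{2}+\eta^{2}\bigr)^{-1}$, up to the oscillatory factor from $p$. The oscillatory factor can be handled as in Lemma \ref{lem:conc} by shifting the $p$-contour to the saddle, which makes it positive to leading order. Hanson--Wright then gives
\begin{align*}
\mbf{u}^{T}A\mbf{u}&=t\Tr{A\bigl((X^{(n)}-E)^{2}+\eta^{2}\bigr)^{-1}}+O_{\prec}\Bigl(\frac{t}{\sqrt{N}}\Tr{(AH)^{2}}^{1/2}\Bigr),
\end{align*}
with probability $1-e^{-c\log^{2}N}$. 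Within the window, replacing $(E,\eta)$ by the saddle values $(E^{(n)}_{\lambda_{1},r},\eta^{(n)}(E^{(n)}_{\lambda_{1},r},r))$ changes the trace by a relative error of order $\log N/\sqrt{Nt^{3}}$. This error is bounded by the local law, in the same way as in the proof of Lemma \ref{lem:conc}, and is absorbed into the right-hand side. Combining the conditional bound with the exponential tails from Step 3 yields the claim.
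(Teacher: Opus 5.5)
Your Steps 1–3 are usable ingredients, but Step 4 has a genuine gap. After Lemma \ref{lem:Sduality}, $\nu_{r}$ is represented as an integral over $p$ (and $E$) of \emph{complex} weights $e^{-\frac{Nr}{2t(1+r)}\mbf{x}^{T}((X^{(n)}-E)^{2}+\eta^{2}+ip)\mbf{x}}$ on $\mbb{R}^{N-n}$. This is not a mixture of probability measures, and $\eta$ is a contour shift, not a random variable. Given $E$, the law of $\mbf{u}$ is a Gaussian conditioned on the sphere, not a Gaussian. So you cannot apply Hanson--Wright, which is a statement about a genuine Gaussian law, ``conditionally on $(E,\eta)$''.

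Concretely, to get probability $1-e^{-c\log^{2}N}$ you must insert the indicator of $\{|\mbf{u}^{T}A\mbf{u}-\cdots|>\cdots\}$ into the duality formula. The $\mbf{x}$-integral is then no longer Gaussian, and you lose the factor $\det^{-1/2}(1+ipH^{(n)}(E,r))$ that localises $p$ in the proofs of Lemmas \ref{lem:K} and \ref{lem:K(r)}. Bounding the $\mbf{x}$-integral in absolute value leaves a $p$-integrand that does not decay. ``Positive to leading order'' only controls total masses up to factors $1+o(1)$; it says nothing about events of probability $e^{-c\log^{2}N}$.

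The missing idea is to do the Chernoff bound inside the duality formula, which is what the paper does by deferring to the proof of Lemma \ref{lem:conc}. Apply Lemma \ref{lem:Sduality} to $\mbb{E}_{r}[e^{s\mbf{u}^{T}A\mbf{u}}]$. The factor $e^{s\mbf{x}^{T}A\mbf{x}}$ keeps the $\mbf{x}$-integral Gaussian, so you recover the representation of $K_{n}(r,\lambda_{1})$ with $H^{(n)}(E,r)$ replaced by $\bigl((H^{(n)}(E,r))^{-1}-\frac{2st(1+r)}{Nr}A\bigr)^{-1}$. For $|s|$ below the natural threshold, your Step 3 localises $E$ and the determinant bound localises $p$. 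This gives a sub-Gaussian bound on the centred exponential moment at the scale of the claimed error, and Markov with optimised $s$ concludes. The same principle is why Lemma \ref{lem:halfSpace} writes indicators through exponentials of quadratic forms.

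Alternatively, you can make your conditioning literal. After a real Hubbard--Stratonovich step, $\nu_{r}$ is a genuine positive mixture over $E$ of laws $\propto e^{-c\,\mbf{u}^{T}(X^{(n)}-E)^{2}\mbf{u}}\diff S(\mbf{u})$. A polar-coordinate comparison, restricting $\|\mbf{x}\|^{2}$ to a thin shell around $1$, bounds their probabilities by a polynomial in $N$ times Gaussian probabilities of $\{\mbf{x}/\|\mbf{x}\|\in\cdot\}$, where Hanson--Wright does apply. That comparison is absent from your proposal.

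Three computational corrections.

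\textbf{The Hubbard--Stratonovich sign.} The displayed density of $\nu_{r}$ must reduce to $\mu^{(1)}_{n}$ as $r\to\infty$ (compare \eqref{eq:E_j}). So the exponent carries an overall minus sign, and the quartic term has a \emph{positive} coefficient. The auxiliary variable therefore enters with a real linear coupling and yields a real shift $E$, as in the proof of Lemma \ref{lem:E_j}, not the imaginary term $-2iy'(X^{(n)}-\lambda_{1})$ you wrote.

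\textbf{The covariance normalisation.} With $\eta=\eta^{(n)}(E,r)$, the Gaussian matching $\nu_{r}$ has covariance $\frac{t(1+r)}{Nr}H^{(n)}(E,r)$, whose trace is $1$ by \eqref{eq:eta(E,r)}. Your $\frac{t}{N}H^{(n)}(E,r)$ has trace $\frac{r}{1+r}$. Testing with $A=1_{N-n}$ gives a left-hand side of $\frac{1}{1+r}$, so the centring and the error bound must both carry the factor $\frac{1+r}{r}$. That is the version either argument actually proves, and it agrees with the display only as $r\to\infty$.

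\textbf{The small-$r$ behaviour.} As $r\downarrow0$, $\eta^{(n)}(E,r)$ increases rather than decreases.
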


The integral over $\mbf{s}$ is very similar to that in $F_{1,1}$, the only difference being the function $p(x,U,\mbf{s},r)$. Given the above properties of $\nu_{r}$, the rest of the analysis is very similar to that of $F_{2}$. Ultimately we obtain
\begin{lemma}\label{lem:Ftilde_1j}
There are functions $\wt{F}_{1,j}:\mbb{R}^{n}\to\mbb{C}$ such that
\begin{align}
	F_{1,j}(\bs\lambda,\{\mbf{v}_{j\leq n}\})&=\Biggl[1+O_{\prec}\Biggl(\frac{1}{\sqrt{Nt^{3}}}\Biggr)\Biggr]\wt{F}_{1,j}(\bs\lambda)+O(e^{-c\log^{2}N}).
\end{align}
\end{lemma}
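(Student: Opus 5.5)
We treat $F_{1,1}$ and $F_{1,2}$ separately, following the proof of Lemma \ref{lem:Ftilde} as closely as possible.

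\emph{The case $F_{1,1}$.} We start from Lemma \ref{lem:F_1,1}. The symplectic integral $I_{n}(\mbf{s})$ involves only $\mbf{s}$ and $\bs\lambda$, so all dependence on $\{\mbf{v}_{j\leq n}\}$ sits in $e^{N\sum_{j}\phi^{(n)}_{\lambda_{1}}(is_{j})}$. Through this factor it enters only via $\prod_{j}\det(X^{(n)}-is_{j})$.

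First, we localise the $\mbf{s}$-integral. Using Lemma \ref{lem:phi}, we restrict to the saddle neighbourhoods $\Omega_{q}$ around $E_{\lambda_{1}}\pm i\eta_{\lambda_{1}}$. The hypothesis $|\lambda_{i}-E\sqrt{1+t}|<Cd_{E}$ is what makes this work with a single saddle point. All the $\lambda_{j}$ lie within $O(d_{E})$ of each other, so $N(\lambda_{j}-\lambda_{1})/t\ll N/t$. The symplectic factor $e^{\frac{iN}{t}\tr UJ(\mbf{s})U^{*}J(\bs\lambda-\lambda_{1})}$ is therefore a bounded perturbation on the scale $\Psi_{1}$. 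It does not move the saddle, and it contributes only $e^{O(1)}$ factors, which are uniformly bounded.

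Second, on $\Omega_{q}$ we apply Cramer's rule $n$ times:
\begin{align*}
\frac{\det(X^{(n)}-is)}{\det(X-is)}=\prod_{k=1}^{n}\mbf{v}_{k}^{*}G^{(k-1)}(is)\mbf{v}_{k}.
\end{align*}
We then use Lemma \ref{lem:conc} and interlacing to replace each factor by $\bigl[1+O_{\prec}((Nt^{3})^{-1/2})\bigr]\,t\Tr{H(E_{\lambda_{k}})G(is)}$. Exactly as in the complex case, the error term is symmetric in the $s_{j}$. Any cancellations produced by $\Delta^{4}(\mbf{s})$ and $I_{n}$ therefore act on it in the same way as on the main term, and the error can be pulled outside the integral. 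Setting $\wt{F}_{1,1}$ equal to the resulting deterministic integral proves the claim for $j=1$.

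\emph{The case $F_{1,2}$.} We use \eqref{eq:F_1,2}. There are three extra sources of $\mbf{v}$-dependence beyond those already handled:
\begin{enumerate}[i)]
\item the normalisation $K_{n}(r,\lambda_{1},\{\mbf{v}\})$, which Lemma \ref{lem:K(r)} controls;
\item the exponent $\phi^{(n+1)}$, which now involves the extra projection along $\mbf{u}\sim\nu_{r}$; this is handled by one more Cramer factor $\mbf{u}^{T}G^{(n)}\mbf{u}$, concentrated via Lemma \ref{lem:nuConc};
\item the Pfaffian average $p(x,U,\mbf{s},r)$.
\end{enumerate}
For item iii), the block matrix $(1_{2n}\otimes\mbf{u}^{T})(J\otimes X^{(n)}-iJ(\mbf{s})\otimes1)^{-1}(1_{2n}\otimes\mbf{u})$ has entries that are quadratic forms $\mbf{u}^{T}A\mbf{u}$ in resolvent-type matrices, and the same holds for $\mbf{u}^{T}(X^{(n)}-\lambda_{1})\mbf{u}$. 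By Lemma \ref{lem:nuConc}, each of these equals its deterministic equivalent $t\Tr{AH^{(n)}_{\lambda_{1}}(r)}$ up to a relative error $O_{\prec}((Nt^{3})^{-1/2})$. Here we use interlacing to replace $X^{(n)}$ by $X$ inside the traces.

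The Pfaffian is a polynomial in these entries. It remains to check that its deterministic value is not too small compared with the fluctuations, so that the relative error survives. We expect to verify this on the saddle region using the lower bound $|\eta_{\lambda}|\gtrsim t\sqrt{|\kappa_{\lambda}|+t^{2}}$. After this, the $x$- and $r$-integrals have positive, $\mbf{v}$-independent weights, up to the factors controlled in Lemma \ref{lem:K(r)}. The multiplicative errors then pass through these integrals, and the low-probability concentration failures contribute $O(e^{-c\log^{2}N})$.

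\emph{Main obstacle.} The hardest step is the uniformity in $r\in(0,\infty)$ for $F_{1,2}$. As $r\to0$, the measure $\nu_{r}$ degenerates and $\eta^{(n)}(E,r)$ changes regime, so the concentration bound of Lemma \ref{lem:nuConc} and the non-degeneracy of the Pfaffian must be checked carefully. As $r\to\infty$, we must recover $\mu^{(1)}_{n}$.

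Our plan is to split the $r$-integral at $r=N^{-\epsilon}$. For small $r$, we would show that the whole contribution is negligible relative to $\wt{F}_{1,2}$, using the prefactor $(1+2r)^{-1/2}(1+r)^{-n-1/2}$ and a crude bound on $K_{n}$. For large $r$, we apply the concentration argument above with errors uniform in $r$.
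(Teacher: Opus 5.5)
Your proposal follows essentially the same route as the paper's sketch. It uses steepest descent in $\mbf{s}$ via Lemma \ref{lem:phi}, with the symplectic factor not affecting the saddle because all $\lambda_j$ lie within $O(d_E)$ of each other. It then removes the dependence on $\{\mbf{v}_{j\leq n}\}$ and $\mbf{u}$ from $\phi^{(n)}$, $\phi^{(n+1)}$, $K_n$ and the quadratic forms in $p(x,U,\mbf{s},r)$ by Cramer's rule, concentration (Lemmas \ref{lem:conc} and \ref{lem:nuConc}), interlacing and Lemma \ref{lem:K(r)}, exactly as in Lemma \ref{lem:Ftilde}.

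The extra points you raise (non-degeneracy of the Pfaffian's deterministic value, uniformity in $r$) are not treated in the paper, which simply asserts that the analysis is as for $F_2$. One correction to your small-$r$ plan: the prefactor $(1+2r)^{-1/2}(1+r)^{-n-1/2}$ is $\approx 1$ there, so any smallness of that region must come from $K_n$ itself.
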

\begin{proof}[Proof sketch]
The integral over $\mbf{s}$ is controlled by $e^{N\sum_{j}\phi^{(n)}_{\lambda_{1}}(is_{j})}$ and treated by the method of steepest descent using Lemma \ref{lem:phi}. Note that $I_{n}(\mbf{s})$ does not contribute to the saddle-point due to the assumption that
\begin{align}
    \max_{i}|\lambda_{i}-E\sqrt{1+t}|<Cd_{E}.
\end{align}
We then replace $\phi^{(n)}_{\lambda}$ by $\phi_{\lambda}$ as in the proof of Lemma \ref{lem:Ftilde}. The quadratic forms in $\mbf{u}$ that appear in the function $p(x,U,\mbf{s},r)$ can be approximated with traces of resolvents of $X^{(n)}$ by Lemma \ref{lem:nuConc}. These traces are themselves approximated by traces of resolvents of $X$ using interlacing. 
\end{proof}

We can now prove Lemma \ref{lem:Ftilde1}
\begin{proof}
For simplicity we focus on the case $m=2$; the general case is exactly the same. By Lemma \ref{lem:spin}, we have
\begin{align}
	\mbb{E}\prod_{j=1}^{2}|\det(X-\lambda_{1})||\det(X-\lambda_{2})|&=c_{1}A_{1}+c_{2}A_{2},
\end{align}
where
\begin{align}
	A_{1}&=\mbb{E}\prod_{j=1}^{2}\det(X-\lambda_{1})\det(X-\lambda_{2}),
\end{align}
and
\begin{align}
	A_{2}&=\mbb{E}\sum_{i}\mbf{1}_{(\lambda_{1},\lambda_{2})}(\lambda_{i})s(\lambda_{i})\det(X-\lambda_{1})|\det(X-\lambda_{2})|.
\end{align}
By Lemma \ref{lem:Ftilde_1j} and Lemma \ref{lem:K}, we have
\begin{align}
	A_{1}&=\Biggl[1+O\Biggl(\frac{1}{\sqrt{Nt^{3}}}\Biggr)\Biggr]\wt{F}_{1,1}(\lambda_{1},\lambda_{2}),\\
	A_{2}&=\int_{\lambda_{1}}^{\lambda_{2}}\Biggl[1+O\Biggl(\frac{1}{\sqrt{Nt^{3}}}\Biggr)\Biggr]\wt{K}^{(1)}_{3}(\lambda)\wt{F}_{1,2}(\lambda_{2},\lambda_{1},\lambda)\diff\lambda.
\end{align}
To take the error outside the integral over $\lambda$ in $A_{2}$, we need to ensure that the leading term does not oscillate rapidly in $\lambda$, but this is clear from the steepest descent analysis in Lemma \ref{lem:Ftilde_1j}, in which only $\lambda_{2}$ plays a role. It is here that we need the assumption $|\lambda_{i}-\lambda_{j}|<Cd_{E}$, since otherwise the term $e^{\frac{iN}{t}\tr UJ(\mbf{s})U^{*}J(\bs\lambda-\lambda_{1})}$ would contribute to the saddle point and we would not be able to conclude that it does not oscillate rapidly in any of the $\lambda_{j}$. In general, the dominant contribution is given by a sum of terms of the form
\begin{align}
	\Re\Bigl(e^{N\sum_{j}\phi^{\pm}_{\lambda_{j}}}\Bigr).
\end{align}
If the $\lambda_{j}$ are far apart, then the imaginary parts of $\phi^{\pm}_{\lambda_{j}}=\phi_{\lambda_{j}}(E_{j}\pm i\eta_{j})$ do not cancel and the end result is oscillatory.
\end{proof}

\subsection{Expectation with respect to $\mu^{(\beta)}_{j}$}\label{sec:E_j}
In this subsection we study expectation values of indicator functions with respect to $\mu^{(\beta)}_{j}$. Recall that
\begin{align}
	\mbb{E}^{(\beta)}_{j}\bigl[f(\mbf{v}_{j})\bigr]&=\frac{1}{K^{(\beta)}_{j}(\lambda_{j},\{\mbf{v}_{i<j}\})}\Biggl(\frac{\beta N}{2\pi t}\Biggr)^{N-j}\int_{\textup{S}^{N-j}_{\beta}}f(\mbf{v}_{j})\nonumber\\
	&\times e^{-\frac{\beta N}{2t}\mbf{v}_{j}^{*}\bigl(X^{(j-1)}-\lambda_{j}\bigr)^{2}\mbf{v}_{j}+\frac{\beta N}{4t}\bigl(\mbf{v}_{j}^{*}(X^{(j-1)}-\lambda_{j})\mbf{v}_{j}\bigr)^{2}}\diff S^{(\beta)}_{N-j}(\mbf{v}_{j}).\label{eq:E_j}
\end{align}
We first obtain a general identity for expectation values, which follows directly from Lemma \ref{lem:Sduality}.
\begin{lemma}\label{lem:E_j}
We have
\begin{align}
	\mbb{E}^{(\beta)}_{j}\bigl[f(\mbf{v}_{j})\bigr]&=\frac{1}{K^{(\beta)}_{j}(\lambda_{j},\{\mbf{v}_{i<j}\}}\sqrt{\frac{2\beta N}{\pi t}}\int_{-\infty}^{\infty}e^{-\beta N\psi^{(j-1)}_{\lambda_{j}}(E)}\nonumber\\
	&\times\int_{-\infty}^{\infty}e^{\frac{i\beta Np}{2t}}\det^{-\beta/2}\bigl(1+ipH^{(j-1)}(E)\bigr)h_{f}(p,E)\diff p\diff E,
\end{align}
where $H^{(j)}(E)$ is defined in \eqref{eq:H} and
\begin{align}
	h_{f}(p)&=\Biggl(\frac{\beta N}{2\pi t}\Biggr)^{N-j}\det\bigl((X^{(j-1)}-E)^{2}+(\eta^{(j-1)}(E))^{2}\bigr)\nonumber\\
    &\times\int_{\mbb{F}_{\beta}^{N-j}}e^{-\frac{\beta N}{2}\mbf{x}^{*}\bigl((X^{(j-1)}-E)^{2}+(\eta^{(j-1)}(E))^{2}+ip\bigr)\mbf{x}}f(\mbf{x})\diff\mbf{x}.
\end{align}
\end{lemma}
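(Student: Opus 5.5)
The identity will be obtained by linearising the quartic term in the exponent of \eqref{eq:E_j} with a Hubbard--Stratonovich transformation, applying the duality formula of Lemma \ref{lem:Sduality} to the resulting sphere integral, and then shifting the dual variable $p$ so that the Gaussian weight becomes $(X^{(j-1)}-E)^{2}+(\eta^{(j-1)}(E))^{2}$. Throughout I write $A:=X^{(j-1)}$, $\eta:=\eta^{(j-1)}(E)$ and $a(\mbf{v}):=\mbf{v}^{*}(A-\lambda_{j})\mbf{v}$.

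\emph{Step 1 (linearisation).} The term $e^{\frac{\beta N}{4t}a^{2}}$ has a positive quadratic exponent, so I will use the real Gaussian identity
\begin{align*}
e^{\frac{\beta N}{4t}a^{2}}&=\sqrt{\frac{\beta N}{\pi t}}\int_{-\infty}^{\infty}e^{-\frac{\beta N}{t}(E-\lambda_{j})^{2}+\frac{\beta N}{t}(E-\lambda_{j})a}\diff E .
\end{align*}
Combining with $-\frac{\beta N}{2t}\mbf{v}^{*}(A-\lambda_{j})^{2}\mbf{v}$ and using $\|\mbf{v}\|=1$ on the sphere, the $\mbf{v}$-dependent part of the exponent becomes $-\frac{\beta N}{2t}\mbf{v}^{*}(A-E)^{2}\mbf{v}$. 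The remaining $E$-dependent scalar is $e^{-\frac{\beta N}{2t}(E-\lambda_{j})^{2}}$. Fubini is harmless here, because the integrand is bounded on the compact sphere and Gaussian in $E$.

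\emph{Step 2 (duality and shift).} For fixed $E$, I will first add and subtract $\eta^{2}\|\mbf{v}\|^{2}=\eta^{2}$ in the exponent. This replaces $(A-E)^{2}$ by $(A-E)^{2}+\eta^{2}$ at the cost of the scalar factor $e^{\frac{\beta N\eta^{2}}{2t}}$. I will then apply Lemma \ref{lem:Sduality} to the function
\[
\mbf{x}\mapsto f(\mbf{x})\,e^{-\frac{\beta N}{2t}\mbf{x}^{*}((A-E)^{2}+\eta^{2})\mbf{x}},
\]
which is integrable on $\mbb{F}_{\beta}^{N-j}$ since $(A-E)^{2}+\eta^{2}>0$ by \eqref{eq:eta(s)} when $\eta^{2}$ is real. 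After the change of variables $p\mapsto Np/t$ and the rescaling $\mbf{x}\mapsto\sqrt{t}\,\mbf{x}$, the factor $e^{\frac{i\beta p}{2}}$ becomes $e^{\frac{i\beta Np}{2t}}$ and the Gaussian weight becomes $e^{-\frac{\beta N}{2}\mbf{x}^{*}((A-E)^{2}+\eta^{2}+ip)\mbf{x}}$, which is the integrand of $h_{f}$. I will then multiply and divide by
\[
\det\bigl((A-E)^{2}+\eta^{2}+ip\bigr)^{-\beta/2}=\det\bigl((A-E)^{2}+\eta^{2}\bigr)^{-\beta/2}\det\bigl(1+ipH^{(j-1)}(E)\bigr)^{-\beta/2}.
\]
The second factor is the one appearing in the statement. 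The first factor equals $e^{\frac{\beta N}{2}\Tr{\log((A-E)^{2}+\eta^{2})}}$ and combines with $e^{-\frac{\beta N}{2t}((E-\lambda_{j})^{2}-\eta^{2})}$ to give exactly $e^{-\beta N\psi^{(j-1)}_{\lambda_{j}}(E)}$. The normalising powers of $\det((A-E)^{2}+\eta^{2})$ and of $\beta N/(2\pi t)$ are the ones absorbed into the definition of $h_{f}$. All remaining numerical constants ($c_{N-j}$, the Hubbard--Stratonovich prefactor, and the Jacobians of the rescalings) should collapse to $\sqrt{2\beta N/(\pi t)}$, and I will verify this by the case $f\equiv1$, for which both sides must agree with the definition of $K^{(\beta)}_{j}$.

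\emph{Main obstacle.} The delicate point is the regime $E>E^{(j-1)}_{*}$, where $\eta^{2}<0$. There $(A-E)^{2}+\eta^{2}$ may fail to be positive definite, so the $\mbf{x}$-integral is not absolutely convergent on the real $p$-line. My plan is to introduce $\eta^{2}$ not by adding and subtracting in $\mbf{v}$, but as a deformation of the $p$-contour $p\mapsto p-i\eta^{2}$ in the representation coming from Lemma \ref{lem:Sduality}. For this I need to check three things: the integrand, as a function of complex $p$, is analytic in the strip between the two contours; $1+ipH$ stays invertible there, since the eigenvalues of $(A-E)^{2}+\eta^{2}+ip$ are nonzero off the real axis; and the contributions at $\Re p\to\pm\infty$ vanish thanks to the decay $|\det(\cdot)|^{-\beta/2}\sim|p|^{-\beta(N-j)/2}$. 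For $\eta^{2}>0$ this deformation reduces to the algebraic add-and-subtract manipulation above. Once it is justified, the exchange of the $p$-, $E$- and $\mbf{x}$-integrals follows from this decay together with the Gaussian decay in $E$ obtained from Lemma \ref{lem:phi2}.
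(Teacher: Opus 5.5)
Your Steps 1--2 are the paper's argument. The paper also linearises the quartic term with an auxiliary Gaussian integral in $E$ and then applies Lemma \ref{lem:Sduality}; it leaves implicit the insertion of $\eta^{2}\|\mbf{v}\|^{2}=\eta^{2}$ that you make explicit. What needs correcting is the ``main obstacle'' paragraph: its premise is false, and the remedy it proposes could not work.

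Fix a real $E$ and let $a_{k}$ be the eigenvalues of $A$. On the interval $s>-\min_{k}(a_{k}-E)^{2}$ the map $s\mapsto t\Tr{((A-E)^{2}+s)^{-1}}$ decreases continuously from $+\infty$ to $0$. The solution of \eqref{eq:eta(s)} singled out by this monotonicity, which is the one the paper uses, lies in that interval. Hence $(A-E)^{2}+\eta^{2}>0$ for every real $E$, including $E>E^{(j-1)}_{*}$ where $\eta^{2}<0$. Positivity comes from this choice of branch, not from \eqref{eq:eta(s)} as such. With it, your add-and-subtract step is valid for all $E$ and nothing more is needed.

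Had positivity failed, deforming the $p$-contour would not have helped. After your rescaling of $p$, the Gaussian weight from Lemma \ref{lem:Sduality} is $(A-E)^{2}+ip$, with real part $(A-E)^{2}-\Im p$. The binding constraint is therefore convergence of the $\mbf{x}$-integral in $h_{f}$, not invertibility of $1+ipH^{(j-1)}(E)$. The contour can be moved to $\Im p=-\eta^{2}$ exactly when $(A-E)^{2}+\eta^{2}>0$, and in that case the deformation is literally your algebraic manipulation.

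There are also some bookkeeping points, each tied to a misprint in the statement.

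\emph{Sign of the log term.} You wrote $\det^{-\beta/2}\bigl((A-E)^{2}+\eta^{2}\bigr)=e^{+\frac{\beta N}{2}\Tr{\log((A-E)^{2}+\eta^{2})}}$; the correct exponent is $-\frac{\beta N}{2}\Tr{\log((A-E)^{2}+\eta^{2})}$. With the correct sign you get $e^{-\beta N\psi}$ with $\psi$ containing $+\frac{1}{2}\Tr{\log((A-E)^{2}+\eta^{2})}$. That version matches $\psi_{\lambda}(E)=\Re\phi_{\lambda}(E+i\eta(E))$ and the formula for $\psi_{\lambda}'$ in the proof of Lemma \ref{lem:phi2}. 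So the minus sign in the displayed definition of $\psi$ is a typo, and your sign slip merely reproduces it.

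\emph{Normalisation of $h_{f}$.} Your multiply-and-divide step actually produces $h_{f}$ normalised by $\det^{\beta/2}\bigl((A-E)^{2}+\eta^{2}+ip\bigr)$, with exponent $-\frac{\beta N}{2t}\mbf{x}^{*}\bigl((A-E)^{2}+\eta^{2}+ip\bigr)\mbf{x}$. This gives $h_{1}\equiv1$, as used in the proof of Lemma \ref{lem:K}, and makes $tH$ appear as in \eqref{eq:Ef}. State it that way. Normalising by $\det\bigl((A-E)^{2}+\eta^{2}\bigr)$ as printed would count $\det^{-\beta/2}(1+ipH^{(j-1)}(E))$ twice.

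\emph{Rescaling of $\mbf{x}$.} Rescaling $\mbf{x}\mapsto\sqrt{t}\,\mbf{x}$ to match the printed exponent silently replaces $f(\mbf{x})$ by $f(\sqrt{t}\,\mbf{x})$.

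\emph{Prefactor.} The case $f\equiv1$ cannot calibrate the prefactor, since it just reruns the same computation. This is harmless: the prefactor is common to all $f$ and cancels against $K^{(\beta)}_{j}$ when that is computed from the same formula, as in the proof of Lemma \ref{lem:K}.
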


By setting $f=1$ we obtain a formula for $K^{(\beta)}_{j}(\lambda,\{\mbf{v}_{i<j}\})$, which we use to prove Lemma \ref{lem:K}.
\begin{proof}[Proof of Lemma \ref{lem:K}]
Using the formula in Lemma \ref{lem:E_j} with $f=1$, we have
\begin{align}
    K^{(\beta)}_{j}(\lambda,\{\mbf{v}_{i<j}\})&=\sqrt{\frac{2\beta N}{\pi t}}\int_{-\infty}^{\infty}e^{-\beta N\psi^{(j-1)}_{\lambda_{j}}(E)}g(E)\diff E,
\end{align}
where
\begin{align}
    g(E)&=\int_{-\infty}^{\infty}e^{\frac{i\beta Np}{2t}}\det^{-\beta/2}\bigl(1+ipH^{(j-1)}(E)\bigr)\diff p.
\end{align}

Using the bound
\begin{align}
    |\det^{-1}(1+ipA)|&\leq \exp\Bigl\{-\frac{p^{2}\tr A^{2}}{1+p^{2}\|A\|^{2}}\Bigr\},
\end{align}
we restrict $p$ to the region $|p|<\frac{\log N}{\sqrt{N\Tr{\bigl(H^{(j-1)}(E)\bigr)^{2}}}}$, in which we make a Taylor expansion using the definition of $\eta^{(j-1)}(E)$ to obtain
\begin{align}
    e^{\frac{i\beta N p}{2t}}\det^{-\beta/2}\bigl(1+ipH^{(j-1)}(E)\bigr)&=\Bigl[1+O\Bigl(\frac{\log N}{\sqrt{Nt^{3}}}\Bigr)\Bigr]\exp\Bigl\{-\frac{\beta Np^{2}\Tr{(H^{(j-1)}(E))^{2}}}{4}\Bigr\}.
\end{align}
Thus we find
\begin{align}
    g(E)&=\Bigl[1+O\Bigl(\frac{\log N}{\sqrt{Nt^{3}}}\Bigr)\Bigr]\sqrt{\frac{4\pi}{\beta N\Tr{(H^{(j-1)}(E))^{2}}}}\\
    &=\Bigl[1+O\Bigl(\frac{\log N}{\sqrt{Nt^{3}}}\Bigr)\Bigr]\sqrt{\frac{4\pi}{\beta N\Tr{H^{2}(E)}}},
\end{align}
where in the second line we used interlacing to replace $H^{(j-1)}$ with $H$.

Now we replace $\eta^{(j-1)}(E)$ with $\eta(E)$ using the estimate
\begin{align}
    &e^{\frac{N}{t}\bigl((\eta^{(i)}(E))^{2}-(\eta^{(i-1)}(E))^{2}\bigr)}\frac{\det\bigl((X^{(i)}-E)^{2}+(\eta^{(i-1)}(E))^{2}\bigr)}{\det\bigl((X^{(i)}-E)^{2}+(\eta^{(i)}(E))^{2}\bigr)}\nonumber\\&=e^{\frac{N}{t}\bigl((\eta^{(i)}(E))^{2}-(\eta^{(i-1)}(E))^{2}\bigr)}\det\bigl(1-\bigl((\eta^{(i)}(E))^{2}-(\eta^{(i-1)}(E))^{2}\bigr)H^{(i)}(E)\bigr)\\
    &=1+O\Bigl(\frac{1}{Nt^{3}}\Bigr),
\end{align}
where the last line follows by the approximation of $\eta(E)$ in \eqref{eq:eta(E)approx}. We also use Cramer's rule to replace $X^{(i)}$ by $X^{(i-1)}$:
\begin{align}
    \frac{\det\bigl((X^{(i-1)}-E)^{2}+(\eta^{(i-1)}(E))^{2}\bigr)}{\det\bigl((X^{(i)}-E)^{2}+(\eta^{(i-1)}(E))^{2}\bigr)}&=\frac{1}{\mbf{v}_{i}^{*}G^{(i-1)}(E+i\eta^{(i-1)}(E))\mbf{v}_{i}\mbf{v}_{i}^{*}G^{(i-1)}(E-i\eta^{(i-1)}(E))\mbf{v}_{i}}.
\end{align}
Using the concentration of quadratic forms in Lemma \ref{lem:conc}, interlacing and the approximation of $\eta(E)$ in \eqref{eq:eta(E)approx}, we have
\begin{align}
    \mbf{v}_{i}^{*}G^{(i-1)}(E\pm i\eta^{(i-1)}(E))\mbf{v}_{i}&=\Bigl[1+O\Bigl(\frac{1}{Nt^{3}}\Bigr)\Bigr]t\Tr{H^{(i-1)}_{\lambda_{i}}G^{(i-1)}(E\pm i\eta^{(i-1)}(E))}\\
    &=\Bigl[1+O_{\prec}\Bigl(\frac{1}{Nt^{3}}\Bigr)\Bigr]t\Tr{H(E_{\lambda_{i}})G(E\pm i\eta(E))}.
\end{align}
Combining these estimates, we can replace $\psi^{(j-1)}_{\lambda}$ with $\psi_{\lambda}$:
\begin{align}
    e^{\beta N\bigl(\psi_{\lambda}(E)-\psi^{(j-1)}_{\lambda}(E)\bigr)}&=\Bigl[1+O_{\prec}\Bigl(\frac{1}{Nt^{3}}\Bigr)\Bigr]\chi(E),
\end{align}
where
\begin{align}
    \chi(E)&=\prod_{i=1}^{j-1}\frac{1}{t^{2}\Tr{H(E_{\lambda_{i}})G(E+i\eta(E))}\Tr{H(E_{\lambda_{i}})G(E-i\eta(E))}}.
\end{align}
Defining
\begin{align}
    \wt{K}^{(\beta)}_{j}(\lambda)&=\sqrt{\frac{8}{t}}\int e^{-\beta N\psi_{\lambda_{j}}(E)}\frac{\chi(E)}{\sqrt{\Tr{H^{2}(E)}}}\diff E,
\end{align}
we obtain \eqref{eq:K}.
\end{proof}

We are interested in expectation values of 
\begin{align}
	f_{\epsilon}(\mbf{v}_{j})&=\prod_{i\in I}e^{-\epsilon(N|u_{j,i}|^{2}-x)}\mbf{1}_{(x,\infty)}\bigl(N|u_{j,i}|^{2}\bigr),
\end{align}
where $\mbf{u}_{j}\equiv\mbf{u}_{j}(\mbf{v}_{1},...,\mbf{v}_{j})$ is an eigenvector of the original Gaussian-divisible matrix. By the delocalisation bound $N\|\mbf{u}_{j}\|_{\infty}^{2}\prec1$, if $\epsilon=N^{-\delta}$ for any $\delta>0$ then
\begin{align}
	f_{\epsilon}(\mbf{v}_{j})&=f_{0}(\mbf{v}_{j})
\end{align}
with overwhelming probability. We recall that $\mbf{u}_{j}=V_{1}\cdots V_{j-1}\mbf{v}_{j}$, so that
\begin{align}
    |u_{j,i}|^{2}&=\mbf{v}_{j}^{*}\mbf{e}^{(j-1)}_{i}\mbf{e}^{(j-1)*}_{i}\mbf{v}_{j},
\end{align}
where we have defined the projected coordinate vectors
\begin{align}
    \mbf{e}^{(j)}_{i}&:=V_{j}^{*}\cdots V_{1}^{*}\mbf{e}_{i}.
\end{align}

To compute integrals of indicator functions, we use the following.
\begin{lemma}\label{lem:halfSpace}
Let $a_{j}\in\mbb{R}$ and $A\in\mbb{C}^{n\times n}$ such that $\Re A:=\frac{1}{2}(A+A^{*})>0$. Then for any $\epsilon>0$ and $J\subset[n]$ we have the identity
\begin{align}
    &\det^{\beta/2}\Biggl(\frac{\beta A}{2\pi}\Biggr)\int_{\mbb{F}_{\beta}^{n}}e^{-\frac{\beta}{2}\mbf{x}^{*}A\mbf{x}}\prod_{j\in J}e^{-\epsilon(|x_{j}|^{2}-a_{j})}1_{(a_{j},\infty)}(|x_{j}|^{2})\diff\mbf{x}\nonumber\\
    &=\int_{\mbb{R}^{|J|}}\det^{-\beta/2}(1-iD(\mbf{k})(A^{-1})_{J})\prod_{j=1}^{|J|}\frac{e^{-ik_{j}a_{j}}}{k_{j}-i\epsilon}\frac{\diff k_{j}}{2\pi},\label{eq:halfSpace}
\end{align}
where $(A^{-1})_{J}$ is the principal submatrix of $A^{-1}$ labelled by indices in $J$.
\end{lemma}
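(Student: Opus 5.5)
Start from the one-dimensional Fourier representation of a damped indicator and take its product over $j\in J$. For $\epsilon>0$ and $y,a\in\mbb{R}$ we have
\begin{align}
	e^{-\epsilon(y-a)}\mbf{1}_{(a,\infty)}(y)&=\frac{1}{2\pi i}\int_{-\infty}^{\infty}\frac{e^{ik(y-a)}}{k-i\epsilon}\diff k,
\end{align}
with the integral understood as an improper (symmetric) limit, or in the $L^{2}$ sense. To check it, note that $\int_{0}^{\infty}e^{-\epsilon s}e^{-iks}\diff s=(\epsilon+ik)^{-1}=-i(k-i\epsilon)^{-1}$, and apply Fourier inversion; alternatively, close the contour in the upper or lower half plane according to the sign of $y-a$, picking up the pole at $k=i\epsilon$ when $y>a$. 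Put $y=|x_{j}|^{2}$ and $a=a_{j}$, and multiply over $j\in J$. Up to the factor $i^{-|J|}$, which I will track as a phase convention to be absorbed by writing $\frac{e^{-ik_{j}a_{j}}}{k_{j}-i\epsilon}\frac{\diff k_{j}}{2\pi}$ with the appropriate sign of $i$ (I will fix the sign at the end by comparing with the case $|J|=1$, $n=1$), the left-hand side of \eqref{eq:halfSpace} becomes
\begin{align}
	\det^{\beta/2}\Bigl(\frac{\beta A}{2\pi}\Bigr)\int_{\mbb{F}^{n}_{\beta}}e^{-\frac{\beta}{2}\mbf{x}^{*}A\mbf{x}}\int_{\mbb{R}^{|J|}}e^{i\sum_{j}k_{j}|x_{j}|^{2}}\prod_{j}\frac{e^{-ik_{j}a_{j}}}{k_{j}-i\epsilon}\frac{\diff k_{j}}{2\pi}\diff\mbf{x}.
\end{align}

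Next, exchange the $\mbf{x}$ and $\mbf{k}$ integrals. This step needs care because $(k-i\epsilon)^{-1}$ is not absolutely integrable. I would regularise by inserting $e^{-\sigma\sum_{j}k_{j}^{2}}$, or by truncating each $k_{j}$ to $[-L,L]$. The $\mbf{x}$ integrand is then absolutely integrable in $(\mbf{x},\mbf{k})$, since $\Re A>0$ gives Gaussian decay in $\mbf{x}$, and Fubini applies. After the exchange, the $\mbf{x}$ integral is Gaussian with matrix $A-\frac{2i}{\beta}D(\mbf{k})$, where $D(\mbf{k})$ is the $n\times n$ diagonal matrix with $k_{j}$ in position $j\in J$ and zero elsewhere. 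Its real part is still $\Re A>0$, so the standard complex Gaussian formula, extended from Hermitian to complex symmetric or normal-real-part matrices by analytic continuation in the entries, gives
\begin{align}
	\det^{\beta/2}\Bigl(\frac{\beta A}{2\pi}\Bigr)\int e^{-\frac{\beta}{2}\mbf{x}^{*}(A-\frac{2i}{\beta}D(\mbf{k}))\mbf{x}}\diff\mbf{x}=\det^{-\beta/2}\bigl(1-\tfrac{2i}{\beta}A^{-1}D(\mbf{k})\bigr).
\end{align}
The branch of $\det^{-\beta/2}$ is fixed by continuity from $\mbf{k}=0$. Because $D(\mbf{k})$ is supported on $J$, the identity $\det(1-MD)=\det(1-D_{J}M_{J})$ (Sylvester) reduces this to $\det^{-\beta/2}(1-iD(\mbf{k})(A^{-1})_{J})$. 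The factor $2/\beta$ is absorbed by rescaling $k_{j}$, which I expect the paper's normalisation of $k$ to already incorporate: for $\beta=2$ it is trivial, and for $\beta=1$ one rescales $k\mapsto k/2$ and $\epsilon$ accordingly.

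The main obstacle is justifying the removal of the regulator, that is, showing that the $\mbf{k}$ integral on the right-hand side converges and equals the limit. For large $|k_{j}|$ the determinant decays like $|k_{j}|^{-\beta/2}$ in each variable, because $\Re (A^{-1})_{J}>0$: this follows from $\Re A>0$ via $\Re A^{-1}=A^{-1*}(\Re A)A^{-1}$, and then $(A^{-1})_{J}$ has positive definite real part as a compression. Combined with the factor $|k_{j}|^{-1}$, the integrand is absolutely integrable when $\beta=2$, and conditionally convergent when $\beta=1$, in the same improper sense as the one-dimensional identity. So dominated convergence as $\sigma\to0$ works for $\beta=2$ directly. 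For $\beta=1$ I would instead argue by truncation $[-L,L]^{|J|}$ and let $L\to\infty$, using that the one-dimensional truncated kernels converge boundedly and pointwise, away from $y=a$, to the damped indicator, together with the Gaussian integrability in $\mbf{x}$.
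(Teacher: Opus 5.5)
Your argument is essentially the paper's proof: the Fourier representation of the damped indicator, a Gaussian damping in $k$ to justify Fubini (the paper gets the same damping by mollifying the indicator with a heat kernel), the Gaussian integral in $\mbf{x}$, and removal of the regulator by absolute integrability of the $\mbf{k}$-integrand. Two small corrections. First, the constants you flagged are genuine slips in the statement: the kernel is $\frac{1}{2\pi i}\int\frac{e^{ik(y-a)}}{k-i\epsilon}\diff k$, as in the paper's introduction, and for $\beta=1$ the determinant is $\det^{-1/2}(1-2iD(\mbf{k})(A^{-1})_{J})$ with $A$ complex symmetric. These cannot be absorbed by rescaling $k$, since that also rescales $a_{j}$ and $\epsilon$. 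Second, for $\beta=1$ the integrand is $O(|k_{j}|^{-3/2})$ in each variable, hence absolutely integrable, so dominated convergence applies directly and your truncation detour is unnecessary.
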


For an index set $I\subset[N]$, define
\begin{align}
    I^{(j)}&:=\{\mbf{e}^{(j)}_{i}:i\in I\}.
\end{align}
Using this lemma we obtain
\begin{align}
    \mbb{E}^{(\beta)}_{j}[f_{\epsilon}(\mbf{v}_{j})]&=\frac{1}{K^{(\beta)}_{j}(\lambda_{j},\{\mbf{v}_{i<j}\})}\sqrt{\frac{2\beta N}{\pi t}}\int_{\Omega_{j}}e^{-\beta N\psi^{(j-1)}_{\lambda_{j}}(E)}\nonumber\\
    &\times\int_{-\infty}^{\infty}e^{\frac{i\beta Np}{2t}}\det^{-\beta/2}(1+ipH^{(j-1)}(E))h_{\beta}(p,E)\diff p\diff E\label{eq:Ef}\\
    &+O(e^{-c\log^{2}N})\nonumber,
\end{align}
where
\begin{align}
    h_{\beta}(p,E)&:=\int_{\mbb{R}^{|I|}}\det^{-\beta/2}\bigl(1-itD(\mbf{k})H^{(j-1)}_{I^{(j-1)}}(p,E)\bigr)\prod_{j=1}^{|I|}\frac{e^{-ik_{j}a_{j}}}{k_{j}-i\epsilon}\frac{\diff k_{j}}{2\pi},\\
    H^{(j-1)}(p,E)&:=\bigl((X^{(j-1)}-E)^{2}+(\eta^{(j-1)}(E))^{2}+ip\bigr)^{-1}.
\end{align}
and $H_{I}$ is the principal submatrix of $H$ indexed by $I$. Note that 
\begin{align}
    H^{(j-1)}(0,E)&=H^{(j-1)}(E)
\end{align}
with $H^{(j-1)}(E)$ defined by \eqref{eq:H}. Note that we have also restricted $E$ to the region
\begin{align}
    \Omega_{j}&:=\Bigl\{E:|E-E_{\lambda_{j}}|<\sqrt{\frac{t}{N}}\log N\Bigr\}
\end{align}
using Lemma \ref{lem:phi2}.

We now give some heuristics for the behaviour of $\mbb{E}^{(\beta)}_{j}[f_{\epsilon}(\mbf{v}_{j})]$. Assume that $h(p,E)$ grows polynomially in $|p|$. As in the proof of Lemma \ref{lem:K}, we can localise $p$ to the region $|p|\lesssim\Bigl(N\Tr{(H^{(j-1)}(E))^{2}}\Bigr)^{-1/2}\log N$. In this region, $\wt{H}_{I}^{(j-1)}(p,E)$ is diagonally dominant and so
\begin{align}
	\det^{-\beta/2}\bigl(1-itD(\mbf{k})H_{I^{(j-1)}}^{(j-1)}(p,E)\bigr)&\simeq\prod_{a=1}^{|I|}\frac{1}{(1-itk_{a}(H^{(j-1)}_{I^{(j-1)}}(p,E))_{aa})^{\beta/2}}.
\end{align}
Since $X$ satisfies the entry-wise local law and $X^{(j)}$ is a projection onto a space of co-dimension $j$, we expect that $X^{(j)}$ also satisfies the entry-wise local law with overwhelming probability, in which case
\begin{align}
	t(H^{(j-1)}_{I^{(j-1)}}(p,E))_{aa}&\simeq 1.
\end{align}
Altogether we obtain
\begin{align}
	\det^{-\beta/2}\bigl(1-itD(\mbf{k})H_{I^{(j-1)}}^{(j-1)}(p,E)\bigr)&\simeq\prod_{a=1}^{|I|}\frac{1}{(1-itk_{a})^{\beta/2}},
\end{align}
and the right-hand side is precisely the characteristic function of $|I|$ independent real or complex standard Gaussians. To summarise, the approximate independence of components of $\mbf{v}_{j}$ (and hence $\mbf{u}_{j}$) is due to two properties:
\begin{enumerate}
	\item the entries of $H^{(j-1)}(p,E)$ are well-approximated by those of $H(p,E)$;
	\item $H(p,E)$ is diagonally dominant.
\end{enumerate}
In the following we make these heuristics precise. 

Let us first study the entries of $H^{(j)}(p,E)$. Let $z=u+i\eta$ with $\textup{dist}(z,[-2,2])\gtrsim t^{2}$ and observe that
\begin{align}
    \bigl((X-u)^{2}+\eta^{2}+ip\bigr)^{-1}&=\frac{G(u+w(p))-G(u-w(p))}{2w(p)}\label{eq:H1},
\end{align}
where
\begin{align}
    w(p)&:=\sqrt{\frac{\sqrt{\eta^{4}+p^{2}}-\eta^{2}}{2}}+i\textup{sgn}(p)\sqrt{\frac{\sqrt{\eta^{4}+p^{2}}+\eta^{2}}{2}}.
\end{align}
Note that $w(0)=i\eta$ and $|\Im w(p)|\simeq |\eta|+\sqrt{|p|}$. By Lemma \ref{lem:resolventProjection}, for any vectors $\mbf{e},\,\mbf{f}\in\mbb{C}^{N-j+1}$,
\begin{align}
    \mbf{e}^{*}G^{(j-1)}(w)\mbf{f}-\mbf{e}^{*}V_{j}G^{(j)}(w)V_{j}^{*}\mbf{f}&=\frac{\mbf{v}_{j}^{*}G^{(j-1)}(w)\mbf{f}\mbf{e}^{*}G^{(j-1)}(w)\mbf{v}_{j}}{\mbf{v}_{j}^{*}G^{(j-1)}(w)\mbf{v}_{j}}.
\end{align}

Applying the concentration of quadratic forms in Lemma \ref{lem:conc}, we obtain
\begin{align}
    \mbf{v}_{j}^{*}G^{(j-1)}(w)\mbf{f}\mbf{e}^{*}G^{(j-1)}(w)\mbf{v}_{j}&=\left[1+O_{\prec}\left(\frac{1}{\sqrt{Nt^{3}}}\right)\right]\frac{t}{N}\mbf{e}^{*}G^{(j-1)}(w)H^{(j-1)}_{\lambda_{j}}G^{(j-1)}(w)\mbf{f}.
\end{align}
Since $\mbf{v}_{j}^{*}G^{(j-1)}(w)\mbf{v}_{j}=\left[1+O_{\prec}\left(\frac{1}{\sqrt{Nt^{3}}}\right)\right]t\Tr{H^{(j-1)}_{\lambda_{j}}G^{(j-1)}(w)}$, we conclude the following.
\begin{lemma}\label{lemma:entrywisePropagation}
We have
\begin{align}
    \mbf{e}^{*}V_{j}G^{(j)}(w)V_{j}^{*}\mbf{f}&=\left[1+O_{\prec}\left(\frac{1}{\sqrt{Nt^{3}}}\right)\right]\mbf{e}^{*}G^{(j-1)}(w)\mbf{f}.
\end{align}
\end{lemma}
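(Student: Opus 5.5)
The plan is to start from the rank-one identity in Lemma \ref{lem:resolventProjection}, which was just recorded in the text preceding the statement:
\begin{align}
    \mbf{e}^{*}V_{j}G^{(j)}(w)V_{j}^{*}\mbf{f}&=\mbf{e}^{*}G^{(j-1)}(w)\mbf{f}-\frac{\mbf{v}_{j}^{*}G^{(j-1)}(w)\mbf{f}\,\mbf{e}^{*}G^{(j-1)}(w)\mbf{v}_{j}}{\mbf{v}_{j}^{*}G^{(j-1)}(w)\mbf{v}_{j}}.
\end{align}
The claim then reduces to showing that the correction term is bounded by $O_{\prec}((Nt^{3})^{-1/2})\,|\mbf{e}^{*}G^{(j-1)}(w)\mbf{f}|$. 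We handle numerator and denominator separately.

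\textbf{Denominator.} Apply Lemma \ref{lem:conc} with $A=\Re G^{(j-1)}(w)$ and $A=\Im G^{(j-1)}(w)$. This gives $\mbf{v}_{j}^{*}G^{(j-1)}(w)\mbf{v}_{j}=t\Tr{H^{(j-1)}_{\lambda_{j}}G^{(j-1)}(w)}+O_{\prec}\bigl(tN^{-1/2}\Tr{|H^{(j-1)}_{\lambda_{j}}G^{(j-1)}(w)|^{2}}^{1/2}\bigr)$. Using the averaged local law for $X^{(j-1)}$ (inherited from $X$ by interlacing) together with $\|H^{(j-1)}_{\lambda_j}\|\lesssim t^{-2}$ and $t\Tr{H}=1$, we obtain a lower bound $|t\Tr{H G}|\gtrsim 1$ for $\Im w\gtrsim t^{2}$. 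The relative error is then $O_{\prec}((Nt^{3})^{-1/2})$, as in the proof of Lemma \ref{lem:Ftilde}.

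\textbf{Numerator.} The numerator is a bilinear form $\mbf{v}_{j}^{*}B\mbf{v}_{j}$ in $\mbf{v}_{j}$, where $B=G^{(j-1)}(w)\mbf{f}\mbf{e}^{*}G^{(j-1)}(w)$ has rank one. By polarisation and Lemma \ref{lem:conc},
\begin{align}
    \mbf{v}_{j}^{*}B\mbf{v}_{j}&=\frac{t}{N}\mbf{e}^{*}G^{(j-1)}(w)H^{(j-1)}_{\lambda_{j}}G^{(j-1)}(w)\mbf{f}+O_{\prec}\Bigl(\frac{t}{N}\|H^{(j-1)}_{\lambda_j}\|\,\|G^{(j-1)}(w)\mbf{e}\|\,\|G^{(j-1)}(w)\mbf{f}\|\Bigr).
\end{align}
Both terms carry a factor $N^{-1}$. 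Using $\|G\mbf{e}\|^{2}=\Im(\mbf{e}^{*}G\mbf{e})/\Im w$ and $\|H\|\lesssim t^{-2}$, the whole numerator is of order $\frac{1}{Nt^{3}}$ times diagonal-size resolvent entries. This is smaller than $(Nt^3)^{-1/2}$ relative to the leading term, provided $|\mbf{e}^{*}G^{(j-1)}\mbf{f}|$ is of order the diagonal entries, which holds for $\mbf{e}=\mbf{f}$. For off-diagonal entries, the statement should be read as an additive error relative to the diagonal size, consistent with how the lemma is used for the principal submatrix $H^{(j-1)}_{I^{(j-1)}}$.

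\textbf{Main obstacle.} The delicate step is the lower bound on the denominator and the control of $\|G^{(j-1)}(w)\mbf{e}\|$ uniformly along the contour $u\pm w(p)$, where $\Im w$ ranges from $t^{2}$ upward. The plan is to establish both for $X$ via the local law (Lemma \ref{lem:locallaw}), using that $\mathrm{dist}(z,[-2,2])\gtrsim t^{2}$. We then propagate them to $X^{(j-1)}$ by induction on $j$, applying the present estimate at step $j-1$. Since $j\le m$ is fixed, the $O_{\prec}$ errors multiply only a bounded number of times, and the final bound follows by a union bound over the high-probability events in $\{\mbf{v}_i\}$.
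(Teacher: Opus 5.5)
This is essentially the paper's argument: the rank-one identity of Lemma \ref{lem:resolventProjection}, with Lemma \ref{lem:conc} controlling the numerator and denominator of the correction term, and $t\Tr{H^{(j-1)}_{\lambda_{j}}}=1$ fixing the size of the denominator. Your caveat is correct and sharper than the paper's text, which claims a relative $(Nt^{3})^{-1/2}$ concentration for the rank-one numerator $\mbf{v}_{j}^{*}G^{(j-1)}\mbf{f}\,\mbf{e}^{*}G^{(j-1)}\mbf{v}_{j}$ that a rank-one form does not satisfy. So the multiplicative form really holds only for diagonal entries, and off the diagonal one gets an additive bound, which is all Corollary \ref{cor:entrywisePropagation} uses; note also that your lower bound $|t\Tr{HG(w)}|\gtrsim1$ needs $|w|\lesssim1$, though for large $|w|$ the numerator decays as well and the ratio stays small.
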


As a corollary, we can propagate entry-wise local laws from $X^{(j-1)}$ to $X^{(j)}$.
\begin{corollary}\label{cor:entrywisePropagation}
Let $I\subset[N]$. If
\begin{align}
    \max_{i_{1},i_{2}\in I}\Bigl|\mbf{e}_{i_{1}}^{(j-1)*}G^{(j-1)}(w)\mbf{e}_{i_{2}}^{(j-1)}-M_{i_{1},i_{2}}(w)\Bigr|&\prec\frac{1}{\sqrt{N|\Im w|}},\label{eq:entrywise(j-1)}
\end{align}
then
\begin{align}
    \max_{i_{1},i_{2}\in I}\Bigl|\mbf{e}_{i_{1}}^{(j)*}G^{(j)}(w)\mbf{e}_{i_{2}}^{(j)}-M_{i_{1},i_{2}}(w)\Bigr|&\prec\frac{1}{\sqrt{N|\Im w|}},\label{eq:entrywise(j)}
\end{align}
\end{corollary}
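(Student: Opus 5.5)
The plan is to apply Lemma \ref{lemma:entrywisePropagation} with $\mbf{e}=\mbf{e}^{(j-1)}_{i_{1}}$ and $\mbf{f}=\mbf{e}^{(j-1)}_{i_{2}}$. The point is that the projected coordinate vectors satisfy
\begin{align*}
\mbf{e}^{(j)}_{i}=V_{j}^{*}\mbf{e}^{(j-1)}_{i}.
\end{align*}
Hence
\begin{align*}
\mbf{e}^{(j)*}_{i_{1}}G^{(j)}(w)\mbf{e}^{(j)}_{i_{2}}=\mbf{e}^{(j-1)*}_{i_{1}}V_{j}G^{(j)}(w)V_{j}^{*}\mbf{e}^{(j-1)}_{i_{2}}.
\end{align*}
This is exactly the quantity controlled by Lemma \ref{lemma:entrywisePropagation}. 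The difference between the two resolvent entries is therefore the rank-one correction from Lemma \ref{lem:resolventProjection}:
\begin{align*}
\frac{\mbf{e}^{(j-1)*}_{i_{1}}G^{(j-1)}(w)\mbf{v}_{j}\,\mbf{v}_{j}^{*}G^{(j-1)}(w)\mbf{e}^{(j-1)}_{i_{2}}}{\mbf{v}_{j}^{*}G^{(j-1)}(w)\mbf{v}_{j}}.
\end{align*}
It suffices to show this is $O_{\prec}((N|\Im w|)^{-1/2})$. The triangle inequality then combines it with the hypothesis \eqref{eq:entrywise(j-1)}. Since $I$ is a fixed finite set, the maximum over $i_{1},i_{2}\in I$ costs only a union bound, which is harmless for events of probability $1-e^{-c\log^{2}N}$.

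The key steps are as follows. First, bound the denominator from below: by Lemma \ref{lem:conc} and the averaged local law (which holds for $X^{(j-1)}$ by interlacing), $\mbf{v}_{j}^{*}G^{(j-1)}\mbf{v}_{j}\approx t\Tr{H^{(j-1)}_{\lambda_{j}}G^{(j-1)}(w)}$, which is of order one in the regime $\textup{dist}(w,[-2,2])\gtrsim t^{2}$. Second, bound the numerator. Using the polarised form of Lemma \ref{lem:conc}, $\mbf{v}_{j}\mbf{v}_{j}^{*}$ is replaced by its mean $\frac{t}{N}H^{(j-1)}_{\lambda_{j}}$, as in the computation before Lemma \ref{lemma:entrywisePropagation}. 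This gives
\begin{align*}
\frac{t}{N}\,\mbf{e}^{*}G H^{(j-1)}_{\lambda_{j}} G\mbf{f}+O_{\prec}\Bigl(\frac{t}{N}\|G\mbf{e}\|\,\|G\mbf{f}\|\,\|H_{\lambda_{j}}\|\Bigr).
\end{align*}
Third, apply Cauchy--Schwarz and the Ward identity $\|G\mbf{e}\|^{2}=\Im(\mbf{e}^{*}G\mbf{e})/\Im w$. By the hypothesis \eqref{eq:entrywise(j-1)}, $\Im(\mbf{e}^{*}G\mbf{e})=O(1)$. Using $\|H_{\lambda_{j}}\|\lesssim t^{-2}$, the numerator is bounded by
\begin{align*}
\frac{t}{N}\cdot\frac{1}{|\Im w|\,t^{2}}=\frac{1}{Nt|\Im w|}.
\end{align*}
This is $\lesssim (N|\Im w|)^{-1/2}$ whenever $N|\Im w|t^{2}\gtrsim1$, which holds in our regime $|\Im w|\gtrsim t^{2}$ since $Nt^{4}\gg1$.

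The main obstacle is that, applied naively, the multiplicative form of Lemma \ref{lemma:entrywisePropagation} only gives an error of $\frac{1}{\sqrt{Nt^{3}}}|\mbf{e}^{*}G\mbf{f}|$. For diagonal entries, where $M_{ii}=O(1)$, this error $(Nt^{3})^{-1/2}$ can be larger than $(N|\Im w|)^{-1/2}$ when $|\Im w|>t^{3}$. For this reason I would work with the additive bound from the Ward identity described above rather than the multiplicative one. A second issue is that iterating the corollary $j$ times accumulates $\prec$ factors of $\log N$. Because $j\leq m$ is fixed, this only worsens constants in the definition of $\prec$, and I would absorb it by adjusting $c$ in the probability $1-e^{-c\log^{2}N}$.
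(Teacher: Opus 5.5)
You take a genuinely different route from the paper. The paper treats the corollary as immediate from the multiplicative Lemma~\ref{lemma:entrywisePropagation}, applied with $\mbf{e}=\mbf{e}^{(j-1)}_{i_{1}}$, $\mbf{f}=\mbf{e}^{(j-1)}_{i_{2}}$ and $\mbf{e}^{(j)}_{i}=V_{j}^{*}\mbf{e}^{(j-1)}_{i}$. You instead bound the rank-one term of Lemma~\ref{lem:resolventProjection} additively.

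Your objection to the multiplicative route is correct. On the diagonal it adds $O_{\prec}((Nt^{3})^{-1/2})|M_{ii}|$, which exceeds $(N|\Im w|)^{-1/2}$ once $|\Im w|\gg t^{3}$. In Lemma~\ref{lem:hBound} one has $|\Im w|\gtrsim\eta_{\lambda_{j}}$, which is of order $t$ in the bulk. The weaker relative-error version suffices for the later uses in Lemmas~\ref{lem:detB} and~\ref{lem:hBound}, but it is not what the corollary states. So an additive estimate of the correction is the right idea if you want the bound as written.

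The gap is in your closing step. With $t=N^{-1/3+\rho}$ one has $Nt^{4}=N^{4\rho-1/3}$. This tends to zero for the small $\rho$ the paper uses (only $\rho>\delta$ is assumed), so ``$Nt^{4}\gg1$'' is false.

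Your crude chain uses three inputs: denominator $\gtrsim1$, $\|H_{\lambda_{j}}\|\lesssim t^{-2}$, and $\Im(\mbf{e}^{*}G\mbf{e})=O(1)$. It gives $(Nt|\Im w|)^{-1}\le(N|\Im w|)^{-1/2}$ only for $|\Im w|\gtrsim(Nt^{2})^{-1}=N^{-1/3-2\rho}$, not down to $t^{2}$.

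In the bulk this is harmless: there $|\Im w|\gtrsim\eta_{\lambda_{j}}\sim t$, and the requirement becomes $Nt^{3}\gg1$, which holds. Near the edge, however, $\eta_{\lambda}\sim t\sqrt{|\kappa_{\lambda}|+t^{2}}$ can be of order $t^{2}$, and $\|H_{\lambda_{j}}\|$ is no longer $O(t^{-2})$. So the argument as written breaks down there.

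The repair is to keep the size of the denominator rather than discard it. Use
\[ |\mbf{v}_{j}^{*}G\mbf{v}_{j}|\ge|\Im(\mbf{v}_{j}^{*}G\mbf{v}_{j})|=|\Im w|\,\|G\mbf{v}_{j}\|^{2}. \]
Then apply Lemma~\ref{lem:conc} to $\|G\mbf{v}_{j}\|^{2}$, and, after Cauchy--Schwarz, to the numerator. Since $G$ and $H_{\lambda_{j}}$ commute, the correction is
\[ \prec\frac{1}{N|\Im w|}\cdot\frac{(\mbf{e}^{*}G^{*}GH_{\lambda_{j}}\mbf{e})^{1/2}(\mbf{f}^{*}G^{*}GH_{\lambda_{j}}\mbf{f})^{1/2}}{\Tr{G^{*}GH_{\lambda_{j}}}}. \]
This is $O_{\prec}((N|\Im w|)^{-1})\le(N|\Im w|)^{-1/2}$ as soon as the vectors $\mbf{e}^{(j-1)}_{i}$ are delocalised in the eigenbasis of $X^{(j-1)}$. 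That input still has to be supplied, but it removes any constraint linking $t$ and $\Im w$.
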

Since \eqref{eq:entrywise(j-1)} is true for $j=1$ by the entry-wise local law for $X$, we can apply Corollary \ref{cor:entrywisePropagation} repeatedly to work on the event that \eqref{eq:entrywise(j)} holds for each $j=1,...,m$.

We now return to \eqref{eq:Ef}. Factorise the Gaussian part from the determinant in the $\mbf{k}$ integral:
\begin{align*}
    \det^{-\beta/2}\bigl(1-itD(\mbf{k})H^{(j)}_{I^{(j)}}(p,E)\bigr)&=\frac{\det^{-\beta/2}\bigl(1-B^{(j)}_{I^{(j)}}(\mbf{k},p,E)\bigr)}{\prod_{a=1}^{|I|}\bigl(1-ik_{a}\bigr)^{\beta/2}},
\end{align*}
where we have defined 
\begin{align}
    \bigl(B^{(j)}_{I^{(j)}}(\mbf{k},p,E)\bigr)_{j_{1},j_{2}}&=\frac{ik_{j_{1}}}{1-ik_{j_{1}}}\Bigl(t\bigl(H^{(j)}_{I^{(j)}}(p,E)\bigr)_{j_{1},j_{2}}-\delta_{j_{1},j_{2}}\Bigr).
\end{align}
For values of $p$ and $E$ that dominate the integral, we can approximate the determinant.
\begin{lemma}\label{lem:detB}
Let $|E-E^{(j-1)}_{\lambda_{j}}|<\sqrt{\frac{t}{N}}\log N$ and $|p|<\frac{\log N}{\sqrt{N\Tr{(H^{(j-1)}_{\lambda_{j}})^{2}}}}$. Then
\begin{align}
	\sup_{\mbf{k}\in\mbb{R}^{|I|}}\|B^{(j)}_{I^{(j)}}(\mbf{k},p,E)\|&\lesssim\frac{\log N}{\sqrt{Nt^{3}}}.
\end{align}
In particular, for any $D>0$ there is an $L>0$ such that
\begin{align}
	\det^{-\beta/2}\Bigl(1-B^{(j)}_{I^{(j)}}(\mbf{k},p,E)\Bigr)&=\exp\Biggl(\frac{\beta}{2}\sum_{m=0}^{L}\frac{1}{m}\tr\bigl(B^{(j)}_{I^{(j)}}(\mbf{k},p,E)\bigr)^{m}\Biggr)+O(N^{-D}),
\end{align}
uniformly in $\mbf{k}\in\mbb{R}^{|I|}$.
\end{lemma}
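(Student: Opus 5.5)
The plan is to bound $\|B\|$ by controlling two factors separately. Each entry of $B$ is a product of the scalar $\frac{ik_{j_{1}}}{1-ik_{j_{1}}}$, which has modulus at most $1$ uniformly in $k_{j_{1}}\in\mbb{R}$, and the quantity $t(H^{(j)}_{I^{(j)}}(p,E))_{j_{1},j_{2}}-\delta_{j_{1},j_{2}}$. Since $|I|\leq C$ is fixed, the operator norm is comparable to the largest entry. So it suffices to show, uniformly in $j_{1},j_{2}\in I$ and for $E,p$ in the stated ranges,
\begin{align}
	\bigl|t\,\mbf{e}^{(j)*}_{j_{1}}H^{(j)}(p,E)\mbf{e}^{(j)}_{j_{2}}-\delta_{j_{1}j_{2}}\bigr|&\lesssim\frac{\log N}{\sqrt{Nt^{3}}}.
\end{align}

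To do this I would write $H^{(j)}(p,E)$ through the resolvent identity \eqref{eq:H1}, with $u=E$ and $\eta=\eta^{(j)}(E)$:
\begin{align}
	H^{(j)}(p,E)&=\frac{G^{(j)}(E+w(p))-G^{(j)}(E-w(p))}{2w(p)}.
\end{align}
Here $|\Im w(p)|\gtrsim\eta^{(j)}(E)\simeq t$, by Lemma \ref{lem:phi} and the fact that $E$ is within $\sqrt{t/N}\log N$ of $E_{\lambda_{j}}$. The argument then has three steps.

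First, by Corollary \ref{cor:entrywisePropagation} applied at both spectral parameters $E\pm w(p)$, the projected entries of $G^{(j)}$ agree with $M_{j_{1}j_{2}}=m_{sc}\delta_{j_{1}j_{2}}$ (more precisely, with the deterministic equivalent of $X$) up to an error $O_{\prec}((Nt)^{-1/2})$. Dividing by $2|w(p)|\gtrsim t$ makes the off-diagonal entries of $tH^{(j)}$ of size $O_{\prec}(t\cdot (Nt)^{-1/2}/t)$. That is not quite the bound we want. The fix is to compare with the trace instead: the diagonal leading term is $t\Tr{H^{(j)}(p,E)}$, and the defining equation \eqref{eq:eta(s)} gives exactly $t\Tr{H^{(j)}(0,E)}=1$. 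One should therefore use the difference structure of the entrywise law. With the isotropic/entrywise error for a resolvent at distance $\gtrsim t$ being $(Nt)^{-1/2}$, the entries of $H=(G_+-G_-)/(2w)$ deviate from $\delta\cdot\Tr{H}$ by at most $(Nt)^{-1/2}/t$. Multiplying by $t$ gives $(Nt)^{-1/2}$, which is at most $(Nt^{3})^{-1/2}$ since $t\leq1$.

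Second, I would remove the $p$-dependence. Differentiating in $p$ gives $\|\partial_{p}H\|\leq\|H\|^{2}\lesssim t^{-2}$. Combined with $|p|\lesssim\log N/\sqrt{N\Tr{H^{2}}}\simeq \sqrt{t}\log N/\sqrt{N}$, where I use $\Tr{H^{2}}\simeq t^{-1}\cdot t^{-2}$ at a distance of order $t$ from the spectrum, this yields $t|p|\,\|H\|^{2}\lesssim \log N/\sqrt{Nt^{3}}$.

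Third, I would replace $\eta^{(j)}(E)$ by $\eta^{(j-1)}$. This costs $O(1/(Nt))$ by interlacing. Combining the three steps gives the norm bound.

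The second claim is then formal. Since $\|B\|\leq N^{-c}$, once $Nt^{3}\geq N^{2\xi}$ we have $\log\det(1-B)=-\sum_{m\geq1}\tr B^{m}/m$ with a convergent series. Truncating at $L>D/c$ leaves a remainder $O(|I|\,\|B\|^{L})=O(N^{-D})$, uniformly in $\mbf{k}$. Exponentiating is harmless because the exponent is $o(1)$. (The $m=0$ term in the statement should be read as starting from $m=1$.)

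The main obstacle is the first step: getting the diagonal entries of $tH^{(j)}$ to equal $1$ rather than $t\,m$-type constants, uniformly in the projected index vectors. The plan is to show, via Lemma \ref{lemma:entrywisePropagation}, that the diagonal entries of $H^{(j)}$ all agree with $\Tr{H^{(j)}}$ to relative precision $(Nt^{3})^{-1/2}\log N$, and then to invoke \eqref{eq:eta(s)}.
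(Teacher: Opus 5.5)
Your strategy matches the paper's. You remove the $p$-dependence at cost $O(\log N/\sqrt{Nt^{3}})$. You then use the entrywise law (propagated to $X^{(j)}$ by Corollary \ref{cor:entrywisePropagation}) together with $t\Tr{H^{(j)}(0,E)}=1$ to get diagonal entries $1+o(1)$ and small off-diagonal entries. Finally you expand $\log\det(1-B)$. Your remarks on $|ik/(1-ik)|\leq 1$, on the bounded size of $I$, and on the series starting at $m=1$ are fine.

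The gap is that all your estimates rest on ``$|\Im w(p)|\gtrsim\eta^{(j)}(E)\simeq t$ by Lemma \ref{lem:phi}'', and this holds only in the bulk. Near the edge, Lemma \ref{lem:phi} gives $(\eta_{\lambda})^{2}\approx t^{2}\kappa_{\lambda}$. So for $\lambda_{j}$ within $t^{2}$ of the spectral edge, $\eta^{(j)}\lesssim t^{2}$. For the extreme eigenvalues $\eta^{(j)}\ll t^{2}$, and $E_{\lambda_{j}}$ lies outside the spectrum of $X$ at distance of order $t^{2}$; past the edge $\eta^{(j)}$ is even imaginary. The lemma is needed there, since Theorems \ref{thm1} and \ref{thm3} involve edge eigenvectors, and in this regime both of your inputs fail:
\begin{enumerate}[i)]
\item Since $\|H\|\simeq t^{-4}$ and $\Tr{H^{2}}\simeq t^{-5}$, the operator-norm $p$-step gives $t|p|\,\|H\|^{2}\simeq t^{-9/2}\log N/\sqrt{N}$. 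That is a factor $t^{-3}$ above the target.
\item Dividing the entrywise error $(N|\Im w|)^{-1/2}$ by $|w|\sim\eta$ loses at least $t^{-1/2}$, and it is useless once $\eta\ll t^{2}$.
\end{enumerate}

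In fact $\log N/\sqrt{Nt^{3}}$ is exactly the edge scale, and reaching it uniformly needs two changes:
\begin{enumerate}[i)]
\item Bound $\partial_{p}H=-iH^{2}$ entrywise rather than in norm, using $|(H^{2})_{ab}|\lesssim\Tr{H^{2}}$ from the local law. Then $t|p|\Tr{H^{2}}\leq t\log N\sqrt{\Tr{H^{2}}/N}\lesssim\log N/\sqrt{Nt^{3}}$, because $\Tr{H^{2}}\lesssim t^{-5}$ throughout.
\item Use that $E\pm w(p)$ stays at distance $\gtrsim t^{2}$ from the spectrum. At that distance, apply the edge-sensitive form of the entrywise local law: error $\sqrt{\Im m_{sc}/(N\eta)}$ inside, or $N^{-1/2}(|\Re z|-2+|\Im z|)^{-1/4}$ outside the spectrum as in Lemma \ref{lem:locallaw}. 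Combine this with Cauchy estimates for the difference quotient $(G(E+w)-G(E-w))/(2w)$, instead of dividing a pointwise error by $\eta$.
\end{enumerate}

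Separately, the arithmetic in your second step is off even in the bulk. There $\|H\|^{2}\simeq t^{-4}$, not $t^{-2}$, and $|p|\lesssim t^{3/2}\log N/\sqrt{N}$, not $\sqrt{t}\log N/\sqrt{N}$. The two slips cancel, so your final bulk bound is correct.
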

\begin{proof}
The local law and the definition of $\eta^{(j)}(E)$ imply that
\begin{align}
    t\bigl(H^{(j)}_{I^{(j)}}(p,E)\bigr)_{a,a}&=t\bigl(H^{(j)}_{I^{(j)}}(E)\bigr)_{a,a}+O\Bigl(\frac{\log N}{\sqrt{Nt^{3}}}\Bigr)\\
    &=1+O\Bigl(\frac{\log N}{\sqrt{Nt^{3}}}\Bigr),
\end{align}
and
\begin{align}
    t\bigl(H^{(j)}_{I^{(j)}}(p,E)\bigr)_{a,b}&=O\Bigl(\frac{\log N}{\sqrt{Nt^{3}}}\Bigr),
\end{align}
for $a\neq b$.
\end{proof}

To localise the $p$ integral to the desired region, we need to show that $h_{\beta}(p,E)$ has polynomial growth.
\begin{lemma}\label{lem:hBound}
We have
\begin{align}
    |h_{\beta}(p,E)|&\lesssim \Biggl(1+\frac{|p|}{t^{2}}+\log\epsilon^{-1}\Biggr)^{|I|}.
\end{align}
\end{lemma}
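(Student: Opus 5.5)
We need to bound
\[
h_\beta(p,E)=\int_{\mbb{R}^{|I|}}\det{}^{-\beta/2}\bigl(1-itD(\mbf{k})H\bigr)\prod_{a}\frac{e^{-ik_a a_a}}{k_a-i\epsilon}\frac{\diff k_a}{2\pi},
\]
with $H:=H^{(j-1)}_{I^{(j-1)}}(p,E)$. The plan is to avoid the Fourier side entirely and go back to the Gaussian-integral side of Lemma \ref{lem:halfSpace}. There, $h_\beta$ equals a normalised Gaussian integral of a product of damped indicators, with covariance $A^{-1}$, where $A=\bigl((X^{(j-1)}-E)^2+(\eta^{(j-1)}(E))^2+ip\bigr)/t$ after rescaling. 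The difficulty is that $A$ is complex. The normalisation $\det^{\beta/2}(\beta A/2\pi)$ then has modulus larger than that of the real part, so I will bound the modulus directly.

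The steps are as follows.

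\textbf{Step 1.} Write $A=A_0+ip/t$ with $A_0>0$. Since $|e^{-\frac{\beta}{2}\mbf{x}^*A\mbf{x}}|=e^{-\frac{\beta}{2}\mbf{x}^*A_0\mbf{x}}$, we get
\[
|h_\beta|\le \Bigl|\frac{\det A}{\det A_0}\Bigr|^{\beta/2}\,\mbb{E}_{A_0}\prod_{a}e^{-\epsilon(|x_a|^2-a_a)}\mbf{1}_{(a_a,\infty)}(|x_a|^2)\le \Bigl|\frac{\det A}{\det A_0}\Bigr|^{\beta/2}.
\]
This ratio is $|\det(1+ipH^{(j-1)}(E))|^{\beta/2}$, which is exponentially large in $N$. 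So this crude route fails, and the bound has to use the structure of $h_\beta$ as a function of the $|I|\times|I|$ matrix $tH$ only.

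\textbf{Step 2 (the actual route).} Work with the $\mbf{k}$-representation, which depends only on the finite matrix $M:=tH$, of bounded dimension $|I|$. First bound $\|M\|$. Each entry is $t\,\mbf{e}^*(\cdots+ip)^{-1}\mbf{e}$, and by \eqref{eq:H1} this is a difference quotient of resolvents at $u\pm w(p)$ with $|\Im w|\gtrsim\eta+\sqrt{|p|}$. Combining the entrywise local law (propagated via Corollary \ref{cor:entrywisePropagation}), the bound $\eta\gtrsim t$ from \eqref{eq:eta(s)}, and the bound $|\mbf{e}^*H(p)\mbf{e}|\le \mbf{e}^*H(0)\mbf{e}$, we get $\|M\|\lesssim 1$. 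We also need a lower bound on $\Re$: the matrix $\Re\bigl(M^{-1}\bigr)=\Re(A_I^{-1})^{-1}$-type quantity is what controls decay. I would rather use $\Re M\gtrsim (1+|p|/t^2)^{-1}$. This follows since $\Re(B+ip)^{-1}=(B^2+p^2)^{-1}B\ge (1+p^2/\eta^4)^{-1}B^{-1}$ for $B\ge\eta^2\gtrsim t^2$, and $B^{-1}$ has diagonal entries of order $1/t$ by the local law.

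\textbf{Step 3.} For a matrix $M$ with $\Re M\ge\gamma>0$ and $\|M\|\lesssim1$, bound $h_\beta$ one variable at a time. Substitute $k_a\mapsto$ the Laplace-type structure, i.e.\ deform each $k_a$ contour into the lower half-plane by an amount of order $\gamma$. The factor $\det^{-\beta/2}(1-iD(\mbf{k})M)$ stays analytic and bounded there, and along the real direction it decays like $\prod_a(1+\gamma|k_a|)^{-\beta/2}$. This decay is borderline-integrable only through the $1/(k_a-i\epsilon)$ factors. Near $k_a=0$, the pole $1/(k_a-i\epsilon)$ gives $\int_{|k|<1}|k-i\epsilon|^{-1}\diff k\lesssim\log\epsilon^{-1}$. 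For $1<|k_a|<\gamma^{-1}$, the integral $\int|k|^{-1}\diff k$ gives $\log\gamma^{-1}\lesssim\log(1+|p|/t^2)\le |p|/t^2$. The tail $|k_a|>\gamma^{-1}$ is handled by the factor $|k|^{-1-\beta/2}$, integrated with one extra integration by parts when $\beta=1$ and $|I|=1$. Taking the product over $a\in I$, and using the crude bound $|\det^{-\beta/2}(1-iD(\mbf{k})M)|\le\prod_a(1+\gamma|k_a|)^{-\beta/2}$ (from Hadamard/accretive-matrix estimates after factoring $D(\mbf{k})^{1/2}$), yields
\[
|h_\beta(p,E)|\lesssim\bigl(1+|p|/t^2+\log\epsilon^{-1}\bigr)^{|I|}.
\]

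The main obstacle is the determinant lower bound
\[
|\det(1-iD(\mbf{k})M)|\gtrsim\prod_a(1+\gamma|k_a|)
\]
for a non-Hermitian accretive $M$. I would prove it by writing $1-iDM=D^{1/2}(D^{-1}-iM)D^{1/2}$ for $k_a>0$, handling signs by splitting $D=D_+-D_-$. I would then use the fact that $\Re\bigl(\mbf{y}^*(-iM)\mbf{y}\bigr)=\mbf{y}^*(\Im M)\mbf{y}$, which is where the sign of $\Im M$ (of the sign of $p$) and $\Re M$ enter. If this fails, the fallback is to bound the $\mbf{k}$ integral by Hölder after diagonal dominance. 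This is valid only for $|p|\lesssim t^2$, which is the region that actually matters for localising $p$.
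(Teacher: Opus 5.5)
There is a genuine gap, and it sits in the step you flag as the main obstacle. The lower bound $|\det(1-iD(\mbf{k})M)|\gtrsim\prod_a(1+\gamma|k_a|)$ is false for non-Hermitian accretive $M$, already when $|I|=1$. For a scalar $c$ with $\Re c>0$ one has exactly
\begin{align*}
|1-ikc|^{2}=|c|^{2}(k-k_{*})^{2}+\Bigl(\frac{\Re c}{|c|}\Bigr)^{2},\qquad k_{*}=-\frac{\Im c}{|c|^{2}},
\end{align*}
so $\min_{k\in\mbb{R}}|1-ikc|=\Re c/|c|$. For example, $c=\gamma-i$ and $k=1$ give $|1-ikc|=\gamma$, not $\gtrsim1+\gamma$.

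This is exactly what happens for $c=tH_{aa}(p,E)$. There $\Im c$ has the sign of $-p$, so the near-zero $k_{*}$ has the sign of $p$. For $|p|\gg1$ one finds $\Re c/|c|\sim|p|^{-1}$, while $1+\gamma|k_{*}|\sim1$.

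Splitting $D=D_{+}-D_{-}$ cannot repair this. In the configuration $\textup{sgn}(k_a)=\textup{sgn}(p)$, the Hermitian parts $D^{-1}$ and $\Im M$ have opposite signs and can cancel. From $\Re M\ge\gamma$ and $\|M\|\lesssim1$ alone you only get $|\det(1-iD(\mbf{k})M)|\ge\gamma^{|I|}\prod_a|k_a|$. For $|I|\ge2$, the set where the determinant is small is a hypersurface in $\mbf{k}$, and your argument never controls it.

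The other pieces do not help either. Shifting the $k_a$-contours downward only moves you towards the singularity of $(1-ikc)^{-\beta/2}$ at $k=-i/c$. The H\"older fallback covers only $|p|\lesssim t^{2}$, whereas the lemma is needed uniformly in $p$ precisely to localise the $p$-integral.

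The missing idea is to keep the entrywise, diagonally dominant structure of $H_{I}$ rather than passing to $\Re M\ge\gamma$. The paper writes
\begin{align*}
\det(1-itD(\mbf{k})H)=\prod_a(1-itk_aH_{aa})\cdot\det(1-\wt{B}),
\end{align*}
where $\wt{B}_{ab}=itk_aH_{ab}/(1-itk_aH_{aa})$ for $a\neq b$ and $\wt{B}$ vanishes on the diagonal. The correct scalar bound is $|1-itk_aH_{aa}|\ge t|k_a|\Re H_{aa}$. It gives $|\wt{B}_{ab}|\le|H_{ab}|/\Re H_{aa}=O(1/\sqrt{Nt^{3}})$ uniformly in $\mbf{k}$ and $p$. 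This uses the $1/p$ expansion for $|p|>C$, and \eqref{eq:H1} with Corollary~\ref{cor:entrywisePropagation} for $|p|<C$.

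Hence $\det^{-\beta/2}(1-\wt{B})=O(1)$, and $h_\beta$ is bounded by a product of one-dimensional integrals $\int|1-itkH_{aa}|^{-\beta/2}(k^{2}+\epsilon^{2})^{-1/2}\diff k$. By the identity above, each integral splits as follows:
\begin{itemize}
\item the near-zero at $k_{*}$ costs only $O(1+\log(|c|/\Re c))$;
\item the region $|k|\lesssim1$ gives $\log\epsilon^{-1}$;
\item the range $1\lesssim|k|\lesssim|c|^{-1}$ gives $\log|c|^{-1}$.
\end{itemize}
Since $\Re c$ decays at most polynomially in $1+|p|/t^{2}$, the total fits into $1+|p|/t^{2}+\log\epsilon^{-1}$.
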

\begin{proof}
We make a different factorisation of the determinant:
\begin{align}
	\det^{-\beta/2}\Bigl(1-itD(\mbf{k})H^{(j)}_{I^{(j)}}(p,E)\Bigr)&=\frac{\det^{-\beta/2}\Bigl(1-\wt{B}^{(j)}_{I^{(j)}}(\mbf{k},p,E)\Bigr)}{\prod_{a=1}^{|I|}(1-itk_{a}\bigl(H^{(j)}_{I^{(j)}}(p,E)\bigr)_{a,a})^{\beta/2}},
\end{align}
where 
\begin{align}
	\bigl(\wt{B}^{(j)}_{I^{(j)}}(\mbf{k},p,E)\bigr)_{a,b}&=\frac{itk_{a}\bigl(H^{(j)}_{I^{(j)}}(p,E)\bigr)_{a,b}}{1-itk_{a}\bigl(H^{(j)}_{I^{(j)}}(p,E)\bigr)_{a,a}}.
\end{align}
We claim that
\begin{align}
	\bigl\|\wt{B}^{(j)}_{I^{(j)}}(\mbf{k},p,E)\bigr\|&\lesssim\frac{1}{\sqrt{Nt^{3}}},\label{eq:Bbound},
\end{align}
uniformly in $p\in\mbb{R}$ and $\mbf{k}\in\mbb{R}^{|I|}$. To prove this, we consider separately the cases $|p|>C$ and $|p|<C$ for some $C>2\|X\|$. In the former case we have
\begin{align}
    \bigl(H^{(j)}_{I^{(j)}}(p,E)\bigr)_{a,b}&=\frac{1}{ip}\Bigl(\delta_{a,b}-\frac{1}{ip}\bigl(|X-z|^{2}\bigr)_{ab}+O\Bigl(\frac{1}{p^{2}}\Bigr)\Bigr),
\end{align}
from which we obtain
\begin{align}
    \bigl|\bigl(\tilde{B}^{(j)}_{I^{(j)}}(\mbf{k},p,E)\bigr)_{a,b}\bigr|&\leq\frac{\bigl|\bigl(H^{(j)}_{I^{(j)}}(p,E)\bigr)_{a,b}}{\bigl|\Re\bigl(H^{(j)}_{I^{(j)}}(p,E)\bigr)_{a,a}\bigr|}\\
    &\prec\frac{1}{\sqrt{N}}.
\end{align}
Note that we used the fact that $\bigl(|X-z|^{2}\bigr)_{ab}\prec N^{-1/2}$ when $a\neq b$ since $X$ satisfies the local law.

When $|p|<C$, by \eqref{eq:H1} and Corollary \ref{cor:entrywisePropagation}, we have
\begin{align}
	\bigl(H^{(j)}_{I^{(j)}}(p,E)\bigr)_{a,a}&=\int\frac{\rho_{sc}(x)\diff x}{|x-z|^{2}+ip}+O_{\prec}\Biggl(\frac{1}{\sqrt{N|\Im w(p)|}|w(p)|}\Biggr),
\end{align}
and so
\begin{align}
	\Re\bigl(H^{(j)}_{I^{(j)}}(p,E)\bigr)_{a,a}&\gtrsim\frac{\sqrt{|\kappa_{u}|+\eta}}{\eta+\sqrt{|p|}}.
\end{align}
Similarly, for $a\neq b$ we have
\begin{align}
	|\bigl(H^{(j)}_{I^{(j)}}(p,E)\bigr)_{a,b}|&\lesssim\frac{1}{\sqrt{N(\eta+\sqrt{|p|})}}\cdot\frac{1}{\eta+\sqrt{|p|}}.
\end{align}
Together these imply \eqref{eq:Bbound}.

Inserting the bound \eqref{eq:Bbound} in the integral over $p$, we find
\begin{align}
	|h_{\beta}(p,E)|&\lesssim\prod_{a}\int_{-\infty}^{\infty}\frac{\diff k}{\bigl|1-itk\bigl(H^{(j)}_{I^{(j)}}(p,E)\bigr)_{a,a}\bigr|^{\beta/2}\sqrt{k^{2}+\epsilon^{2}}}\\
	&\lesssim\Biggl(1+\frac{|p|}{t^{2}}+\log\epsilon^{-1}\Biggr)^{|I|}.
\end{align}
\end{proof}

We can now prove Lemma \ref{lem:gaussianApprox}.
\begin{proof}[Proof of Lemma \ref{lem:gaussianApprox}]
Combining Lemma \ref{lem:hBound} with the upper bound
\begin{align*}
    \bigl|\det^{-1}\bigl(1+ipH^{(j-1)}(E)\bigr)\bigr|&\leq \exp\left\{-\frac{Np^{2}\Tr{(H^{(j-1)}(E))^{2}}}{1+p^{2}\|H^{(j-1)}(E)\|^{2}}\right\},
\end{align*}
we deduce that we can localise $p$ to the region $|p|\leq\frac{\log N}{\sqrt{N\Tr{(H^{(j-1)}(E))^{2}}}}$. In this region we Taylor expand $\det^{-\beta/2}\Bigl(1-B^{(j-1)}_{I^{(j-1)}}(\mbf{k},p,E)\Bigr)$ as in Lemma \ref{lem:detB}. This leads to $\mbf{k}$-integrals of the form
\begin{align}
	\int_{\mbb{R}^{|I|}}\frac{e^{-ix\sum_{j}k_{j}}}{\prod_{j}(1-ik_{j})^{\beta/2}(k_{j}-i\epsilon)}\Biggl[1+P\Biggl(\Biggl\{\frac{k_{j}}{1-ik_{j}}\Biggr\}\Biggr)\Biggr]\frac{\diff\mbf{k}}{(2\pi)^{|I|}},
\end{align}
where $P$ is a polynomial of finite degree whose coefficients are $O_{\prec}\bigl((Nt^{3})^{-1/2}\bigr)$. If $\beta=2$, we can evaluate the integral by the residue theorem to obtain a polynomial in $x$ multiplied by $e^{-|I|x}$. If $\beta=1$, we first write
\begin{align}
	\frac{1}{\sqrt{1-ik}}&=\frac{1}{\sqrt{2\pi}}\int_{-\infty}^{\infty}e^{-\frac{1}{2}(1-ik)\xi^{2}}\diff\xi,
\end{align}
before applying the residue theorem to the $k$-integral.
\end{proof}
   
\section{Proofs of the main results}
\begin{proof}[Proof of Theorem \ref{thm1}]
The result will follow if we can show that the point process
\begin{align}
	\xi&:=\{(d_{E}^{-1}(\lambda_{i}-E),N\|\mbf{u}_{i}\|_{\infty}^{2}-\ell^{(\beta)}_{N}(0)):i\in[N]\}
\end{align}
converges weakly to a point process whose correlation functions factorise into an eigenvalue part (the sine/Airy process) and an eigenvector part which is a Poisson process. Indeed, we can then apply the continuous mapping theorem to the map
\begin{align}
	\xi&\mapsto N|\mbf{u}_{n(E)}\|_{\infty}^{2}-\ell^{(\beta)}_{N}(0),
\end{align}
where
\begin{align}
	n(E)&=\textup{argmin}\{d_{E}^{-1}|\lambda_{n}-E|\}.
\end{align}

By Lemma \ref{lem:momentmatching1}, we know that the point process $\xi$ is universal, so we need only prove the convergence for Gaussian-divisible matrices. It is enough to consider the statistics
\begin{align}
	\mc{L}_{m}(f)&:=\mbb{E}\sum_{i_{1},...,i_{m}}f(d_{E}^{-1}(\lambda_{i_{1}}-E),...,d_{E}^{-1}(\lambda_{i_{m}}-E),N\|\mbf{u}_{i_{1}}\|_{\infty}^{2}-\ell^{(\beta)}_{N}(0),...,N\|\mbf{u}_{i_{m}}\|_{\infty}^{2}-\ell^{(\beta)}_{N}(0)),
\end{align}
for $f\in C^{\infty}_{c}(\mbb{R}^{2m})$. Using the identity
\begin{align}
	f(X)&=\int_{0}^{\infty}f'(x)\mbf{1}_{(x,\infty)}(X)\diff x
\end{align}
for $X>0$, we can consider instead
\begin{align}
	\mbb{E}\sum_{i_{1},...,i_{m}}f(d_{E}^{-1}(\lambda_{i_{1}}-E),...,d_{E}^{-1}(\lambda_{i_{m}}-E))\prod_{a=1}^{m}\mbf{1}_{(x,\infty)}(N\|\mbf{u}_{i_{a}}\|_{\infty}^{2}-\ell^{(\beta)}_{N}(0)).
\end{align}
Finally, using the inclusion-exclusion principle, we can replace $\mbf{1}_{(x,\infty)}(N\|\mbf{u}_{i}\|_{\infty}^{2}-\ell^{(\beta)}_{N}(0))$ with $\prod_{j\in J}\mbf{1}_{(x,\infty)}(N|u_{i,j}|^{2}-\ell^{(\beta)}_{N}(0))$ for $J\subset[N]$ (see \eqref{eq:Pm} and \eqref{eq:Pm2} below). It is therefore sufficient to consider the statistics
\begin{align}
	\mbb{E}\sum_{i_{1},...,i_{m}}f(d_{E}^{-1}(\lambda_{i_{1}}-E),...,d_{E}^{-1}(\lambda_{i_{m}}-E))\prod_{a=1}^{m}\prod_{j\in J_{a}}\mbf{1}_{(x,\infty)}(N|u_{i_{a},j}|^{2}-\ell^{(\beta)}_{N}),
\end{align}
as $J_{a},\,a=1,...,m$ range over finite subsets of $[N]$. By Proposition \ref{prop:gauss-divisible}, we can replace $|u_{i_{a},j}|^{2}$ with $|Z^{(\beta)}_{i_{a},j}|^{2}$ for a collection of i.i.d. standard Gaussians $Z^{(\beta)}_{i,j}$ that are independent of $\{\lambda_{i}\}$. We thus obtain the desired factorisation of the correlation functions. Moreover, since the Gaussian replacements are i.i.d., the eigenvector part converges to a Poisson process with intensity $e^{-\frac{\beta x}{2}}$, which gives the desired Gumbel law.
\end{proof}

Let us prove Theorem \ref{thm3} before sketching the proof of Theorem \ref{thm2}, which is very similar.
\begin{proof}[Proof of Theorem \ref{thm3}]
For $m\in\mbb{N}$, let
\begin{align}
	P_{m}&:=\sum_{n=1}^{m}(-1)^{n-1}\mbb{E}\sum_{\substack{K\subset [N]\times [N]\\|K|=m}}\prod_{(i,j)\in K}\mbf{1}_{(x,\infty)}\bigl(N|u_{i,j}|^{2}-\ell^{(\beta)}_{N}(1)\bigr).\label{eq:Pm}
\end{align}
Then the probability we are interested in is $P_{\infty}$ and we have
\begin{align}
	P_{2m}&\leq P_{\infty}\leq P_{2m+1}.\label{eq:Pm2}
\end{align}
Let $\wt{P}_{m}$ denote the corresponding probability for an independent Gaussian-divisible Wigner matrix $\wt{W}=(1+t)^{-1/2}(X+\sqrt{t}Y)$ that $t$-matches with $W$.

We can rearrange the sum over subsets $K\subset [N]\times [N]$ into sums over subsets $I,\,J\subset[N]$ by considering the number of distinct $i_{k}$ and $j_{k}$ in $\{(i_{k},j_{k}):k=1,...,m\}$. Thus, it is sufficient to prove that
\begin{align}
	P_{x}(J_{1},...,J_{p})&:=\sum_{|I|=p}\mbb{E}\prod_{a=1}^{p}\prod_{j\in J_{a}}\mbf{1}_{(x,\infty)}\bigl(N|u_{i_{a},j}|^{2}-\ell^{(\beta)}_{N}(1)\bigr)
\end{align}
is universal, for any $p\leq m$ and $J_{a}\subset[N],\,a=1,...,p$ such that $\sum_{a}|J_{a}|=m$. We split $P_{x}(J_{1},...,J_{p})$ into two terms as follows:
\begin{align}
	P_{x}(J_{1},...,J_{p})&=A_{x}(J_{1},...,J_{p})+B_{x}(J_{1},...,J_{p}),
\end{align}
where
\begin{align}
	A_{x}(J_{1},...,J_{p})&=\mbb{E}\sum_{|I|=p}\prod_{a=1}^{p}\mbf{1}_{1}(\mc{N}(\hat{I}_{i_{a}}))\prod_{j\in J_{a}}\mbf{1}_{(x,\infty)}\bigl(N|u_{i_{a},j}|^{2}-\ell^{(\beta)}_{N}(1)\bigr).
\end{align}
Thus $A_{x}$ contains the contribution from the event that the intervals $\hat{I}_{i_{a}}$ contain only $\lambda_{i_{a}}$, and $B_{x}$ contains the contribution from the complimentary event. Let $\hat{P}_{x},\,\hat{A}_{x}$ and $\hat{B}_{x}$ denote the quantities obtained by replacing $|u_{i_{a},j}|^{2}$ with $\hat{v}^{2}_{i_{a},j}$. By Lemma \ref{lem:eigapprox} and Proposition \ref{prop:p-gauss-divisible}, we have 
\begin{align}
	B_{x}(J_{1},...,J_{p})&\leq\hat{B}_{x-N^{-\epsilon}}(J_{1},...,J_{p})\lesssim N^{p-m-\delta}.
\end{align}
On the event $\{\mc{N}(\hat{I}_{i_{a}})=1\}$, we have
\begin{align}
	\mbf{1}_{(x+N^{-\epsilon},\infty)}(N\hat{v}^{2}_{i_{a},j}-\ell^{(\beta)}_{N}(1))&\leq\mbf{1}_{(x,\infty)}(N|u_{i_{a},j}|^{2}-\ell^{(\beta)}_{N}(1))\leq\mbf{1}_{(x-N^{-\epsilon},\infty)}(N\hat{v}^{2}_{i_{a},j}-\ell^{(\beta)}_{N}(1)),
\end{align}
and thus
\begin{align}
	\hat{A}_{x+N^{-\epsilon}}&\leq A_{x}\leq\hat{A}_{x-N^{-\epsilon}}.
\end{align}
Altogether we have
\begin{align}
	\hat{P}_{x-N^{-\epsilon}}+O(N^{p-m-\delta})&\leq P_{x}\leq \hat{P}_{x+N^{-\epsilon}}+O(N^{p-m-\delta}).
\end{align}
Since $\hat{P}_{x}$ is universal by Lemma \ref{lem:momentmatching2}, we conclude that $P_{x}$ is also universal. It remains to show that $|u_{i_{a},j}|^{2}$ can be replaced by $|\xi_{i_{a},j}|^{2}$ for a collection of independent Gaussians, which follows from Proposition \ref{prop:gauss-divisible} in the complex case. In the real case, we cannot use Proposition \ref{prop:gauss-divisible} because of the support condition on $f$; instead we reduce to the GOE by imposing the four-moment matching condition on the original Wigner matrix $W$. 
\end{proof}

\begin{proof}[Proof of Theorem 1.2]
Let 
\begin{align}
	\Omega&=\{x:N|x|^{2}>\ell^{(\beta)}_{N}(D)\}.
\end{align}
By inclusion-exclusion, we have
\begin{align}
	A-B&\leq \mbb{P}\Biggl(\max_{\lambda\in I}\|\mbf{u}_{\lambda}\|_{\infty}>\sqrt{\frac{\ell^{(\beta)}_{N}(D)}{N}}\Biggr)\leq A,
\end{align}
where
\begin{align}
	A&=\mbb{E}\sum_{i,j}\mbf{1}_{I}(\lambda_{i})\mbf{1}_{\Omega}(u_{i,j}),
\end{align}
and
\begin{align}
	B&=\mbb{E}\sum_{i_{1}}\sum_{j_{1}<j_{2}}\mbf{1}_{I}(\lambda_{i_{1}})\mbf{1}_{\Omega}(u_{i_{1},j_{1}})\mbf{1}_{\Omega}(u_{i_{1},j_{2}})\\
	&+\mbb{E}\sum_{i_{1}<i_{2}}\sum_{j_{1}}\mbf{1}_{I}(\lambda_{i_{1}})\mbf{1}_{I}(\lambda_{i_{2}})\mbf{1}_{\Omega}(u_{i_{1},j_{1}})\mbf{1}_{\Omega}(u_{i_{2},j_{1}})\\
	&+\mbb{E}\sum_{i_{1}<i_{2}}\sum_{j_{1}<j_{2}}\mbf{1}_{I}(\lambda_{i_{1}})\mbf{1}_{I}(\lambda_{i_{2}})\mbf{1}_{\Omega}(u_{i_{1},j_{1}})\mbf{1}_{\Omega}(u_{i_{2},j_{2}}).
\end{align}
By the argument in the proof of Theorem \ref{thm3}, $A$ and $B$ are universal, so it suffices to estimate them in the Gaussian-divisible case. By Proposition \ref{prop:gauss-divisible}, we have
\begin{align}
    A&\lesssim\mbb{E}\sum_{i}\mbf{1}_{I}(\lambda_{i})\cdot N\mbb{P}\bigl(|Z^{(\beta)}|^{2}>\ell^{(\beta)}_{N}(D)\bigr)\\
    &\lesssim N^{1-D}\rho_{sc}(I).
\end{align}
We bound $B$ from above in a similar way, the only difference being that if $\beta=1$ then we require $|I|\lesssim d_{E}$ in order to apply Proposition \ref{prop:gauss-divisible}. 

To obtain \eqref{eq:thm13}, we use optimal rigidity (see e.g. Tao-Vu \cite[Corollary 15]{tao_random_2013}): for any $D'>0$ there is a constant $C>0$ such that
\begin{align}
    \max_{i\in[N]}\mbb{P}\Bigl(|\lambda_{i}-\gamma_{i}|>\frac{\log^{C}N}{N^{2/3}\hat{i}^{1/3}}\Bigr)\leq N^{-D'}.
\end{align}
The conclusion now follows from \eqref{eq:thm11} by choosing $I$ to be centred at $\gamma_{i}$ with width $N^{-2/3}\hat{i}^{-1/3}$.
\end{proof}

\paragraph{Acknowledgements} This work was mostly carried out while the author was at Queen Mary University of London and supported by the Royal Society grant number RF/ERE/210051. We also acknowledge support from the ERC Advanced Grant ``RMTBeyond" No. 101020331.

\appendix
\section{Proofs for subsection \ref{sec:partialdiag}}
\begin{proof}[Proof of Lemma \ref{lem:phi}]
Recall that $E_{*}$ is defined by
\begin{align}
    t\Tr{G^{2}(E_{*})}&=1.
\end{align}
Using the local law outside the spectrum and the monotonicity of $\Tr{G^{2}(E)}$, we obtain
\begin{align}
    E_{*}&=\frac{2+t}{\sqrt{1+t}}+O\Bigl(\frac{1}{Nt}\Bigr).
\end{align}
Let
\begin{align}
    \lambda_{*}&=E_{*}-t\Tr{G(E_{*})}.
\end{align}
To solve
\begin{align}
    \Re z&=\lambda+t\Re\Tr{G(z)},\label{eq:re_z}\\
    \Im z&=t\Im\Tr{G(z)},\label{eq:im_z}
\end{align}
we first fix $E=\Re z$ and solve the second equation, which is equivalent to \eqref{eq:eta(s)} when its solution is non-zero. This happens for $|\lambda|<\lambda_{*}$.

Consider the deterministic equivalent of \eqref{eq:eta(s)}, namely
\begin{align}
    t\int\frac{\rho_{sc}(x)\diff x}{(x-E)^{2}+\eta^{2}(E)}&=1,
\end{align}
whose solution is 
\begin{align}
    \eta_{\infty}(E)&=\frac{t}{\sqrt{1+t}}\sqrt{1-\frac{(1+t)E^{2}}{(2+t)^{2}}}.
\end{align}
Note that we allow $\eta_{\infty}(E)$ to be imaginary if $E\geq\frac{2+t}{\sqrt{1+t}}$.

Returning to \eqref{eq:eta(s)}, let
\begin{align}
    \Omega_{E}&=\Bigl\{\eta:|\eta^{2}-\eta^{2}_{\infty}(E)|<\frac{\sqrt{|\kappa_{\lambda}|+t^{2}}}{N}\Bigr\}.
\end{align}
For $\eta\in\Omega_{E}$ we have
\begin{align}
    \frac{t}{N}\sum_{n}\frac{1}{(\lambda_{n}-E)^{2}+\eta^{2}}&=t\int\frac{\rho_{sc}(x)\diff x}{(x-E)^{2}+\eta^{2}}+O\Bigl(\frac{1}{Nt(|\kappa_{\lambda}|+t^{2})}\Bigr)\\
    &=1+\int\frac{t(\eta_{\infty}^{2}(E)-\eta^{2})\rho_{sc}(x)\diff x}{((x-E)^{2}+\eta^{2})((x-E)^{2}+\eta^{2}_{\infty}(E))}+O\Bigl(\frac{1}{Nt(|\kappa_{\lambda}|+t^{2})}\Bigr),
\end{align}
from which we obtain
\begin{align}
    |\eta^{2}(E)-\eta^{2}_{\infty}(E)|&\lesssim\frac{\sqrt{|\kappa_{\lambda}|+t^{2}}}{N}.\label{eq:eta(E)approx}
\end{align}
Using this estimate in \eqref{eq:re_z}, we have
\begin{align}
    \frac{E-\lambda}{t}&=\frac{1}{N}\sum_{n}\frac{\lambda_{n}-E}{(\lambda_{n}-E)^{2}+\eta^{2}(E)}\\
    &=\int\frac{(x-E)\rho_{sc}(x)\diff x}{(x-E)^{2}+\eta^{2}(E)}+O\Bigl(\frac{1}{Nt\sqrt{|\kappa_{\lambda}|+t^{2})}}\Bigr)\\
    &=-\frac{E}{2+t}+\int\frac{(\eta^{2}_{\infty}(E)-\eta^{2}(E))(x-E)\rho_{sc}(x)\diff x}{((x-E)^{2}+\eta^{2}(E))((x-E)^{2}+\eta^{2}_{\infty}(E))}+O\Bigl(\frac{1}{Nt\sqrt{|\kappa_{\lambda}|+t^{2})}}\Bigr)\\
    &=-\frac{E}{2+t}+O\Bigl(\frac{1}{Nt\sqrt{|\kappa_{\lambda}|+t^{2})}}\Bigr).
\end{align}
In the third equality we have used the identity
\begin{align}
    \int\frac{(x-E)\rho_{sc}(x)\diff x}{(x-E)^{2}+\eta^{2}_{\infty}(E)}&=-\frac{E}{2+t}.
\end{align}
This implies that
\begin{align}
    E_{\lambda}&=\frac{2+t}{2(1+t)}\lambda+O\Bigl(\frac{1}{N\sqrt{|\kappa_{\lambda}|+t^{2}}}\Bigr),\label{eq:e_lambda}\\
    \eta^{2}_{\lambda}&=\frac{t^{2}}{1+t}\Bigl(1-\frac{\lambda^{2}}{4(1+t)}\Bigr)+O\Bigl(\frac{\sqrt{|\kappa_{\lambda}|+t^{2}}}{N}\Bigr),\label{eq:eta_lambda}
\end{align}
when $|\lambda|<\lambda_{*}$. 

When $|\lambda|\geq\lambda_{*}$, there is a real-valued solution to \eqref{eq:re_z} in the region $|E|\geq E_{*}$. If $0<|\lambda|-\lambda_{*}<ct^{2}$ for some small $c>0$, we rewrite \eqref{eq:re_z} as
\begin{align}
    \lambda-\lambda_{*}+t(E-E_{*})^{2}\Tr{G^{2}(E_{*})G(E)}&=0.
\end{align}
When $|\lambda|-\lambda_{*}>ct^{2}$, we rewrite \eqref{eq:z1} as
\begin{align}
    E&=\lambda+tm_{sc}(E)+O\Bigl(\frac{1}{Nt}\Bigr).
\end{align}
In either case, using the local law outside the spectrum we find
\begin{align}
    E_{\lambda}&=\frac{2+t}{2(1+t)}\lambda+\frac{t}{\sqrt{1+t}}\sqrt{\frac{\lambda^{2}}{4(1+t)}-1}+O\Bigl(\frac{1}{N\sqrt{|\kappa_{\lambda}|+t^{2}}}\Bigr).\label{eq:e_pm}
\end{align}
The estimates in \eqref{eq:z1}-\eqref{eq:z3} follow directly from \eqref{eq:e_lambda}, \eqref{eq:eta_lambda} and \eqref{eq:e_pm}. Since the above argument relied only on the averaged local law, it applies to $X^{(j)}$ by interlacing.

For the final statement, observe that $\Re\phi_{\lambda}(z)$ is even in $\Im z$ so it suffices to consider the case $\Im z>0$. If $s>Ct$ for sufficiently large $C$, then since $|\Tr{G(z)}|\lesssim1$ we have
\begin{align}
    \Re\bigl(\phi_{\lambda}(z_{\lambda}+is)-\phi_{\lambda}\bigr)&=-\int_{0}^{s}\frac{\eta_{\lambda}+r}{t}\bigl(1-t\Tr{|G(z_{\lambda}+ir)|^{2}}\Bigr)\diff r\\
    &\lesssim-\int_{s/2}^{s}\frac{\eta_{\lambda}+r}{t}\bigl(1-t\Tr{|G(z_{\lambda}+ir)|^{2}}\Bigr)\diff r\\
    &=-\int_{s/2}^{s}\Bigl(\frac{\eta_{\lambda}+r}{t}-\Im\Tr{G(z_{\lambda}+ir)}\Bigr)\diff r\\
    &\lesssim-\int_{s/2}^{s}\frac{r}{t}\diff r\\
    &\lesssim-\frac{s^{2}}{t}.
\end{align}
In the second line we used the fact that $t\Tr{|G(z_{\lambda}+ir)|^{2}}\leq1$ for $r>0$ by monotonicity.

Now consider the region $-\epsilon\eta_{\lambda}<s<Ct$. By the local law,
\begin{align}
	\Tr{|G(z_{\lambda}+is)|^{4}}&=\int\frac{\rho_{sc}(x)\diff x}{|x-z_{\lambda}-is|^{4}}+O\Biggl(\frac{1}{N(t\sqrt{|\kappa_{\lambda}|+t^{2}}+s)^{4}}\Biggr)\\
	&\gtrsim\frac{\sqrt{|\kappa_{\lambda}|+t^{2}+s}}{(t\sqrt{|\kappa_{\lambda}|+t^{2}}+s+\max(-\kappa_{\lambda},0))^{3}}.
\end{align}
Inserting this estimate into the formula
\begin{align}
	\Re\bigl(\phi_{\lambda}(z_{\lambda}+is)-\phi_{\lambda}\bigr)&=-2\int_{0}^{s}\int_{0}^{r}(\eta_{\lambda}+r)(\eta_{\lambda}+q)\Tr{|G(z_{\lambda}+iq)|^{4}}\diff q\diff r\nonumber\\
    &-\frac{1-t\Tr{|G(z_{\lambda})|^{2}}}{2t}(s^{2}+2\eta_{\lambda}s),
\end{align}
we obtain the desired result for $s\geq-\epsilon\eta_{\lambda}$ for any small constant $\epsilon>0$. For $s\in[-\eta_{\lambda},-\epsilon\eta_{\lambda}]$, $z_{\lambda}+is$ might be outside the domain of the local law. Instead we use the formula
\begin{align}
	\Re\bigl(\phi_{\lambda}(z_{\lambda}+is)-\phi_{\lambda}\bigr)&=-\int_{0}^{-s}(\eta_{\lambda}-r)\Bigl(\Tr{|G(z_{\lambda}-ir)|^{2}}-\frac{1}{t}\Bigr)\diff r.
\end{align}
Since $\Tr{|G(z_{\lambda}-ir)|^{2}}$ increases monotonically from $t^{-1}$ as $r$ increases from 0, we have
\begin{align}
	\Re\bigl(\phi_{\lambda}(z_{\lambda}+is)-\phi_{\lambda}\bigr)&\leq-\Bigl(\Tr{|G(z_{\lambda}-i\epsilon\eta_{\lambda})|^{2}}-\frac{1}{t}\Bigr)\int_{\epsilon\eta_{\lambda}}^{s}(\eta_{\lambda}-r)\diff r.
\end{align}
We can now use the local law at $z_{\lambda}-i\epsilon\eta_{\lambda}$ to obtain the desired result.
\end{proof}

\begin{proof}[Proof of Lemma \ref{lem:phi2}]
Using the definition of $\eta^{2}(E)$, we can compute
\begin{align}
    \psi'_{\lambda}(E)&=\frac{E-\lambda}{t}-\Tr{(X-E)\bigl((X-E)^{2}+\eta^{2}(E)\bigr)^{-1}},
\end{align}
and
\begin{align}
    \psi''_{\lambda}(E)&=2\eta^{2}(E)\Tr{\bigl((X-E)^{2}+\eta^{2}(E)\bigr)^{-2}}\nonumber\\&+2\partial_{E}(\eta^{2}(E))\Tr{(X-E)\bigl((X-E)^{2}+\eta^{2}(E)\bigr)^{-2}}.
\end{align}
The derivative of $\eta^{2}(E)$ can be obtained by differentiating \eqref{eq:eta(s)}:
\begin{align}
    \partial_{E}(\eta^{2}(E))&=\frac{\Tr{(X-E)\bigl((X-E)^{2}+\eta^{2}(E)\bigr)^{-2}}}{\Tr{\bigl((X-E)^{2}+\eta^{2}(E)\bigr)^{-2}}}.
\end{align}
Thus we have
\begin{align}
    \psi''_{\lambda}(E)&=2\eta^{2}(E)\Tr{\bigl((X-E)^{2}+\eta^{2}(E)\bigr)^{-2}}+\frac{2\Tr{(X-E)\bigl((X-E)^{2}+\eta^{2}(E)\bigr)^{-2}}^{2}}{\Tr{\bigl((X-E)^{2}+\eta^{2}(E)\bigr)^{-2}}}\\
    &\gtrsim\frac{1}{t},
\end{align}
by the local law.
\end{proof}

\begin{proof}[Proof of \ref{lem:conc}]
We only give a sketch since the computations are very similar to those in the proof of Lemma \ref{lem:K}. As in \cite[Lemma 7.2]{maltsev_bulk_2024}, we bound the Laplace transform and use Markov's inequality. Using Lemma \ref{lem:Sduality}, the Laplace transform is given by
\begin{align}
	\mbb{E}^{(\beta)}_{i}e^{x\mbf{v}_{i}^{*}A\mbf{v}_{i}}&=c_{N}\int_{-\infty}^{\infty}e^{-\beta N\psi^{(i-1)}_{\lambda_{i}}(E)}\int_{-\infty}^{\infty}e^{\frac{i\beta Np}{2t}}\det^{-\beta/2}(1+ipB(E))\Bigr)\diff p\diff E,
\end{align}
where
\begin{align}
    B(E)&=\sqrt{H^{(i-1)}(E)}\Bigl(1-\frac{xt}{N}\sqrt{H^{(i-1)}(E)}A\sqrt{H{(i-1)}(E)})^{-1}\Bigr)^{-1}\sqrt{H^{(i-1)}(E)}.
\end{align}
If
\begin{align}
	\frac{|x|t}{N}\bigl\|\sqrt{H^{(i-1)}(E)}A\sqrt{H^{(i-1)}(E)}\bigr\|<1-\delta,
\end{align}
we can treat $A$ as a perturbation and bound the $E$ integral using Lemma \ref{lem:phi2} and the $p$ integral using the bound
\begin{align}
	|\det^{-1}(1+ipB)|&\leq e^{-\frac{p^{2}\tr B^{2}}{2(1+p^{2}\|B\|^{2})}}.
\end{align}
Ultimately we obtain the bound
\begin{align}
	\mbb{E}^{(\beta)}_{i}e^{x\mbf{v}_{i}^{*}A\mbf{v}_{i}-\frac{\beta xt}{2}\Tr{H^{(i-1)}_{\lambda_{i}}A}}&\lesssim e^{-\frac{\beta xt}{2}\Tr{H^{(i-1)}_{\lambda_{i}}A}}\det^{-\beta/2}\Biggl(1-\frac{xt}{N}\sqrt{H^{(i-1)}_{\lambda_{i}}}A\sqrt{H^{(i-1)}_{\lambda_{i}}}\Biggr)\\
	&\leq\exp\Biggl(\frac{\beta^{2}}{4}\cdot\frac{\frac{x^{2}t^{2}}{N}\Tr{(H^{(i-1)}_{\lambda_{i}}A)^{2}}}{1-\frac{xt}{N^{1/2}}\Tr{(H^{(i-1)}_{\lambda_{i}}A)^{2}}^{1/2}}\Biggr),
\end{align}
which we insert into Markov's inequality to obtain \eqref{eq:conc}.
\end{proof}

\section{Proofs for subsection \ref{sec:charpoly}}
\begin{proof}[Proof of Lemma \ref{lem:F_2}]
Let $\Lambda=\diag(\bs\lambda,\bs\lambda)$. By representing the determinants by integrals over anti-commuting variables and then taking the expectation over $Y^{(m)}$, we can derive the formula
\begin{align}
    F_{2}(\bs\lambda,\{\mbf{v}_{j}\})&=\frac{1}{2^{m}}\left(\frac{N}{\pi t}\right)^{2m^{2}}\Delta^{2}(\lambda)\int_{M^{H}_{2m}(\mbb{C})}e^{-\frac{N}{2t}\tr Q^{2}}\nonumber\\&\times\det\bigl(1_{m}\otimes X^{(m)}-(\Lambda+iQ)\otimes1_{N-m}\bigr)\diff Q.
\end{align}
Change variable to $Q\mapsto U\mu U^{*}-i\Lambda$ to obtain
\begin{align}
    F&=\frac{1}{2^{m}}\left(\frac{N}{\pi t}\right)^{2m^{2}}\Delta^{2}(\lambda)\int_{\mbb{R}^{2m}}\Delta^{2}(\mu)I(\mu)\prod_{j=1}^{2m}e^{-\frac{N}{2t}(\mu_{j}^{2}-\Lambda_{j}^{2})}\det(X^{(m)}-i\mu_{j})\diff\mu,
\end{align}
where
\begin{align}
    I(\mu)&=\int_{U(2m)/U(1)^{2m}}e^{-\frac{iN}{t}\tr U\mu U^{*}\Lambda}\diff U.
\end{align}
Here we have deformed the $\mu_{j}$-contours back to the real axis.

The integral over $U$ is the limit of the HCIZ integral as $\lambda_{m+i}\to\lambda_{i}$ and can be performed by l'H\^{o}pital's rule:
\begin{align}
    \int_{U(2m)}e^{-\frac{iN}{t}\tr U\mu U^{*}\Lambda}\diff U&=\left(\frac{t}{N}\right)^{2m(m-1)}\frac{1}{\Delta(\mu)\Delta^{4}(\lambda)}\det\begin{pmatrix}e^{\frac{iN}{t}\mu_{j}\lambda_{k}}&\mu_{j}e^{\frac{iN}{t}\mu_{j}\lambda_{k}}\end{pmatrix}.
\end{align} 
Inserting this above we find
\begin{align}
    F&=c_{m}\left(\frac{N}{t}\right)^{2m}\frac{1}{\Delta^{2}(\lambda)}\int_{\mbb{R}^{2m}}\Delta(\mu)\det\begin{pmatrix} e^{-\frac{N}{2t}(\mu_{j}+i\lambda_{k})^{2}}&\mu_{j}e^{-\frac{N}{2t}(\mu_{j}+i\lambda_{k})^{2}}\end{pmatrix}\prod_{j=1}^{2m}\det(X^{(m)}-i\mu_{j})\diff\mu.
\end{align}
Expanding the determinant and using the symmetry of the integrand, we obtain \eqref{eq:F}.
\end{proof}

\begin{proof}[Proof of Lemma \ref{lem:F_1,2}]
For ease of notation we drop the superscript $(n)$. Representing the determinants by Gaussian integrals, we have
\begin{align}
    F_{1,2}(\bs\lambda,\{\mbf{v}_{j\leq n}\})&=\left(\frac{N}{2\pi t}\right)^{N/2}\lim_{\epsilon\to0}\int\exp\left\{\tr\Psi^{*}X\Psi-\tr\Lambda\Psi^{*}\Psi-\frac{N}{2t}\mbf{x}^{T}\bigl((X-\lambda_{1})^{2}+\epsilon\bigr)\mbf{x}\right\}\nonumber\\
    &\times\mbb{E}\exp\left\{-\frac{N}{2}\mbf{x}^{T}Y^{2}\mbf{x}-N\tr(B_{1}+B_{2})Y\right\}\diff\Psi\diff\mbf{x},
\end{align}
where
\begin{align}
    B_{1}&:=\frac{1}{2\sqrt{t}}\left(\mbf{x}\mbf{x}^{T}(X-\lambda_{1})+(X-\lambda_{1})\mbf{x}\mbf{x}^{T}\right),\\
    B_{2}&:=\frac{\sqrt{t}}{2N}\left(\Psi\Psi^{*}-\bar{\Psi}\Psi^{T}\right)
\end{align}
Let $A>0$ be real symmetric and observe the following identity
\begin{align}
    I_{n}(A,B)&:=\frac{1}{2^{n/2}}\left(\frac{N}{\pi}\right)^{n^{2}/2}\int_{\textup{Sym(n)}}e^{-\frac{N}{4}\tr AY^{2}-N\tr BY}\diff Y\nonumber\\
    &=\prod_{i}a_{i}^{-1/2}\prod_{i<j}\left(\frac{a_{i}+a_{j}}{2}\right)^{-1/2}\exp\left\{2N\textup{vec}(B)^{T}(1\otimes A+A\otimes 1)^{-1}\textup{vec}(B)\right\},
\end{align}
where $a_{i}$ are the eigenvalues of $A$. If $A=1_{n}$ we obtain $e^{N\tr B^{2}}$ as in the usual Gaussian integral. In our case, $A=1+2\mbf{x}\mbf{x}^{T}$,
\begin{align}
	2(1\otimes A+A\otimes1)^{-1}&=1-\frac{r}{1+r}(\mbf{u}\mbf{u}^{T}\otimes1+1\otimes\mbf{u}\mbf{u}^{T})+\frac{2r^{2}}{(1+r)(1+2r)}\mbf{u}\mbf{u}^{T}\otimes\mbf{u}\mbf{u}^{T},
\end{align}
and so
\begin{align}
    I_{n}(A,B)&=\frac{1}{(1+2r)^{1/2}(1+r)^{(n-1)/2}}\exp\left\{N\left(\tr B^{2}-\frac{2r\mbf{u}^{T}B^{2}\mbf{u}}{1+r}+\frac{2(r\mbf{u}^{T}B\mbf{u})^{2}}{(1+r)(1+2r)}\right)\right\},
\end{align}
where we have written $\mbf{x}=\sqrt{r}\mbf{u}$ for $\mbf{u}\in S^{n-1}$. Inserting $B=B_{1}$ we find
\begin{align*}
    \tr B_{1}^{2}-\frac{2r\mbf{u}^{T}B^{2}_{1}\mbf{u}}{1+r}+\frac{2(r\mbf{u}^{T}B_{1}\mbf{u})^{2}}{(1+r)(1+2r)}&=\frac{r^{2}}{2t(1+r)}\left[\mbf{u}^{T}(X-\lambda_{1})^{2}\mbf{u}+\frac{(\mbf{u}^{T}(X-\lambda_{1})\mbf{u})^{2}}{1+2r}\right].
\end{align*}
For the other terms we use the identity
\begin{align*}
    \tr B^{2}-\frac{2r\mbf{u}^{T}B^{2}\mbf{u}}{1+r}+\frac{2(r\mbf{u}^{T}B\mbf{u})^{2}}{(1+r)(1+2r)}&=\tr\Bigl(B\bigl(1-\frac{r}{1+r}\mbf{u}\mbf{u}^{T}\bigr)\Bigr)^{2}+\frac{(r\mbf{u}^{T}B\mbf{u})^{2}}{(1+r)^{2}(1+2r)}
\end{align*}
to obtain
\begin{align*}
    \tr B^{2}_{2}-\frac{2r\mbf{u}^{T}B^{2}_{2}\mbf{u}}{1+r}+\frac{2(r\mbf{u}^{T}B_{2}\mbf{u})^{2}}{(1+r)(1+2r)}&=-\frac{t}{2N^{2}}\tr\Bigl(\Psi^{*}\bigl(1-\frac{r}{1+r}\mbf{u}\mbf{u}^{T}\bigr)\Psi\Bigr)^{2}\\
    &+\frac{t}{2N^{2}}\tr\Psi^{*}\bigl(1-\frac{r}{1+r}\mbf{u}\mbf{u}^{T}\bigr)\bar{\Psi}\Psi^{T}\bigl(1-\frac{r}{1+r}\mbf{u}\mbf{u}^{T}\bigr)\Psi\\
    &+\frac{tr^{2}(\mbf{u}^{T}\Psi\Psi^{*}\mbf{u})^{2}}{N^{2}(1+r)^{2}(1+2r)}
\end{align*}
and
\begin{align}
    &2\tr B_{1}B_{2}-\frac{2r\mbf{u}^{T}(B_{1}B_{2}+B_{2}B_{1})\mbf{u}}{1+r}+\frac{4r^{2}\mbf{u}^{T}B_{1}\mbf{u}\mbf{u}^{T}B_{2}\mbf{u}}{(1+r)(1+2r)}\\
    &=-\frac{1}{N}\tr\Psi^{*}\Bigl(\frac{r}{1+r}(\mbf{u}\mbf{u}^{T}(X-\lambda_{1})+(X-\lambda_{1})\mbf{u}\mbf{u}^{T})-\frac{2r^{2}\mbf{u}^{T}(X-\lambda_{1})\mbf{u}}{(1+r)(1+2r)}\mbf{u}\mbf{u}^{T}\Bigr)\Psi.
\end{align}

For the quartic terms in $\Psi$ we introduce auxiliary integrals:
\begin{align}
	e^{-\frac{t}{2N}\tr\Bigl(\Psi^{*}\bigl(1-\frac{r}{1+r}\mbf{u}\mbf{u}^{T}\bigr)\Psi\Bigr)^{2}}&=\frac{1}{2^{m/2}}\Biggl(\frac{N}{\pi t}\Biggr)^{m^{2}/2}\int_{\textup{Herm}_{\mbb{C}}(m)}e^{-\frac{N}{2t}\tr|B|^{2}}\nonumber\\
	&\times e^{\frac{i}{2}\tr\Bigl(B\Psi^{*}\bigl(1-\frac{r}{1+r}\mbf{u}\mbf{u}^{T}\bigr)\Psi-B^{T}\Psi^{T}\bigl(1-\frac{r}{1+r}\mbf{u}\mbf{u}^{T}\bigr)\bar{\Psi}\Bigr)}\diff B,
\end{align}
and
\begin{align}
	e^{\frac{t}{2N}\tr\Bigl(\Psi^{*}\bigl(1-\frac{r}{1+r}\mbf{u}\mbf{u}^{T}\bigr)\bar{\Psi}\Psi^{T}\bigl(1-\frac{r}{1+r}\mbf{u}\mbf{u}^{T}\bigr)\Psi}&=\Biggl(\frac{N}{\pi t}\Biggr)^{m(m-1)/2}\int_{\textup{Skew}_{\mbb{C}}(m)}e^{-\frac{N}{2t}\tr|D|^{2}}\nonumber\\
	&\times e^{\frac{1}{2}\tr\Bigl(D\Psi^{*}\bigl(1-\frac{r}{1+r}\mbf{u}\mbf{u}^{T}\bigr)\bar{\Psi}-D^{*}\Psi^{T}\bigl(1-\frac{r}{1+r}\mbf{u}\mbf{u}^{T}\bigr)\Psi\Bigr)}\diff D,
\end{align}
and
\begin{align}
	e^{\frac{tr^{2}}{N(1+r)^{2}(1+2r)}(\mbf{u}^{T}\Psi\Psi^{*}\mbf{u})^{2}}&=\sqrt{\frac{Nr^{2}}{2\pi t(1+r)^{2}(1+2r)}}\int_{-\infty}^{\infty}e^{-\frac{Nr^{2}}{4t(1+r)^{2}(1+2r)}x^{2}+\frac{r^{2}x\mbf{u}^{T}\Psi\Psi^{*}\mbf{u}}{(1+r)^{2}(1+2r)}}\diff x.
\end{align}

The dependence of the exponent on the anticommuting variables is quadratic; integrating these out we obtain
\begin{align}
    \pf M(S,x),
\end{align}
where
\begin{align}
	M&=S\otimes\Biggl(1-\frac{r}{1+r}\mbf{u}\mbf{u}^{T}\Biggr)-\begin{pmatrix}0&\Lambda-\lambda_{1}\\-\Lambda+\lambda_{1}&0\end{pmatrix}\otimes1_{N}+1_{2m}\otimes\begin{pmatrix}0&\mathcal{X}\\-\mathcal{X}^{T}&0\end{pmatrix}\nonumber\\
	&+\frac{xr^{2}}{(1+r)^{2}(1+2r)}J_{2m}\otimes\mbf{u}\mbf{u}^{T},\label{eq:M}
\end{align}
with
\begin{align}
	J_{2m}&=\begin{pmatrix}0&1_{m}\\-1_{m}&0\end{pmatrix},\\
	S&=\begin{pmatrix}D&iB\\-iB^{T}&D^{*}\end{pmatrix},\\
	\mathcal{X}&=X-\lambda_{1}-\frac{r}{1+r}\Bigl(\mbf{u}\mbf{u}^{T}(X-\lambda_{1})+(X-\lambda_{1})\mbf{u}\mbf{u}^{T}\Bigr)+\frac{2r^{2}\mbf{u}^{T}(X-\lambda_{1})\mbf{u}}{(1+r)(1+2r)}\mbf{u}\mbf{u}^{T}.
\end{align}
By making a block decomposition of $M$ with respect to the projection $1_{2m}\otimes\mbf{u}\mbf{u}^{T}$ and making the change of variables $S=UJ(\mbf{s})U^{T}-J(\bs\lambda)$, we obtain \eqref{eq:F_1,2}. Note that we can put the limit $\epsilon\to0$ inside the integral since the integrand at $\epsilon=0$ is integrable as a function of $r$. To see this, we write
\begin{align}
    B(\mbf{s},r)&=\wt{B}\Bigl(\mbf{s},\frac{r}{1+r},\frac{r^{2}}{(1+r)(1+2r)},r\Bigr),
\end{align}
where
\begin{align}
    \wt{B}(\mbf{s},a,b,c)&=iaJ(\mbf{s})+b(\mbf{u}^{T}(X^{(n)}-\lambda_{1})\mbf{u}+x)J-cU^{*}J(\wt{\bs\lambda}-\lambda_{1})U.
\end{align}
Using the pfaffian addition formula
\begin{align}
   \pf(A+B)&=\sum_{q=0}^{n}\sum_{|I|=2q}(-1)^{\sum_{i\in I}i-q}\pf(A_{I})\pf(B_{I^{c}}),
\end{align}
and the fact that $\pf(U^{*}(\wt{\bs{\lambda}}-\lambda_{1})U)=0$, we deduce that the degree of $c=r$ in $p(x,U,\mbf{s},r)$ is at most $n-1$. Thus the integrand decays as $r^{-2}$ for large $r$ and is integrable in $r$.

For illustration, consider the case $\Lambda=\lambda_{1}1_{m}$. Then the second term in \eqref{eq:M} disappears and we can diagonalise $S$ by a symplectic matrix, 
\begin{align}
	S&=U\begin{pmatrix}0&-i\mbf{s}\\i\mbf{s}&0\end{pmatrix}U^{T},
\end{align}
so that
\begin{align}
	\pf M(B,D,x)&=\prod_{j=1}^{m}\det\Bigl(\mathcal{X}-is_{j}\Bigl(1-\frac{r}{1+r}\mbf{u}\mbf{u}^{T}\Bigr)+\frac{xr^{2}}{(1+r)^{2}(1+2r)}\mbf{u}\mbf{u}^{T}\Bigr)\\
	&=\prod_{j=1}^{m}\frac{\det(X-\lambda_{1}-is_{j})}{(1+r)^{2}}\Biggl[1+\frac{r^{2}(\mbf{u}^{T}(X-\lambda_{1})\mbf{u}+x)-ir(1+2r)s_{j}}{1+2r}\mbf{u}^{T}G(\lambda_{1}-is_{j})\mbf{u}\Biggr]\\
    &=\prod_{j=1}^{m}\frac{\det(X^{(n)}-\lambda_{1}-is_{j})}{1+r}\Biggl[\frac{1}{1+r}\bigl(\mbf{u}^{T}G(\lambda_{1}-is_{j})\mbf{u}\bigr)^{-1}-\frac{irs_{j}}{1+r}\nonumber\\&+\frac{r^{2}}{(1+r)(1+2r)}\bigl(\mbf{u}^{T}(X^{(n)}-\lambda_{1})\mbf{u}+x\bigr)\Biggr].
\end{align}
We recognise the factor in square brackets in the last line as $p(x,U,\mbf{s},r)$ in the limit $\lambda_{j}=\lambda_{1}$.
\end{proof}

\begin{proof}[Proof of Lemma \ref{lem:K(r)}]
The proof is very similar to that of Lemma \ref{lem:K}, the only difference being that we generalise $\eta(E)$ and $H(E)$ to also depend on $r$. Recall the definition \eqref{eq:eta(E,r)} of $\eta^{(n)}(E,r)$ and note that $\eta^{(n)}(E,\infty)=\eta^{(n)}(E)$, where $\eta^{(n)}(E)$ is defined in \eqref{eq:eta(s)}. By the duality formula in Lemma \ref{lem:Sduality}, we have
\begin{align}
	K_{n}(r,\lambda)&=\int_{-\infty}^{\infty}e^{-N\psi_{\lambda}(E,r)}\int_{-\infty}^{\infty}e^{\frac{iNrp}{2t(1+r)}}\det^{-1/2}\Bigl(1+ipH^{(n)}(E,r)\Bigr)\diff p\diff x,
\end{align}
for any $\eta\in\mbb{R}$, where
\begin{align}
	\psi^{(n)}_{\lambda}(E,r)&=\frac{1}{2t}\Biggl((E-\lambda)^{2}-\frac{r}{1+r}(\eta^{(n)}(E,r))^{2}\Biggr)+\frac{1}{2}\Tr{\log((X^{(n)}-E)^{2}+(\eta^{(n)}(E,r))^{2})},
\end{align}
and
\begin{align}
    H^{(n)}(E,r)&=\bigl((X^{(n)}-E)^{2}+(\eta^{(n)}(E,r))^{2}\bigr)^{-1}.
\end{align}

Define $\psi^{(n)}_{\lambda,r}:=\psi_{\lambda}(E^{(n)}_{\lambda,r},r)$. By arguing as for Lemma \ref{lem:phi2}, we can obtain
\begin{align}
    \psi_{\lambda}(E,r)-\psi_{\lambda,r}&\gtrsim\frac{t(1+r)}{r}(E-E^{(n)}_{\lambda,r})^{2}.
\end{align}
Now the analysis proceeds exactly as in the proof of Lemma \ref{lem:K}.
\end{proof}

\section{Proofs for subsection \ref{sec:E_j}}
\begin{proof}[Proof of Lemma \ref{lem:E_j}]
Let us first rewrite the exponent as follows:
\begin{align}
	\frac{N}{t}\Bigl(\mbf{v}_{j}^{*}(X^{(j-1)})^{2}\mbf{v}_{j}-\lambda_{j}\mbf{v}_{j}^{*}X^{(j-1)}\mbf{v}_{j}+\frac{1}{2}\lambda_{j}^{2}-\frac{1}{2}\bigl(\mbf{v}_{j}^{*}X^{(j-1)}\mbf{v}_{j}\bigr)^{2}\Bigr).
\end{align}
For the quartic term we introduce an auxiliary Gaussian integral:
\begin{align}
	e^{\frac{N}{2t}\bigl(\mbf{v}_{j}^{*}X^{(j-1)}\mbf{v}_{j}\bigr)^{2}}&=\Biggl(\frac{N}{2\pi t}\Biggr)^{1/2}\int_{-\infty}^{\infty}e^{-\frac{N}{2t}E^{2}-\frac{N}{t}E\mbf{v}_{j}^{*}X^{(j-1)}\mbf{v}_{j}}\diff E.
\end{align}
The exponent becomes
\begin{align}
	&\frac{N}{t}\Bigl(\mbf{v}_{j}^{*}(X^{(j-1)})^{2}\mbf{v}_{j}-(\lambda_{j}+E)\mbf{v}_{j}^{*}X^{(j-1)}\mbf{v}_{j}+\frac{1}{2}(\lambda_{j}^{2}+E^{2})\Bigr)\\
	&=\frac{N}{t}\Bigl(\mbf{v}_{j}^{*}\bigl(X^{(j-1)}-\frac{\lambda_{j}+E}{2}\bigr)^{2}\mbf{v}_{j}+\frac{1}{4}(E-\lambda_{j})^{2}\Bigr).
\end{align}
Shifting $E\mapsto \lambda+2E$, we obtain for the expectation
\begin{align}
	\mbb{E}_{j}\bigl[f(\mbf{v}_{j})\bigr]&=\frac{\sqrt{2}}{K_{j}}\Biggl(\frac{N}{\pi t}\Biggr)^{N-j+1/2}\int_{-\infty}^{\infty}e^{-\frac{N}{t}E^{2}}\nonumber\\
    &\times\int_{S^{N-j}}f(\mbf{v}_{j})e^{-\frac{N}{t}\mbf{v}_{j}^{*}\bigl(X^{(j-1)}-\lambda_{j}-E\bigr)^{2}\mbf{v}_{j}}\diff S_{N-j}(\mbf{v}_{j})\diff E.
\end{align}
Applying the duality formula in Lemma \ref{lem:Sduality}, we obtain \eqref{eq:E_j}. In order to restrict $E$ to a suitable region, we first bound $f(\mbf{v}_{j})$ by $\|f\|_{\infty}$ before applying the duality formula.
\end{proof}

\begin{proof}[Proof of Lemma \ref{lem:halfSpace}]
Recall the Fourier representation of $e^{-\epsilon(x-a)}1_{(a,\infty)}(x)$:
\begin{align*}
    e^{-\epsilon(x-a)}1_{(a,\infty)}(x)&=\int_{\mbb{R}}\frac{e^{ik(x-a)}}{k-i\epsilon}\frac{\diff k}{2\pi}.
\end{align*}
We need to justify changing the order of integration in
\begin{align*}
    \int_{\mbb{C}^{n}}\int_{\mbb{R}^{|J|}}e^{-\mbf{x}^{*}A\mbf{x}+i\sum_{j\in J}k_{j}x_{j}}\prod_{j\in J}\frac{e^{-ik_{j}a_{j}}}{k_{j}-i\epsilon}\frac{\diff k_{j}}{2\pi}\diff\mbf{x}
\end{align*}
We cannot use Fubini's theorem since the integral is not absolutely convergent. Instead we use the distributional identity
\begin{align*}
    1_{(a,b)}(x)&=\lim_{\epsilon\to0}\frac{1}{\sqrt{2\pi\epsilon}}\int_{a}^{b}e^{-\frac{1}{2\epsilon}(x-y)^{2}}\diff y.
\end{align*}
Note that although both sides are well-defined functions, the identity does not hold pointwise: the right hand side is not zero when $x=a$ or $x=b$. Since the function in the limit on the right hand side is uniformly bounded in $\epsilon$, we can take the limit outside the integral over $\mbb{C}^{n}$. Now we use the identity
\begin{align*}
    \frac{1}{\sqrt{2\pi\epsilon}}e^{-\frac{1}{2\epsilon}(x-y)^{2}}&=\frac{1}{2\pi}\int_{\mbb{R}}e^{-\frac{\epsilon}{2}k^{2}+ik(x-y)}\diff k.
\end{align*}
At this point we can freely interchange the order of integration, so we intergrate first over $\mbb{C}^{n}$ then $y_{j},\,j\in[|J|]$. Finally, we put the limit $\epsilon\to0$ inside the resulting integral over $\mbf{k}$ (which we can do since the integrand is now absolutely integrable due to the factor $\det^{-1}(1-iD(\mbf{k})A)$) to obtain \eqref{eq:halfSpace}.
\end{proof}

\bibliographystyle{plain}

\end{document}